\numberwithin{equation}{section}
\newtheorem{theorem}{Theorem}[section]
\newtheorem{lemma}[theorem]{Lemma}
\newtheorem{proposition}[theorem]{Proposition}
\newtheorem{remark}[theorem]{Remark}
\newtheorem{definition}[theorem]{Definition}
\newtheorem{algorithm}[theorem]{Algorithm}
\newcommand{\al}{\alpha}
\newcommand{\be}{\beta}
\newcommand{\Ga}{\Gamma}
\newcommand{\de}{\delta}
\newcommand{\De}{\Delta}
\newcommand{\e}{\varepsilon}
\newcommand{\ka}{\kappa}
\newcommand{\la}{\lambda}
\newcommand{\si}{\sigma}
\newcommand{\Si}{\Sigma}
\newcommand{\vp}{\varphi}
\newcommand{\om}{\omega}
\newcommand{\Om}{\Omega}
\newcommand{\cq}{\mathcal Q}
\newcommand{\cp}{\mathcal P}
\newcommand{\co}{\mathcal O}
\newcommand{\cb}{\mathcal B}
\newcommand{\cl}{\mathcal L}
\newcommand{\wt}{\widetilde}
\newcommand{\wh}{\widehat}
\newcommand{\ZR}{\mathbb{R}}
\newcommand{\ZT}{\mathbb{T}}
\newcommand{\ZZ}{\mathbb{Z}}
\newcommand{\ZB}{\mathbb{B}}
\newcommand{\ZN}{\mathbb{N}}
\newcommand{\ZS}{\mathbb{S}}
\newcommand{\Id}{{\textit{1}}}
\newcommand{\Tau}{\mathcal{T}}
\newcommand{\cB}{{\mathcal B}}
\newcommand{\cT}{{\mathcal T}}
\newcommand{\cQ}{{\mathcal Q}}
\newcommand{\bq}{{\bf q}}
\newcommand{\BR}{{\rm{Br}}}
\newcommand{\R}{\mathbb{R}}
\newcommand{\Br}{{\rm{Br}}}
\newcommand{\T}{\mathbb{T}  }
\newcommand{\1}{\mathbbm{1}}
\newcommand{\cO}{\mathcal{O}}
\newcommand{\bSq}{{{\bf Sq}}}
\newcommand{\Sq}{{\rm{Sq}}}
\newcommand{\bil}{{\rm{Bil}}}
\newcommand{\vf}{\boldsymbol f}
\newcommand{\vg}{\boldsymbol g}
\newcommand{\vh}{\boldsymbol h}
\newcommand{\vn}{\vec{n}}
\newcommand{\rap}{{\rm RapDec}}
\newcommand{\dist}{{\rm dist}}
\newcommand{\dil}{{\rm Dil}}
\newcommand{\poly}{\rm{Poly}}
\def\l{\ell}
\begin{document}

\title{New bounds for Stein's square function in $\ZR^3$.}

\author{Shengwen Gan}
\address{Shengwen Gan\\ 
Deparment of Mathematics, Massachusetts Institute of Technology, Cambridge MA,  02139, USA}
\email{shengwen@mit.edu}

\author{Yifan Jing}
\address{
Yifan Jing\\
Department of Mathematics, University of Illinois at Urbana-Champaign, Urbana IL, 61801, USA}
\email{yifanjing17@gmail.com}

\author{Shukun Wu}

\address{
Shukun Wu\\
Department of Mathematics\\
University of Illinois at Urbana-Champaign\\
Urbana, IL, 61801, USA}

\email{shukunw2@illinois.edu}
\date{}
\begin{abstract}
We prove new $L^p(\ZR^3)$ bounds on Stein's square function for $p\geq3.25$. As an application, it improves the maximal Bochner-Riesz conjecture to the same range of $p$. The main method we used is polynomial partitioning.

\end{abstract}

\maketitle

%\tableofcontents

\section{Introduction}

In this paper, the ambient space is always $\ZR^3$. The purpose of this paper is to prove some $L^p$ estimate for Stein's square function. To begin with, let us recall the Bochner-Riesz mean of order $\la$:
\begin{equation}
    T_t^\lambda f(x):=\int_{\mathbb{R}^3} \Big(1-\frac{|\xi|^2}{t^2}\Big)^\lambda_+\wh{f}(\xi)e^{ix\cdot\xi}d\xi\,,
\end{equation}
where $f$ is a Schwartz function. Stein \cite{Stein-BR} (See also \cite{Stein-Weiss} Chapter 7) introduced a square function $G^\la f$ defined as
\begin{equation}
    G^\la f(x):=\Big(\int_0^\infty\Big|\frac{\partial}{\partial t}T^\la_t f(x)\Big|^2tdt\Big)^{1/2}.
\end{equation}

Our main result is the following.
\begin{theorem}[Stein's square function]
\label{sqfcn-thm-intro}
Assume $p\geq3.25$. Then 
\begin{equation}
\label{sqfcn-esti-intro}
    \|G^\la f\|_p\leq C_p\|f\|_p,\hspace{1cm}\la>3\Big(\frac{1}{2}-\frac{1}{p}\Big).
\end{equation}
\end{theorem}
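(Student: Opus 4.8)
\emph{Step 1: reduction to a single-scale estimate.} The plan is to run the polynomial-partitioning machinery for oscillatory integrals --- as developed by Guth and by Guth--Hickman--Iliopoulou for Bochner--Riesz, and refined in recent work on Bochner--Riesz in $\R^3$ --- for the square function rather than for a linear multiplier. First, a Littlewood--Paley decomposition of $f$, isotropic rescaling of each dyadic frequency annulus to the unit annulus, the standard pointwise comparison of $\big(\int_1^2|S_t^\delta f|^2\,t\,dt\big)^{1/2}$ with $\delta^{1/2}\big(\sum_k(Mf_k)^2\big)^{1/2}$ (here $M$ is the Hardy--Littlewood maximal operator, $S_t^\delta$ the smooth projection onto $\{\,|1-|\xi|/t|\le\delta\,\}$, and $f_k$ the restriction of $f$ to the $k$-th $1/R$-shell, $R=1/\delta$), and the Fefferman--Stein vector-valued maximal inequality reduce Theorem~\ref{sqfcn-thm-intro} to the following single-scale square function estimate for annuli: if $\widehat f$ is supported in $\{1\le|\xi|\le2\}$ and $f=\sum_{k}f_k$ with $\widehat{f_k}$ supported in $\{r_k\le|\xi|\le r_k+1/R\}$, $r_k=1+k/R$, then
\begin{equation*}
\Big\|\Big(\sum_k|f_k|^2\Big)^{1/2}\Big\|_{L^p(\R^3)}\;\lesssim_\e\;R^{\e}\,R^{1-3/p}\,\|f\|_{L^p(\R^3)},\qquad p\ge3.25.
\end{equation*}
Since $1-\tfrac3p=3(\tfrac12-\tfrac1p)-\tfrac12$, reassembling $G^\la$ from its dyadic shell pieces --- the $j$-th contributing $\delta_j^{\la-1}\big(\int_1^2|S_t^{\delta_j}f|^2\,t\,dt\big)^{1/2}$, where $\delta_j^{\la-1}$ is the symbol size --- via Minkowski's inequality, and inserting the estimate above, produces a geometric series in $j$ that converges precisely when $\la>3(\tfrac12-\tfrac1p)$; in particular no $\e$-removal is needed, $\la>3(\tfrac12-\tfrac1p)$ being an open condition. (For $p\ge4$ the single-scale estimate follows from $L^2$-based methods, so the content is the regime $3.25\le p<4$.)

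\emph{Step 2: wave packets and the broad--narrow dichotomy.} Localize to a ball $B_R$ (globalizing afterwards is routine) and fix a small $\rho>0$. Decompose each $f_k=\sum_\theta f_{k,\theta}$ over $R^{-1/2}$-caps $\theta$ of the sphere $\{|\xi|=r_k\}$, and each $f_{k,\theta}$ into wave packets supported on $R^{1/2}\times R^{1/2}\times R$ tubes dual to the $R^{-1/2}\times R^{-1/2}\times R^{-1}$ slabs tangent to that sphere at $\theta$. Crucially, for a fixed $\theta$ these slabs have the same (essentially radial) orientation for every $k$, so the tube directions depend only on $\theta$ and the square function couples a one-parameter family of mutually parallel tube-stacks. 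Now apply the Bourgain--Guth broad--narrow splitting to $\big(\sum_k|f_k|^2\big)^{1/2}(x)$: at each $x$ either the value is dominated by caps $\theta$ concentrated in an $R^{-1/2+\rho}$-neighbourhood of a great circle of the sphere --- the \emph{narrow} part, which after rescaling is a two-dimensional square-function problem, disposed of by the induction on $R$ together with C\'ordoba's planar annular square function estimate (available in the required range $p<4$) --- or at least two $R^{-\rho}$-transversal caps contribute comparably --- the \emph{broad} part.

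\emph{Step 3: polynomial partitioning of the broad part.} For the broad part choose a polynomial $P$ of suitably large degree $D=D(\e)$ whose zero set $Z(P)$ bisects the $L^p$-mass of the broad square function at each stage, so that $B_R\setminus Z(P)$ splits into $\sim D^3$ cells of diameter $\ll R$. If a fixed fraction of the mass lies in the cells (the \emph{cellular} case), each wave packet meets $O(D)$ cells, and rescaling each cell reduces the case to the inductive hypothesis at a smaller scale, with room to spare. Otherwise the mass concentrates in the $R^{1/2}$-neighbourhood of $Z(P)$ (the \emph{algebraic} case); split the wave packets by the angle of their tubes with $Z(P)$. The \emph{transversal} packets (making angle $\gtrsim R^{-\rho}$ with the tangent planes of $Z(P)$) are controlled through the polynomial Wolff axioms --- each point of $Z(P)\cap B_R$ meets only a few such tubes and their directions are spread out --- iterated via a nested second polynomial partitioning together with the multilinear Kakeya inequality of Bennett--Carbery--Tao. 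The \emph{tangential} packets lie within $R^{1/2+O(\rho)}$ of a two-dimensional algebraic surface, which confines the caps feeding a fixed tangency plane to an $R^{-1/2+O(\rho)}$-cap, so that after rescaling one again reduces to a lower-dimensional square-function estimate. Running the induction on $R$ with the narrow case of Step 2 and the cellular/algebraic cases above at each stage, and optimizing the parameters $D$ and $\rho$ against the planar and multilinear inputs, produces the single-scale estimate of Step 1 and hence Theorem~\ref{sqfcn-thm-intro}.

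\emph{The main obstacle.} The genuinely new difficulty, compared with linear restriction or Bochner--Riesz, is that the $\ell^2$-sum runs over the $\sim R$ radial shells --- a direction carrying no curvature --- so wave packets from different shells but the same cap are parallel, stacked tubes. The crux is therefore the tangential case: one must show that such stacks cannot conspire to inflate $\big(\sum_k|f_k|^2\big)^{1/2}$ near the surface $Z(P)$, which amounts to an $L^2$-type bound transverse to $Z(P)$ that is uniform over the shells, married to the curvature gain in the directions along $S^2$. Quantifying the loss this incurs and balancing it against the planar and multilinear gains is precisely what pins the exponent at $p=3.25$, with no evident route to a smaller value by this method.
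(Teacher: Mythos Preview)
Your outline has a genuine gap at the heart of Steps 2 and 3: you repeatedly invoke \emph{rescaling and induction on the scale $R$} --- for the narrow part (``after rescaling is a two-dimensional square-function problem, disposed of by the induction on $R$'') and for the cellular case (``rescaling each cell reduces the case to the inductive hypothesis at a smaller scale''). This is precisely what does \emph{not} work here, and the paper says so explicitly. The square function couples $\sim R$ distinct shells $\{|\xi|=t_j\}_{1\le j\le R}$; a parabolic rescaling adapted to a $K^{-1}$-cap on one shell does not map the other shells to scaled copies of themselves (for spheres $(t_j^2-|\bar\xi|^2)^{1/2}$ there is no common linear map doing the job for all $j$). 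So there is no clean induction-on-scales available, and your narrow and cellular cases are left unproved.

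The paper's substitutes for rescaling are the two ingredients missing from your sketch. First, instead of rescaling the narrow part, it \emph{iterates} the broad--narrow split through $\sim\e^{-10}$ levels down to $R^{-1/2}$-caps, never rescaling, and at each level bounds the broad contribution not by $\|f\|_p$ but by an auxiliary \emph{lattice-cube square function} $\big\|\big(\sum_{q}|\Delta_q f|^2\big)^{1/2}\big\|_p$ over $R^{-1/2}$-cubes $q$; this is what allows the pieces to be summed via Littlewood--Paley for congruent cubes. Second, in the polynomial-partitioning iteration for each broad function, there is again no rescaling: the iteration simply shrinks the physical cells while the frequency support stays in $N_{Cr_u^{-1}}(\Gamma_j(\sigma))$, and the loss is recovered by a \emph{backward algorithm} (Section~8) that tracks tube incidences from the final step back to the first, producing the refined $L^2$ bound $\sum_{O_s}\|\Sq\vf_{O_s}\|_2^2\lesssim R^{O(\de)}R^{-s_t\de}\kappa\,\|\Id_X\Sq\vf\|_2^2$ with either a gain $\kappa$ or a support gain $\Id_X$. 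This backward algorithm is the mechanism that replaces the usual $\|\cdot\|_2^\alpha\|\cdot\|_\infty^{1-\alpha}$ interpolation and is what pins the exponent at $p=3.25$; your proposal has no counterpart to it. Finally, the tangential case is handled not by multilinear Kakeya but by a direct bilinear $L^4$ estimate (Lemma~\ref{bilinear-lem}) for caps at angle $\sim(MK)^{-1}$, and your ``main obstacle'' paragraph misidentifies the difficulty --- the stacked parallel tubes across shells are harmless (they are $L^2$-orthogonal), whereas the real obstacle is exactly the absence of rescaling.
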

\noindent An immediate application of Theorem \ref{sqfcn-thm-intro} is improvements on the maximal Bochner-Riesz problems (See \cite{Carbery-MBR} for instance). Recall that the maximal Bochner-Riesz operator $T^\la_\ast$ is defined as
\begin{equation}
\label{MBR-operator}
    T^{ \lambda}_\ast f(x)=\sup_{t>0}|T_t^\la f(x)|.
\end{equation}
\begin{theorem}[Maximal Bochner-Riesz]
\label{MBR}
Assume $p\geq3.25$. Then
\begin{equation}
\label{MBR-esti-intro}
    \|T^\la_\ast f\|_p\leq C_{p}\|f\|_p, \hspace{1cm}\la>1-3/p.
\end{equation}
\end{theorem}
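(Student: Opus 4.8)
The plan is to deduce the maximal Bochner–Riesz bound (Theorem \ref{MBR}) directly from the square function estimate (Theorem \ref{sqfcn-thm-intro}), since this is a classical implication going back to Stein. First I would recall the pointwise identity that controls $T^\lambda_t f$ in terms of the square function. Writing $T^\lambda_t f(x)^2 = -\int_t^\infty \frac{\partial}{\partial s}\big(T^\lambda_s f(x)^2\big)\,ds = -2\int_t^\infty T^\lambda_s f(x)\,\frac{\partial}{\partial s}T^\lambda_s f(x)\,ds$ (using that $T^\lambda_s f(x)\to 0$ as $s\to\infty$ for Schwartz $f$), and then applying Cauchy–Schwarz in the variable $s$ with the weight $s\,ds$, one gets
\begin{equation}
|T^\lambda_t f(x)|^2 \le 2\Big(\int_0^\infty |T^\lambda_s f(x)|^2 \frac{ds}{s}\Big)^{1/2}\Big(\int_0^\infty \Big|\frac{\partial}{\partial s}T^\lambda_s f(x)\Big|^2 s\,ds\Big)^{1/2},
\end{equation}
so that $\|T^\lambda_\ast f\|_p^2 \le 2\,\big\|\big(\int_0^\infty |T^\lambda_s f|^2 \tfrac{ds}{s}\big)^{1/2}\big\|_p \cdot \|G^\lambda f\|_p$. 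The second factor is handled by Theorem \ref{sqfcn-thm-intro} whenever $\lambda > 3(\tfrac12 - \tfrac1p)$; it remains to bound the first factor, call it $\|\mathcal{S}^\lambda f\|_p$ where $\mathcal{S}^\lambda f(x) = \big(\int_0^\infty |T^\lambda_s f(x)|^2 \frac{ds}{s}\big)^{1/2}$.

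**Bounding the auxiliary square function $\mathcal{S}^\lambda$.** The point is that $\mathcal S^\lambda$ is a much softer object: by Plancherel's theorem in the $t$-variable (after a change of variables $t\mapsto 1/t$, or equivalently by Khintchine/Littlewood–Paley considerations) one has $\|\mathcal S^\lambda f\|_2 \lesssim \|f\|_2$ for every $\lambda>0$, since $\int_0^\infty (1-|\xi|^2/s^2)_+^{2\lambda}\frac{ds}{s}$ is a bounded function of $\xi$. On the other hand, each $T^\lambda_s$ is a nice Fourier multiplier and for $\lambda$ large (say $\lambda > 1$) the kernel decays fast enough that $\mathcal S^\lambda$ is bounded on all $L^p$, $1<p<\infty$, essentially because it is dominated by a vector-valued Calderón–Zygmund operator; more simply, $\mathcal{S}^\lambda$ is pointwise dominated by a constant multiple of the Hardy–Littlewood maximal function when $\lambda$ is large. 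Interpolating between the $L^2$ bound for small $\lambda$ and the $L^p$ bound for large $\lambda$ (and using that decreasing $\lambda$ only helps once we are away from the critical line), one obtains $\|\mathcal S^\lambda f\|_p \lesssim_p \|f\|_p$ for all $p\ge 2$ and all $\lambda>0$. Combining with the inequality above gives $\|T^\lambda_\ast f\|_p^2 \lesssim \|f\|_p \cdot \|f\|_p = \|f\|_p^2$, i.e. the desired bound.

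**Matching the exponent of $\lambda$.** The one subtlety to check carefully is that the constraint on $\lambda$ coming out of this argument is exactly $\lambda > 3(\tfrac12 - \tfrac1p)$ from Theorem \ref{sqfcn-thm-intro}, whereas Theorem \ref{MBR} claims the range $\lambda > 1 - 3/p$. These agree precisely at $p = 4$ (both give $\lambda > 1/4$), but for $p$ near $3.25$ we have $1 - 3/p < 3(\tfrac12 - \tfrac1p)$, so the stated range for the maximal operator is \emph{wider} than what the above yields directly. To recover the full range I would invoke the standard fact that for the maximal Bochner–Riesz operator the critical exponent is governed by the same Bochner–Riesz-type number as the single operator $T^\lambda := T^\lambda_1$: once $\|T^\lambda_\ast\|_{p\to p}<\infty$ is known for one value of $\lambda$, a slicing/interpolation argument with the trivial $L^\infty$ (or $L^2$) bound at $\lambda$ slightly above $0$ sharpens the allowed range down to $\lambda > \lambda(p) := \max\{3|\tfrac12-\tfrac1p| - \tfrac12,\,0\} = 1 - 3/p$ for $p \ge 4$, and a separate (easier, since $p<4<2\cdot 2$) interpolation handles $3.25\le p<4$. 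I expect the main obstacle to be precisely this last bookkeeping step: making sure the interpolation is carried out against an estimate that is genuinely available (the $L^2\to L^2$ bound of $G^\lambda$ for $\lambda>0$, or Carbery's square-function approach to the maximal operator) rather than circularly invoking the conclusion, and confirming that no loss is incurred at the endpoint $p=3.25$.
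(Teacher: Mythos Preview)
There is a genuine gap in two places. First, the limit: as $s\to\infty$ the multiplier $(1-|\xi|^2/s^2)^\lambda_+\to 1$, so $T^\lambda_s f(x)\to f(x)$, not $0$. Hence your auxiliary object $\mathcal S^\lambda f(x)=\big(\int_0^\infty |T^\lambda_s f(x)|^2\tfrac{ds}{s}\big)^{1/2}$ is identically $+\infty$ whenever $f(x)\neq 0$, and the Plancherel check you sketch fails because $\int_0^\infty(1-|\xi|^2/s^2)_+^{2\lambda}\tfrac{ds}{s}$ diverges at $s=\infty$. One can repair this by replacing $T^\lambda_s$ with $T^\lambda_s-T^{\lambda+N}_s$, but even then your Cauchy--Schwarz produces one factor of $G^\lambda f$ and one factor which is essentially $G^{\lambda+1}f$; after H\"older in $x$ the binding constraint is still the $L^p$ bound for $G^\lambda$, i.e.\ $\lambda>3(\tfrac12-\tfrac1p)=\tfrac32-\tfrac3p$.

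Second, the interpolation repair cannot work, and rests on an arithmetic slip: at $p=4$ one has $3(\tfrac12-\tfrac1p)=\tfrac34\neq\tfrac14$, and in fact
\[
3\Big(\tfrac12-\tfrac1p\Big)-\Big(1-\tfrac3p\Big)=\tfrac12\qquad\text{for every }p.
\]
Both your endpoint and the $L^2$ endpoint ($T^\lambda_*$ bounded on $L^2$ for $\lambda>0=3(\tfrac12-\tfrac12)$) lie on the line $\lambda=3(\tfrac12-\tfrac1p)$, so complex interpolation stays on that line, uniformly $\tfrac12$ above the target. The paper does not reprove this implication but simply invokes Carbery~\cite{Carbery-MBR}; the mechanism there is a genuine $\tfrac12$-shift coming from a one-variable Sobolev embedding in the dilation parameter. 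With $u=\log t$ one has $G^\alpha f(x)=\|\partial_u T^\alpha_{e^u}f(x)\|_{L^2(du)}$, and (using the Mellin transform of $(1-e^{-2u})^\alpha_+$, a ratio of Gamma functions) the embedding $H^{1/2+\varepsilon}_u\hookrightarrow L^\infty_u$ yields, after subtracting a smooth mean, a pointwise bound of the shape $T^\lambda_* f\lesssim_\varepsilon G^{\lambda+1/2-\varepsilon}f+Mf$. Applying Theorem~\ref{sqfcn-thm-intro} to $G^{\lambda+1/2-\varepsilon}$ then requires $\lambda+\tfrac12-\varepsilon>\tfrac32-\tfrac3p$, which gives the full range $\lambda>1-\tfrac3p$ after letting $\varepsilon\downarrow 0$. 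Your inequality $\|h\|_\infty^2\le 2\|h\|_2\|h'\|_2$ is pointwise sharp, but splitting into the two separate $L^2$ factors forces you down to index $\lambda$ on the worse one; what is needed is to control the single $H^{1/2+\varepsilon}$ norm directly at index $\lambda+\tfrac12-\varepsilon$.
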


The two dimensional square function problem and the maximal Bochner-Riesz problem were solved by Carbery \cite{Carbery-MBR}. In $\ZR^3$, the previous best results for Theorem \ref{sqfcn-thm-intro} and Theorem \ref{MBR} are both $p\geq 10/3$, which were obtained respectively by Lee, Rogers, Seeger in \cite{Lee-Rogers-Seeger} and Lee in \cite{Lee-BR}. We remark that the range of $\la$ in both theorem is sharp. See \cite{Lee-Rogers-Seeger} for some discussions.

Both the square function $G^\la f$ and the maximal function $T^\la_\ast f$ are closely related to the almost everywhere convergence of the Bochner-Riesz mean. Unlike the Bochner-Riesz operator, the case $p<2$ is largely different from the case $p>2$ for the maximal Bochner-Riesz problem. Here we are only interested in the case $p>2$. For recent results about the maximal operator $T^\la_\ast$ when $p<2$, see \cite{Li-Wu-MBR}.

\medskip

The main method we use to prove Theorem \ref{sqfcn-thm-intro} is the polynomial partitioning method,  which was introduced by Guth and Katz \cite{Guth-Katz}, and later applied by Guth \cite{Guth-R3} to Fourier restriction estimate. The original article \cite{Guth-R3} itself is also a great overview of this method. Roughly speaking, polynomial partitioning allows one to make a uniform partition for the ambient function, and keep some geometric properties for straight lines at the same time.

In the landmark paper \cite{Guth-R3}, Guth used polynomial partitioning to improve the restriction conjecture in $\ZR^3$ to $p>3.25$. This result was later improved to $p>13/3$ by Wang \cite{Wang-restriction-R3}. Recently in \cite{Wu} and \cite{GOWWZ}, the authors were able to improve the three-dimensional Bochner-Riesz conjecture to the endpoint $p\geq3.25$, though the methods used in \cite{Wu} and \cite{GOWWZ} are quite different. In \cite{GOWWZ}, one used the Carleson-Sj\"{o}lin reduction to convert the Bochner-Riesz problem to an analogue of the restriction problem. In this way, the techniques developed in \cite{Guth-R3} and \cite{Guth-II} are still applicable.

A key ingredient in \cite{Wu} and in this paper is a backward algorithm. The algorithm allows us to bound a certain broad function using lattice-cube square function (See \eqref{broad-esti-intro}). This is different from the mixed norm $\|\cdot\|_2^\al\|\cdot\|_\infty^{1-\al}$ that was widely used in the restriction problem, for instance, in \cite{Guth-R3} and \cite{Wang-restriction-R3}. In there, the loss when using polynomial partitioning iteratively is partly picked up via the norm $\|\cdot\|_\infty$. Here in this paper, we use the backward algorithm to try to pick up the loss by looking at the norm $\|\cdot\|_2$.

\medskip

The proof of our main estimate \eqref{sqfcn-esti-intro} is quite technical and involved, so we give a quick sketch of proof to help the readers to digest. The argument here is more intuitive and less rigorous, as we want to avoid messy notations.

We first reduce the square function $G^\la f$ to a curved, congruent, Littlewood-Paley-type of square function. Let $R\geq1$ be a large integer and let $\zeta:[-1,1]\to \ZR$ be a smooth function. For each $j=1,2,\ldots,R$, set $t_j:=1+j/R$ and define a Fourier multiplier $\wh\zeta_j:\ZR^3\to\ZR$ as $\wh\zeta_j(\xi):=\zeta(R|\xi|-Rt_j)$. Then each $\wh\zeta_j$ is supported in a thin neighbourhood of the sphere of radius $t_j$, which we denote by $\Ga_j$. We introduce a spherical square function
\begin{equation}
\label{sqfcn-intro}
    {\bf Sq}f:=\Big(\sum_{j=1}^R|\zeta_j\ast f|^2\Big)^{1/2}.
\end{equation}
After several standard reductions, to get \eqref{sqfcn-esti-intro}, one only needs to prove for any $\e>0$ and $p\geq3.25$,
\begin{equation}
\label{sqfcn-main}
    \|{\bf Sq}f\|_{L^p(B_R)}\leq C_\e R^{\frac{p-3}{p}+\e}\|f\|_p.
\end{equation}

Now let us focus on \eqref{sqfcn-main}. Set $K:=R^{\e^{10}}$ and let $\cT=\{\tau\}$ be a collection of $K^{-1}$ caps in $\ZS^2$. We use the pullbacks of the Gauss maps of the spheres $\{|\xi|=t_j\}_{1\leq j\leq R}$ to define a broad part $\Br\bSq f $ and a narrow part $\{\bSq_\tau f \}_{\tau\in\cT}$. Each narrow square function $\bSq_\tau  f $ is defined as
\begin{equation}
     {\bSq}_\tau f:=\Big(\sum_{j=1}^R|\zeta_{j,\tau}\ast f|^2\Big)^{1/2},
\end{equation}
where the Fourier transform of each kernel $\zeta_{j,\tau}$ is supported in a thin neighbourhood of the portion $\Ga_{j}(\tau)\subset \Ga_j=\{|\xi|=t_j\}$ so that all normal vectors of $\Ga_{j}(\tau)$ belong to the $K^{-1}$-directional cap $\tau$. 
Recall that a main step in the broad-narrow argument is parabolic rescaling. More precisely, one can find a linear map $\cl$ that maps the $K^{-1}$-cap $\Ga_j(\tau)$ to some $1$-cap $\tilde\Ga_j$. However in our setting,
when we try to run broad-narrow argument as in \cite{Bourgain-Guth-Oscillatory}, there is some technical difficulty: it is hard to find a suitable parabolic rescaling here. As one can see, since we are dealing with many slices $\{\Ga_j\}$ instead of a single slice, it's hard to find a common linear map $\mathcal{L}$ for all the $K^{-1}$-caps $\{\Ga_j(\tau)\}$.

In order to get around this difficulty, we define a second-level broad part and a second-level narrow part associated to each of the first-level narrow square function $\bSq_\tau f $. We iterate this idea until each of the existing narrow square function is restricted to a $R^{-1/2}$-directional cap. Since $K=R^{\e^{10}}$, there are in total $\e^{-10}/2$ levels. After this iteration stops, there are many broad parts at different levels. We estimate each remaining broad part directly and add them up eventually.

Suppose that $\Br\bSq_\si f $ is a broad function at any intermediate level $\l$, so $\si$ is a $K^{-\l}$ directional cap. We set $M=K^\l$. The desired estimate for $\Br\bSq_\si f $ is
\begin{equation}
\label{broad-esti-intro}
    \|\Br\bSq_\si f \|_p\leq C_\e R^{\frac{p-3}{p}+\e} M^{\frac{6-2p}{p}}\Big\|\Big(\sum_{q}|\De_q f |^2\Big)^{1/2}\Big\|_p.
\end{equation} 
The right hand side is an auxiliary square function, where $\bq=\{q\}$ is a tiling in the frequency space $\ZR^3$ using $R^{-1/2}$-cubes, and $\De_q f$ is a smooth Fourier restriction of $f$ on the $R^{-1/2}$-cube $q$. Once we prove \eqref{broad-esti-intro}, we can use Littlewood-Paley theorem for congruent cubes to sum up all broad quantities $\|\Br\bSq_\si f \|_p^p$ at all levels. 

To obtain \eqref{broad-esti-intro}, we use the polynomial partitioning iteration to break $\Br\bSq_\si f$ down. Specifically, suppose that there are in total $s$ steps in the iteration. In each intermediate step $u$, we have a collection of step $u$ cells $\co_u=\{O_u\}$ and step $u$ broad functions $\Br(\bSq_\si f)_{O_u}$. All $\|\Br (\bSq_\si f)_{O_u}\|_p^p$ are morally the same, and their sum dominate a significant portion of the original one $\|\Br \bSq_\si f\|_p^p$. In the final step $s$, the $L^p$ norm of $\Br (\bSq_\si f)_{O_s}$ can be bounded by the $L^2$ norm of $ (\bSq_\si f)_{O_s}$ via a bilinear argument. So it remains to sum up $\| (\bSq_\si f)_{O_s}\|_2^2$.

Here is the key ingredient of our proof. Assume that there are in total $s_t$ transverse steps and $s_c$ cell steps in the polynomial partitioning iteration. Using the information from the iteration, we can add up all $\| (\bSq_\si f)_{O_s}\|_2^2$ as
\begin{equation}
    \sum_{O_s\in\co_s}\| (\bSq_\si f)_{O_s}\|^2_2\lesssim R^{O(\e^2)}R^{-s_t\e^2}\| \bSq_\si f\|^2_2.
\end{equation}
While this estimate is not strong enough. We refine this estimate by proving 
\begin{equation}
\label{refinement}
    \sum_{O_s\in\co_s}\| (\bSq_\si f)_{O_s}\|^2_2\lesssim R^{O(\e^2)}R^{-s_t\de}\ka\| \Id_X\bSq_\si f\|^2_2,
\end{equation}
where the factor $\ka\lesssim1$ depends heavily on the set $X$. In \eqref{refinement}, we either have gain $\ka$ in the $L^2$ estimate, or have gain $\Id_X$ in the support, which will be useful when converting $\| \Id_X\bSq_\si f\|^2_2$ to the square function in \eqref{broad-esti-intro}. The proof of our refinement is based on a backward algorithm built up step by step from the last step $s$ to the first step of the polynomial partitioning iteration. Inside the backward algorithm, the key is an estimate in incidence geometry, which helps us create a relation between tubes at adjacent scales. Since more notations are needed to clarify the backward algorithm, we stop here and leave the details to the main body of the paper, in particular in Section \ref{backward}.

\medskip

\noindent {\bf Organization of the paper.} The first part of Section 2 contains some classic result in Harmonic analysis that we will use later. The second part of Section 2, together with Section 3, contain several reductions for our main result---Theorem \ref{sqfcn-thm-intro}. It is reduced to Theorem \ref{local-broad-thm}, which is also our strongest result. Section 4 contains the wave packet decomposition. In Section 5 and 6, we build the polynomial partitioning iteration for some vectors, where the iteration is similar to the one in \cite{Wu}. After that, we prove our main result for some special cases in Section 7. In Section 8 and 9, similar to \cite{Wu}, we establish a backward algorithm and prove our main result for the remaining cases. Section 10 is devoted to a bilinear estimate for some square functions.

\medskip

\noindent {\bf Notations.}

\noindent$\bullet$  We let $a\sim b$ mean that $ca\leq b\leq Ca$ for some unimportant constants $c$ and $C$. We also use $a\lesssim b$ to represent $a\leq Cb$ for an unimportant constant $C$. These constants may change from line to line.

\noindent$\bullet$ We write $A(R) \leq \mathrm{RapDec}(R)B$ to mean that for any power $\beta$, there is a constant $C_{N}$ such that
\begin{equation}
\nonumber
    A(R) \leq C_{N}R^{-N}B \;\; \text{for all $R \geq 1$}.
\end{equation}

\noindent$\bullet$ For every number $R>0$ and set $S$, we denote by $N_{R}(S)$ the $R$-neighborhood of the set $S$.

\noindent$\bullet$ The symbol $B^3(x,r)$ represents the open ball centered at $x$, of radius $r$, in $\ZR^3$. The symbol $B_r$ is reserved for $B^3(0,r)$. $M,N,C$ are (big) constants that may change from line to line.

\noindent$\bullet$ We use $\Id_X$ to denote the characteristic function of a set $X\subset\ZR^3$.

\noindent$\bullet$ By saying a $r$-tube, we mean a rectangle of dimensions $r^{1/2}\times r^{1/2}\times r$.

\noindent$\bullet$ By saying a $r$-cap, we mean a cap of radius $r$ in $\ZS^2$.

\medskip

\noindent
{\bf {Some numbers.}} 
We will encounter many different numbers in the paper. For readers' convenience, we summarize all of them here. Note that here we only give a rough description of these numbers. The precise definition will be given later in the paper.

\noindent$\bullet$ $\e$ is a very small number.

\noindent$\bullet$ $\de=\e^2$.

\noindent$\bullet$ $\beta=\e^{1000}$ is the number used to deal with the rapid-decay tail of wave packet.

\noindent$\bullet$ $K\sim\log R$ is the number appearing in the broad-narrow argument.

\noindent$\bullet$ $A\sim \log\log R$ is the number in the subscript of the broad function $\Br_A f$. Also, we would like $A$ to be of form $2^{\ZN}$.

\noindent$\bullet$ $d=R^{\e^6}$ is the degree of the polynomial in the polynomial partitioning argument.

\medskip

\noindent {\bf Acknowledgement.} The authors would like to thank Larry Guth, Shaoming Guo, Xiaochun Li and Andreas Seeger for valuable discussions.

\section{Preliminaries and a first reduction}

In this section, we first review some classic results in Fourier analysis that we will repeatedly use later in the paper. Then we make our first reduction to the square function $G^\la f$.

\subsection{A local \texorpdfstring{$L^2$} estimate}\hfill

Let $\Ga=\{(\bar\xi,\Psi(\bar\xi)):\bar\xi\in\overline{B^{2}(0,1)}\}$, be the truncated graph of a function $\Psi:\ZR^{2}\to\ZR$ with bounded Hessian. Suppose that there is a vector $e\in \ZS^2$ such that all the normal vectors of $\Ga$ make an angle of $\leq 1$ with respect to $e$. The following lemma is a local $L^2$ estimate for functions whose Fourier transforms are supported in a small neighborhood of $\Ga$. 
\begin{lemma}\label{localL2}
Assume $e=e_3$, the vertical unit vector. Let $f$ be an $L^2$ function such that $\wh{f}$ is supported in $N_{\rho}(\Ga)$, where $\rho$ is a positive number much smaller than 1. Let $1\geq\si\geq\rho$ and let $\cp\subset\ZR^n$ be a tube of length $\si^{-1}$ and radius $r>1$ in the physical space, pointing to the direction $e$. Suppose that $\vp_\cp$ is a smooth cutoff function with respect to $\cp$ that $|\vp_B(x)|\gtrsim 1$ on $B$, $\wh\vp_B$ is supported in a dual slab $B^2(0,3r^{-1})\times[-3\si,3\si]$ and $|\wh\vp_B(\xi)|\gtrsim 1$ in a smaller slab $B^2(0,r^{-1})\times[-\si,\si]$. Then
\begin{equation}
\label{local-l2}
    \|f\vp_B\|_2\lesssim\rho^{1/2}\si^{-1/2}\|f\|_2.
\end{equation}
\end{lemma}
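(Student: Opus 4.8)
The plan is to reduce the estimate, via a partial Fourier transform and Plancherel's theorem, to a one–dimensional Bernstein (Nikolskii) inequality in the single coordinate dual to $e=e_n$.

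Write $x=(\bar x,x_n)\in\ZR^{n-1}\times\ZR$ and let $\bar\xi,\xi_n$ be the conjugate frequency variables. The only geometric fact I would use is that, since every normal of $\Ga=\{(\bar\xi,\Psi(\bar\xi))\}$ makes angle $\le 1$ with $e_n$, the gradient $\nabla\Psi$ is bounded; consequently every $\xi\in N_\rho(\Ga)$ satisfies $|\xi_n-\Psi(\bar\xi)|\lesssim\rho$. Introduce the Fourier transform of $f$ in the first $n-1$ variables only,
\[
F(\bar\eta,x_n):=\int_{\ZR^{n-1}}f(\bar x,x_n)\,e^{-i\bar x\cdot\bar\eta}\,d\bar x,
\]
so that the one–variable function $x_n\mapsto F(\bar\eta,x_n)$ has $x_n$–Fourier transform $\xi_n\mapsto\wh f(\bar\eta,\xi_n)$, which by the previous line is supported in an interval of length $O(\rho)$ about $\Psi(\bar\eta)$. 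Hence $F(\bar\eta,x_n)=e^{ix_n\Psi(\bar\eta)}G(\bar\eta,x_n)$ with $G(\bar\eta,\cdot)$ band–limited to an interval of length $\lesssim\rho$ and $|G|=|F|$.

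Next I would peel off the $\bar x$–variable. Estimating the cross–sectional part of the cutoff pointwise and using Plancherel in $\bar x$ slice by slice,
\[
\|f\vp_B\|_2^2\le\int_{\ZR}w(x_n)\Big(\int_{\ZR^{n-1}}|f(\bar x,x_n)|^2\,d\bar x\Big)dx_n=\int_{\ZR^{n-1}}\int_{\ZR}w(x_n)\,|G(\bar\eta,x_n)|^2\,dx_n\,d\bar\eta,
\]
where $w(x_n):=\sup_{\bar x}|\vp_B(\bar x,x_n)|^2$. For each fixed $\bar\eta$ the inner integral is at most $\|G(\bar\eta,\cdot)\|_\infty^2\,\|w\|_{L^1}$, and Bernstein's inequality for the band–limited function $G(\bar\eta,\cdot)$ gives $\|G(\bar\eta,\cdot)\|_\infty\lesssim\rho^{1/2}\|G(\bar\eta,\cdot)\|_2$. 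Since $\vp_B$ is a smooth cutoff adapted to the tube $\cp$, whose length in the $e_n$–direction is $\si^{-1}$, its $x_n$–marginal obeys $\|w\|_{L^1}=\int_{\ZR}\sup_{\bar x}|\vp_B(\bar x,x_n)|^2\,dx_n\lesssim\si^{-1}$ (one has $w\lesssim 1$ and $w$ decays rapidly once $|x_n|\gtrsim\si^{-1}$). Putting these together and integrating back in $\bar\eta$ with one more application of Plancherel,
\[
\|f\vp_B\|_2^2\lesssim\rho\,\si^{-1}\int_{\ZR^{n-1}}\|G(\bar\eta,\cdot)\|_2^2\,d\bar\eta=\rho\,\si^{-1}\|f\|_2^2,
\]
which is \eqref{local-l2}.

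I do not expect a serious obstacle here; the whole content is the reduction to the one–dimensional Bernstein estimate combined with Plancherel in the remaining variables. The two points that need a little care are (i) the bound $\int_{\ZR}\sup_{\bar x}|\vp_B(\bar x,x_n)|^2\,dx_n\lesssim\si^{-1}$, a routine consequence of the standard properties of a smooth cutoff adapted to a tube (boundedness plus rapid decay off the tube), and (ii) the factorization $F=e^{ix_n\Psi}G$ with $G$ band–limited at scale $\rho$, which uses only that $\nabla\Psi$ is bounded. It is worth noting that the radius $r$ of $\cp$, the Hessian bound on $\Psi$, and the lower bounds on $\vp_B$ and $\wh{\vp_B}$ are not needed for this particular inequality.
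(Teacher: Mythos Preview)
Your argument is correct and complete. The paper itself omits the proof entirely, merely pointing to \cite{Wu} for the special case $\si=r$; your route via Plancherel in the transverse variables followed by the one--dimensional Bernstein inequality in $x_n$ is the standard way to prove such local $L^2$ estimates and supplies exactly the details the paper skips. The one point you flag --- that $\int_{\ZR}\sup_{\bar x}|\vp_B(\bar x,x_n)|^2\,dx_n\lesssim\si^{-1}$ --- is indeed implicit in the phrase ``smooth cutoff function with respect to $\cp$'' and matches how such cutoffs are constructed and used elsewhere in the paper (e.g.\ the functions $\vp_O$, $\vp_B$ in Section~\ref{modified-poly}).
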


\begin{proof}
The proof is similar to the one in  \cite{Wu}, dealing with the simple case $\si=r$. We omit details here.
\end{proof}

\subsection{A weighted estimate for pseudo-congruent square functions}\hfill

The weighted estimate we are going to prove in this subsection is similar to the result in \cite{Cordoba-LP}. Let $T\subset \ZR^3$ be a fixed rectangular parallelepiped of dimensions $\rho_1\times \rho_2\times \rho_3$ centered at the origin, and let $\{e_1,e_2,e_3\}$ be the corresponding orthonormal basis. For any positive integer $N$, we define two weights 
\begin{align}
\label{weight}
    w_{T,N}(x)&=(1+|x_1/\rho_1|+|x_2/\rho_2|+|x_3/\rho_3|)^{-N}\\
    \wt w_{T,N}(x)&=\frac{1}{|T|}(1+|x_1/\rho_1|+|x_2/\rho_2|+|x_3/\rho_3|)^{-N}.
\end{align}
Using the weight $\wt w_{T,N}$ we can introduce the definition for a class of smooth, rapidly decreasing functions associated to $T$.
\begin{definition}
\label{adapt-def}
We say a finite collection of smooth functions $\{\vp_j\}$ is ``adapted to $T$", if $|\vp_j(x)|\leq C_N \wt w_{T,N}(x)$ uniformly in $j$ for any $N\in\ZN$. 
\end{definition}

Next, in the frequency space, we consider a family of finitely overlapping ``pseudo-congruent" rectangular parallelepipeds $\Om=\{\om\}$ with dimensions $\rho_1^{-1}\times \rho_2^{-1}\times \rho_3^{-1}$. By pseudo-congruent we mean that for any two $\om_1,\om_2\in\Om$, $\om_1\subset C\wt\om_2$, where $\wt\om_2$ is the parallelepiped congruent to $\om_2$ and $\wt\om_2$ has the same center as $\om_1$ does. Assume that for each $\om\in\Om$ there is a smooth function $\wh\vp_\om$ such that the collection $\{\vp_\om\}$ is adapt to a rectangular parallelepiped $T$ centered at the origin. As usual, we let $\{e_1,e_2,e_3\}$ be the orthonormal basis associated to $T$. Now define $\De_\om f:=(\wh\vp_\om\wh f)^{\vee}$ as the smooth Fourier restriction of $f$ on $\om$. Then, we have
\begin{lemma}
\label{weightedlemma}
For any positive measurable function $g$,
\begin{equation}
\label{weighted-L2}
    \sum_{\om\in\Om}\int |\De_\om f|^2g\leq C_N \int |f|^2(\wt w_{T,N}\ast g).
\end{equation}
\end{lemma}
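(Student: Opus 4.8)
The plan is to prove \eqref{weighted-L2} by exploiting the almost-orthogonality of the pieces $\De_\om f$ together with the fact that each $\vp_\om$ is adapted to the \emph{same} rectangular parallelepiped $T$. First I would fix $N$ large and reduce to an operator inequality: writing $K_\om$ for the kernel of $\De_\om$ (so $K_\om = \vp_\om \ast \cdot$ up to the smooth cutoff, with $\widehat{K_\om} = \wh\vp_\om$), the left-hand side of \eqref{weighted-L2} equals $\sum_\om \langle |\De_\om f|^2, g\rangle$. The natural move is to expand $|\De_\om f(x)|^2 = \De_\om f(x)\,\overline{\De_\om f(x)}$ and use Plancherel in a localized form, but the cleaner route is to bound each term pointwise: since $\{\vp_\om\}$ is adapted to $T$, we have $|\De_\om f(x)| \le C_N (\wt w_{T,N} \ast |f|)(x)$ only after we control the overlap, so instead I would keep the square structure. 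Concretely, $\sum_\om |\De_\om f(x)|^2 \le \big(\sup_x \#\{\om : \ldots\}\big)$-type bounds are too lossy; the right idea is duality in $\om$.

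The key step: I would dualize the $\ell^2_\om$ sum. For the display we want $\sum_\om \int |\De_\om f|^2 g$; pick $g^{1/2}$ and write $\int |\De_\om f|^2 g = \|g^{1/2}\De_\om f\|_2^2$, then the sum is $\sum_\om \|g^{1/2}\De_\om f\|_2^2 = \| \{g^{1/2}\De_\om f\}_\om \|_{\ell^2 L^2}^2$. By duality there is a unit vector $\{h_\om\}$ in $\ell^2 L^2$ with the sum equal to $\big|\sum_\om \int g^{1/2} \De_\om f \, \overline{h_\om}\big|^2 = \big|\int f \cdot \overline{\sum_\om \De_\om^*(g^{1/2} h_\om)}\big|^2 \le \|f\|_2^2 \,\big\| \sum_\om \De_\om^*(g^{1/2}h_\om)\big\|_2^2$ — but this is circular since we want weights, not $\|f\|_2$. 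So instead I would run the $TT^*$ / Cauchy–Schwarz argument directly on $\int|\De_\om f|^2 g$: expand $\De_\om f(x) = \int \vp_\om(x-y) f(y)\,dy$ (modulo the smooth truncation of $\vp_\om$ near the cutoff, which I will absorb into the adapted-weight bound), so
\begin{equation}
\nonumber
\int |\De_\om f(x)|^2 g(x)\,dx = \iint \Big(\int \vp_\om(x-y)\overline{\vp_\om(x-z)} g(x)\,dx\Big) f(y)\overline{f(z)}\,dy\,dz.
\end{equation}
Bounding the inner integral by $C_N \int \wt w_{T,N}(x-y)\wt w_{T,N}(x-z) g(x)\,dx$ and then by $C_N (\wt w_{T,N/2}\ast g)(y)\cdot[\text{mass of }\wt w_{T,N/2}\text{ centered near }y\text{–}z]$ — using $\wt w_{T,N}(a)\wt w_{T,N}(b) \lesssim \wt w_{T,N}\big(\tfrac{a+b}{2}\big)\wt w_{T,N}(a-b)\cdot\frac{1}{|T|}$ type inequalities for the box weight — one gets $\int|\De_\om f|^2 g \lesssim_N \iint \wt w_{T,N}(y-z)\,(\wt w_{T,N}\ast g)(y)\,|f(y)||f(z)|\,dy\,dz$. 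But this loses the orthogonality in $\om$, which is exactly where the \emph{pseudo-congruence} of $\Om$ enters.

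To recover the $\om$-sum I would use Plancherel in $\xi$: $\sum_\om \int |\De_\om f|^2 g = \sum_\om \iint \wh f(\xi)\overline{\wh f(\eta)}\, \wh\vp_\om(\xi)\overline{\wh\vp_\om(\eta)}\,\wh g(\eta-\xi)\,d\xi\,d\eta$. The support of $\wh\vp_\om(\xi)\overline{\wh\vp_\om(\eta)}$ forces $\xi,\eta$ both in $C\wt\om$-type boxes, and since $\{\om\}$ is finitely overlapping, for fixed $(\xi,\eta)$ only $O(1)$ values of $\om$ contribute; pseudo-congruence guarantees the boxes all have comparable shape $\rho_1^{-1}\times\rho_2^{-1}\times\rho_3^{-1}$, so $|\xi-\eta|$ is constrained to the dual box of $T$, i.e. $\wh g(\xi-\eta)$ is being tested against frequencies in (a dilate of) $T^{-1}$. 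Summing over the $O(1)$ overlapping $\om$ and undoing Plancherel turns the right-hand side into $\lesssim_N \int |f(y)|^2 (\wt w_{T,N}\ast g)(y)\,dy$ after one more Cauchy–Schwarz in $(y,z)$ against the symmetric kernel $\wt w_{T,N}(y-z)$, whose $y$- and $z$-marginals are $\int \wt w_{T,N} = C_N$ (this is why the $1/|T|$ normalization in $\wt w_{T,N}$ is there). \textbf{The main obstacle} I anticipate is handling the interaction between the smooth spatial truncation built into $\vp_\om$ (it is only \emph{adapted} to $T$, not literally $\wt w_{T,N}$, and not exactly band-limited to $\om$) and the Plancherel step that needs sharp frequency localization: one must either first decompose $\vp_\om$ into a rapidly decaying sum of honestly band-limited pieces, or run the argument purely in physical space and extract the $\om$-orthogonality from the oscillation of $\wh\vp_\om$ via stationary phase / integration by parts rather than from disjoint supports. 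I would take the physical-space route, trading the exact support condition for the pseudo-congruence bound $\om_1 \subset C\wt\om_2$, which is precisely strong enough to keep the number of relevant $\om$ bounded after the Cauchy–Schwarz.
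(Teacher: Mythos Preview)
Your Plancherel route has a genuine gap at the step you label ``undoing Plancherel.'' The expansion
\[
\sum_{\om} \int |\De_\om f|^2 g \;=\; \iint \wh f(\xi)\,\overline{\wh f(\eta)}\,K(\xi,\eta)\,\wh g(\eta-\xi)\,d\xi\,d\eta,
\qquad K(\xi,\eta)=\sum_{\om}\wh\vp_\om(\xi)\overline{\wh\vp_\om(\eta)},
\]
is fine, and finite overlap does give $|K(\xi,\eta)|\lesssim 1$ together with the constraint that $\xi-\eta$ lies in a fixed dilate of the dual box of $T$. But from here neither of your two options closes. If you pass to absolute values and work with $|\wh f|$, $|\wh g|$, you destroy the spatial locality the weighted bound encodes: the resulting quantity can be as large as $\|f\|_2^2\int g$, which is not controlled by $\int|f|^2(\wt w_{T,N}\ast g)$ when $f$ is concentrated where $g$ is small. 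If instead you keep the phases, then $K(\xi,\eta)$ is \emph{not} a function of $\xi-\eta$ alone (in the model case $K(\xi,\eta)=\sum_\om \Id_\om(\xi)\Id_\om(\eta)$), so there is no convolution structure to invert back to physical space. The Cauchy--Schwarz in $(y,z)$ you invoke at the end does not repair this: it would apply to a bound of the form $\iint \wt w_{T,N}(y-z)\,h(y)\,|f(y)||f(z)|\,dy\,dz$, but you never legitimately arrive at such an expression with $h=\wt w_{T,N}\ast g$ once the $\om$-sum is in play.

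What the paper supplies is exactly the missing mechanism. It embeds the family $\{\om\}$ into a lattice $\{\theta_m\}_{m\in\ZZ^3}$ of translates of one fixed dual box, dominates $|\De_\om f|^2$ pointwise by $|\phi_{m_\om}\ast f|^2\ast \wt w_{T,N}$ for a single bump $\wh\phi$, and then evaluates $\sum_{m\in\ZZ^3}|\phi_m\ast f(x)|^2$ \emph{exactly} via the Poisson summation formula as $\int_{[0,1]^3}\big|\sum_m \phi(m+z)f(x-m-z)\big|^2\,dz$. H\"older against the rapidly decaying weights $|\phi(m+z)|$ then gives $\sum_m|\phi_m\ast f|^2\lesssim \wt w_{T,N}\ast|f|^2$ pointwise, which is precisely the conversion of frequency orthogonality into a spatial weight. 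An exact identity of this kind (C\'ordoba's argument) is what your soft overlap bound on $K$ cannot replace.
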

\begin{proof}
Following the idea in \cite{Cordoba-LP}, we are going to use the Poisson summation formula. First, let us introduce some more notations. Let $\theta_T$ the be rectangular parallelepiped dual to $T$ centered at the origin, and let $\theta=C\theta_T$. We choose the absolute constant $C$ large enough so the following is true: If we consider the lattice translations
\begin{equation}
    \{\theta_m:\theta_m:=\theta+(m_1\rho_1^{-1},m_2\rho_2^{-1},m_3\rho_3^{-1}), m\in\ZZ^3\},
\end{equation}
then for each $\om\in\Om$, there is a lattice point $m_\om\in\ZZ^3$ such that $2\om\subset\theta_{m_\om}$. Indeed, we can pick $m_\om\in\rho_1^{-1}\ZZ\times\rho_2^{-1}\ZZ\times\rho_3^{-1}\ZZ$ be an arbitrary lattice point which also belongs to $2\om$. Since parallelepipeds in $\Om$ are finitely overlapping, we have for every $\om_1\in\Om$, all but finitely many $\om_2\in\Om$ satisfy $m_{\om_1}\not=m_{\om_2}$. Let $\wh\phi$ be a bump function associated to $\theta$ so that $\phi$ is dominated by $w_{T,N}$, and let $\wh\phi_m(\xi):=\wh\phi(\xi+m)$ for every translate $m\in\ZZ^3$. Hence $\wh\phi_{m_\om}\wh{\De_\om f}=\wh \De_\om f$, so
\begin{equation}
    |\De_\om f(x)|^2=|\vp_\om\ast(\phi_{m_\om}\ast f)(x)|^2\lesssim (|(\phi_{m_\om}\ast f)|^2\ast|\vp_\om|)(x).
\end{equation}
Since $|\vp_\om|$ is dominated by the weight $\wt w_{T,N}$, and since $m_{\om_1}\not=m_{\om_2}$ for most pairs $(\om_1,\om_2)$, one has
\begin{equation}
\label{weighted-1}
    \sum_{\om\in\Om}\int |\De_\om f|^2g\lesssim\sum_{\om\in\Om}\int |(\phi_{m_\om}\ast f)|^2(\wt w_{T,N}\ast g)\lesssim\sum_{m\in\ZZ^3}\int |(\phi_{m}\ast f)|^2(\wt w_{T,N}\ast g).
\end{equation}

Next, we are going to estimate $\sum_m|\phi_m\ast f|^2$ via the Poisson summation formula. Without loss of generality, we assume $\rho_1=\rho_2=\rho_3=1$. Clearly,
\begin{equation}
\label{Fourier-series}
    \sum_{m\in\ZZ^3}|(\phi_{m}\ast f)|^2=\int_{[0,1]^3}\Big|\sum_{m\in\ZZ^3}(\phi_{m}\ast f)e^{2\pi i m\cdot z}\Big|^2dz.
\end{equation}
Notice that $\phi_m(x)=\phi(x)e^{-2\pi im\cdot x}\phi(x)$. By Poisson summation formula (See for instance \cite{Grafakos-249} Section 3.1.5), one has
\begin{align}
    \sum_{m\in\ZZ^3}(\phi_{m}\ast f)e^{2\pi i m\cdot z}&=\sum_{m\in\ZZ^3}e^{2\pi i m\cdot z}\int \phi(y)f(x-y)e^{-2\pi im\cdot y}dy\\[1ex] \nonumber
    &=\sum_{m\in\ZZ^3}\phi(m+z)f(x-m-z).
\end{align}
Plug this back to \eqref{Fourier-series} so that
\begin{equation}
    \sum_{m\in\ZZ^3}|(\phi_{m}\ast f)|^2=\int_{[0,1]^3}\Big|\sum_{m\in\ZZ^3}\phi(m+z)f(x-m-z)\Big|^2dz,
\end{equation}
which can be bounded using H\"older's inequality as
\begin{equation}
    \sum_{m\in\ZZ^3}|(\phi_{m}\ast f)|^2\lesssim\int_{[0,1]^3}\sum_{m\in\ZZ^3}|\phi(m+z)|\cdot|f(x-m-z)|^2dz\lesssim \wt w_{T,N}\ast(|f|^2)(x). 
\end{equation}
Finally, noting $\wt w_{T,N}(x)=\wt w_{T,N}(-x)$ and $\wt w_{T,N}\ast \wt w_{T,N}\lesssim \wt w_{T,N}$, we combine \eqref{weighted-1} and the above estimate to conclude \eqref{weighted-L2}.
\end{proof}

\subsection{A first reduction for the square function \texorpdfstring{$ G^\la f$}{Lg}} \hfill

In the final part of this section, let us make a reduction for $G^\la f$. It consists of several smaller steps, each of which is stated as a lemma.

\medskip 

The first step consists of partition in the frequency space. Suppose that $R\geq1$ is a big number. For simplicity let us assume $R\in\ZN$. Let $\zeta:[-1,1]\to \ZR$ be a smooth function. We introduce a spherical function 
\begin{equation}
    T_{R,t} f(x):=\int_{\mathbb{R}^3} \wh\eta\Big(\frac{1-|\xi/t|}{R^{-1}}\Big)\wh{f}(\xi)e^{ix\cdot\xi}d\xi\,,
\end{equation}
and a spherical square function $G_R f$ as
\begin{equation}
    G_R f(x):=\Big(\int_0^\infty\Big|T_{R,t} f(x)\Big|^2\frac{dt}{t}\Big)^{1/2}.
\end{equation}
It was shown in \cite{Carbery-MBR} that to prove \eqref{sqfcn-esti-intro}, we only need to prove
\begin{equation}
    \|G_R f\|_p\leq C_\e R^{\frac{3}{2}-\frac{3}{p}+\e}\|f\|_p
\end{equation}
for any $\e>0$ and $p\geq3.25$. Using the Proposition 4.2 in \cite{Guo-Roos-Yung} (which is a generalization of \cite{Seeger-black-box}), it suffices to consider the local square function 
\begin{equation}
\label{local-sqfcn}
    H_R f(x):=\Big(\int_1^2\Big|T_{R,t} f(x)\Big|^2\frac{dt}{t}\Big)^{1/2}
\end{equation}
and prove for any $\e>0$ and $p\geq3.25$ that
\begin{equation}
\label{local-sqfcn-esti}
    \|H_R f\|_p\leq C_\e R^{\frac{3}{2}-\frac{3}{p}+\e}\|f\|_p
\end{equation}
We put the argument above into a lemma
\begin{lemma}[First step]
To prove \eqref{sqfcn-esti-intro}, one only needs to show \eqref{local-sqfcn-esti}.
\end{lemma}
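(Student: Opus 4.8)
The plan is to assemble the reduction out of exactly three ingredients quoted in the excerpt: (i) the result of Carbery \cite{Carbery-MBR} which replaces the genuine square function $G^\la f$ by the Littlewood--Paley-type spherical square function $G_R f$, reducing \eqref{sqfcn-esti-intro} to the claim $\|G_R f\|_p \le C_\e R^{\frac32-\frac3p+\e}\|f\|_p$ for all $\e>0$ and $p\ge 3.25$; (ii) the black-box localization argument of Seeger \cite{Seeger-black-box} in the form of Proposition 4.2 of Guo--Roos--Yung \cite{Guo-Roos-Yung}, which lets one pass from the global dilation integral $\int_0^\infty$ defining $G_R$ to the single-scale local integral $\int_1^2$ defining $H_R$; and (iii) the trivial scaling symmetry $t\mapsto 2^k t$ that lets one decompose $(0,\infty)=\bigcup_{k\in\ZZ}[2^k,2^{k+1}]$. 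So the lemma is essentially a concatenation of known black-box reductions, and the proof is a matter of checking that the hypotheses of each black box are met with the correct bookkeeping of the $R$-power.

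Concretely, I would first observe that by Littlewood--Paley theory and the standard reduction from Bochner--Riesz means to frequency-annulus-localized operators (this is the content of the cited part of \cite{Carbery-MBR}), the estimate \eqref{sqfcn-esti-intro} with exponent $\la>3(\tfrac12-\tfrac1p)$ follows once one knows the uniform-in-$R$ bound $\|G_R f\|_p\le C_\e R^{\frac32-\frac3p+\e}\|f\|_p$; here the exponent $\tfrac32-\tfrac3p$ is exactly $3(\tfrac12-\tfrac1p)$, matching the range of $\la$. Next, the multiplier $\wh\eta((1-|\xi/t|)/R^{-1})$ defining $T_{R,t}$ is, after rescaling $\xi\mapsto t\xi$, a fixed ($t$-independent) bump adapted to the $R^{-1}$-annulus around $\ZS^2$; this is precisely the setting in which the Seeger-type black box of \cite[Proposition 4.2]{Guo-Roos-Yung} applies, and it yields $\|G_R f\|_p \lesssim \|H_R f\|_p + (\text{lower-order terms controlled by }\|f\|_p)$, where the local operator $H_R$ in \eqref{local-sqfcn} carries the same $R$-dependence. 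Finally I would note that the restriction of the dilation parameter to $[1,2]$ in $H_R$ is harmless: summing the $[2^k,2^{k+1}]$ pieces and using the scaling invariance of the whole setup returns the full range $t\in(0,\infty)$ with only an $\ell^2$-orthogonality loss that is absorbed into the constant. Chaining (i)–(iii) gives: \eqref{local-sqfcn-esti} $\Rightarrow$ $\|G_R f\|_p\le C_\e R^{\frac32-\frac3p+\e}\|f\|_p$ $\Rightarrow$ \eqref{sqfcn-esti-intro}, which is the lemma.

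The only genuine work — and the main obstacle, such as it is — lies in verifying that the hypotheses of Proposition 4.2 of \cite{Guo-Roos-Yung} are literally satisfied by the multipliers $\wh\eta((1-|\xi/t|)/R^{-1})$, including the requisite symbol-type derivative bounds uniform in $t\in[1,2]$ and the correct scaling of the cutoff with $R$; one must make sure no extra power of $R$ is lost in that passage, since the target exponent $\frac32-\frac3p+\e$ is sharp up to the $\e$. Everything else is routine, and indeed the paper evidently treats this lemma as a formality, packaging the argument by citing \cite{Carbery-MBR}, \cite{Seeger-black-box}, and \cite{Guo-Roos-Yung} rather than reproving anything; I would do the same, stating the chain of implications and pointing to those references for the two black boxes.
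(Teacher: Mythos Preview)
Your proposal is correct and follows essentially the same approach as the paper: the paper's entire argument for this lemma is the two citations you identify, namely \cite{Carbery-MBR} for the passage from $G^\lambda$ to $G_R$, and Proposition~4.2 of \cite{Guo-Roos-Yung} (generalizing \cite{Seeger-black-box}) for the passage from $G_R$ to $H_R$. One small remark: your step (iii) is redundant, since the global-to-local reduction $\int_0^\infty \to \int_1^2$ is already the content of the Seeger/Guo--Roos--Yung black box you cite in (ii); the naive dyadic summation with ``$\ell^2$-orthogonality'' you sketch in (iii) does not work on its own in $L^p$ for $p\neq 2$, which is precisely why the black box is needed.
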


Our next step is to discretize the square function $H_R f$. This step in fact is not necessary, but we feel that it is better to work on the discrete version of the square function. Let $\Phi(\bar\xi;t)$ be a real function with $\bar\xi\in B^2(0,1)$ and $t\in[1,2]$, and let $\{t_j\}_{1\le j\le R}$ be any collection of $R^{-1}$-separated points in the interval $[1,4]$. For each $j$, define the truncated surface $\Ga_j$ as:
\begin{equation}
\label{surface}
    \Gamma_j:=\{ (\bar{\xi},\Phi(\bar\xi;t_j)):|\bar\xi|\le 1/2 \}.
\end{equation}
We need to impose certain regularity conditions and curvature condition on the function $\Phi$. Specifically, assume that when $t\in[1,2]$ and $\bar\xi\in B^2(0,1/2)$, $\Phi$ satisfies:
\begin{equation}
\label{Phi-conditions}
\begin{cases}
    |\partial_{t}\Phi|\sim1,~|\partial_t\nabla_{\bar\xi}\Phi|=O(1),\\[1ex]
    \textup{Both~two~eigenvalues~of~}\nabla^2_{\bar\xi}\Phi~\textup{are}\sim -1, ~{\textup{uniform ~in~}} t.
\end{cases}
\end{equation}
The conditions in the first line basically says that any $\Ga_j$ is roughly a translation copy of another $\Ga_{j'}$ when $j$ and $j'$
are close enough. In particular, if $|j-j'|\le R^{1/2}$ and $|\bar\xi_1-\bar\xi_2|\le R^{-1/2}$, we have $|G_j(\bar\xi_1)-G_{j'}(\bar\xi_2)|\lesssim R^{-1/2}$, where $G_j$ is the Gauss map of $\Ga_j$. The condition in the second line means that the surfaces $\{\Ga_j\}_{1\leq j\leq R}$ are all parabolic. Here are two good examples for $\Phi(\bar\xi;t)$:
\begin{enumerate}
    \item $\Phi(\bar\xi;t)=(t-|\bar\xi|^2)^{1/2}$, a family of of spheres;
    \item $\Phi(\bar\xi,t)=t-|\bar\xi|^2$, a family of translated paraboloids.
\end{enumerate}
For each $j$, let $\eta_j:B^2(0,1/2)\to\ZR$ and let $\psi_j:[-1,1]\to\ZR$ be two smooth functions. Suppose that $\phi:\ZR^3\times[0,1]\to\ZR$ is an arbitrary smooth function that $|\partial^\al\phi|\leq C_{\al}$ for any multi-index $\al\in\ZN^4$. Define a general Fourier multiplier
\begin{equation}
\label{kernel-1}
    \wh{m}_j(\xi):=\eta_j(\bar\xi)\psi_j\Big(\frac{\xi_3-\Phi(\bar\xi;t_j)}{R^{-1}}\phi(\xi;t)\Big)
\end{equation}
and the associated operator 
\begin{equation}
\label{operator-1}
    S_j f(x):={m}_j\ast f(x).
\end{equation}

\begin{remark}
Heuristically, one can think of $\phi(\xi;t)=1$. But for generality, we still keep $\phi(\xi;t)$ in our later argument.
\end{remark}
Now we can introduce the discrete square function associated to the general surfaces $\{\Ga_j\}_{1\leq j\leq R}$ that we defined in \eqref{surface}.

\begin{definition}[Square function]
\label{defsqfunction}
For any Schwartz function $f$, define
\begin{equation}
\label{sqfcn-1}
    \Sq f(x):=\Big(\sum_{j=1}^R|S_jf(x)|^2\Big)^{1/2}.
\end{equation}
\end{definition}

\begin{lemma}[Second step]
Let $\Sq f$ be defined in \eqref{sqfcn-1}. Suppose that for any $\e>0$ and $p\geq3.25$,
\begin{equation}
\label{discrete-sqfcn}
    \|\Sq f\|_p\leq C_\e R^{\frac{p-3}{p}+\e}\|f\|_p.
\end{equation}
Then \eqref{local-sqfcn-esti} is true.
\end{lemma}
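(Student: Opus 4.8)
The plan is a routine discretization of the $t$-integral defining $H_Rf$. Since $1/t\sim 1$ on $[1,2]$, I would first reduce to bounding $\int_1^2|T_{R,t}f(x)|^2\,dt$, and localize $\widehat f$ to the annulus $\{1/2\le|\xi|\le 4\}$, where all the relevant multipliers are supported (this costs only a bounded Fourier projection). The key point is that, as a function of $t$, the symbol $\widehat\eta\big(R(1-|\xi|/t)\big)$ is band-limited to frequency $|\tau|\lesssim R$ up to a rapidly decaying tail, uniformly in $\xi$ on the support $|\xi|\sim t\sim 1$: writing it (up to normalization) as $\int\eta(s)\,e^{i s R(1-|\xi|/t)}\,ds$, each phase has $t$-derivative of size $O(|s|R)$, and $\eta$ decays rapidly. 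Consequently $t\mapsto T_{R,t}f(x)$ has the same property, so a Plancherel--P\'olya type sampling inequality in $t$ gives, for an $R^{-1}$-net $\{t_j\}$ of $[1,2]$ (there are $\sim R$ of them; after splitting into $O(1)$ subfamilies one may take them $R^{-1}$-separated and $\le R$ in number),
\[
  H_Rf(x)^2\ \lesssim\ \int_1^2|T_{R,t}f(x)|^2\,dt\ \lesssim\ R^{-1}\sum_{j}|T_{R,t_j}f(x)|^2\ +\ E(x),
\]
where $E$ collects the high $t$-frequencies and obeys $\|E\|_{L^{p/2}}\le \rap(R)\,\|f\|_p^2$.

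Next I would recognize each $T_{R,t_j}$ as an operator of the form \eqref{operator-1}--\eqref{kernel-1}. Partition the annulus into $O(1)$ truncated cones $\{C_\nu\}$ so that each $C_\nu$, after a fixed rotation, lies in the ``north-pole'' cone on which every sphere $\{|\xi|=t\}$, $t\in[1,4]$, is a graph $\xi_3=\Phi(\bar\xi;t)$ with $\Phi(\bar\xi;t)=(t^2-|\bar\xi|^2)^{1/2}$ and $|\bar\xi|\le 1/2$. This $\Phi$ satisfies \eqref{Phi-conditions} (it is essentially example~(1)), and on this region the ratio
\[
  \Psi(\xi;t):=\frac{\,1-|\xi|/t\,}{\ \xi_3-\Phi(\bar\xi;t)\ }
\]
is a smooth function with $|\Psi|\sim 1$ and bounded derivatives (numerator and denominator vanish simply along the common hypersurface $\{\xi_3=\Phi(\bar\xi;t)\}$), and there holds the exact identity $R(1-|\xi|/t)=\frac{\xi_3-\Phi(\bar\xi;t)}{R^{-1}}\,\Psi(\xi;t)$ there. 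Hence, after the rotation, the restriction of $T_{R,t_j}$ to $C_\nu$ is precisely an operator $S_j$ as in \eqref{kernel-1}, with $\psi_j=\widehat\eta$, $\phi=\Psi$, and $\eta_j$ the (rotated) cone cutoff. Writing $\widehat f=\sum_\nu\widehat f_\nu$ with $\mathrm{supp}\,\widehat f_\nu\subset C_\nu$ and using rotation-invariance of $\|\cdot\|_p$ and of the class of square functions, together with the triangle inequality over the $O(1)$ cones, I would conclude that
\[
  H_Rf(x)\ \lesssim\ R^{-1/2}\,\Sq f(x)\ +\ (\text{a rapidly decaying remainder})
\]
for square functions $\Sq$ of the type in \eqref{sqfcn-1}.

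Finally, plugging in the hypothesis \eqref{discrete-sqfcn} and using $\frac32-\frac3p=\frac12+\frac{p-3}{p}$,
\[
  \|H_Rf\|_p\ \lesssim\ R^{-1/2}\,\|\Sq f\|_p+\rap(R)\|f\|_p\ \le\ C_\e\,R^{-\frac12+\frac{p-3}{p}+\e}\|f\|_p\ =\ C_\e\,R^{\frac32-\frac3p-1+\e}\|f\|_p,
\]
which is even stronger than \eqref{local-sqfcn-esti} by a full power of $R$; in particular \eqref{local-sqfcn-esti} follows.

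The one step requiring genuine care is the first: $t\mapsto T_{R,t}f(x)$ is not compactly supported in $t$-frequency, so one must split it into a piece with compactly supported $t$-Fourier transform (to which the exact sampling identity applies) plus a remainder whose associated $\xi$-multiplier operator is bounded on $L^p$ with a $\rap(R)$ constant. Everything else --- the conic decomposition, the smoothness of $\Psi$, verifying \eqref{Phi-conditions} --- is bookkeeping, and the flexibility built into \eqref{kernel-1}, in particular the slowly varying factor $\phi(\xi;t)$ and the freedom to let $\psi_j,\eta_j$ depend on $j$, is exactly what makes the identification of $T_{R,t_j}$ with $S_j$ immediate.
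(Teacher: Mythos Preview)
Your argument is essentially the paper's own: the conic decomposition, the choice $\Phi(\bar\xi;t)=(t^2-|\bar\xi|^2)^{1/2}$, and the smooth quotient $\Psi$ (which is the paper's $\phi$ up to a sign, coming from $\xi_3-\Phi$ versus $-\xi_3+\Phi$) are exactly what appears there, and your sampling step is precisely what the paper compresses into ``via discretizing the variable $t$; we omit the details.''

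One technical caution on the remainder you flag at the end. The $t$-Fourier tail of $\widehat\eta\big(R(1-|\xi|/t)\big)$ past $|\tau|>CR$ decays like $C^{-N}$, not like $R^{-N}$; with a fixed cut at $CR$ the error multiplier and its $\xi$-derivatives are only $O(R^{|\alpha|}C^{1-N})$, which is not $\rap(R)$. To make the remainder honestly $\rap(R)$ you should cut at $|\tau|\le R^{1+\e}$ (and then split the $\sim R^{1+\e}$ sample points into $O(R^{\e})$ families of size $\le R$, applying \eqref{discrete-sqfcn} to each). A cleaner route that sidesteps sampling altogether is to write $\int_1^2\,dt=\int_0^{1/R}\sum_j\,dt_0$ and apply Minkowski in the shift $t_0$ (valid since $p\ge 2$), invoking the hypothesis \eqref{discrete-sqfcn} for each shifted lattice $\{t_0+j/R\}$; this gives $\|H_Rf\|_p\le R^{-1/2}\sup_{t_0}\|\Sq_{t_0}f\|_p$ with no error term. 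Your final observation that one lands at $R^{\frac12-\frac3p+\e}$, a full power of $R$ below the exponent written in \eqref{local-sqfcn-esti}, is correct.
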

\begin{proof}
Using polar coordinate and a smooth partition of unity for $\ZS^2$, we can partition the whole space $\ZR^3$ into smaller conic region, so that any two vectors in a single conic region make an angle $\leq1/100$. Note that the Fourier multiplier $\zeta\big(R(1-|\xi/t|)\big)$ is a radial function. To prove \eqref{local-sqfcn-esti}, we can apply the aforementioned partition to the multiplier so that by rotational symmetry, we only need to consider the operator $\wt S_{R,t}$ defined as
\begin{equation}
    \wt S_{R,t} f(x):=\int_{\mathbb{R}^3}\wh\vp_a(\xi) \zeta\Big(\frac{1-|\xi/t|}{R^{-1}}\Big)\wh{f}(\xi)e^{ix\cdot\xi}d\xi.
\end{equation}
Here $\wh\vp_a=\vp(\xi_1/\xi_3)\vp(\xi_2/\xi_3)$ is a smooth function such that $\vp$ supported in $[-1/10,1/10]$. Note that in \eqref{local-sqfcn}, the variable $t$ only ranges in $[1,2]$. We can freely add two smooth cutoff functions $\wh\psi(|\xi|)$, $\eta(\bar\xi)$ in the Fourier multiplier of $S_{R,t}$, where $\wh\psi$ is supported in $[1/10,10]$ and $\wh\psi=1$ on $[1/9,9]$; $\eta$ is supported in the ball $B^2(0,1/2)$. Namely, 
\begin{equation}
    \wt S_{R,t} f(x)=\int_{\mathbb{R}^3}\wh\vp_a(\xi)\wh\psi(|\xi|)\eta(\bar\xi) \zeta\Big(\frac{1-|\xi/t|}{R^{-1}}\Big)\wh{f}(\xi)e^{ix\cdot\xi}d\xi.
\end{equation}
Since the operator $\vp_a\ast\psi(D)$ is of strong-type $(p,p)$ for any $1\leq p\leq \infty$, one can reduce $\wt S_{R,t}$ to a similar operator $S_{R,t}$ defined as
\begin{equation}
    S_{R,t} f(x)=\int_{\mathbb{R}^3}\eta(\bar\xi) \zeta\Big(\frac{1-|\xi/t|}{R^{-1}}\Big)\wh{f}(\xi)e^{ix\cdot\xi}d\xi.
\end{equation}
Hence to show \eqref{local-sqfcn-esti}, it suffices to prove that for any $\e>0$ and $p\geq3.25$,
\begin{equation}
\label{local-sqfcn-reduced-esti}
    \|S_{R,t}f\|_{L^p_xL^2_t([1,2])}\leq C_\e R^{\frac{3}{2}-\frac{3}{p}+\e}\|f\|_p.
\end{equation}

To express the Fourier multiplier of $S_{R,t}$ via something similar to \eqref{kernel-1}, we take
\begin{equation}
    \Phi(\bar\xi;t)=(t^2-|\bar\xi|^2)^{1/2}, \hspace{.5cm} \phi(\xi;t)=\frac{\xi_3+(t^2-|\bar\xi|^2)}{t(t+|\xi|)},
\end{equation}
so that one has
\begin{equation}
    S_{R,t} f(x)=\int_{\mathbb{R}^3}\eta(\bar\xi) \zeta\Big(\frac{-\xi_3+\Phi(\bar\xi;t)}{R^{-1}}\phi(\xi;t)\Big)\wh{f}(\xi)e^{ix\cdot\xi}d\xi.
\end{equation}
The function $\phi$ is smooth in the support of $\wh{S_{R,t}f}$, and $|\partial^\al\phi|\leq C_{\al}$ for any multi-index $\al\in\ZN^4$. Hence one can deduce \eqref{local-sqfcn-reduced-esti} from \eqref{discrete-sqfcn} via discretizaing the variable $t$. We omit the details.
\end{proof}

In the final step, we break the discrete square function $\Sq f$ in the physical space. For any point $m\in R^{-1/2}\ZZ^3$, we choose a $CR^{-1/2}$-cube $q_m$ for some absolute big constant $C$ only depending on the function $\Phi$. The constant $C$ will be determined later in the proof of Lemma \ref{half-lem}. Denote by the collection of these $CR^{-1/2}$ cube by $\bf q$. Let $\wh\vp:B^3(0,2)\to[0,1]$ be a bump function that $\wh\vp=1$ in the unit ball $B^3(0,1)$. For each $q_m\in\bf q$, we define $\vp_{q_m}:=\vp(2CR^{1/2}(x-m))$ and 
\begin{equation}
\label{lattice-sqfcn}
    \De_q f:=\vp_q\ast f.
\end{equation} 
The estimate we would like to prove in the rest of the paper is the following.

\begin{theorem}
\label{sqfcn-local-thm}
Let $\Sq f$ be defined in \eqref{sqfcn-1}. For any $p\geq3.25$ and $\e>0$, one has
\begin{equation}
\label{sqfcn-local-physics}
    \|\Sq f\|_{L^p(B_R)}\leq C_{\e,N} R^{\frac{p-3}{p}+\e}\Big\|\Big(\sum_{q}|\De_q f |^2\Big)^{\frac{1}{2}}\Big\|_{L^p(w_{B_{R^{1+\e}},N})}
\end{equation}
\end{theorem}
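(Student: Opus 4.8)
The plan is to peel off the narrow parts of $\Sq f$ by an \emph{iterated} broad--narrow decomposition, reduce matters to the broad estimate \eqref{broad-esti-intro}, prove \eqref{broad-esti-intro} by a polynomial--partitioning induction, and reassemble by a congruent--cube Littlewood--Paley inequality. Set $K=R^{\e^{10}}$. Using the pullbacks of the Gauss maps of the slices $\Ga_j$, I would split $\Sq f$ on $B_R$ into the $A$--broad part $\Br_A\Sq f$ with respect to the $K^{-1}$--caps of directions, and narrow parts $\Sq_\tau f$ over $K^{-1}$--caps $\tau$. Since we handle all the slices $\{\Ga_j\}$ at once, there is no common parabolic rescaling sending $\Sq_\tau f$ back to unit scale; so instead of rescaling I would iterate, splitting each $\Sq_\tau f$ into a second--level broad part and second--level narrow pieces over $K^{-2}$--caps, and continuing for $\e^{-10}/2$ levels until the caps reach radius $R^{-1/2}$. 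Because $A\sim\log\log R$, the per--step loss is subpolynomial in $K$, so nesting the decomposition this many times costs only $R^{o(1)}$; and at the bottom level every surviving narrow square function lives entirely at scale $R^{-1/2}$ and is controlled, up to $\rap(R)$, by the right--hand side of \eqref{sqfcn-local-physics}. Thus it remains to prove \eqref{broad-esti-intro} for a broad function $\Br\Sq_\si f$ at an arbitrary level $\l$, $\si$ a $K^{-\l}$--cap and $M=K^\l$. Granting that, one sums $\|\Br\Sq_\si f\|_p^p$: by frequency support the square function on the right of \eqref{broad-esti-intro} only sees the $R^{-1/2}$--cubes $q$ lying over $\si$, and these partition the full collection as $\si$ ranges over level $\l$, so the sum over $\si$ recombines into $(\sum_q|\De_q f|^2)^{1/2}$ via Lemma~\ref{weightedlemma}; then summing over $\l$, the factor $M^{6-2p}=K^{\l(6-2p)}$ is summable precisely because $p>3$, which yields \eqref{sqfcn-local-physics}.

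For the broad estimate \eqref{broad-esti-intro} I would decompose $f$ into wave packets adapted to the slices (Section~4) and run a degree--$d$ polynomial--partitioning iteration on $\Br\Sq_\si f$, with $d=R^{\e^6}$, along the lines of \cite{Wu}. At each step one is in a \emph{cellular} case --- partitioning the current cell into $\sim d^3$ subcells over each of which $\|\Br(\Sq_\si f)_{O_u}\|_p^p$ is comparable and whose sum retains a definite fraction of the parent --- or in an \emph{algebraic} case, where the mass concentrates near the partitioning variety and, after discarding the tangential wave packets, one recurses on the transverse ones. I would run this until cells reach scale $\sim R^{1/2}$, recording the numbers $s_c$ of cellular and $s_t$ of transverse steps, $s=s_c+s_t$. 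On each terminal piece $\Br(\Sq_\si f)_{O_s}$ the broadness permits an $A$--linear (morally bilinear) reduction, which by the bilinear estimate of Section~10 gives $\|\Br(\Sq_\si f)_{O_s}\|_p\lesssim(\cdots)\|(\Sq_\si f)_{O_s}\|_2$. Collecting the iteration losses, \eqref{broad-esti-intro} is reduced to a good bound for $\sum_{O_s\in\co_s}\|(\Sq_\si f)_{O_s}\|_2^2$.

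The naive bound $\sum_{O_s}\|(\Sq_\si f)_{O_s}\|_2^2\lesssim R^{O(\e^2)}R^{-s_t\e^2}\|\Sq_\si f\|_2^2$ --- each transverse step thinning the surviving tangential wave packets --- is too weak, being useless when $s_t=0$. I would instead prove the refinement \eqref{refinement}, trading a genuine $L^2$--gain $\ka\lesssim1$ against a support gain $\Id_X$, where $X$ is assembled from the cells and tubes produced by the iteration. The proof is a backward algorithm run from step $s$ down to the first step (Section~\ref{backward}), whose engine is an incidence estimate relating tubes at consecutive scales; this propagates control of the $L^2$ mass scale by scale while accumulating either $\ka$ or the localization $X$. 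Finally one converts $\|\Id_X\Sq_\si f\|_2^2$ into the auxiliary $L^p$ square function of \eqref{broad-esti-intro}: expressing each $S_jf$ through its pieces on the $R^{-1/2}$--cubes $q$ over $\si$ and invoking the local $L^2$ estimate (Lemma~\ref{localL2}) on the thin tubes making up $X$, one spends exactly the gain from \eqref{refinement} to produce the factor $M^{\frac{6-2p}{p}}$ and the power $R^{\frac{p-3}{p}+\e}$.

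The main obstacle is this last step: proving the refinement \eqref{refinement}, and inside it the adjacent--scale incidence estimate, while balancing the two competing gains (small $\ka$ versus small $X$) so that the bound is strong enough uniformly across all outcomes of the partitioning iteration, from purely cellular ($s_t=0$) to heavily transverse. A secondary difficulty is keeping the iterated broad--narrow decomposition essentially lossless despite the absence of a uniform rescaling, which is what forces the use of the $A\sim\log\log R$ broad functions in place of the usual $K$--broad ones.
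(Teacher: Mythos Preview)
Your outline follows the paper's strategy --- iterated broad--narrow reduction, then polynomial partitioning on each broad piece, then the backward algorithm of Section~\ref{backward} feeding into the bilinear estimate --- and the reassembly via Lemma~\ref{finiely-overlap-lattice-cube} and Lemma~\ref{half-lem} is correct. Two points, however, are genuinely off and would cause the argument to fail as written.

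First, in the algebraic step you say one ``discards the tangential wave packets'' and recurses on the transverse ones. This is not what happens. The tangent case is a \emph{third} outcome of the one--step algorithm (Algorithm~\ref{algorithm}), on equal footing with cellular and transverse: when the tangential contribution dominates, the iteration \emph{terminates} (Theorem~\ref{scenario1}). It is precisely in this tangent termination that the bilinear estimate of Section~10 applies, because Lemma~\ref{tangent-lemma} forces the surviving tubes through each small ball to lie in a thin neighbourhood of a plane, which is the planar structure Lemma~\ref{bilinear-geometric-lemma} needs. If you discarded the tangential mass you would simply lose it; and if you tried to apply the bilinear estimate at a generic terminal cell without the tangent structure, you would not get the factor $r^{-5(p-2)/4}M^{(p-2)/2}$ in \eqref{bilinear-esti}.

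Second, the iteration does not run ``until cells reach scale $\sim R^{1/2}$''. There are two stopping scenarios: either the tangent state appears (handled as above, and this is where the backward algorithm and the refinement \eqref{refinement} are used), or the scale drops to $r\lesssim R^{\e/10}M^2$ (Theorem~\ref{scenario2}). In the latter small--$r$ scenario there is no bilinear input at all: one uses Bernstein on the $M^{-1}\times M^{-1}\times M^{-2}$ Fourier slab (Section~7) together with the cellular $L^2$ bound \eqref{scenario2.3}. The $M$--dependence of this threshold is essential --- for large $M$ the small--scale scenario may kick in long before $R^{1/2}$. So the backward algorithm is not a universal endgame but is specific to the tangent termination; your paragraph on the refinement \eqref{refinement} should be placed there. (Minor: the paper ultimately takes $K\sim\log R$ and $A=[\log K]$, not $K=R^{\e^{10}}$; either choice works for the summation, but the $\log$ choice is what makes the number of transverse steps bounded by $\de^{-1}$ and keeps $A/2^{a(u)}\ge 2$ throughout.)
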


\noindent Note that if \eqref{sqfcn-local-physics} is true for $B_R$, it is also true for any $R$-ball in $\ZR^3$. Hence we can raise both sides in \eqref{sqfcn-local-physics} to $p$-th power and sum up all the $R$-balls in $\ZR^3$ to concludes \eqref{discrete-sqfcn}. We also put it into a lemma.
\begin{lemma}[Third step]
Let $\Sq f$ be defined in \eqref{sqfcn-1}. Then \eqref{sqfcn-local-physics} implies \eqref{discrete-sqfcn}.
\end{lemma}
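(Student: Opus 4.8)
The goal is to pass from the local estimate \eqref{sqfcn-local-physics} over a single ball $B_R$ to the global estimate \eqref{discrete-sqfcn}. The plan is to tile $\ZR^3$ by a lattice of $R$-balls, apply the local estimate on each, raise to the $p$-th power, sum, and then control the resulting sum of the right-hand side by $\|f\|_p^p$ using the bounded overlap of the translated weights $w_{B_{R^{1+\e}},N}$ together with the Littlewood--Paley theorem for the congruent cubes $\bq$. Let me sketch the steps.

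\medskip

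\noindent\textbf{Step 1: Translating the local estimate.} The remark after Theorem \ref{sqfcn-local-thm} says that \eqref{sqfcn-local-physics} holds not only for $B_R=B^3(0,R)$ but for any translate $B^3(y,R)$. Indeed, all the relevant operators $S_j$ and $\De_q$ are convolution operators, hence translation-invariant, so applying \eqref{sqfcn-local-physics} to the translated function $f(\cdot+y)$ gives, for every $y$,
\begin{equation}
\label{local-translated}
    \|\Sq f\|_{L^p(B^3(y,R))}\leq C_{\e,N} R^{\frac{p-3}{p}+\e}\Big\|\Big(\sum_{q}|\De_q f |^2\Big)^{\frac12}\Big\|_{L^p(w_{B^3(y,R^{1+\e}),N})}.
\end{equation}

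\noindent\textbf{Step 2: Summing over a lattice.} Fix a lattice $\Lambda=R\ZZ^3$ and the corresponding cover $\{B^3(y,R)\}_{y\in\Lambda}$ of $\ZR^3$, which has bounded overlap. Raising \eqref{local-translated} to the $p$-th power and summing over $y\in\Lambda$,
\begin{equation}
    \|\Sq f\|_p^p\lesssim\sum_{y\in\Lambda}\|\Sq f\|^p_{L^p(B^3(y,R))}\leq C_{\e,N}^p R^{p-3+p\e}\sum_{y\in\Lambda}\int\Big(\sum_q|\De_q f|^2\Big)^{p/2}w_{B^3(y,R^{1+\e}),N}.
\end{equation}
Now $\sum_{y\in\Lambda}w_{B^3(y,R^{1+\e}),N}(x)\lesssim R^{3\e}$ for $N$ large (the weights centered at an $R$-lattice but of width $R^{1+\e}$ have overlap $O(R^{3\e})$ and the polynomial tails of the remaining translates sum geometrically when $N>3$), absorbing the extra $R^{3\e}$ into the $\e$-loss. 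This yields
\begin{equation}
\label{after-summing}
    \|\Sq f\|_p^p\leq C_{\e,N}' R^{p-3+O(\e)}\Big\|\Big(\sum_q|\De_q f|^2\Big)^{\frac12}\Big\|_p^p.
\end{equation}

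\noindent\textbf{Step 3: Littlewood--Paley for congruent cubes.} The functions $\De_q f$ have Fourier supports in finitely overlapping $CR^{-1/2}$-cubes $\{q\}$ tiling $\ZR^3$ (up to the fixed dilation $C$); these are translates of a single cube, so the Littlewood--Paley inequality for congruent cubes (Rubio de Francia / C\'ordoba, valid for $p\geq 2$) gives $\big\|\big(\sum_q|\De_q f|^2\big)^{1/2}\big\|_p\lesssim\|f\|_p$. Combining with \eqref{after-summing} and taking $p$-th roots gives \eqref{discrete-sqfcn} (after renaming $\e$), as desired.

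\medskip

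The genuinely substantive point is Step 3, but this is a standard and well-known inequality for $p\geq 2$, so it is more bookkeeping than obstacle; likewise Step 1 is immediate from translation invariance. If anything, the only place requiring a little care is the weight-overlap estimate $\sum_{y\in R\ZZ^3}w_{B^3(y,R^{1+\e}),N}\lesssim R^{O(\e)}$ uniformly in $x$, which one checks by splitting the lattice into the $O(R^{3\e})$ points $y$ with $|x-y|\lesssim R^{1+\e}$ (each contributing $O(1)$) and the remaining points, whose contributions decay like $(|x-y|/R^{1+\e})^{-N}$ and sum to $O(1)$ once $N>3$.
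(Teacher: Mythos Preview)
Your proof is correct and follows essentially the same approach as the paper. The paper's own argument is the single sentence ``raise both sides to the $p$-th power and sum up all the $R$-balls in $\ZR^3$,'' which is precisely your Steps~1--2; your Step~3 (Littlewood--Paley for congruent cubes, $p\geq 2$) is the implicit ingredient needed to pass from the summed square-function norm to $\|f\|_p$, and is indeed the tool the authors invoke elsewhere in the paper for this purpose.
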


Combining all the lemmas in this subsection, we conclude the first reduction which we state as a proposition below.
\begin{proposition}[First reduction]
\label{first-reduction}
Theorem \ref{sqfcn-local-thm} implies Theorem \ref{sqfcn-thm-intro}.
\end{proposition}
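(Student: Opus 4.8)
The plan is to chain the three reduction lemmas of this subsection in the order ``Third step, Second step, First step,'' starting from the conclusion of Theorem~\ref{sqfcn-local-thm}, and to check at each stage that the extra $R^{O(\e)}$ losses and the auxiliary weights are harmless after relabelling $\e$.

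First I would feed Theorem~\ref{sqfcn-local-thm} into the Third step. Since the whole setup of $\Sq$ and $\De_q$ is translation invariant, \eqref{sqfcn-local-physics} holds with $B_R$ replaced by any $R$-ball $B^3(c,R)$, the weight being the corresponding translate of $w_{B_{R^{1+\e}},N}$. Raising this to the $p$-th power and summing over a tiling of $\ZR^3$ by $R$-balls reconstructs $\|\Sq f\|_{L^p(\ZR^3)}^p$ on the left; on the right the translated weights $w_{B_{R^{1+\e}},N}$ attached to an $R$-tiling have overlap $\lesssim R^{O(\e)}$ together with a rapidly decaying tail, so the sum is $\lesssim R^{O(\e)}\big\|(\sum_q|\De_q f|^2)^{1/2}\big\|_p^p$. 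The classical Littlewood--Paley inequality for a grid of congruent cubes (valid for $p\ge 2$, and of the type recorded in the preliminaries) then gives $\big\|(\sum_q|\De_q f|^2)^{1/2}\big\|_p\lesssim\|f\|_p$, and after renaming $\e$ this is exactly \eqref{discrete-sqfcn}.

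Next the Second step turns \eqref{discrete-sqfcn} into \eqref{local-sqfcn-esti}: apply a finite conic partition of unity on $\ZS^2$ to the radial multiplier $\zeta\big(R(1-|\xi/t|)\big)$, use rotational symmetry to reduce to a single cap, write the relevant piece of the sphere as a graph $\xi_3=\Phi(\bar\xi;t)$ with $\Phi(\bar\xi;t)=(t^2-|\bar\xi|^2)^{1/2}$ (which satisfies the curvature and regularity conditions \eqref{Phi-conditions} with a smooth multiplier factor $\phi$), insert the harmless cutoffs $\wh\psi(|\xi|)\eta(\bar\xi)$ permitted because $t\in[1,2]$, and finally discretize $t\in[1,2]$ into $\sim R$ points spaced $R^{-1}$ apart so that the $L^2_t$-average is comparable to the $\ell^2$-sum over those values. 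Applying \eqref{discrete-sqfcn} to the resulting $\Sq$ yields \eqref{local-sqfcn-esti}. Finally the First step is the external reduction (Carbery \cite{Carbery-MBR} together with the square-function-valued black box of \cite{Guo-Roos-Yung}, which generalizes \cite{Seeger-black-box}) that passes from $G^\la f$ to $G_R f$ and then localizes the dilation parameter $t$ to a bounded block, reducing \eqref{sqfcn-esti-intro} to \eqref{local-sqfcn-esti}. Composing the implications Theorem~\ref{sqfcn-local-thm} $\Rightarrow$ \eqref{sqfcn-local-physics} $\Rightarrow$ \eqref{discrete-sqfcn} $\Rightarrow$ \eqref{local-sqfcn-esti} $\Rightarrow$ \eqref{sqfcn-esti-intro} proves Theorem~\ref{sqfcn-thm-intro}.

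All the genuine content sits in Theorem~\ref{sqfcn-local-thm} and in the cited black boxes; the reduction itself is bookkeeping. The point I would be most careful about is the local-to-global passage in the Third step: one must verify that it is the \emph{dilated} weight $w_{B_{R^{1+\e}},N}$, not $w_{B_R,N}$, that appears, and check that it still has $R^{O(\e)}$-bounded overlap over an $R$-tiling (it does), so that the loss is absorbed into the $R^{\e}$ factor; and that the congruent-cube Littlewood--Paley estimate is invoked only in its valid range $p\ge 2$, which is automatic since $p\ge 3.25$ throughout.
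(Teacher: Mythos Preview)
Your proposal is correct and follows exactly the paper's approach: the proposition is just the concatenation of the three reduction lemmas (Third, Second, First step), and the paper's ``proof'' is the single sentence ``Combining all the lemmas in this subsection.'' You have simply expanded each implication with the details the paper leaves implicit --- in particular, you make explicit the Littlewood--Paley step for congruent cubes that is needed in the Third step to pass from the $\ell^2$-square function $\big(\sum_q|\De_q f|^2\big)^{1/2}$ back to $\|f\|_p$, which the paper absorbs silently into ``sum up all the $R$-balls.'' One cosmetic remark: the first implication you list, ``Theorem~\ref{sqfcn-local-thm} $\Rightarrow$ \eqref{sqfcn-local-physics},'' is tautological since \eqref{sqfcn-local-physics} \emph{is} the content of Theorem~\ref{sqfcn-local-thm}; and the congruent-cube Littlewood--Paley inequality in the direction you need actually holds for all $1<p<\infty$, so the restriction to $p\ge 2$ is harmless but unnecessary.
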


\section{A second reduction: the broad-narrow reduction}

The second reduction is made for the function $\Id_{B_R}\Sq f$ in \eqref{sqfcn-local-physics}. The reduction is slightly different from the original broad-narrow argument in \cite{Bourgain-Guth-Oscillatory}. We begin with the definition of Gauss maps associated to the surfaces $\{\Ga_j\}$, then we introduce general setups of broad-narrowness for a directional cap at any intermediate scale between $1$ and $R^{-1/2}$. After that, we reduce \eqref{sqfcn-local-physics} to a local estimate for broad part, which is our main result. Because of technical reasons, in the end of this section, we define some auxiliary broad functions and prove some properties for them.

\subsection{Gauss maps}
\begin{definition}[Gauss map]
Let $\Phi$ and $\{\Gamma_j\}$ be given in \eqref{surface}. For each $j$, we define the Gauss map of the surface $\Gamma_j$ as:
\begin{equation}
\label{Gauss-map}
    G_j(\bar\xi)=\frac{(\nabla_{\bar\xi}\Phi(\bar\xi;t_j),1)}{|(\nabla_{\bar\xi}\Phi(\bar\xi;t_j),1)|}.
\end{equation}
\end{definition}

\noindent The next two lemmas contain some properties about the Gauss map $G_j$.
\begin{lemma}
\label{Gauss-map-lem}
For any $1\leq j\leq R$, the Gauss map $G_j$ is smooth and injective. In particular, when $\bar\xi\in B^2(0,1/2)$, one has
\begin{equation}
\label{derivative-Gauss}
    |\partial^\al G_j|\leq C_\al,\hspace{5mm}\al\in\ZN^2
\end{equation}
uniformly for all $G_j$.
\end{lemma}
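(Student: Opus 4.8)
The plan is to verify the two claims in Lemma~\ref{Gauss-map-lem} --- smoothness/injectivity and the uniform derivative bounds --- directly from the defining formula \eqref{Gauss-map} together with the curvature and regularity hypotheses \eqref{Phi-conditions}. Write $N(\bar\xi):=(\nabla_{\bar\xi}\Phi(\bar\xi;t_j),1)\in\ZR^3$, so that $G_j(\bar\xi)=N(\bar\xi)/|N(\bar\xi)|$. For smoothness, note that $\Phi(\cdot;t_j)$ is smooth on $B^2(0,1/2)$, hence so is $N$; moreover $|N(\bar\xi)|\geq 1$ everywhere (the third component is $1$), so $\bar\xi\mapsto 1/|N(\bar\xi)|$ is smooth as well, being the reciprocal of a smooth nowhere-vanishing function. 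Thus $G_j$ is smooth.

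For the derivative bounds \eqref{derivative-Gauss}, I would argue that on the compact domain $\overline{B^2(0,1/2)}$ the entries of $N$ are $\Phi$ and its first derivatives $\partial_i\Phi$, all of which have all derivatives bounded by absolute constants: the Hessian entries $\partial^2_{\bar\xi}\Phi$ are $\sim -1$ by \eqref{Phi-conditions}, and higher derivatives are controlled by the standing smoothness assumption that $\Phi$ has bounded Hessian together with the general $|\partial^\alpha\phi|\le C_\alpha$-type bounds available in this setting; in any case $\|N\|_{C^k}\le C_k$ uniformly in $j$. Since $|N|\ge 1$, the function $h:=(|N|^2)^{-1/2}$ satisfies $\|h\|_{C^k}\le C_k$ as well (write $h = F(|N|^2)$ with $F(s)=s^{-1/2}$ smooth on $[1,\infty)$ and apply the chain/Leibniz rule, using $|N|^2\in[1,C]$). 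Then $G_j = h\cdot N$ is a product of $C^k$-bounded functions, so $|\partial^\alpha G_j|\le C_\alpha$ uniformly in $j$ and in $\bar\xi\in B^2(0,1/2)$. The uniformity in $j$ is automatic because all the constants in \eqref{Phi-conditions} are independent of $t$, hence of $t_j$.

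For injectivity, the natural route is to show the differential $dG_j(\bar\xi)$ has rank $2$ at every point, which already gives local injectivity, and then promote this to global injectivity on $B^2(0,1/2)$ using the convexity of the surface. The shape operator of the graph $\Gamma_j$ is $dG_j$, and its determinant (Gauss curvature up to a positive factor) is, by the standard formula for graphs, proportional to $\det\nabla^2_{\bar\xi}\Phi / |N|^4$; by \eqref{Phi-conditions} both eigenvalues of $\nabla^2_{\bar\xi}\Phi$ are $\sim -1$, so $\det\nabla^2_{\bar\xi}\Phi\sim 1\neq 0$, hence $dG_j$ is invertible everywhere and $G_j$ is a local diffeomorphism. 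Global injectivity then follows from the strict concavity of $\Phi(\cdot;t_j)$: if $G_j(\bar\xi_1)=G_j(\bar\xi_2)$ with $\bar\xi_1\neq\bar\xi_2$, then $\nabla_{\bar\xi}\Phi$ takes the same value at $\bar\xi_1$ and $\bar\xi_2$, contradicting the fact that a map with negative-definite derivative (namely $\nabla_{\bar\xi}\Phi$, whose derivative is $\nabla^2_{\bar\xi}\Phi\sim -\mathrm{Id}$) is strictly monotone, hence injective, on the convex set $B^2(0,1/2)$.

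The routine parts here are the smoothness and the derivative estimates, which are just the chain rule applied to $s^{-1/2}$ composed with a bounded smooth function bounded away from $0$; the only point requiring slight care --- and the place I would be most careful --- is extracting the needed higher-order bounds on $\Phi$ itself, since \eqref{Phi-conditions} only states bounds on $\partial_t\Phi$, $\partial_t\nabla_{\bar\xi}\Phi$, and the eigenvalues of $\nabla^2_{\bar\xi}\Phi$. I would resolve this by invoking the standing convention in this section that $\Phi$ is smooth with all relevant derivatives $O(1)$ on the truncated domain (as for the two model examples, spheres and paraboloids, where this is immediate), and noting that the two-step reduction in the previous section only ever feeds such $\Phi$ into the argument. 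With that in hand, the lemma follows; I would write it up as: (i) $|N|\ge1$ and smoothness of $N$; (ii) $C^k$-bounds on $h=1/|N|$ and hence on $G_j=hN$; (iii) $\det dG_j\neq 0$ from the curvature hypothesis; (iv) global injectivity from strict monotonicity of $\nabla_{\bar\xi}\Phi$ on the convex domain.
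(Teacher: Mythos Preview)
Your argument is correct. For smoothness and the uniform derivative bounds your write-up is a more explicit version of what the paper does (the paper just says ``one can check \eqref{derivative-Gauss} directly'' from $\nabla_{\bar\xi}\Phi=O(1)$), and your caveat about needing higher $\bar\xi$-derivative bounds on $\Phi$ beyond what \eqref{Phi-conditions} literally states is well taken --- the paper is implicitly using the same standing smoothness convention you identify.

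For injectivity the two proofs diverge. You argue analytically: $G_j(\bar\xi_1)=G_j(\bar\xi_2)$ forces $\nabla_{\bar\xi}\Phi(\bar\xi_1)=\nabla_{\bar\xi}\Phi(\bar\xi_2)$ (since $(v,1)\mapsto (v,1)/|(v,1)|$ is injective), and then strict monotonicity of $\nabla_{\bar\xi}\Phi$ on the convex ball --- immediate from the negative-definite Hessian via the mean value theorem --- gives a contradiction. The paper instead argues geometrically: definiteness of $\nabla^2_{\bar\xi}\Phi$ means every planar slice of $\Gamma_j$ is a curve of nonvanishing curvature, and one derives a contradiction from two points on such a slice having parallel tangent lines. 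Your route is shorter and more self-contained; the paper's route is more geometric and perhaps more in the spirit of the surrounding discussion of curved surfaces. Note also that your intermediate step computing $\det dG_j\neq 0$ is correct but not actually needed once you run the monotonicity argument, which already gives global injectivity directly.
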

\begin{proof}
Since $\nabla_{\bar\xi}\Phi=O(1)$, one can check \eqref{derivative-Gauss} directly. The fact that $\Phi(\bar\xi;t_j)$ has positive second fundamental form implies that the intersection between the surface $\{\bar\xi\in B^2(0,1):(\bar\xi;\Phi(\bar\xi;t_j))\}$ and an arbitrary plane in $\ZR^3$ is a two dimensional curve with non-vanishing curvature. Thus, we can prove by contradiction that $G_j$ is injective.
\end{proof}

\begin{lemma}
\label{Gauss-map-spt}
Suppose that $\si\subset\ZS^2$ is any $M^{-1}$-cap with $1\le M\le R^{1/2}$. Then for any $1\leq j\leq R$, the set $G_j^{-1}(\si)$ is morally a $M^{-1}$-ball. That is, there is a $M^{-1}$-ball $B_j$ in $\ZR^2$ such that
\begin{equation}
    cB_j\subset G_j^{-1}(\si)\subset CB_j
\end{equation}
for two absolute constants $c<1$ and $C>1$.
\end{lemma}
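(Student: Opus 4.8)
The plan is to reduce the statement to a quantitative statement about the Jacobian of the Gauss map and then argue as for the standard fact that a diffeomorphism with controlled derivatives carries small balls to comparable small balls. First I would fix $j$ and write $G := G_j : B^2(0,1/2) \to \ZS^2$. By Lemma \ref{Gauss-map-lem} we already know $G$ is smooth and injective, with $|\partial^\al G| \le C_\al$ uniformly in $j$; the curvature hypothesis \eqref{Phi-conditions} (both eigenvalues of $\nabla^2_{\bar\xi}\Phi$ are $\sim -1$) tells us that the differential $dG$ is, at every point, an invertible linear map from $\R^2$ (the tangent to the domain) to $T_{G(\bar\xi)}\ZS^2$ with singular values bounded above and below by absolute constants — indeed $dG$ is, up to the normalization factor $|(\nabla_{\bar\xi}\Phi,1)| \sim 1$ and its derivatives, essentially $\nabla^2_{\bar\xi}\Phi$ composed with the projection onto $T_{G}\ZS^2$, so $\|dG\| \lesssim 1$ and $\|(dG)^{-1}\| \lesssim 1$ uniformly in $\bar\xi$ and $j$.

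Given this two-sided bound on $dG$, the next step is purely a mean value / inverse function argument. For the upper inclusion $G^{-1}(\si) \subset CB_j$: pick any $\bar\xi_0 \in G^{-1}(\si)$ and set $B_j$ to be the ball of radius $C_1 M^{-1}$ centered at $\bar\xi_0$; if $\bar\xi \in G^{-1}(\si)$ then $G(\bar\xi), G(\bar\xi_0)$ both lie in the $M^{-1}$-cap $\si$, so their geodesic distance on $\ZS^2$ is $\lesssim M^{-1}$, hence by the lower bound on $dG$ (which gives $\dist(G(\bar\xi),G(\bar\xi_0)) \gtrsim |\bar\xi - \bar\xi_0|$ along the segment, using convexity of $B^2(0,1/2)$) we get $|\bar\xi - \bar\xi_0| \lesssim M^{-1}$, i.e. $\bar\xi \in CB_j$ for suitable absolute $C$. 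For the lower inclusion $cB_j \subset G^{-1}(\si)$: shrink the radius of $B_j$ to $cM^{-1}$ with $c$ small; for $\bar\xi \in cB_j$ the upper bound on $dG$ gives $\dist(G(\bar\xi),G(\bar\xi_0)) \le \|dG\|_{\infty} |\bar\xi-\bar\xi_0| \lesssim c M^{-1}$, which for $c$ small enough (absolute, depending only on the constants in \eqref{Phi-conditions} and the eccentricity of the cap $\si$) keeps $G(\bar\xi)$ inside $\si$, so $\bar\xi \in G^{-1}(\si)$. One must also check $cB_j \subset B^2(0,1/2)$, which holds once $M \ge 1$ and we take $\bar\xi_0$ as the center; if $G^{-1}(\si)$ touches the boundary of the domain one restricts to $G^{-1}(\si) \cap B^2(0,1/2)$ throughout, which does not affect the two-sided comparison.

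The main obstacle — really the only place any care is needed — is extracting the \emph{uniform} two-sided bound on $dG_j$ and its inverse from \eqref{Phi-conditions}, independently of $j$ and of the point $\bar\xi$. This is where the hypothesis that both eigenvalues of $\nabla^2_{\bar\xi}\Phi$ are comparable to $-1$ uniformly in $t$ is essential: differentiating \eqref{Gauss-map} and using $\nabla_{\bar\xi}\Phi = O(1)$ together with $|\partial_t \nabla_{\bar\xi}\Phi| = O(1)$ shows $dG_j = A_j(\bar\xi)\, \nabla^2_{\bar\xi}\Phi(\bar\xi;t_j)$ for a matrix-valued function $A_j$ with $\|A_j\|, \|A_j^{-1}\| \lesssim 1$ uniformly (coming from the normalization $|(\nabla_{\bar\xi}\Phi,1)|\sim 1$ and from the orthogonal projection to the tangent plane of $\ZS^2$, which is an isomorphism onto the range since the normal is never tangential). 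Then the eigenvalue condition gives the desired bounds on $dG_j$. Once this is in hand the rest is the routine bi-Lipschitz estimate sketched above, and the constants $c, C$ depend only on $\Phi$, exactly as claimed. I would also remark that one can phrase $B_j$ explicitly as $B_j := G_j^{-1}(\si_0)$ where $\si_0$ is the $M^{-1}$-cap concentric with $\si$, but centering $B_j$ at a preimage point $\bar\xi_0$ of the center of $\si$ makes the two inclusions most transparent.
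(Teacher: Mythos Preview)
Your proposal is correct and follows exactly the approach the paper indicates: the paper's proof is simply ``One only need to use the fact that both two eigenvalues of $\nabla^2_{\bar\xi}\Phi$ are $\sim -1$ uniformly in $t$ \ldots\ We omit the details,'' and you have supplied precisely those details by extracting uniform two-sided bounds on $dG_j$ from the Hessian condition and running a bi-Lipschitz argument. One small imprecision: a pointwise lower bound on $\|dG\|$ does not by itself give $\dist(G(\bar\xi),G(\bar\xi_0)) \gtrsim |\bar\xi-\bar\xi_0|$ by integrating along the segment (the vectors could cancel); the clean fix is to note that $\nabla_{\bar\xi}\Phi$ is uniformly monotone because $\nabla^2_{\bar\xi}\Phi$ is uniformly negative definite, so $(\nabla\Phi(\bar\xi)-\nabla\Phi(\bar\xi_0))\cdot(\bar\xi-\bar\xi_0)\sim -|\bar\xi-\bar\xi_0|^2$, and then compose with the normalization $v\mapsto(v,1)/|(v,1)|$, which is bi-Lipschitz on bounded sets.
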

\begin{proof}
One only need to use the fact that both two eigenvalues of $\nabla^2_{\bar\xi}\Phi$ are $\sim -1$ uniformly in $t$, as mentioned in \eqref{Phi-conditions}. We omit the details.
\end{proof}

\subsection{Broad-narrow reduction}\hfill

Let us first assume that $\si\subset\ZS^2$ is a $M^{-1}$-cap  with $1\le M\le R^{1/2}$, and assume that for each $1\leq j\leq R$, there is a kernel $m_{j,\si}$ with the formula
\begin{equation}
\label{kernel-2}
    \wh{m}_{j,\si}(\xi)=\eta_{j,\si}(\bar\xi)\psi_j\Big(\frac{\xi_3-\Phi(\bar\xi;t_j)}{R^{-1}}\phi(\xi;t)\Big),
\end{equation}
where $\eta_{j,\si}$ is a smooth function satisfying the following properties:
\begin{enumerate}
    \item $\eta_{j,\si}$ is supported in $G_j^{-1}(2\si)$, where $G_j$ is the Gauss map defined in \eqref{Gauss-map}, 
    \item For any multi-index $\al\in\ZN^2$ that $|\al|=N$, $|\partial^\al\eta_{j,\si}|\leq C_NM^{N}$.
\end{enumerate}
Define $S_{j,\si}f$ and the square function $\Sq_\si f$ for the cap $\si$ as
\begin{equation}
\label{sqfcn-2}
    S_{j,\si}f:=m_{j,\si}\ast f,\hspace{1cm}\Sq_\si f(x):=\Big(\sum_{j=1}^R|S_{j,\si}f(x)|^2\Big)^{1/2}.
\end{equation}

In order to utilize the broad-narrow argument, we consider a collection of slightly smaller caps that form a cover of $\si$. Specifically, let $\cT_\si=\{\tau\}$ be a collection $K^{-1}M^{-1}$-caps in $\ZS^2$ that form a cover of $\si$. Define $\wt\vp_\tau:\ZS^2\to\ZR^+$ as a smooth partition of unity associated to this cover so that $\wt\vp_\tau$ is supported in $2\tau$. The smooth cutoff functions $\wt\vp_\tau$ satisfies the following derivatives estimates: for any multi-index $\al\in\ZN^2$ that $|\al|=N$, $|\partial^\al\wt\vp_\tau|\leq C_N(MK)^{N}$. From Lemma \ref{Gauss-map-lem} we know the Gauss map $G_j$ is injective, so the pullback $\vp_\tau=\wt\vp_\tau\circ G_j$ forms a partition of unity of $\Id_{\textup{supp}(\eta_{j,\si})}(\bar\xi)$. Therefore we can partition $m_{j,\si}f$ as
\begin{equation}
\label{frequencey-decom-1}
    m_{j,\si}=\sum_{\tau\in\cT_\si}m_{j,\tau},\hspace{5mm}\wh{m}_{j,\tau}(\xi):=\vp_\tau(\bar\xi)\eta_{j,\si}(\bar\xi)\psi_j\Big(\frac{\xi_3-\Phi(\bar\xi;t_j)}{R^{-1}}\phi(\xi;t)\Big).
\end{equation}
For some technical issues, we further break $\cT_\si$ into 100 disjoint subcollections $\{\cT_{\si,k}\}_{k=1}^{100}$, such that any two caps $\tau,\tau'$ in an arbitrary subset $\cT_{\si,k}$ satisfy $\textup{dist}(\tau,\tau')\geq8(MK)^{-1}$. We define $m_{j,\cT_k}$ as the sum of $m_{j,\tau}$ for $\tau\in\cT_{\si,k}$, and set
\begin{equation}
\label{sqfcn-separated}
    S_{j,\cT_{\si,k}}f:=m_{j,\cT_k}\ast f,\hspace{1cm}\Sq_{\cT_{\si,k}} f(x):=\Big(\sum_{j=1}^R|S_{j,{\cT_{\si,k}}}f(x)|^2\Big)^{1/2}.
\end{equation}
Similar to \eqref{sqfcn-2}, for each $\tau\in\cT_k$ we define
\begin{equation}
\label{sqfcn-3}
    S_{j,\tau}f:=m_{j,\tau}\ast f,\hspace{1cm}\Sq_\tau f(x):=\Big(\sum_{j=1}^R|S_{j,\tau}f(x)|^2\Big)^{1/2}.
\end{equation}
Now we can introduce the broad function $\BR_A\Sq_{\cT_{\si,k}} f$.
\begin{definition}[Broadness]
Given $A\in\ZN$ and any $x\in B_R$, we define $\BR_A\Sq_{\cT_{\si,k}} f(x)$ as the $A+1$-th largest number in $\{\Sq_{\tau} f(x)\}_{\tau\in\cT_{\si,k}}$. That is, 
\begin{equation}
\label{alpha-broad}
    \BR_A\Sq_{\cT_{\si,k}} f(x)=\min_{\tau_1,\ldots,\tau_{A}\in\cT_{\si,k }}\max_{\substack{\tau\not=\tau_l,\\1\leq l\leq A}}\Sq_\tau f(x).
\end{equation}
\end{definition}
\noindent
It is clear from the definition that for any $x\in B_R$
\begin{equation}
\label{broad-narrow}
    \Sq_{\cT_{\si,k}} f(x)\leq K^2\BR_A\Sq_{\cT_{\si,k}} f(x)+ A\sup_{\tau\in\cT_{\si,k}}\Sq_\tau f(x).
\end{equation}
The first term is called the ``broad part" and the second is called the ``narrow part". Taking the $L^p$-norm on both sides and summing up all $k=1,\ldots,100$, we have
\begin{align}
\label{broad-narrow-lp}
    \int_{B_R}\Sq_{\si} f^p & \lesssim\sum_k\int_{B_R}\Sq_{\cT_{\si,k}} f^p\\ \nonumber
    &\lesssim K^2\sum_k\int_{B_R}\BR_A\Sq_{\cT_{\si,k}} f^p+A\sum_k\sum_{\tau\in\cT_{\si,k}}\int_{B_R}\Sq_\tau f^p.
\end{align}

Next, we set $\si_0=\ZS^2$ and define $m_{j,\si_0}:=m_{j}$ and $\Sq_{\si_0} f:=\Sq f$, where $m_j$ and $\Sq f$ were defined in \eqref{kernel-1} and \eqref{operator-1}. We run the above broad-narrow argument for $\si=\si_0$, to have via \eqref{broad-narrow-lp} that
\begin{equation}
\label{broad-narrow-lp-2}
    \int_{B_R}\Sq_{{\si_0}} f^p\lesssim K^2\sum_k\int_{B_R}\BR_A\Sq_{\cT_{\si_0,k}} f^p+A\sum_k\sum_{\si_1\in\cT_{\si_0,k}}\int_{B_R}\Sq_{\si_1} f^p.    
\end{equation}
We keep the broad part in \eqref{broad-narrow-lp-2}, and run the broad-narrow argument for each narrow part to get
\begin{equation}
\label{broad-narrow-lp-3}
    \int_{B_R}\Sq_{ {\si_1}} f^p\lesssim K^2\sum_k\int_{B_R}\BR_A\Sq_{\cT_{\si_1,k}} f^p+A\sum_k\sum_{\si_2\in\cT_{\si_1,k}}\int_{B_R}\Sq_{\si_2} f^p.
\end{equation}
One can repeats this argument for $\sim\log R^{1/2}/\log K$ steps to break down all the narrow parts, except the ones at the final step. Let us conclude this broad-narrow decomposition into a proposition.

\begin{proposition}[Broad-narrow decomposition]
Let $K(K<R)$ be a big number and let $A=[\log K]$. Define $\ka:=\log R^{1/2}/\log K$. Then, we can break down $\|\Id_{B_R}\Sq f\|_p^p$ into
\begin{align}
\label{broad-narrow-final}
    \int_{B_R}\Sq f^p  \lesssim \sum_{\ell=1}^{\ka-1} A^\ell K^2\sum_{\si_\ell\in\Si_\ell}\sum_k\int_{B_R}\BR_A\Sq_{\cT_{\si_\ell,k}} f^p+A^\ka\sum_{\theta\in\Theta}\int_{B_R}\Sq_\theta f^p.
\end{align}
Here $\Si_\ell$ is the collection of finitely overlapping $K^\ell$-caps in $\ZS^2$ that $|c_\si-e_3|\leq1/10$ for any $\si\in\Si_\ell$; $\Theta$ is a collection of finitely overlapping $R^{-1/2}$-caps in $\ZS^2$ that $|c_\theta-e_3|\leq1/10$. The set $\cT_{\si_\ell, k}$ is a collection of $8K^{-\ell-1}$-separated $K^{-\ell-1}$-caps in $\si$ for any $1\leq k\leq100$. Recall \eqref{sqfcn-separated} and \eqref{sqfcn-3} that $\Sq_{\cT_{\si_\ell,k}} f$ is defined as
\begin{equation}
\label{sigma-sqfcn}
    S_{j,\cT_{\si_\ell,k}}f:=m_{j,\cT_{\si_\ell,k}}\ast f,\hspace{1cm}\Sq_{\cT_{\si_\ell,k}} f(x):=\Big(\sum_{j=1}^R|S_{j,{\cT_{\si_\ell,k}}}f(x)|^2\Big)^{1/2},
\end{equation}
where, recalling \eqref{frequencey-decom-1},
\begin{equation}
\label{sigma-kernel}
    m_{j,\cT_{\si_\ell,k}}:=\sum_{\tau\in\cT_{\si_\ell,k}}m_{j,\tau},\hspace{5mm}\wh{m}_{j,\tau}(\xi):=\eta_{j,\tau}(\bar\xi)\psi_j\Big(\frac{\xi_3-\Phi(\bar\xi;t_j)}{R^{-1}}\phi(\xi;t)\Big)
\end{equation}
for a smooth function $\eta_{j,\tau}$ satisfying
\begin{enumerate}
    \item $\eta_{j,\tau}$ is supported in $G_j^{-1}(2\tau)$, where $G_j$ is the Gauss map defined in \eqref{Gauss-map}, 
    \item For any multi-index $\al\in\ZN^2$ that $|\al|=N$, $|\partial^\al\eta_{j,\si}|\leq C_NK^{N(\ell+1)}$.
\end{enumerate}
The function $\Sq_\theta f$ is defined similarly as
\begin{align}
\label{half-sqfcn}
    &S_{j,\theta}f:=m_{j,\theta}\ast f,\hspace{1cm}\Sq_\theta f(x):=\Big(\sum_{j=1}^R|S_{j,\theta}f(x)|^2\Big)^{1/2}, \\ \label{kernel-half}
    &\wh{m}_{j,\theta}(\xi):=\eta_{j,\theta}(\bar\xi)\psi_j\Big(\frac{\xi_3-\Phi(\bar\xi;t_j)}{R^{-1}}\phi(\xi;t)\Big),
\end{align}
where the smooth function $\eta_{j,\theta}$ satisfies
\begin{enumerate}
    \item $\eta_{j,\theta}$ is supported in $G_j^{-1}(2\theta)$, where $G_j$ is the Gauss map defined in \eqref{Gauss-map}, 
    \item For any multi-index $\al\in\ZN^2$ that $|\al|=N$, $|\partial^\al\eta_{j,\si}|\leq C_NR^{N/2}$.
\end{enumerate}
Finally, the broad function $\BR_A\Sq_{\cT_{\si_\ell,k}} f$ was defined \eqref{broad-narrow}.
\end{proposition}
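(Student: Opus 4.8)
The plan is to derive \eqref{broad-narrow-final} by iterating the single-scale splitting \eqref{broad-narrow-lp}, starting from $\si_0=\ZS^2$: at each stage we retain the broad pieces that are produced and re-run the broad-narrow machinery only on the narrow pieces, stopping once the surviving caps have radius $R^{-1/2}$.

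First I would set up the inductive hypothesis at level $\ell$ (for $0\le\ell<\ka$): we are given a $K^{-\ell}$-cap $\si_\ell$ and, for each $1\le j\le R$, a kernel $m_{j,\si_\ell}$ of the form \eqref{kernel-2} with $\eta_{j,\si_\ell}$ supported in $G_j^{-1}(2\si_\ell)$ and $|\partial^\al\eta_{j,\si_\ell}|\le C_NK^{N\ell}$ uniformly in $j$. For $\ell=0$ take $\si_0=\ZS^2$, $m_{j,\si_0}=m_j$, $\eta_{j,\si_0}=\eta_j$; since $\eta_j$ is supported in $B^2(0,1/2)$ with bounded derivatives this is satisfied, and since $G_j(B^2(0,1/2))$ lies in a fixed $O(1)$-cap about $e_3$ by \eqref{Phi-conditions}, all caps produced thereafter do too, so the collections $\Si_\ell$ may be taken as stated. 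Now run the construction of the previous subsection with $M=K^\ell$: cover $\si_\ell$ by $K^{-\ell-1}$-caps $\cT_{\si_\ell}=\{\tau\}$ with a subordinate partition of unity $\wt\vp_\tau$ on $\ZS^2$; pull these back via the Gauss maps to $\vp_\tau=\wt\vp_\tau\circ G_j$, which by Lemma \ref{Gauss-map-lem} (bounded derivatives of $G_j$, uniformly in $j$) satisfy $|\partial^\al\vp_\tau|\le C_NK^{N(\ell+1)}$ and partition $\Id_{\mathrm{supp}\,\eta_{j,\si_\ell}}$; then split $\cT_{\si_\ell}$ into $100$ subfamilies $\cT_{\si_\ell,k}$ of $8K^{-\ell-1}$-separated caps, which is possible since each $K^{-\ell-1}$-cap has only $O(1)$ caps within distance $8K^{-\ell-1}$. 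This produces \eqref{frequencey-decom-1}, hence $S_{j,\si_\ell}f=\sum_k S_{j,\cT_{\si_\ell,k}}f$; by Minkowski's inequality in $\ell^2_j$ we get $\Sq_{\si_\ell}f\lesssim\sum_k\Sq_{\cT_{\si_\ell,k}}f$, and \eqref{broad-narrow} gives $\Sq_{\cT_{\si_\ell,k}}f\le K^2\BR_A\Sq_{\cT_{\si_\ell,k}}f+A\sup_{\tau}\Sq_\tau f$. Raising to the $p$-th power (using $(a+b)^p\lesssim_p a^p+b^p$ and $(\sup_\tau\Sq_\tau f)^p\le\sum_\tau\Sq_\tau f^p$) and integrating over $B_R$ reproduces an inequality of the shape \eqref{broad-narrow-lp-3} with $\si_1$ replaced by $\si_{\ell+1}\in\cT_{\si_\ell,k}$.

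Two points make the iteration go through. First, the $K^2$ loss does not accumulate: the broad term created at stage $\ell$ is \emph{kept}, and only the narrow term---carrying a factor $A$, to a bounded power, harmless since $A=[\log K]$---is fed back in. So after $\ell$ stages each surviving $\Sq_{\si_\ell}f$ comes with a coefficient $\lesssim A^\ell$, whence the broad piece it spawns carries $\lesssim A^\ell K^2$, exactly as in \eqref{broad-narrow-final}. Second, the hypothesis is reproduced at level $\ell+1$: each $\tau\in\cT_{\si_\ell,k}$ is a $K^{-\ell-1}$-cap, and by \eqref{frequencey-decom-1} its kernel has the form \eqref{sigma-kernel} with $\eta_{j,\tau}=\vp_\tau\eta_{j,\si_\ell}$ supported in $G_j^{-1}(2\tau)$---here Lemma \ref{Gauss-map-spt} guarantees $G_j^{-1}(\tau)$ is morally a $K^{-\ell-1}$-ball, so the supports have the expected size---and $|\partial^\al\eta_{j,\tau}|\le C_NK^{N(\ell+1)}$.

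Iterating for $\ell=1,\dots,\ka-1$ with $\ka=\log R^{1/2}/\log K$, the caps at stage $\ka$ have radius $K^{-\ka}=R^{-1/2}$: these are the $\theta\in\Theta$, and their kernels are of the form \eqref{kernel-half} with derivative bound $C_NR^{N/2}$, so the recursion terminates there leaving the narrow sum $A^\ka\sum_{\theta\in\Theta}\int_{B_R}\Sq_\theta f^p$. Collecting the retained broad contributions over all $\ell$ and all $\si_\ell\in\Si_\ell$ yields \eqref{broad-narrow-final}. The only mildly delicate point is the uniformity (in $j$ and in the stage $\ell$) of the derivative bounds for $\wt\vp_\tau\circ G_j$ and of the support descriptions of $G_j^{-1}(\tau)$---precisely what Lemmas \ref{Gauss-map-lem} and \ref{Gauss-map-spt} are designed to supply---while the remainder is a routine iteration with constant-chasing.
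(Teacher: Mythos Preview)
Your proposal is correct and follows essentially the same approach as the paper: the proposition in the paper is stated as a summary of the preceding discussion, which iterates the single-scale splitting \eqref{broad-narrow-lp} starting from $\si_0=\ZS^2$, keeping the broad pieces and feeding only the narrow pieces back in, until the caps reach radius $R^{-1/2}$. Your careful bookkeeping of the factors $A^\ell K^2$ and the verification that the kernel hypotheses on $\eta_{j,\tau}$ are reproduced at each level (via Lemmas~\ref{Gauss-map-lem} and~\ref{Gauss-map-spt}) is exactly what the paper intends but leaves implicit.
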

The kernel $m_{j,\theta}$ defined in \eqref{kernel-half} decays rapidly outside a $R^{1/2}\times R^{1/2}\times R$ tube with direction $c_\theta$. In fact, we have:
\begin{lemma}\label{lemdecaykernel}
Let $m_{j,\theta}$ be defined in \eqref{kernel-half}. For any $\be>0$, let $T$ be a $R^{1/2+\be}\times R^{1/2+\be}\times R^{1+\be}$ tube with direction $c_\theta$. Then recalling \eqref{weight}, for any $x\in\ZR^3\setminus T$, one has
\begin{equation}
\label{kernelestimate}
    |m_{j,\theta}(x)|\lesssim\rap(R)w_{T,N}(x).
\end{equation}
The implicit constant in \rap(R) depends on $\be$. In particular, the family of smooth functions $\{R^{-O(\be)}m_{j,\theta}\}$ is adapted to $T$ (Recall Definition \ref{adapt-def}).
\end{lemma}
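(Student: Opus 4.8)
The plan is to show that $m_{j,\theta}$ is essentially a smooth bump localized to a $R^{1/2}\times R^{1/2}\times R$ slab in frequency, so that its inverse Fourier transform decays rapidly away from the dual tube, and then to absorb the $R^{O(\be)}$ losses into the rapidly-decaying tail. First I would record the geometry of $\mathrm{supp}(\wh m_{j,\theta})$: by property (1) in the proposition, $\eta_{j,\theta}$ is supported in $G_j^{-1}(2\theta)$, which by Lemma~\ref{Gauss-map-spt} (applied with $M\sim R^{1/2}$) is contained in a $CR^{-1/2}$-ball $B_j$ in the $\bar\xi$-plane; combined with the constraint $|\xi_3-\Phi(\bar\xi;t_j)|\lesssim R^{-1}$ coming from the support of $\psi_j$, and the fact that $\nabla_{\bar\xi}\Phi=O(1)$, the full support of $\wh m_{j,\theta}$ lies in a rectangular slab $\Omega_{j,\theta}$ of dimensions $\sim R^{-1/2}\times R^{-1/2}\times R^{-1}$ whose long, thin direction is (up to a $\lesssim R^{-1/2}$ angular error) the normal direction $c_\theta$; this slab is exactly the reciprocal box to the tube $T_{j,\theta}$ of dimensions $R^{1/2}\times R^{1/2}\times R$ pointing in $c_\theta$, and $\wh\phi_{T_{j,\theta}}$ in the sense of Lemma~\ref{localL2}-type weights is comparable to $1$ on $\Omega_{j,\theta}$.

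Next I would estimate the derivatives of $\wh m_{j,\theta}$ with respect to the natural (anisotropic) coordinates adapted to $\Omega_{j,\theta}$. By property (2), $|\partial^\al\eta_{j,\theta}|\le C_N R^{N/2}$ for $|\al|=N$, i.e. $\eta_{j,\theta}$ is a standard bump at the $R^{-1/2}$ scale in $\bar\xi$; likewise the argument $\big(\xi_3-\Phi(\bar\xi;t_j)\big)R\,\phi(\xi;t)$ of $\psi_j$ has all derivatives $O(1)$ in the rescaled variables because $|\partial^\al\phi|\le C_\al$, $\nabla_{\bar\xi}\Phi=O(1)$, and the $R$ in front is exactly the reciprocal of the $R^{-1}$ thickness. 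Hence, after the affine change of variables $\xi\mapsto\xi'$ that maps $\Omega_{j,\theta}$ to a unit cube, $\wh m_{j,\theta}$ becomes a Schwartz bump with all derivative bounds uniform in $j,\theta$. Taking the inverse Fourier transform and undoing the change of variables yields, for every $N$,
\begin{equation}
\label{proof-prop-kernel-bound}
    |m_{j,\theta}(x)|\lesssim_N R^{-1/2}\cdot R^{-1/2}\cdot R^{-1}\,\big(1+|x\cdot e_1|/R^{1/2}+|x\cdot e_2|/R^{1/2}+|x\cdot e_3|/R\big)^{-N}=|T_{j,\theta}|^{-1}\,w_{T_{j,\theta},N}(x),
\end{equation}
where $\{e_1,e_2,e_3\}$ is the orthonormal frame of $T_{j,\theta}$ (here $|T_{j,\theta}|\sim R^2$ and I have absorbed the normalization into $w$). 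In particular $m_{j,\theta}$ is (up to a harmless constant) adapted to $T_{j,\theta}$.

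Finally I would pass from the $R^{1/2}\times R^{1/2}\times R$ tube $T_{j,\theta}$ to the dilated tube $T$ of dimensions $R^{1/2+\be}\times R^{1/2+\be}\times R^{1+\be}$ claimed in the statement. Since $T\supset T_{j,\theta}$ (after matching centers and directions, using that the direction error is $\lesssim R^{-1/2}\le R^\be$ so the slight tilt is absorbed by enlarging by $R^\be$), for $x\notin T$ one has $1+|x\cdot e_1|/R^{1/2}+|x\cdot e_2|/R^{1/2}+|x\cdot e_3|/R\gtrsim R^{\be}$ along at least one axis, so choosing $N$ a large multiple of $1/\be$ in \eqref{proof-prop-kernel-bound} gains a factor $R^{-N\be}$, which for any prescribed power of $R$ can be made to beat $|T|^{-1}\le 1$; thus the right-hand side of \eqref{proof-prop-kernel-bound} is $\le\rap(R)\,w_{T,N'}(x)$ for the desired $N'$, which is \eqref{kernelestimate}. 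The assertion that $\{R^{-O(\be)}m_{j,\theta}\}$ is adapted to $T$ then follows by comparing \eqref{proof-prop-kernel-bound} with $\wt w_{T,N}=|T|^{-1}w_{T,N}$: inside $T$ we have $w_{T_{j,\theta},N}\lesssim R^{O(\be)}\,w_{T,N}$ (a change of scales by $R^\be$ in each direction costs at most $R^{O(\be N)}$, but only $R^{O(\be)}$ once we fix $N=O(1)$ as in Definition~\ref{adapt-def} quantified over all $N$) and $|T_{j,\theta}|^{-1}=R^{O(\be)}|T|^{-1}$. The one genuine obstacle is bookkeeping the angular-error term: because the surfaces $\Gamma_j$ are only ``morally'' translates and $G_j^{-1}(2\theta)$ is only ``morally'' a ball, one must be a little careful that the long direction of $\Omega_{j,\theta}$ really is within $O(R^{-1/2})$ of $c_\theta$ uniformly in $j$ — this is exactly where the curvature hypothesis \eqref{Phi-conditions} and Lemmas~\ref{Gauss-map-lem}–\ref{Gauss-map-spt} are used, and it is the step I would write out most carefully; everything else is the routine ``non-stationary phase / Schwartz bump on a slab'' computation, which I would only sketch.
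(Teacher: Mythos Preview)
Your proposal is correct and follows essentially the same route as the paper: an affine change of variables reducing $\wh m_{j,\theta}$ to a unit-scale bump, followed by integration by parts (non-stationary phase). The paper makes the affine map explicit as a shear along the tangent plane of $\Gamma_j$ at the cap center (subtracting $\Phi(c_{j,\theta})+\nabla\Phi(c_{j,\theta})\cdot(\bar\xi-c_{j,\theta})$ before rescaling $\xi_3$ by $R$), and it is this shear---not merely ``$\nabla_{\bar\xi}\Phi=O(1)$'', which by itself would leave an $O(R^{1/2})$ derivative in the $\bar\om$ directions---that makes the argument of $\psi_j$ become $\om_3-\Psi(\bar\om)$ with $\Psi$ a second-order Taylor remainder having uniformly bounded derivatives; this is exactly the ``angular-error bookkeeping'' you flagged as the step to write out carefully.
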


\begin{proof}
From the definition of $m_{j,\theta}$ in \eqref{half-sqfcn}, we know that one can write $\eta_{j,\theta}(\bar\xi)=\eta(R^{1/2}(\bar\xi-c_{j,\theta}))$ for some smooth function $\eta$ supported in $B^2(0,3/4)$, where the vector $(c_{j,\theta},\Phi(c_{j,\theta}))$ is parallel to $c_\theta$. Hence one has
\begin{equation}
    m_{j,\theta}(x)=\int_{\ZR^3}e^{ix\cdot\xi}\eta(R^{1/2}(\bar\xi-c_{j,\theta}))\psi_j\Big(\frac{\xi_3-\Phi(\bar\xi;t_j)}{R^{-1}}\phi(\xi;t)\Big)d\xi.
\end{equation}
Consider the following change of variables:
\begin{equation}
    \cl_\tau:(\bar{\om},\om_3)=\big(\frac{\bar\xi-{c_{j,\theta}}}{R^{-1/2}},\frac{\xi_3-\nabla\Phi({c_{j,\theta}})\cdot\bar\xi-\Phi({c_{j,\theta}})+\nabla\Phi({c_{j,\theta}})\cdot{c_{j,\theta}}}{R^{-1}}\big).
\end{equation}
which leads to the linear transform
\begin{equation}
    \begin{cases}
    \bar\xi=R^{-1/2}\bar\om+c_{j,\theta}\\
    \xi_3=R^{-1}\om_3+R^{-1/2}\nabla\Phi({c_{j,\theta}})\cdot\bar\om+\Phi({c_{j,\theta}}).
    \end{cases}
\end{equation}
Now let us introduce a new function $\Psi(\bar\om)$ defined as
\begin{equation}
    \Psi(\bar\om):=R[\Phi({c_{j,\theta}}+R^{-1/2}\bar\om)-\Phi({c_{j,\theta}})-R^{-1/2}\nabla\Phi({c_{j,\theta}})\cdot\bar\om],
\end{equation}
so that we can rewrite $|m_{j,\theta}(x)|$ as
\begin{equation}
    |m_{j,\theta}(x)|=R^{-2}\Big|\int_{\ZR^3}e^{iR^{-1/2}(x_3\nabla\Phi(c_{j,\theta})+\bar x)\cdot\bar\om}e^{iR^{-1}x_3\om_3}a(\om;t_j)d\om\Big|.
\end{equation}
Here $a(\om;t_j)=\eta(\bar\om)\psi_j((\om_3-\Psi(\bar\om;t_j))\wt\phi(\om;t_j))$ is a smooth function, with $\wt\phi(\om;t_j)=\phi(\xi;t_j)$. Finally, one can obtain \eqref{kernelestimate}
by the method of (non) stationary phase. We leave out the details.
\end{proof}

In order to get \eqref{sqfcn-local-physics}, we only need to bound the right hand side of \eqref{broad-narrow-final}. Before going even further, let us introduce one more notation and one more lemma.

\begin{definition}
Let $\bq=\{q\}$ be the collection of finitely overlapping $CR^{-1/2}$-cubes in the frequency space that was introduced in the paragraph above \eqref{lattice-sqfcn}. For any cap $\tau\subset\ZS^2$ and any $1\leq j\leq R$, let $\Ga_{j}(\tau)$ be the subset of $\Gamma_j$ where the normal directions lie in $2\tau$:
\begin{equation}
    \Ga_{j}(\tau):=\{\xi\in\Ga_j:G_j(\bar\xi)\in2\tau\},
\end{equation}
where $G_j$ is the Gauss map introduced in \eqref{Gauss-map}.
Define $\bq(\tau)\subset\bq$ as
\begin{equation}
\label{q-tau}
    \bq(\tau):=\{q\in\bq:{\rm{there~is~a}}~j,~1\leq j\leq R,~q\cap \Ga_{j}(\tau)\not=\varnothing\}.
\end{equation}
\end{definition}
\noindent When $\Phi(\bar\xi;t)=t-|\bar\xi|^2/2$ or $\Phi(\bar\xi;t)=(t-|\bar\xi|^2)^{1/2}$ is the family of paraboloids or spheres, cubes in $\bq(\tau)$ are roughly contained in a tube, whose radius is the same as the radius of the cap $\tau$. For general $\Phi$ satisfying \eqref{Phi-conditions}, cubes in $\bq(\tau)$ are morally contained in a curved tube instead. The exact distribution of cubes in $\bf q(\tau)$ does not concern us. What we need for $\bf q(\tau)$ is the next lemma.

\begin{lemma}
\label{finiely-overlap-lattice-cube}
Suppose that $\cT$ is a collection of finitely overlapping $E^{-1}$-caps in $\ZS^2$ with $1<E\leq R^{1/2}$. Then the collection $\{\bf q(\tau)\}_{\tau\in\cT}$ is also finitely overlapped.
\end{lemma}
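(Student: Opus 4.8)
The plan is to show that each $CR^{-1/2}$-cube $q\in\bq$ lies in $\bq(\tau)$ for only $O(1)$ many $\tau\in\cT$. Fix such a cube $q$; we may assume $q\in\bq(\tau)$ for some $\tau$, so there is an index $j$ and a point $\xi\in q\cap\Ga_j(\tau)$, which forces $G_j(\bar\xi)\in 2\tau$. The key geometric input is the quantitative control on the Gauss maps from \eqref{Phi-conditions}, \eqref{derivative-Gauss} and Lemma \ref{Gauss-map-spt}: because $\Phi$ has uniformly bounded $C^2$-norm and uniformly elliptic Hessian, the Gauss maps $G_j$ are bi-Lipschitz with constants independent of $j$. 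First I would record the elementary consequence that if $\xi\in q\cap\Ga_j(\tau)$ and $\xi'\in q'\cap\Ga_{j'}(\tau)$ for possibly different $j,j'$, then $|G_j(\bar\xi)-G_{j'}(\bar\xi')|\lesssim$ (the diameter of $2\tau$) $\sim E^{-1}$, simply because both normal vectors sit in the same $E^{-1}$-cap $2\tau$.

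Next I would exploit the near-translation-invariance of the family $\{\Ga_j\}$. Since $|\partial_t\nabla_{\bar\xi}\Phi|=O(1)$ and the $t_j$ all lie in $[1,4]$, the Gauss maps satisfy $|G_j(\bar\xi)-G_{j'}(\bar\xi)|\lesssim |t_j-t_{j'}|\cdot$ — wait, actually one needs to be careful: the vertical translation changes normals only through $\nabla_{\bar\xi}\Phi$, and $\partial_t\nabla_{\bar\xi}\Phi=O(1)$ with $t_j\in[1,4]$ gives $|\nabla_{\bar\xi}\Phi(\bar\xi;t_j)-\nabla_{\bar\xi}\Phi(\bar\xi;t_{j'})|\lesssim|t_j-t_{j'}|\le 4$, which is too crude on its own. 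Instead the cleaner route is: fix one reference index, say $j_0$, and observe that for a point $\xi\in q$ the base coordinate $\bar\xi$ ranges over a fixed $O(R^{-1/2})$-ball $\bar q$ determined by $q$; then $G_{j_0}(\bar q)$ is, by Lemma \ref{Gauss-map-lem} and bi-Lipschitz-ness, contained in a cap of radius $O(R^{-1/2})\le O(E^{-1})$. For any $j$ with $q\cap\Ga_j(\tau)\ne\varnothing$ the normal $G_j(\bar\xi)\in 2\tau$ is within $O(E^{-1})$ of $G_{j_0}(\bar\xi)$; combining these, $2\tau$ must intersect the $O(E^{-1})$-neighbourhood of the fixed cap $G_{j_0}(\bar q)$. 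Hence all caps $\tau\in\cT$ with $q\in\bq(\tau)$ have their doubles $2\tau$ meeting a fixed $O(E^{-1})$-cap, and since $\cT$ is a finitely overlapping family of $E^{-1}$-caps, only $O(1)$ of them can do so.

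To make the second step rigorous I need the comparison $|G_j(\bar\xi)-G_{j_0}(\bar\xi)|\lesssim E^{-1}$ to actually hold, and this is where the argument must be organized correctly: it is not true for arbitrary $j,j_0$, but it is not needed for arbitrary $j,j_0$ either — we only compare $G_j(\bar\xi)$ to the set $G_{j_0}(\bar q)$ after having already used $G_j(\bar\xi)\in 2\tau$. So the logic is that membership $q\in\bq(\tau)$ pins $2\tau$ to within $O(E^{-1})$ of the single reference set $G_{j_0}(\bar q)$, with no cross-$j$ Gauss-map comparison required at all. I would write the proof in that order: (i) bi-Lipschitz bounds for each $G_j$ uniform in $j$, giving $\operatorname{diam}(G_{j_0}(\bar q))\lesssim R^{-1/2}\le E^{-1}$; (ii) for each $\tau$ with $q\in\bq(\tau)$, pick $j,\xi$ witnessing it, note $G_j(\bar\xi)\in 2\tau$ and $G_j(\bar\xi)\in G_j(\bar q)$, and use the uniform $C^2$ control to see $G_j(\bar q)$ sits in an $O(E^{-1})$-cap around a fixed point (the subtlety about which "fixed point" is handled by noting $G_j(\bar q)$ has diameter $\lesssim E^{-1}$ and contains $G_j(\bar\xi)\in 2\tau$, so $2\tau$ and $G_j(\bar q)$ are $O(E^{-1})$-close, hence $2\tau$ is $O(E^{-1})$-close to any one chosen representative, e.g. $G_j(c_q)$ — and these representatives for different $j$ differ by $O(R^{-1/2})$ by the translation-type control on $\Ga_j$, using $|j-j'|\le R$, $|\bar\xi_1-\bar\xi_2|\le R^{-1/2}$ together with the remark after \eqref{Phi-conditions} applied on $R^{1/2}$-blocks of indices); (iii) conclude by finite overlap of $\cT$.

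The main obstacle, and the part requiring the most care, is step (ii): honestly controlling how far apart the base-point normals $G_j(c_q)$ can be as $j$ ranges over $\{1,\dots,R\}$. The crude bound $|t_j-t_{j'}|\le 4$ is useless, so one genuinely needs the structural point that a cube $q$ can only meet $\Ga_j(\tau)$ for $j$ in a range where the surfaces are close — but in fact $q$ meets $\Ga_j$ at all only for $j$ in an $O(R^{1/2})$-window (since the $\Ga_j$ are $R^{-1}$-separated in the vertical direction while $q$ has vertical extent $\sim R^{-1/2}$), and over such a window the remark after \eqref{Phi-conditions} gives exactly $|G_j(\bar\xi_1)-G_{j'}(\bar\xi_2)|\lesssim R^{-1/2}$ when $|\bar\xi_1-\bar\xi_2|\le R^{-1/2}$. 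That observation — that $\bq(\tau)\ni q$ restricts $j$ to a short window — is the one nontrivial ingredient; once it is in place the rest is a packing count. I would state it as a short sub-claim and prove it from the $R^{-1}$-separation of the slices and the $CR^{-1/2}$-size of $q$, then finish the overlap count as above.
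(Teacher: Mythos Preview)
Your proposal is correct and, once the stream-of-consciousness detours are pruned, it is essentially the same argument as the paper's. Both proofs hinge on the same two observations: (i) a fixed $CR^{-1/2}$-cube $q$ can meet $\Ga_j$ only for indices $j$ in a window of length $O(R^{1/2})$ (from $|\partial_t\Phi|\sim 1$), and (ii) over such a window the Gauss maps are stable in the sense $|G_j(\bar\xi_1)-G_{j'}(\bar\xi_2)|\lesssim R^{-1/2}$ whenever $|\bar\xi_1-\bar\xi_2|\le R^{-1/2}$. The paper phrases the conclusion via pullbacks $G_j^{-1}(\tau)$ and shows cubes from far-apart $\bq(\tau_1),\bq(\tau_2)$ cannot intersect, whereas you phrase it via the forward images $G_j(\bar q)$ and count how many $\tau$ a single $q$ can belong to; these are equivalent packaging of the same geometry.
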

\begin{proof}
Fix a cap $\tau_1$ and an arbitrary $CR^{-1/2}$-cube $q_1\in\bq(\tau_1)$. We claim that if $|\tau_1-\tau_2|\geq C'E^{-1}$ for some large enough constant $C'$, then there does not have a $CR^{-1/2}$ cube $q_2\in\bq(\tau_2)$ such that $q_2\cap q_1\not=\varnothing$. This suffices to prove our lemma. 

To prove the our claim, we first note that by the condition $|\partial_{t}\Phi|\sim1$ in \eqref{Phi-conditions}, there are $O(R^{1/2})$ many $j$ such that $q_1\cap \Ga_j\not=\varnothing$. Let us denote by $J$ the collection of these $j$. For a fixed $j\in J$, from Lemma \ref{Gauss-map-spt} we know that the pullbacks $\{G_j^{-1}(\tau)\}_{\tau\in\cT}$ are finitely overlapped. Also, via \eqref{Phi-conditions} we know that $\dist(G_{j}^{-1}(\tau),G_{j'}^{-1}(\tau))\lesssim R^{-1/2}$ if $|j-j'|\lesssim R^{-1/2}$. These two arguments in particular implies that if $\dist(\tau_1,\tau_2)\geq C' R^{-1/2}$ for some large enough constant $C'$, $\dist(G_{j}^{-1}(\tau_1),G_{j'}^{-1}(\tau_2))\geq 100C R^{-1/2}$ for any $j'\in J$. It gives our claim. 
\end{proof}

The second part of \eqref{broad-narrow-final} is easy to handle by Lemma \ref{finiely-overlap-lattice-cube} and the next lemma.
\begin{lemma}
\label{half-lem}
Let $\Sq_\theta f$ be defined in \eqref{half-sqfcn} and let $\De_q f$ be defined in \eqref{lattice-sqfcn}. Suppose that $\theta\in\Theta$ is a $R^{-1/2}$-cap. Then for any $\e>0$ and $2\leq p<\infty$,
\begin{equation}
\label{half-esti}
    \|\Sq_\theta f\|_{L^p(B_R)}\leq C_\e R^{\e}\Big\|\Big(\sum_{q\in\bq(\theta)}|\De_q f|^2\Big)^{\frac{1}{2}}\Big\|_{L^p(w_{B_{R^{1+\e}},N})}.
\end{equation}
\end{lemma}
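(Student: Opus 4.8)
The plan is to prove Lemma \ref{half-lem} by exploiting the fact that $\Sq_\theta f$ only ``sees'' frequencies lying in the curved tube associated to the $R^{-1/2}$-cap $\theta$, which by the discussion preceding the lemma is essentially a union of $CR^{-1/2}$-cubes, namely the cubes in $\bq(\theta)$. First I would reduce the square function in $j$ to something that can be controlled cube-by-cube. Since the kernel $m_{j,\theta}$ has Fourier support in $N_{R^{-1}}(\Ga_j)\cap G_j^{-1}(2\theta)$, and by Lemma \ref{Gauss-map-spt} that frequency region sits inside $O(1)$ many cubes of $\bq(\theta)$ (in fact, for fixed $j$, a single $R^{-1/2}$-ball's worth), one can write $S_{j,\theta}f = \sum_{q\in\bq(\theta)} S_{j,\theta}(\De_q^\sharp f)$ where $\De_q^\sharp$ is a fattened Fourier projection to $Cq$ and only $O(1)$ terms are nonzero for each $j$. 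Hence pointwise $|S_{j,\theta}f|^2 \lesssim \sum_{q\in\bq(\theta)} |S_{j,\theta}(\De_q^\sharp f)|^2$, and summing in $j$,
\begin{equation}
\nonumber
    \Sq_\theta f(x)^2 \lesssim \sum_{q\in\bq(\theta)}\sum_{j=1}^R |S_{j,\theta}(\De_q^\sharp f)(x)|^2.
\end{equation}

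The next step is to dispose of the $j$-sum on the right. Here the point is that $\theta$ is an $R^{-1/2}$-cap, so $\eta_{j,\theta}$ localizes $\bar\xi$ to an $R^{-1/2}$-ball, and $\psi_j$ then localizes $\xi_3$ to an $R^{-1}$-interval; thus $\wh m_{j,\theta}$ is (up to rescaling) a bump adapted to an $R^{-1/2}\times R^{-1/2}\times R^{-1}$ slab, and by Lemma \ref{lemdecaykernel} the functions $R^{-O(\be)}m_{j,\theta}$ are adapted to the dual $R^{1/2+\be}\times R^{1/2+\be}\times R^{1+\be}$ tube $T_{j,\theta}$ pointing in direction $c_\theta$. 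For fixed $q$, the various $j$'s for which $\wh m_{j,\theta}\wh{\De_q^\sharp f}\neq 0$ number $O(R^{1/2})$, and the corresponding frequency slabs are finitely overlapping (this is exactly the content behind the $O(R^{1/2})$ count in the proof of Lemma \ref{finiely-overlap-lattice-cube}, combined with the $\psi_j$ localization in $\xi_3$). Therefore one can invoke the weighted estimate Lemma \ref{weightedlemma} — with $T = T_{j,\theta}$-type tube and $\Om$ the pseudo-congruent family of these slabs — to get, for any positive $g$,
\begin{equation}
\nonumber
    \sum_{j=1}^R \int |S_{j,\theta}(\De_q^\sharp f)|^2 g \lesssim R^{O(\be)}\int |\De_q^\sharp f|^2 (\wt w_{T,N}\ast g).
\end{equation}
Taking $g = (\Sq_\theta f / \|\Sq_\theta f\|_{L^p(B_R)})^{p'}$-type dual weight, or more simply running the $L^p$ version of this weighted inequality, one converts $\|\Sq_\theta f\|_{L^p(B_R)}$ into $\big\|(\sum_{q\in\bq(\theta)}|\De_q^\sharp f|^2)^{1/2}\big\|_{L^p(w_{B_{R^{1+\e}},N})}$ up to an $R^{O(\be)}\le R^\e$ loss, after absorbing $\wt w_{T,N}\ast(\cdot)$ into the enlarged weight $w_{B_{R^{1+\e}},N}$ and converting the fattened $\De_q^\sharp f$ back to $\De_q f$ by a standard almost-orthogonality/finite-overlap argument in $q$.

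I expect two points to be the main obstacles. The first is bookkeeping the passage from $\De_q^\sharp f$ (a fattened projection) back to the genuine $\De_q f$ of \eqref{lattice-sqfcn} together with choosing the absolute constant $C$ in the definition of $\bq$ — this is precisely the ``$C$ will be determined later in the proof of Lemma \ref{half-lem}'' remark, and it requires that $C$ be large enough that the Fourier support of each $m_{j,\theta}$, enlarged by the weight's tail, is swallowed by the $CR^{-1/2}$-cubes; one handles this by a Schur-test / Cotlar-type estimate using the rapid decay from Lemma \ref{lemdecaykernel}. The second, more essential obstacle is justifying the application of Lemma \ref{weightedlemma} in the vertical ($j$) direction: one must check that the $O(R^{1/2})$ frequency slabs $\{\,\mathrm{supp}\,\wh m_{j,\theta}\,\}_{j}$ (for a fixed cube $q$) really are pseudo-congruent and finitely overlapping, which uses the non-degeneracy $|\partial_t\Phi|\sim 1$ and $|\partial_t\nabla_{\bar\xi}\Phi| = O(1)$ from \eqref{Phi-conditions} to see that consecutive slabs are translates of one another along the $\xi_3$-axis with $R^{-1}$-spacing. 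Once that is in place, everything else is routine; the weighted Córdoba-type lemma does all the real work, which is why the statement here is essentially lossless (only $R^\e$) compared with the broad estimate \eqref{broad-esti-intro}.
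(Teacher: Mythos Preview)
Your proposal is correct and follows essentially the same route as the paper: apply the weighted C\'ordoba-type estimate (Lemma \ref{weightedlemma}) to the family $\{R^{-O(\be)}m_{j,\theta}\}_j$, which is adapted to a single $R^{1/2+\be}\times R^{1/2+\be}\times R^{1+\be}$ tube $T$ by Lemma \ref{lemdecaykernel}, then dualize with a $g\in L^{(p/2)'}(B_R)$ and absorb $\wt w_{T,N}\ast g$ into the weight $w_{B_{R^{1+\e}},N}$. The one place where the paper is cleaner is your first ``obstacle'': rather than introducing a fattened projection $\De_q^\sharp$ and then converting back, the paper simply chooses the constant $C$ in the definition of $\bq$ large enough that $\Ga_j(\theta)\cap q\neq\varnothing$ forces $\Ga_j(\theta)\subset q$, so that $m_{j,\theta}\ast f=m_{j,\theta}\ast(\De_q f)$ holds on the nose and Lemma \ref{weightedlemma} outputs $|\De_q f|^2$ directly.
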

\begin{proof}
We let $\be=\e^{1000}$ and let $T$ be the $R^{1/2+\be}\times R^{1/2+\be}\times R^{1+\be}$ rectangular tube centered at the origin with direction $c_\theta$, where $c_\theta\in\ZS^2$ is the center of $\theta$. Then by Definition \ref{adapt-def} and \eqref{kernelestimate}, the kernels $\{R^{-O(\be)}m_{j,\si}\}_j$ are all adapt to the tube $T$. Also, recalling that $q$ is a $CR^{-1/2}$-cube, we choose $C$ large enough such that if $\Ga_j(\theta)\cap q\not=\varnothing$, then $\Ga_j(\theta)\subset q$. Hence for each $CR^{-1/2}$ cube $q\in\bq(\theta)$, by Lemma \ref{weightedlemma} one has
\begin{equation}
\label{one-q}
    \sum_{j:\Ga_j(\theta)\cap q\not=\varnothing}\int|m_{j,\theta}\ast f|^2g\leq C_{N,\be} R^{O(\be)}\int|\De_q f|^2w_{T,N}\ast g 
\end{equation}
Note that there is a 1-bounded, positive function $g\in L^{p'/2}(B_R)$ with $\|g\|_{p'/2}=1$, such that
\begin{equation}
    \|\Sq_\theta f\|_{L^p(B_R)}^2=\sum_{j}\int|m_{j,\theta}\ast f|^2 g.
\end{equation}
We sum up all $j\in\{1,2,\ldots R\}$ in \eqref{one-q} to have
\begin{equation}
    \|\Sq_\theta f\|_{L^p(B_R)}^2\leq C_{2N,\be} R^{O(\be)}\int\sum_{q\in\bq(\theta)}|\De_q f|^2\wt w_{T,2N}\ast g.
\end{equation}
To deal with the weight $\wt w_{T,2N}$, one can check directly that uniformly for any $y\in B_R$, the new weight $\bar w(x,y):= \wt w_{T,2N}(x-y)/w_{B_{R^{1+\be}},N}(x)$ is integrable. In particular, we have $\|\bar w(\cdot,y)\|_1=O(1)$. Hence
\begin{equation}
\nonumber
    \int\sum_{q\in\bq(\theta)}|\De_q f|^2\wt w_{T,2N}\ast g=\int\sum_{q\in\bq(\theta)}|\De_q f(x)|^2w_{B_{R^{1+\be}},N}(x)\Big(\int \wt w(x,y)g(y)dy\Big)dx.
\end{equation}
Notice that one the other hand $\|\bar w(x,\cdot)\Id_{B_R}(\cdot)\|_1=O(1)$. One can thus use Young's inequality for integral operators to conclude that $\|\int \wt w(\cdot,y)g(y)dy\|_{p'/2}\lesssim\|g\|_{p'/2}$. Finally, we use H\"older's inequality to conclude from the above two estimates that
\begin{equation}
     \|\Sq_\theta f\|_{L^p(B_R)}\leq C_{\be}R^{O(\be)} \Big\|\Big(\sum_{q\in\bq(\theta)}|\De_q f|^2\Big)^{\frac{1}{2}}\Big\|_{L^p(w_{B_{R^{1+\be}},N})}.
\end{equation}
Since $\be=\e^{1000}<\e$, it proves \eqref{half-esti}
\end{proof}

The first part of \eqref{broad-narrow-final} is our main focus. In fact, we will prove the following result.

\begin{lemma}
\label{main-broad-thm}
Let $K\sim \log R$ be a big number and let $A=[\log K]$. Recall the definition of $\Sq_{\cT_{\si,k}}f$ in \eqref{sigma-sqfcn} and \eqref{sigma-kernel}. Suppose that $M=K^\ell$ for some $1\leq\ell<\log R^{1/2}/\log K$ and $\si\in\Si_\ell$ is a $M^{-1}$-cap. Then for any $\e>0$, $1\leq k\leq100$, $4\geq p\geq3.25$, 
\begin{equation}
\label{broad-main}
    \|\Br_{A}\Sq_{\cT_{\si,k}} f\|_{L^p(B_R)}\leq C_\e R^{\frac{p-3}{p}+\e} M^{\frac{6-2p}{p}}\Big\|\Big(\sum_{q\in\bq(\si)}|\De_q f|^2\Big)^{\frac{1}{2}}\Big\|_{L^p(w_{B_{R^{1+\e}},N})}.
\end{equation}
The function $\De_q f$ was introduced in \eqref{lattice-sqfcn}. 
\end{lemma}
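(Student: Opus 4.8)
The plan is to prove \eqref{broad-main} by a polynomial partitioning induction on the radius $R$, following the scheme sketched in the introduction and closely paralleling the argument in \cite{Wu}. First I would set up the wave packet decomposition (from Section 4) for each $S_{j,\tau}f$ at scale $R^{-1/2}$ and record the essential localization: each wave packet is concentrated on an $R^{1/2+\e}\times R^{1/2+\e}\times R^{1+\e}$ plank whose orientation is dictated by the normal direction on $\Ga_j(\tau)$. Because $\si$ is a $K^{-\ell}$-cap and we are looking at the $A$-broad part with $A=[\log K]$, a standard broad-narrow iteration \emph{within} the cap $\si$ (iterating down from $K^{-\ell}$-caps through $K^{-\ell-1},\dots$ to $R^{-1/2}$-caps, exactly as in the second-level/higher-level decomposition described in the introduction) reduces matters to controlling the fully-broad contribution at the bottom scale, up to an acceptable $R^{O(\e^{10})}=R^{O(\e)}$ loss and after paying for the many ($\e^{-10}/2$) levels. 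At each level one keeps the broad part and recurses on the narrow parts; the narrow parts at the very bottom are handled by Lemma \ref{half-lem} together with the finite-overlap statement Lemma \ref{finiely-overlap-lattice-cube}, which is what allows the $\bq(\si)$-indexed square function on the right of \eqref{broad-main} to absorb all of them.

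For the genuinely broad piece, I would run the polynomial partitioning iteration of Sections 5--6: choose a degree-$d$ polynomial with $d=R^{\e^6}$ that bisects the mass of $\Br_A\Sq_{\cT_{\si,k}}f$ over $B_R$ into $\sim d^3$ cells, distinguishing the \emph{cellular} case (most mass in the cells, where one rescales each cell to a ball of radius $\sim R/d$ and applies the inductive hypothesis) from the \emph{transverse/algebraic} case (most mass in the neighborhood of the zero set, handled via transversality of the surfaces $\Ga_j(\tau)$ to the variety, again recursing). Iterating this produces, after $s=s_c+s_t$ steps, a family of final cells $O_s$ on each of which $\Br(\Sq_\si f)_{O_s}$ is supported at a small scale, and on which a bilinear/narrow reduction (Section 10) lets us pass from the $L^p$ norm to the $L^2$ norm of $(\Sq_\si f)_{O_s}$. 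The bookkeeping of the exponents has to reproduce exactly the target $R^{(p-3)/p+\e}M^{(6-2p)/p}$: the $R^{(p-3)/p}$ comes from the interplay of the number of cells, the rescaling factors, and the $L^2$-to-$L^p$ conversion at $p=3.25$, while the extra gain $M^{(6-2p)/p}=M^{-(2p-6)/p}$ (negative exponent since $p>3$) reflects the fact that all frequencies lie in a $K^{-\ell}=M^{-1}$-cap, so the relevant wave packets are confined to an $M^{-1}$-tube's worth of directions, giving the improved $L^2$ estimate via Lemma \ref{localL2} and Lemma \ref{weightedlemma}.

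The main obstacle — and the step I would spend the most care on — is the final summation of $\sum_{O_s}\|(\Sq_\si f)_{O_s}\|_2^2$, i.e. establishing the refined bound \eqref{refinement}
\[
    \sum_{O_s\in\co_s}\|(\Sq_\si f)_{O_s}\|^2_2\lesssim R^{O(\e^2)}R^{-s_t\de}\,\ka\,\|\,\Id_X\Sq_\si f\,\|^2_2
\]
rather than the naive $R^{O(\e^2)}R^{-s_t\e^2}\|\Sq_\si f\|_2^2$. The point is that the naive estimate loses a factor that is not compensated when one runs polynomial partitioning iteratively, and the cure is the backward algorithm of Sections 8--9: one builds, step by step from the last step $s$ back to the first, a nested structure of tubes at adjacent scales, and the key input is an incidence-geometric estimate relating the number of $R/d^{k}$-tubes through a point to the number of finer tubes. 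This is where the $\|\cdot\|_2$-based loss recovery (as opposed to the $\|\cdot\|_2^\al\|\cdot\|_\infty^{1-\al}$ mixed norm of \cite{Guth-R3}) enters, and it is the technical heart of the paper. Once \eqref{refinement} is in hand, converting $\|\Id_X\Sq_\si f\|_2^2$ back to the $\bq(\si)$-square function on the right-hand side of \eqref{broad-main} is done by Lemma \ref{weightedlemma} (the pseudo-congruent weighted estimate) applied with the $R^{-1/2}$-cube tiling, and the proof closes.
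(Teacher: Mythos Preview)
Your plan captures the overall spirit of the argument (polynomial partitioning iteration, bilinear estimate at the bottom, backward algorithm for the $L^2$ summation), but it misses the single structural step that the paper actually uses to prove Lemma~\ref{main-broad-thm}, and it inserts an extra step that does not belong.

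\textbf{The missing reduction to rescaled balls.} The paper does \emph{not} run polynomial partitioning over $B_R$ for $\Br_A\Sq_{\cT_{\si,k}}f$. Instead, after fixing $\si$ and $M$ and renaming $\Sq_{\cT_{\si,k}}f=:\Sq f$, it observes that the kernels $m_j$ are essentially supported in a \emph{rescaled ball} $\cp_R$ (Definition~\ref{recalsed-ball}): a tube of dimensions $M^{-1}R\times M^{-1}R\times R$ pointing in the direction $c_\si$. The actual content of the proof of Lemma~\ref{main-broad-thm} is the one-line reduction ``by summing up certain translations, Theorem~\ref{local-broad-thm} implies \eqref{broad-main-2} and hence Lemma~\ref{main-broad-thm}.'' All of the heavy machinery of Sections~5--10 is devoted to Theorem~\ref{local-broad-thm}, the estimate on $\cp_R$. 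This is not cosmetic: the polynomial partitioning (Proposition~\ref{final-partition}) is modified so that the cells sit inside smaller \emph{rescaled} balls $\cp_{O}$, the cutoff $\vp_O$ has Fourier support in the anisotropic dual $\wh\cp_O$ of size $Md\rho^{-1}\times Md\rho^{-1}\times d\rho^{-1}$, and the $M$-factors in \eqref{estimate-end-small}, \eqref{bilinear-esti} and \eqref{ineqofX} all trace back to this geometry. Running the partitioning over an ordinary ball $B_R$ would not produce the correct $M^{(6-2p)/p}$ exponent, and your one-sentence explanation via Lemma~\ref{localL2} and Lemma~\ref{weightedlemma} does not substitute for this.

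\textbf{The extra broad--narrow iteration inside $\si$ is not part of the proof.} You propose to iterate broad--narrow from $K^{-\ell}$-caps down to $R^{-1/2}$-caps \emph{inside} the proof of \eqref{broad-main}, with the bottom narrow parts handled by Lemma~\ref{half-lem}. But that multi-level broad--narrow decomposition is precisely what happens \emph{before} Lemma~\ref{main-broad-thm} is stated (it produces \eqref{broad-narrow-final}); Lemma~\ref{main-broad-thm} is the estimate for a \emph{single} broad term at a fixed level $\ell$, and its proof attacks $\Br_A\Sq_{\cT_{\si,k}}f$ directly with no further decomposition of the cap. Lemma~\ref{half-lem} plays no role here.

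Once you insert the rescaled-ball reduction and drop the internal broad--narrow step, your description of the remaining argument (polynomial partitioning iteration with cell/transverse/tangent trichotomy, the small-$r$ endpoint of Section~7, the bilinear estimate of Lemma~\ref{bilinear-lem}, and the backward algorithm yielding \eqref{refinement}) matches the paper's route to Theorem~\ref{local-broad-thm}.
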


Via Lemma \ref{finiely-overlap-lattice-cube}, in \eqref{broad-narrow-final}, we can sum up the first part using Lemma \ref{main-broad-thm}, and sum up the second part using Lemma \ref{half-lem} to conclude \eqref{sqfcn-local-physics}. Since Lemma \ref{finiely-overlap-lattice-cube} and Lemma \ref{half-lem} are already verified, we have the following lemma
\begin{lemma}
\label{intermediate-reduction}
Lemma \ref{main-broad-thm} implies Theorem \ref{sqfcn-local-thm}.
\end{lemma}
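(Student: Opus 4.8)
The plan is to insert Lemma~\ref{main-broad-thm} and Lemma~\ref{half-lem} into the broad--narrow decomposition \eqref{broad-narrow-final} and sum; the only thing to check is that the losses accumulated over the $\ka\sim\log R/\log K$ iterations behind \eqref{broad-narrow-final} are swallowed by the power-of-$M$ gain in \eqref{broad-main} and by one extra factor $R^\e$. We may assume $R$ exceeds an absolute constant: for bounded $R$, \eqref{sqfcn-local-physics} follows from $\|\Sq f\|_{L^p(B_R)}\lesssim R^{1/2}\|f\|_p$ and a congruent-cube Littlewood--Paley inequality, the loss being absorbed into $C_{\e,N}$. We also restrict to $3.25\le p\le4$, the range where Lemma~\ref{main-broad-thm} applies; for $p>4$ the estimate is easier and we omit it.

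First I would dispose of the broad part of \eqref{broad-narrow-final}. Fix $\ell\in\{1,\dots,\ka-1\}$ and $1\le k\le100$, put $M=K^\ell$, and apply \eqref{broad-main} to each $\Br_A\Sq_{\cT_{\si_\ell,k}}f$ with $\si=\si_\ell\in\Si_\ell$; raising to the $p$-th power,
\begin{equation*}
\int_{B_R}\Br_A\Sq_{\cT_{\si_\ell,k}}f^p\le C_\e^p R^{p-3+p\e}K^{\ell(6-2p)}\Big\|\Big(\sum_{q\in\bq(\si_\ell)}|\De_qf|^2\Big)^{1/2}\Big\|_{L^p(w_{B_{R^{1+\e}},N})}^p .
\end{equation*}
Next I would sum over $\si_\ell\in\Si_\ell$. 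Because $p\ge2$, the inequality $\sum_i x_i^{p/2}\le\big(\sum_i x_i\big)^{p/2}$ for nonnegative reals, combined with the finite overlap of $\{\bq(\si_\ell)\}_{\si_\ell\in\Si_\ell}$ from Lemma~\ref{finiely-overlap-lattice-cube} (valid since $1<K^\ell\le R^{1/2}$), yields pointwise
\begin{equation*}
\sum_{\si_\ell\in\Si_\ell}\Big(\sum_{q\in\bq(\si_\ell)}|\De_qf|^2\Big)^{p/2}\lesssim\Big(\sum_{q}|\De_qf|^2\Big)^{p/2} .
\end{equation*}
Multiplying by $A^\ell K^2$, summing over the $\le100$ values of $k$ and then over $\ell\ge1$, and using that $A=[\log K]$ and $6-2p\le-\tfrac12$, so that $AK^{6-2p}\le(\log K)\,K^{-1/2}<\tfrac12$ once $R$ is large, the geometric series $\sum_{\ell\ge1}(AK^{6-2p})^\ell$ is $O(1)$. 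Hence the broad part of \eqref{broad-narrow-final} is at most $C_\e^p K^2 R^{p-3+p\e}\big\|(\sum_q|\De_qf|^2)^{1/2}\big\|_{L^p(w_{B_{R^{1+\e}},N})}^p$.

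For the narrow part I would apply \eqref{half-esti} to each $\theta\in\Theta$, raise to the $p$-th power, and sum over $\theta$ in the same way, now using Lemma~\ref{finiely-overlap-lattice-cube} with $E=R^{1/2}$; this bounds $A^\ka\sum_{\theta}\int_{B_R}\Sq_\theta f^p$ by $A^\ka C_\e^p R^{p\e}\big\|(\sum_q|\De_qf|^2)^{1/2}\big\|_{L^p(w_{B_{R^{1+\e}},N})}^p$. It remains to absorb $K^2$ and $A^\ka$. With $K\sim\log R$ one has $K^2\sim(\log R)^2\le R^\e$, and since $A\sim\log\log R$ and $\ka\sim\tfrac12\log R/\log\log R$, $\log A^\ka\lesssim(\log R)\,\tfrac{\log\log\log R}{\log\log R}=o(\log R)$, so $A^\ka\le R^\e$ for $R$ large. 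Adding the broad and narrow contributions and using $p-3>0$,
\begin{equation*}
\int_{B_R}\Sq f^p\le C_{\e,N}R^{p-3+(p+1)\e}\Big\|\Big(\sum_{q}|\De_qf|^2\Big)^{1/2}\Big\|_{L^p(w_{B_{R^{1+\e}},N})}^p ;
\end{equation*}
taking $p$-th roots, enlarging the weight, and replacing $\e$ by $\e p/(p+1)$ gives \eqref{sqfcn-local-physics}.

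The main obstacle is the bookkeeping around $K$: it must be chosen large enough that $(\log K)\,K^{(6-2p)/2}<1$ --- so that the sum over levels $\ell$ converges and the gain $M^{(6-2p)/p}$ in \eqref{broad-main} beats the multiplicity $A^\ell$ of broad pieces at level $\ell$ --- and at the same time small enough that $A^\ka=R^{o(1)}$, so that the last narrow remainder of \eqref{broad-narrow-final} is negligible; the choice $K\sim\log R$, $A=[\log K]$ is exactly the window in which both hold. Everything genuinely analytic sits inside Lemma~\ref{main-broad-thm} and Lemma~\ref{half-lem}, which we treat as given.
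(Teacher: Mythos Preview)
Your proof is correct and follows exactly the approach the paper sketches in the sentence preceding Lemma~\ref{intermediate-reduction}: plug Lemma~\ref{main-broad-thm} and Lemma~\ref{half-lem} into the two pieces of \eqref{broad-narrow-final}, then sum over caps via Lemma~\ref{finiely-overlap-lattice-cube}. You have simply made explicit the bookkeeping the paper leaves to the reader --- the geometric series $\sum_\ell (AK^{6-2p})^\ell$ with ratio $(\log K)K^{-1/2}<1$, and the absorption of $K^2$ and $A^\ka$ into $R^\e$ using $K\sim\log R$, $A\sim\log\log R$ --- so there is nothing substantively different to compare.
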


From now on, let us fix the factors $M,\si$. To save notations, we use $\Sq f$ in place of $\Sq_{\cT_{\si_\ell,k}} f$. Comparing \eqref{sigma-sqfcn} and \eqref{sigma-kernel}, the new square function $\Sq f$ is defined as
\begin{align}
\label{sigma-sqfcn-2}
    &S_{j}f:=m_{j}\ast f,\hspace{1cm}\Sq f(x):=\Big(\sum_{j=1}^R|S_{j}f(x)|^2\Big)^{1/2}\\ \label{sigma-kernel-2}
    &m_{j}:=\sum_{\tau\in\cT}m_{j,\tau},\hspace{5mm}\wh{m}_{j,\tau}(\xi):=\eta_{j,\tau}(\bar\xi)\psi_j\Big(\frac{\xi_3-\Phi(\bar\xi;t_j)}{R^{-1}}\phi(\xi;t)\Big).
 \end{align}
Here the set $\cT$ is a collection of $8(MK)^{-2}$-separated $(MK)^{-1}$-caps in an ambient $M^{-1}$-cap $\si$, and the smooth function $\eta_{j,\tau}$ satisfies
\begin{enumerate}
    \item $\eta_{j,\tau}$ is supported in $G_j^{-1}(2\tau)$, where $G_j$ is the Gauss map defined in \eqref{Gauss-map}, 
    \item For any multi-index $\al\in\ZN^2$ that $|\al|=N$, $|\partial^\al\eta_{j,\si}|\leq C_N(MK)^{N(\ell+1)}$.
\end{enumerate}
Under this new notations, comparing to \eqref{broad-main}, we would like to prove
\begin{equation}
\label{broad-main-2}
    \|\Br_{A}\Sq f\|_{L^p(B_R)}\leq C_\e R^{\frac{p-3}{p}+\e} M^{\frac{6-2p}{p}}\Big\|\Big(\sum_{q\in\bq}|\De_q f|^2\Big)^{\frac{1}{2}}\Big\|_{L^p(w_{B_{R^{1+\e}},N})}.
\end{equation}
The two estimate \eqref{broad-main} and \eqref{broad-main-2} are parallel, so we only focus on \eqref{broad-main}.

\medskip

Let us introduce one more definition to further reduce \eqref{broad-main-2} to a more local estimate, which is our main result.

\begin{definition}[Rescaled balls]
\label{recalsed-ball}
Let $\si$, $M>1$ be the ambient factors in \eqref{broad-main-2} (See also \eqref{broad-main}). For any radius $r>1$, we say a geometric object in $\ZR^3$ is a ``rescaled ball" radius $r$, if it is a tube of length $r$ and radius $M^{-1}r$, pointing to the direction $c_\si$. We use $\cp(x,r)$ to denote a rescaled $r$ ball centered at $x\in\ZR^3$. For simplicity, we use $\cp_r$ to denote $\cp(0,r)$.
\end{definition}

\noindent Note that in \eqref{sigma-kernel-2}, the kernel $m_{j}$ is essentially supported in the rescaled ball $\cp_{R}$. This is our motivation for introducing the term ``rescaled ball". Under the orthonormal coordinate $\{e_1,e_2,e_3\}$ with $e_3=c_\si$, we define an associated weight $w_{\cp_R}$ as
\begin{equation}
\label{weight-cp-R}
    w_{\cp_R}(x):=\Big(1+\frac{M|x_1|}{R}+\frac{M|x_2|}{R}+\frac{|x_3|}{R}\Big)^{-N_0}.
\end{equation}
Here $N_0$ is a big number.

\begin{theorem}[The main result for broad functions]
\label{local-broad-thm}
Let $\Sq f$ be defined in \eqref{sigma-sqfcn-2} with two ambient factors $\si$ and $M$. Suppose that $A=[\log\log R]$. Then for any $\e>0$ and $4\geq p\geq3.25$, 
\begin{align}
\label{local-main}
    \int_{\cp_R}\Br_{A}\Sq f^p\leq &\, C_\e R^{p-3+p\e} M^{6-2p}\int\Big(\sum_{q\in\bq }|\De_q f|^2\Big)^{p/2}w_{\cp_R}.
\end{align}
\end{theorem}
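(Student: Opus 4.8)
The plan is to prove Theorem \ref{local-broad-thm} by a polynomial partitioning iteration on the broad function $\Br_A\Sq f$ over the rescaled ball $\cp_R$, in the spirit of \cite{Guth-R3} and \cite{Wu}, but adapted to the curved Littlewood--Paley square function and to the anisotropic ``rescaled ball'' geometry dictated by the ambient $M^{-1}$-cap $\si$. First I would perform the wave packet decomposition of each $S_jf$ relative to $\cp_R$ (Section 4), writing $S_jf=\sum_T S_jf_T$ where $T$ ranges over $R^{1/2}\times R^{1/2}\times R$-tubes (after the $e_3=c_\si$ rescaling these become the natural ``rescaled'' tubes). Then, choosing the degree $d=R^{\e^6}$, I would apply polynomial partitioning to $\int_{\cp_R}\Br_A\Sq f^p$: at each stage one either finds a degree-$d$ polynomial whose zero set's neighborhood carries most of the mass (the \emph{cellular} case), producing $\sim d^3$ sub-cells on each of which the broad function restricts to a rescaled $R/d$-type ball, or most of the mass concentrates on the neighborhood of the variety, which after a further dichotomy splits into a \emph{transverse} case (tubes meeting the variety transversally, handled by a bilinear/Córdoba-type $L^2$ argument) and a \emph{tangential} case (tubes tangent to the variety, reducing the effective dimension and allowing induction on the radius with a polynomial gain). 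I would iterate this, tracking the number $s_c$ of cell steps and $s_t$ of transverse steps, until reaching a final scale where the remaining broad pieces $(\Sq f)_{O_s}$ can be estimated by their $L^2$ norms via the bilinear estimate of Section 10.

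The heart of the argument is the summation of $\sum_{O_s\in\co_s}\|(\Sq f)_{O_s}\|_2^2$ after the iteration. The naive bound $\lesssim R^{O(\e^2)}R^{-s_t\e^2}\|\Sq f\|_2^2$ is not good enough to close the induction at $p=3.25$, so following the sketch I would prove the refinement
\begin{equation}
\label{plan-refinement}
    \sum_{O_s\in\co_s}\|(\Sq f)_{O_s}\|_2^2\lesssim R^{O(\e^2)}R^{-s_t\de}\ka\,\|\1_X\Sq f\|_2^2,
\end{equation}
where $X$ is an appropriate union of tubes carrying the surviving mass and $\ka\lesssim1$. This is where I expect the main obstacle to lie: to prove \eqref{plan-refinement} one runs a \emph{backward algorithm} (Section 8--9) starting from the final cells $\co_s$ and working up the polynomial partitioning tree step by step, at each level relating the wave packets at scale $R/d^u$ to those at scale $R/d^{u+1}$. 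The crucial input is an incidence-geometric estimate controlling how many fine tubes at one scale can be contained in a single coarse tube while respecting the transversality recorded by the iteration; this is what lets one either extract the gain $\ka$ in the $L^2$ bound or gain the localization $\1_X$ in the support. Converting $\|\1_X\Sq f\|_2^2$ back to the auxiliary square function $\|(\sum_{q\in\bq}|\De_q f|^2)^{1/2}\|_p$ then uses Lemma \ref{weightedlemma} together with the local $L^2$ estimate Lemma \ref{localL2} and Hölder's inequality, picking up the factors $R^{(p-3)/p}$ and $M^{(6-2p)/p}$ through the rescaled-ball geometry.

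The bookkeeping for the exponents is delicate: in the cellular case one gains $d^{-3/p}$ per cell from having $\sim d^3$ cells but pays $d^{(p-3)/p\cdot?}$ type losses from the smaller radius in the inductive hypothesis, so one needs $p\ge 3.25$ precisely for the cell gain to beat the loss; in the transverse case one uses the bilinear $L^2$ estimate and the $R^{-s_t\de}$ factor from \eqref{plan-refinement}; in the tangential case the reduced dimensionality of the $R^{1/2}$-neighborhood of the variety gives a genuine improvement over the trivial bound. One assembles all three, sums over the $\le \e^{-10}/2$ broad-narrow levels and over the $s$ polynomial partitioning steps using the Littlewood--Paley theorem for congruent cubes to control the accumulated $K$- and $A$-powers (which are only $R^{o(1)}$ since $K\sim\log R$ and $A\sim\log\log R$), and absorbs everything into the $R^{p\e}$ slack. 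The decay Lemma \ref{lemdecaykernel} ensures the tails of the wave packets outside the rescaled balls contribute only $\rap(R)$ and can be discarded throughout.
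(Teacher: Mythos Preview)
Your overall strategy matches the paper, but you conflate the two distinct ways the polynomial partitioning iteration terminates, and this matters for the structure of the argument. In the paper the iteration stops either when a \emph{tangent state} first appears (Theorem \ref{scenario1}) or when the scale $r_s$ drops below $R^{\e/10}M^2$ without ever hitting a tangent state (Theorem \ref{scenario2}); these two scenarios are handled by entirely separate arguments. The small-$r$ scenario (Section 7) is the easy one: each $f_{j,O_s}$ has Fourier support in an $M^{-1}\times M^{-1}\times M^{-2}$ slab, so one uses Bernstein's inequality directly together with the crude iterated $L^2$ bound \eqref{scenario2.3}; neither the backward algorithm nor the bilinear estimate is used here. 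The tangent scenario (Sections 8--9) is \emph{terminal}---one does not ``induce on the radius with a polynomial gain'' as you write---and it is precisely here that both the backward algorithm and the bilinear estimate (Lemma \ref{bilinear-lem}) enter. Moreover, the backward algorithm produces not only an $L^2$ refinement of the kind you wrote but also a crucial \emph{support} estimate $|X|\lesssim R^{O(\de)}R^3\min(M^{-2},\,M^{-1}r^{-1/2}|\co_{s+1}|v^{-1})$ (see \eqref{ineqofX}), tracked via horizontal-slice bounds \eqref{cell2}, \eqref{trans2} and Lemma \ref{tangenttubelemma}; it is the interplay of this support bound with the bilinear output $r^{-5(p-2)/4}M^{(p-2)/2}$ that forces $p\ge 3.25$ in the final numerology of Section 9.

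Two smaller points: your description of the cellular case is inverted (cellular means the mass lies in the cells \emph{away from} the wall, not on the zero set's neighborhood), and the summation ``over the $\le\e^{-10}/2$ broad-narrow levels'' is not part of the proof of Theorem \ref{local-broad-thm} at all---that summation is how one deduces Theorem \ref{sqfcn-local-thm} from Theorem \ref{local-broad-thm} via Lemma \ref{intermediate-reduction}, whereas here $\si$ and $M$ are fixed throughout.
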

\noindent By summing up certain translations, Theorem \ref{local-broad-thm} implies \eqref{broad-main-2} and hence Lemma \ref{main-broad-thm}. Thus, via Lemma \ref{intermediate-reduction}, we can conclude our second reduction in the next proposition.

\begin{proposition}[Second reduction]
\label{second-reduction}
Theorem \ref{local-broad-thm} implies Theorem \ref{sqfcn-local-thm}.
\end{proposition}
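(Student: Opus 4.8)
The plan is to assemble Proposition~\ref{second-reduction} from the chain of reductions already in place, so that the only genuinely new content is the passage from the single-ball estimate \eqref{local-main} for a fixed pair $(\si,M)$ to the global broad estimate \eqref{broad-main-2}, and hence to \eqref{broad-main}. By Lemma~\ref{intermediate-reduction} (together with Proposition~\ref{first-reduction}) it suffices to deduce Lemma~\ref{main-broad-thm}, i.e.\ \eqref{broad-main}, from Theorem~\ref{local-broad-thm}. As noted in the excerpt, \eqref{broad-main} and \eqref{broad-main-2} are parallel after the bookkeeping change of notation $\Sq f = \Sq_{\cT_{\si_\ell,k}}f$, so I will work with \eqref{broad-main-2}; recovering \eqref{broad-main} is then purely notational.

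First I would fix the ambient cap $\si \in \Si_\ell$ and $M = K^\ell$, and cover the ball $B_R$ by a boundedly overlapping family of rescaled $R$-balls $\{\cp(y,R)\}_{y\in Y}$ in the sense of Definition~\ref{recalsed-ball}: each $\cp(y,R)$ is a tube of length $R$ and radius $M^{-1}R$ in the direction $c_\si$. Since $M \le R^{1/2}$, a ball $B_R$ is covered by $O(M^2) = O(R)$ such tubes, and the family can be taken with $O(1)$ overlap. On each $\cp(y,R)$, Theorem~\ref{local-broad-thm} applied to the translated function gives
\begin{equation*}
\int_{\cp(y,R)}\Br_A \Sq f^p \;\le\; C_\e R^{p-3+p\e} M^{6-2p}\int\Big(\sum_{q\in\bq}|\De_q f|^2\Big)^{p/2} w_{\cp(y,R)},
\end{equation*}
where $w_{\cp(y,R)}$ is the translate of the weight in \eqref{weight-cp-R}. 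Summing over $y \in Y$, the left side dominates $\int_{B_R}\Br_A\Sq f^p$ (up to the $O(1)$ overlap), while on the right the bounded-overlap property yields $\sum_{y} w_{\cp(y,R)}(x) \lesssim w_{B_{R^{1+\e}},N}(x)^{p}$ pointwise after adjusting $N$: indeed each $w_{\cp(y,R)}$ is $\le 1$ and decays polynomially off $\cp(y,R)$, so the sum is controlled by the (non-degenerate, isotropic) weight $w_{B_{R^{1+\e}},N}$ on the larger ball, and raising to the $p$-th power inside the integral is harmless since $p \ge 2$ and the integrand is a sum of squares. This produces exactly the right-hand side of \eqref{broad-main-2}, hence \eqref{broad-main}, hence Lemma~\ref{main-broad-thm}.

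The remaining step is to invoke the already-established implications in order: Lemma~\ref{main-broad-thm} $\Rightarrow$ Theorem~\ref{sqfcn-local-thm} (this is Lemma~\ref{intermediate-reduction}, which combines the broad-narrow decomposition \eqref{broad-narrow-final}, Lemma~\ref{finiely-overlap-lattice-cube} to sum the broad terms at all $\ka-1 = O(\log R/\log K)$ levels without losing more than $A^\ka K^2 = R^{o(1)}$, and Lemma~\ref{half-lem} to absorb the $O(R^{-1/2})$-cap narrow terms $\Sq_\theta f$). Chaining this with the present deduction gives Theorem~\ref{local-broad-thm} $\Rightarrow$ Theorem~\ref{sqfcn-local-thm}, which is the claim of Proposition~\ref{second-reduction}.

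I expect the only delicate point to be the weight bookkeeping in the second paragraph: one must check that the anisotropic rescaled-ball weights $w_{\cp(y,R)}$ sum, over a bounded-overlap cover of $B_R$ by direction-$c_\si$ tubes, to something controlled by the isotropic weight $w_{B_{R^{1+\e}},N}$ on $B_{R^{1+\e}}$, with the exponent matching after the $p$-th power is taken. This is a routine but slightly fussy computation — it uses that the tubes all point the same way so their union is contained in $B_{R^{1+\e}}$ and that the tails decay fast enough that distant tubes contribute negligibly — and I would carry it out by splitting the sum according to the distance of $y$ from $x$ and using $N$ large. Everything else is either a verbatim citation of an earlier lemma or a trivial translation/overlap argument.
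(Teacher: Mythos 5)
Your proposal is correct and follows essentially the same route as the paper: the paper's own proof of Proposition \ref{second-reduction} is precisely "sum translated copies of the local estimate \eqref{local-main} over a boundedly overlapping cover of $B_R$ by rescaled $R$-balls in the direction $c_\si$, absorbing the anisotropic weights $w_{\cp(y,R)}$ into $w_{B_{R^{1+\e}},N}$, to get \eqref{broad-main-2}/\eqref{broad-main}, and then invoke Lemma \ref{intermediate-reduction}." The only slip is cosmetic: no $p$-th power should appear on the weight (the weight multiplies $\big(\sum_q|\De_q f|^2\big)^{p/2}$ linearly, and one just needs $\sum_y w_{\cp(y,R)}\lesssim w_{B_{R^{1+\e}},N}$, which holds since the weights are $\leq 1$ with rapidly decaying tails and $N$ is at our disposal), so this does not affect the argument.
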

Combining Proposition \ref{first-reduction} and \ref{second-reduction}, we know that Theorem \ref{local-broad-thm} implies our main result Theorem \ref{sqfcn-thm-intro}. From now on, let us focus on the broad function $\Br_{A}\Sq f$ and the estimate \eqref{local-main}.

\subsection{Auxiliary broad functions} \hfill

Finally, we are going to define a new broad functions for technical reasons. We begin with the definition of a square function for vectors.

\begin{definition}[Square function for vector-valued functions]\label{defsqfunc}
Suppose that we are given a vector-valued function $\vg=\{g_1,\cdots,g_R\}$. We define the square function of $\vg$ as:
\begin{equation}
    \Sq\vg(x):=\Big(\sum_{j}|g_j(x)|^2\Big)^{1/2}.
\end{equation}
\end{definition}

\noindent To see this new definition of square function coincides with the square function $\Sq f$ in \eqref{sigma-sqfcn-2} at some point, we define $f_j$ and $\vf$ as
\begin{equation}
\label{vector-f}
    f_j:=S_{j}f,\hspace{1cm}\vf:=\{f_1,\ldots,f_R\},
\end{equation}
then $\Sq f=\Sq\vf$. Since any cap $\tau\in\cT$ is independent of the vertical factor $j$, we can gather all the vertical components together and write in the vector-valued form as
\begin{equation}
    f_{j,\tau}:=S_{j,\tau}f, \hspace{5mm}     \vf_\tau:=\{f_{1,\tau},\ldots,f_{R,\tau}\}.
\end{equation}

\begin{remark}
\rm
Our notation is nothing mysterious, but just to make our formula not too lengthy. It turns out we will treat each $j$- slice in the same way, so it will be convenient to omit the subscript $j$ and write them as $\vf_\tau$, as we will see later.
\end{remark}

Recall that in \eqref{sigma-kernel-2}, we already assume that those $(MK)^{-1}$-caps in $\cT$ are $8(MK)^{-1}$-separated.
For each $\tau\in\cT$, we define a map $P_{j,\tau}:L^p(\ZR^3)\to L^p(\ZR^3)$ as
\begin{equation}
    P_{j,\tau}g(x)=\int_{\ZR^3}e^{2\pi ix\cdot\xi}\wh{g}(\xi)\Id_{5\tau}\circ G_j(\bar\xi)d\xi.
\end{equation}
Basically, the map $P_{j,\tau}$ restricts the Fourier support of $g$ to a vertical stripe. In this stripe, the normal vectors of the surface $\Ga_j$ belong to $5\tau\subset\ZS^2$. For any vector $\vg=\{g_1,\ldots,g_R\}$, we further define $P_\tau\vg:=\{P_{1,\tau}g_1,\ldots,P_{R,\tau}g_R\}$. The map $P_\tau$ may not look natural, but it would help us define the broad function rigorously in the rest of the paper. As an example, we have $P_\tau\vf=\vf_\tau$\,.

Now we can introduce the broad part for any vector $\vg=\{g_1,\ldots,g_R\}$.
\begin{definition}
\label{broad-vector}
Given $A\in\ZN$ and any $x\in \cp_R$, we define $\BR_A\Sq \vg(x)$ as the $A+1$ largest number in $\{\Sq(P_\tau\vg)(x)\}_{\tau\in\cT}$. That is, 
\begin{equation}
\label{A-broad-vector}
    \BR_A\Sq \vg(x):=\min_{\tau_1,\ldots,\tau_{A}\in\cT}\max_{\substack{\tau\not=\tau_l,\\1\leq l\leq A}}\Sq(P_\tau\vg)(x).
\end{equation}
\end{definition}
\noindent Note that in particular one has $\Br_A \Sq f(x)=\Br_A\Sq \vf(x)$ when $x\in\cp_R$.

We will not need to define broad function for an arbitrary vector $\vg$, but only for vectors appear in the one-step polynomial partitioning algorithm in Section \ref{modified-poly} and in the iteration built up in Section \ref{iteration}. These vectors have similar geometric patterns as the vector $\vf$ does.

\medskip

Probably the most important property  we will use for the broad function is the following weak version of triangle inequality. It allows us to keep the ``broad" property when decomposing the original function.
\begin{lemma}[Triangle inequality]
If $\vg=\vg_1+\vg_2$, then 
\begin{equation}
\label{broad-triangle}
    \BR_A\Sq\vg(x)\leq \BR_{A/2}\Sq\vg_1(x)+\BR_{A/2}\Sq\vg_2(x)
\end{equation}
\end{lemma}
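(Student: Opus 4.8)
The plan is to unwind the definition \eqref{A-broad-vector} of $\BR_A$ as a min-max over choices of $A$ exceptional caps, and exploit that a number is ``small'' (bounded by the $(A+1)$-th largest) as soon as we can exhibit $A$ caps whose removal leaves only small values. Fix a point $x\in\cp_R$. By definition of $\BR_{A/2}\Sq\vg_1(x)$, there is a set of caps $\cT_1\subset\cT$ with $|\cT_1|=A/2$ such that $\Sq(P_\tau\vg_1)(x)\le \BR_{A/2}\Sq\vg_1(x)$ for every $\tau\in\cT\setminus\cT_1$; similarly there is $\cT_2\subset\cT$ with $|\cT_2|=A/2$ and $\Sq(P_\tau\vg_2)(x)\le \BR_{A/2}\Sq\vg_2(x)$ for every $\tau\in\cT\setminus\cT_2$. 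Set $\cT_0:=\cT_1\cup\cT_2$, so $|\cT_0|\le A$.

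The key linearity observation is that the operator $P_\tau$ is linear and $\vg=\vg_1+\vg_2$ forces $P_\tau\vg=P_\tau\vg_1+P_\tau\vg_2$ componentwise; hence by the ordinary triangle inequality in $\ell^2$ (i.e. Minkowski applied to the vector $\{P_{j,\tau}g_j\}_j$),
\begin{equation}
\nonumber
    \Sq(P_\tau\vg)(x)\le \Sq(P_\tau\vg_1)(x)+\Sq(P_\tau\vg_2)(x).
\end{equation}
For any $\tau\in\cT\setminus\cT_0$ both $\tau\notin\cT_1$ and $\tau\notin\cT_2$, so the right-hand side is at most $\BR_{A/2}\Sq\vg_1(x)+\BR_{A/2}\Sq\vg_2(x)$. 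Thus we have produced a family $\cT_0$ of at most $A$ caps whose removal bounds all remaining values of $\Sq(P_\tau\vg)(x)$ by this sum. Taking the max over $\tau\notin\cT_0$ and then the min over all admissible size-$A$ exceptional families in \eqref{A-broad-vector} yields $\BR_A\Sq\vg(x)\le \BR_{A/2}\Sq\vg_1(x)+\BR_{A/2}\Sq\vg_2(x)$, which is \eqref{broad-triangle}. A minor bookkeeping point: $A$ is taken of the form $2^{\ZN}$ (see the ``Some numbers'' list), so $A/2\in\ZN$ and the statement is literally meaningful; alternatively one replaces $A/2$ by $\lceil A/2\rceil$ with no change in the argument.

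There is essentially no analytic obstacle here — the content is purely combinatorial, the interplay between the ``$(A+1)$-th largest'' formulation and subadditivity. The only place requiring the tiniest care is that $P_\tau$ genuinely commutes with the sum and that $\Sq$ is a genuine norm (the $\ell^2_j$-norm of the vector of values at $x$), so that the pointwise triangle inequality applies slice-by-slice; both are immediate from the definitions of $P_\tau\vg$ and $\Sq\vg$ given just above the statement. I would present the proof in exactly the three moves above: (i) extract the exceptional families from the two right-hand broad functions, (ii) union them and use linearity of $P_\tau$ together with the $\ell^2$ triangle inequality on each surviving cap, (iii) conclude by the min-max definition.
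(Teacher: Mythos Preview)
Your proof is correct and follows essentially the same approach as the paper: pick the minimizing exceptional families of size $A/2$ for $\vg_1$ and $\vg_2$, take their union as the exceptional family for $\vg$, and apply the $\ell^2$ triangle inequality $\Sq(P_\tau\vg)\le \Sq(P_\tau\vg_1)+\Sq(P_\tau\vg_2)$ on the surviving caps. The only cosmetic difference is that the paper writes the two families explicitly as $\{\tau_k\}$ and $\{\tau_k'\}$ rather than naming them $\cT_1,\cT_2$.
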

\begin{proof}
By the definition of the broad function, suppose that there are two collection of caps $\{\tau_i\}_{i=1}^{A/2}, \{\tau_i'\}_{i=1}^{A/2}$ such that

\begin{align}
    \BR_{A/2}\Sq\vg_1(x)=\max_{\substack{\tau\not=\tau_k,\\1\leq k\leq A/2}}\Sq(P_\tau\vg_1)(x), \\
    \BR_{A/2}\Sq\vg_2(x)=\max_{\substack{\tau\not=\tau'_k,\\1\leq k\leq A/2}}\Sq(P_\tau\vg_2)(x).
\end{align}
Since $\Sq(P_\tau\vg_1+P_\tau\vg_2)\le \Sq(P_\tau\vg_1)+\Sq(P_\tau\vg_2)$, we have
\begin{equation}
    \BR_{A}\Sq\vg(x)\le \max_{\substack{\tau\not=\tau_k,\tau\not=\tau_k',\\1\leq k\leq A/2}}\Sq(P_\tau\vg)(x)\le \BR_{A/2}\Sq\vg_1(x)+\BR_{A/2}\Sq\vg_2(x)
\end{equation}
as desired.
\end{proof}

The broad function is dominated by a certain bilinear function. First, we define
\begin{equation}
\label{bilinear-def}
    \bil\vg(x):=\sum_{\tau_1,\tau_2\in\cT}|\Sq(P_{\tau_1}\vg)(x)|^{1/2}|\Sq(P_{\tau_2}\vg)(x)|^{1/2}.
\end{equation}
\begin{lemma}
\label{broad-blinear}
For any $A\ge 2$, we have $\BR_A\Sq\vg(x)\leq\bil\vg(x)$.
\end{lemma}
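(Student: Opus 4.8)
The plan is to use a simple pigeonhole/counting argument on the ordered values $\{\Sq(P_\tau\vg)(x)\}_{\tau\in\cT}$ at a fixed point $x\in\cp_R$. Fix $x$ and relabel the caps so that $\Sq(P_{\tau_1}\vg)(x)\ge \Sq(P_{\tau_2}\vg)(x)\ge\cdots$ in decreasing order. By definition $\BR_A\Sq\vg(x)$ is the $(A+1)$-th entry in this list, namely $\Sq(P_{\tau_{A+1}}\vg)(x)$. First I would observe that since $A\ge 2$, there are at least two caps among $\tau_1,\dots,\tau_{A+1}$ whose $\Sq(P_\tau\vg)(x)$-value is at least $\BR_A\Sq\vg(x)$ — indeed all of $\tau_1,\dots,\tau_{A+1}$ have value $\ge\Sq(P_{\tau_{A+1}}\vg)(x)=\BR_A\Sq\vg(x)$, and $A+1\ge 3\ge 2$. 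Pick any two distinct such caps, say $\tau'$ and $\tau''$.

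Then the single term in the bilinear sum \eqref{bilinear-def} indexed by the ordered pair $(\tau',\tau'')$ already dominates:
\begin{equation}
\nonumber
|\Sq(P_{\tau'}\vg)(x)|^{1/2}|\Sq(P_{\tau''}\vg)(x)|^{1/2}\ge \BR_A\Sq\vg(x)^{1/2}\,\BR_A\Sq\vg(x)^{1/2}=\BR_A\Sq\vg(x).
\end{equation}
Since $\bil\vg(x)$ is a sum of nonnegative terms over all pairs $(\tau_1,\tau_2)\in\cT\times\cT$, it is bounded below by this one term, giving $\BR_A\Sq\vg(x)\le \bil\vg(x)$ as claimed. (One must just be slightly careful that $\Sq(P_\tau\vg)(x)$ could be infinite or the ordering could have ties, but the inequality chain above only uses that the two chosen values are each $\ge\BR_A\Sq\vg(x)$, which holds regardless.)

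There is essentially no obstacle here: the statement is a pointwise combinatorial inequality, and the only thing to get right is the bookkeeping of "the $(A+1)$-th largest value is witnessed by at least two caps," which needs $A\ge 2$ (in fact $A\ge 1$ would suffice for the two-cap count, but the hypothesis $A\ge 2$ is stated for safety and consistency with later uses). I would write the proof in three sentences along the lines above, making the indexing explicit so that the reader sees exactly which pair of caps is selected and why both of their square-function values are at least $\BR_A\Sq\vg(x)$.
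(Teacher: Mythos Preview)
Your proposal is correct and is exactly what the paper has in mind: the paper's own proof is the single line ``The proof is just by definition,'' and you have simply spelled out that definition-chasing explicitly. Your observation that the diagonal pair (or indeed any pair among the top $A+1$ caps) already suffices is the whole content of the lemma.
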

\begin{proof}
The proof is just by definition.
\end{proof}

\section{Wave packet decomposition}
\label{wave-packet}

In this section, we build wave packet decomposition for some special types of functions (Including the vector $\vf$ introduced in \eqref{vector-f}) at different scales. These functions will appear in the next two sections when establishing our iteration. Here is a sketch of the idea: We first use pullback of Gauss map to build up a partition in the frequency space. Then for each part of the previous partition, we introduce an associated partition of unity in the physical space to finish the wave packet decomposition.

\subsection{Wave packet decomposition at the largest scale}\hfill

We first consider the wave packet decomposition at the largest scale $R$. Recall that each component of the vector $\vf$ in \eqref{vector-f} has the expression
\begin{equation}
    f_j(x)=\int_{\ZR^3} e^{2\pi ix\cdot\xi}\wh{f}(\xi)\eta_j(\bar\xi)\psi_j\Big(\frac{\xi_3-\Phi(\bar\xi;t_j)}{R^{-1}}\phi(\xi;t)\Big)d\xi.
\end{equation}
Cover $\ZS^2$ using a collection of two dimensional $R^{-1/2}$ caps $\Theta=\{\theta\}$, and let $\wt\vp_\theta:\ZS^2\to\ZR^+$ be a smooth partition of unity associated to the cover $\Theta$. Each function $\wt\vp_\theta$ is supported in $2\theta$. Also, for any multi-index $\al\in\ZN^2$ that $|\al|=N$, $|\partial^\al\wt\vp_\theta|\leq C_NR^{N/2}$. Since each restricted map $G_j$ is injective as proved in Lemma \ref{Gauss-map-lem}, the pullback $\vp_\theta=\wt\vp_\theta\circ G_j$ forms a partition of unity of $\Id_{\textup{supp}(\eta_j)}(\bar\xi)$. Therefore we can partition $f_j$ as
\begin{equation}
\label{frequencey-decom-2}
    f_j=\sum_{\theta\in\Theta}f_{j,\theta},\hspace{5mm}f_{j,\theta}(x):=\int_{\ZR^3} e^{2\pi ix\cdot\xi}\wh{f}(\xi)\vp_\theta(\bar\xi)\eta_j(\bar\xi)\psi_j\Big(\frac{\xi_3-\Phi(\bar\xi;t_j)}{R^{-1}}\phi(\xi;t)\Big)d\xi.
\end{equation}
For convenience, we write
\begin{equation}
\label{m-j-theta}
    f_{j,\theta}=m_{j,\theta}\ast f,\hspace{5mm}m_{j,\theta}(x):=\int_{\ZR^3} e^{2\pi ix\cdot\xi}\vp_\theta(\bar\xi)\eta_j(\bar\xi)\psi_j\Big(\frac{\xi_3-\Phi(\bar\xi;t_j)}{R^{-1}}\phi(\xi;t)\Big)d\xi.
\end{equation}
We remark that because of the cutoff $\eta_j$, only those caps $\theta$ whose center $c_\theta\in\ZS^2$ is transverse to the horizontal plane make contribution in \eqref{frequencey-decom-2}.

Next, let us partition the physical space. Fix a cap $\theta\in\Theta$. After rotating $c_\theta$, without loss of generality we assume $c_\theta=e_3$. Let $\{v\}\in\ZZ^3$ be the set of lattice points, so we can find a partition of unity $\{\eta_v\}$ associated to it that $\eta_v(x)=\eta(x-v)$. Here $\eta:\ZR^3\to\ZR^+$ is a smooth function whose Fourier transform is supported in the unit ball in the frequency space, and $\eta$ decays rapidly outside the unit ball in the physical space.

We tailor the partition of unity $\{\eta_v\}$ a little bit. For any given $\e$ in Theorem \ref{local-broad-thm}, let $\be:=\e^{1000}$ be a small number from now on, and let $\{u\}$ be the collection of lattice $R^{\be}\ZZ^3$ points. For each point $u$, we define 
\begin{equation}
    \eta_u(x)=\sum_{v\in E(u)}\eta_v(x), \hspace{3mm}E_u:=\{v:-R^{\be}/2<v_l-u_l\leq R^{\be}/2,~l=1,2,3\}.
\end{equation}
Hence $\{\eta_u\}$ also forms a smooth partition of unity. One advantage about this new partition of unity is that the functions $\{\eta_u\}$ are morally orthogonal. In fact, $|\eta_u(u')|=\rap(R)$ for all but finitely many points $u'\in R^{\be/2}\ZZ^3$.

Now that each function $\wt\eta_u(x):=\eta_u(R^{-1/2}(x_1,x_2),R^{-1}x_3)$ is essentially supported in an $R^{1/2+\be}\times R^{1/2+\be}\times R^{1+\be}$ rectangular tube. In fact, if we define a tube associated to $\wt\eta_u$ as
\begin{equation}
    T_u:=\{x\in\ZR^3,R^{-1/2}|x_l-u_l|\leq R^\be,~l=1,2;~R^{-1}|x_3-u_3|\leq R^\be\},
\end{equation}
then for any $x\in\ZR^3\setminus T_u$ and any $N\geq1$,
\begin{equation}
    \wt\eta_u(x)\leq \rap(R)C_N\Big(1+\frac{|x_1-u_1|}{R^{1/2}}+\frac{|x_2-u_2|}{R^{1/2}}+\frac{|x_3-u_3|}{R}\Big)^{-N}
\end{equation}
The coreline of the tube $T_u$ is parallel to the vector $c_\theta=e_3$. Define 
\begin{equation}
\label{scale-R-tube-set}
    \ZT_\theta[R]:=\{T_u\}
\end{equation}
as the collection of all these rectangular tubes, and define $\Id_{T_u}^\ast(x)=\wt\eta_u(x)$ so that $\{\Id_{T}^\ast\}_{T\in\ZT_\theta[R]}$ forms a partition of unity of $\ZR^3$ as well. This completes the physical partition for a fixed cap $\theta$. For a different cap $\theta'\in\Theta$, one can similarly construct a partition of unity $\{\Id_{T_{\theta'}}^\ast\}$. Using the partition of unity for every cap $\theta\in\Theta$, we further partition $f_j(x)$ as

\begin{equation}
\label{wave-packet-1}
    f_j=\sum_{\theta\in\Theta}\sum_{T_\theta\in\ZT_\theta[R]}f_{j,T_\theta},\hspace{5mm}f_{j,T_\theta}:=f_{j,\theta}\Id_{T_\theta}^\ast.
\end{equation}
This is the scale $R$ wave packet decomposition we are looking for. Each $f_{j,T_\theta}$ is a single wave packet. Since the $\theta$ and $T_\theta$ are both independent to the factor $j$, we define $\vf_\theta:=\{f_{1,\theta},\ldots,f_{R,\theta}\}$ and $\vf_{T_\theta}=\vf_{\theta}\Id^\ast_{T_\theta}:=\{f_{1,T_\theta},\ldots,f_{R,T_\theta}\}$. We also call $\vf_{T_\theta}$ a single wave packet.

\medskip
\begin{definition}
\label{dual}
A rectangular box $\om$ is said to be ``dual" to another rectangular box $T$ of dimension $\rho_1\times\rho_2\times\rho_3$, if $\om$ has dimensions $\sim\rho_1^{-1}\times\rho_2^{-1}\times\rho_3^{-1}$ and the $j$-th side of $\om$ is parallel to the $j$-th side of $T$.
\end{definition}
The next two lemmas contain some useful properties about the wave packet decomposition \eqref{wave-packet-1}. Their proof follows directly by definition and Plancherel.

\begin{lemma}[Fourier support]
\label{fourier-support-big}
The Fourier transform of each wave packet $f_{j,T_\theta}$ is contained a slab $S_j(\theta)$ of dimensions $\sim R^{-1}\times R^{-1/2}\times R^{-1/2}$ that is dual to $T_\theta$. The shortest side of $S_j(\theta)$ is parallel to the direction $c_\theta$ and  $S_j(\theta)\subset N_{CR^{-1/2}}(\Ga_j)$. Also, the collection $\{S_j(\theta)\}_{\theta\in\Theta}$ is finitely overlapped.
\end{lemma}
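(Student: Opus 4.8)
The plan is to unwind the definitions in the wave packet decomposition \eqref{wave-packet-1} and track the Fourier support through each multiplication. Recall $f_{j,T_\theta} = f_{j,\theta}\,\Id^\ast_{T_\theta}$, and $f_{j,\theta} = m_{j,\theta}\ast f$ where by \eqref{m-j-theta} the multiplier $\wh m_{j,\theta}(\xi) = \vp_\theta(\bar\xi)\eta_j(\bar\xi)\psi_j\big((\xi_3 - \Phi(\bar\xi;t_j))R\,\phi(\xi;t)\big)$. First I would identify the Fourier support of $f_{j,\theta}$. The cutoff $\psi_j$ localizes $\xi_3 - \Phi(\bar\xi;t_j)$ to an $O(R^{-1})$ neighborhood, so $\wh f_{j,\theta}$ lives in an $R^{-1}$-neighborhood of $\Ga_j$. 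The factors $\vp_\theta\circ$(nothing) — rather, $\vp_\theta(\bar\xi) = \wt\vp_\theta\circ G_j(\bar\xi)$ — restrict $\bar\xi$ to $G_j^{-1}(2\theta)$, which by Lemma \ref{Gauss-map-spt} (applied with $M = R^{1/2}$) is comparable to an $R^{-1/2}$-ball in $\bar\xi$-space. So $\wh f_{j,\theta}$ is contained in the set $\{(\bar\xi,\xi_3): \bar\xi\in CB, |\xi_3 - \Phi(\bar\xi;t_j)| \lesssim R^{-1}\}$ for an $R^{-1/2}$-ball $B$; since $\Phi$ has bounded derivatives, over such a ball $\Phi(\bar\xi;t_j)$ varies by $O(R^{-1/2})$ and its gradient is essentially constant, so this set is contained in a slab of dimensions $\sim R^{-1/2}\times R^{-1/2}\times R^{-1}$ whose short direction is the normal to $\Ga_j$ at $B$, i.e. parallel to $c_\theta$ (up to angle $O(R^{-1/2})$, which after enlarging constants we may absorb). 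Call this slab $S_j(\theta)$; by construction $S_j(\theta)\subset N_{CR^{-1/2}}(\Ga_j)$.

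Next I would account for the physical cutoff $\Id^\ast_{T_\theta} = \wt\eta_u$. Its Fourier transform is supported in a box dual to $T_u$, which has dimensions $\sim R^{-1/2}\times R^{-1/2}\times R^{-1}$ (the $R^\be$ factors in the definition of $T_u$ only shrink the dual box, which is harmless). Multiplication in physical space is convolution in frequency, so $\wh f_{j,T_\theta}$ is supported in $S_j(\theta) + (\text{dual box of } T_u)$, which is still a slab of dimensions $\sim R^{-1/2}\times R^{-1/2}\times R^{-1}$, with the same short direction $c_\theta$; after adjusting the constant $C$ it still lies in $N_{CR^{-1/2}}(\Ga_j)$ and is still dual to $T_\theta$. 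This gives the first three assertions.

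For the finite overlap of $\{S_j(\theta)\}_{\theta\in\Theta}$: the slabs $S_j(\theta)$ are, up to bounded constants, the sets $\{(\bar\xi,\Phi(\bar\xi;t_j)) : G_j(\bar\xi)\in 2\theta\}$ thickened by $CR^{-1/2}$. Since $\Theta$ is a finitely overlapping cover of $\ZS^2$ by $R^{-1/2}$-caps, the caps $\{2\theta\}$ are finitely overlapping; by Lemma \ref{Gauss-map-lem} the map $G_j$ is a diffeomorphism with uniformly bounded derivatives (and bounded-below Jacobian, by the curvature condition in \eqref{Phi-conditions}), so the pullbacks $\{G_j^{-1}(2\theta)\}_\theta$ are finitely overlapping $R^{-1/2}$-balls in $\bar\xi$-space; lifting to the graph of $\Phi(\cdot;t_j)$ and thickening by $CR^{-1/2}$ preserves finite overlap because the graph map is bi-Lipschitz. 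Hence $\{S_j(\theta)\}_\theta$ is finitely overlapping. I do not expect any real obstacle here; the only mild care needed is bookkeeping the constants so that "$R^{-1/2}$-ball in $G_j^{-1}(2\theta)$" and "thicken by $CR^{-1/2}$" and "add the dual box of $T_u$" all compose into a single slab with a clean $C$ — this is exactly the sort of routine constant-chasing the lemma statement already anticipates with its $\sim$ and $N_{CR^{-1/2}}$ notation, so I would carry it out tersely.
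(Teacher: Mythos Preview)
Your proposal is correct and follows essentially the same approach as the paper, which simply remarks that the lemma ``follows directly by definition and Plancherel.'' You have carefully unwound those definitions (support of $\wh m_{j,\theta}$, convolution with the dual box of $T_\theta$, and finite overlap via the Gauss map), which is exactly what the paper intends by that one-line proof.
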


\begin{lemma}[$L^2$-orthogonality]
\label{l2-orthogonality-big}
For an arbitrary collection $\ZT'\subset\cup_\theta\ZT_\theta[R]$, one has
\begin{equation}
    \Big\|\sum_{T\in\ZT'}f_{j,T}\Big\|_2^2\lesssim \sum_{T\in\ZT'}\|f_{j,T}\|_2^2.
\end{equation}
This estimate is uniform for all $1\leq j\leq R$.
\end{lemma}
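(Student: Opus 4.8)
The plan is to isolate the two kinds of almost-orthogonality built into the decomposition \eqref{wave-packet-1}: orthogonality in \emph{frequency} between wave packets attached to different caps $\theta$, and orthogonality in \emph{physical space} between wave packets attached to one fixed $\theta$. Concretely, I would write $\ZT'=\bigsqcup_{\theta}\ZT'_\theta$ with $\ZT'_\theta:=\ZT'\cap\ZT_\theta[R]$ and group $\sum_{T\in\ZT'}f_{j,T}=\sum_\theta g_{j,\theta}$, where $g_{j,\theta}:=\sum_{T\in\ZT'_\theta}f_{j,T}=f_{j,\theta}\sum_{T_u\in\ZT'_\theta}\Id_{T_u}^{\ast}$. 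Since $\wh{g_{j,\theta}}$ is supported in the slab $S_j(\theta)$ of Lemma \ref{fourier-support-big}, and the slabs $\{S_j(\theta)\}_{\theta}$ are finitely overlapping, the standard Plancherel almost-orthogonality for finitely overlapping Fourier supports gives $\big\|\sum_\theta g_{j,\theta}\big\|_2^2\lesssim\sum_\theta\|g_{j,\theta}\|_2^2$, with an overlap constant depending only on the finite-overlap constant of the $R^{-1/2}$-caps and on the uniform estimates for $G_j$ in Lemmas \ref{Gauss-map-lem} and \ref{Gauss-map-spt}, hence independent of $j$.

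It then remains to bound $\|g_{j,\theta}\|_2^2$ by $\sum_{T\in\ZT'_\theta}\|f_{j,T}\|_2^2$ for each fixed $\theta$. Recalling $\Id_{T_u}^{\ast}=\wt\eta_u$, and that $\{\wt\eta_u\}_u$ is a non-negative smooth partition of unity with $\sum_u\wt\eta_u\equiv1$ whose essential supports $T_u$ tile $\ZR^3$ with $O(1)$ overlap, at every point $x$ only $O(1)$ of the values $\wt\eta_u(x)$ exceed $R^{-10}$ while the remaining ones sum to $\rap(R)$. Cauchy--Schwarz over that bounded index set then yields the pointwise bound $\big(\sum_{T_u\in\ZT'_\theta}\wt\eta_u(x)\big)^2\lesssim\sum_{T_u\in\ZT'_\theta}\wt\eta_u(x)^2+\rap(R)$, and multiplying by $|f_{j,\theta}(x)|^2$ and integrating gives $\|g_{j,\theta}\|_2^2\lesssim\sum_{T\in\ZT'_\theta}\|f_{j,T}\|_2^2+\rap(R)\|f_{j,\theta}\|_2^2$. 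Summing over $\theta$, and using once more the finite overlap of the $S_j(\theta)$ to get $\sum_\theta\|f_{j,\theta}\|_2^2\lesssim\|f_j\|_2^2$, yields the lemma, all constants being manifestly uniform in $j$ since the construction of $\{\wt\eta_u\}$ and of the caps $\theta$ is the same for every slice.

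I do not expect a genuine obstacle here --- this is why the paper records the two lemmas as ``following directly by definition and Plancherel.'' The only points needing a word of care are bookkeeping: keeping track of the $\rap(R)$ tails of the $\wt\eta_u$ above, which are absorbed into the ubiquitous rapidly-decaying error terms (or disappear after the routine reduction to $f$ supported in a ball of radius $R^{O(1)}$); and, for the companion Lemma \ref{fourier-support-big}, verifying that the Fourier support of $f_{j,T_\theta}$ --- the intersection of the vertical strip $G_j^{-1}(2\theta)$ with the $R^{-1}$-neighbourhood of $\Ga_j$, convolved with the box dual to $T_\theta$ coming from $\wh{\wt\eta_u}$ --- is a slab of the claimed dimensions $\sim R^{-1}\times R^{-1/2}\times R^{-1/2}$, and that the constant-factor fattening produced by that convolution preserves finite overlap. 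Both follow from the uniform control of $G_j$ and $G_j^{-1}$ in Lemmas \ref{Gauss-map-lem} and \ref{Gauss-map-spt}, exactly as in the corresponding wave-packet construction of \cite{Wu}.
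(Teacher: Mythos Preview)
Your proposal is correct and follows essentially the same approach the paper has in mind: the paper records this lemma as ``following directly by definition and Plancherel,'' and what you have written is exactly the standard unpacking of that phrase --- frequency-side almost-orthogonality between different $\theta$'s via the finite overlap of the slabs $S_j(\theta)$, and spatial almost-orthogonality among the $\wt\eta_u$ for fixed $\theta$. Your handling of the $\rap(R)$ tails is also the standard bookkeeping, and your remark that the residual error is absorbed either into the ambient rapidly-decaying terms or via an a priori localization of $f$ is the right disposition.
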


Let us conclude the wave packet decomposition above into a proposition.
\begin{proposition}[Wave packet decomposition at the largest scale]
Let $\vf$ be defined in \eqref{vector-f}. Then for each $R^{-1/2}$-cap $\theta\in\Theta$, there is a collection of finitely overlapping tubes $\ZT_\theta[R]$ defined in \eqref{scale-R-tube-set}, and a smooth, positive partition of unity $\{\Id_{T_\theta}^\ast\}_{T_\theta\in\ZT_\theta[R]}$, such that
\begin{equation}
\label{wave-packet-big}
    f_j=\sum_{\theta\in\Theta}\sum_{T_\theta\in\ZT_\theta[R]}f_{j,T_\theta},\hspace{5mm}f_{j,T_\theta}:=f_{j,\theta}\Id_{T_\theta}^\ast.
\end{equation}
Also, recalling \eqref{weight}, the smooth function $\Id_{T_\theta}^\ast$ satisfies that
\begin{equation}
    \Id_{T_\theta}^\ast(x) \leq \rap(R)C_Nw_{T_\theta,N}(x),
\end{equation}
and the Fourier transform of $\Id^\ast_{T_\theta}$ is supported in a dual rectangular box of $T_\theta$ that is centered at the origin. In addition, Lemma \ref{fourier-support-big} and Lemma \ref{l2-orthogonality-big} are true.
\end{proposition}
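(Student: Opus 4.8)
The plan is to check that the construction carried out in the paragraphs preceding the statement already produces everything the proposition asserts, and then to supply proofs of the two lemmas whose proofs were deferred, Lemma~\ref{fourier-support-big} and Lemma~\ref{l2-orthogonality-big}, which between them carry essentially all the content.

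First I would assemble the decomposition. Since $\{\wt\vp_\theta\}_{\theta\in\Theta}$ is a partition of unity on $\ZS^2$ with $\wt\vp_\theta$ supported in $2\theta$, and each Gauss map $G_j$ is smooth and injective on $B^2(0,1/2)$ by Lemma~\ref{Gauss-map-lem}, the pullbacks $\vp_\theta=\wt\vp_\theta\circ G_j$ form a smooth partition of unity of $\Id_{\mathrm{supp}(\eta_j)}$, which gives \eqref{frequencey-decom-2}. Fixing $\theta$ and rotating so that $c_\theta=e_3$, the coarsened lattice functions $\{\eta_u\}_{u\in R^{\be}\ZZ^3}$ and their anisotropic rescalings $\{\wt\eta_u\}$ form a smooth partition of unity $\{\Id_{T_\theta}^\ast\}_{T_\theta\in\ZT_\theta[R]}$, with $\ZT_\theta[R]$ as in \eqref{scale-R-tube-set}; multiplying \eqref{frequencey-decom-2} by this partition and using that $\theta$ and $T_\theta$ do not depend on $j$ yields \eqref{wave-packet-1}. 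The pointwise bound $\Id_{T_\theta}^\ast(x)\le\rap(R)C_Nw_{T_\theta,N}(x)$ is exactly the rescaled lattice estimate recorded just above the statement (it follows from the rapid decay of $\eta$ and the coarsening to $R^{\be}\ZZ^3$), and the assertion on $\wh{\Id_{T_\theta}^\ast}$ follows because $\wh{\eta_u}(\zeta)=\wh\eta(\zeta)\sum_{v\in E_u}e^{-2\pi i v\cdot\zeta}$ is supported where $\wh\eta$ is, namely the unit ball, so after the rescaling $x\mapsto(R^{-1/2}x_1,R^{-1/2}x_2,R^{-1}x_3)$ the transform $\wh{\Id_{T_\theta}^\ast}$ is supported in the box of dimensions $\sim R^{-1/2}\times R^{-1/2}\times R^{-1}$ dual to $T_\theta$ in the sense of Definition~\ref{dual} and centered at the origin.

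Next I would prove Lemma~\ref{fourier-support-big}. From $f_{j,T_\theta}=f_{j,\theta}\Id_{T_\theta}^\ast$ we have $\wh{f_{j,T_\theta}}=\wh{f_{j,\theta}}\ast\wh{\Id_{T_\theta}^\ast}$. The support of $\wh{f_{j,\theta}}$ is cut out by $\vp_\theta(\bar\xi)=\wt\vp_\theta(G_j(\bar\xi))$, which forces $\bar\xi\in G_j^{-1}(2\theta)$ --- an $O(R^{-1/2})$-ball by Lemma~\ref{Gauss-map-spt} --- and by $\psi_j$, which forces $|\xi_3-\Phi(\bar\xi;t_j)|\lesssim R^{-1}$; under the regularity and curvature conditions \eqref{Phi-conditions} this region is a slab of dimensions $\sim R^{-1}\times R^{-1/2}\times R^{-1/2}$ contained in $N_{CR^{-1/2}}(\Ga_j)$ whose shortest side is parallel to $c_\theta$. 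Convolving with $\wh{\Id_{T_\theta}^\ast}$, supported on the dual box of comparable (indeed smaller) size centered at the origin, fattens this slab by at most a constant factor, producing the desired $S_j(\theta)$. The finite overlap of $\{S_j(\theta)\}_{\theta\in\Theta}$ follows because the $\{2\theta\}$ are finitely overlapping and $G_j$ is injective with $|\partial^\al G_j|\le C_\al$ (Lemma~\ref{Gauss-map-lem}), so the preimages $\{G_j^{-1}(2\theta)\}$ are finitely overlapping; uniformity in $j$ is inherited from the uniform bounds in \eqref{Phi-conditions} and \eqref{derivative-Gauss}.

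Finally, for Lemma~\ref{l2-orthogonality-big} I would split $\mathbb{T}'=\bigcup_\theta\mathbb{T}'_\theta$ with $\mathbb{T}'_\theta\subset\ZT_\theta[R]$, and set $g_{j,\theta}:=\sum_{T_\theta\in\mathbb{T}'_\theta}f_{j,T_\theta}=f_{j,\theta}\sum_{T_\theta\in\mathbb{T}'_\theta}\Id_{T_\theta}^\ast$. Within a single cap, since the $\Id_{T_\theta}^\ast\ge0$ have bounded overlap and sum to $1$, Cauchy--Schwarz gives $\|g_{j,\theta}\|_2^2\le C\sum_{T_\theta\in\mathbb{T}'_\theta}\|f_{j,T_\theta}\|_2^2$. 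Across caps, $\wh{g_{j,\theta}}$ is supported in a bounded dilate of $S_j(\theta)$ by the previous step, and those dilates remain finitely overlapping by Lemma~\ref{fourier-support-big}, so Plancherel yields $\|\sum_\theta g_{j,\theta}\|_2^2\lesssim\sum_\theta\|g_{j,\theta}\|_2^2$; chaining the two inequalities gives the lemma, uniformly in $j$. The only point needing any care --- and hence the main (mild) obstacle --- is to keep all constants in Lemma~\ref{fourier-support-big}, in particular the shape of $S_j(\theta)$ and the finite-overlap bound, uniform over $j\in\{1,\dots,R\}$; this is precisely where the hypotheses \eqref{Phi-conditions} on $\Phi$ and the uniform derivative estimates \eqref{derivative-Gauss} on the Gauss maps enter, and the rest is bookkeeping with partitions of unity and Plancherel.
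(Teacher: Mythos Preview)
Your proposal is correct and follows exactly the approach the paper has in mind: the proposition is stated as a summary of the construction in the preceding paragraphs, and the paper explicitly says of Lemmas~\ref{fourier-support-big} and~\ref{l2-orthogonality-big} that ``their proof follows directly by definition and Plancherel.'' You have simply filled in those details---the support computation for $\wh{f_{j,T_\theta}}$ via convolution, and the two-layer $L^2$ orthogonality (physical-space near-disjointness within a cap, Plancherel across caps)---in the way the paper intends.
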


\subsection{Wave packet decomposition at smaller scales}\hfill\label{wpdsmallscale}

We will state the wave packet decomposition in a smaller scale $\rho$, $R^{\e/10}\leq\rho< R$,  for some special types of functions. Assume that $r>\rho>r^{1/2}$ and $\vg=\{g_1,\ldots,g_R\}$ is a vector valued function satisfying the following properties:
\begin{enumerate}
    \item \label{property1} The Fourier transform of each $g_{j}$ is contained in $N_{\rho^{-1}}(\Ga_j)$.
    \item \label{property2} Uniformly for $1\leq j\leq R$, $g_j$ already has a decomposition 
    \begin{equation}
    \label{prop2}
        g_j=\sum_{\overline\om}g_{j,\overline\om},
    \end{equation}
    where $\overline\om$ is a $r^{-1/2}$ cap in $\ZS^2$ and the Fourier transform of $g_{j,\overline\om}$ is supported in a $\rho^{-1}\times r^{-1/2}\times r^{-1/2}$ -slab whose shortest side is parallel to the direction $c_{\overline\om}$. 
\end{enumerate}

We partition $\ZS^2$ into $\rho^{-1/2}$-caps $\Om=\{\om\}$. 
We are going to define a relationship between $\Om=\{\om\}$ and $\bar\Om=\{\bar\om\}$.

\begin{definition}\label{caprelation}
For any $\bar\om\in\bar\Om$, we pick one $\om\in\Om$ so that $\bar\om\subset 2\om$ (the choice for $\om$ may not be unique), and denote it by
\begin{equation}
    \bar\om<\om.,
\end{equation}
so that if we let $\bar\Om (\om):=\{\bar\om\in\bar\Om: \bar\om<\om \}$, we have
\begin{equation}\label{partitioningproperty}
    \bar\Om=\sqcup_{\om\in\Om} \bar\Om(\om). 
\end{equation} 
\end{definition}

%\begin{remark}
%\label{nested}

%\rm

%Intuitively, we can think of $\bar\om<\om$ as $\bar\om\subset\om$. The reason we define the relation $``<"$ is to make sure the following partitioning property: If we let $\bar\Om (\om):=\{\bar\om\in\bar\Om: \bar\om<\om \}$, we have
%\begin{equation}\label{partitioningproperty}
%    \bar\Om=\sqcup_{\om\in\Om} \bar\Om(\om). 
%\end{equation} 
%Later, we will see more discussion on the relation $``<"$ and the choice for $\{\om\},\{\bar\om\}$. See, for example \eqref{caprelation1}, \eqref{caprelation2} and \eqref{caprelation3}.
%\end{remark}

\noindent Intuitively, we can think of $\bar\om<\om$ as $\bar\om\subset\om$. For each $g_j$, we define
\begin{equation}
\label{wave-packet-nested}
    g_{j,\om}=\sum_{\overline\om<\om}g_{j,\overline\om}.
\end{equation}
By \eqref{partitioningproperty}, we have
\begin{equation}
    g_{j}=\sum_{\om\in\Om}g_{j,\om}.
\end{equation}

Now for each cap $\om$, define
\begin{equation}
\label{scale-rho-tube-set}
    \ZT_{\om}[\rho]:=\{T_\om\}
\end{equation}
as a collection of finitely overlapping $\rho^{1/2+\be}\times \rho^{1/2+\be}\times \rho^{1+\be}$ rectangular tubes whose direction is $c_\om$, and who form a cover of $\ZR^3$. We introduce an associated partition of unity $\{\Id_{T_\om}^\ast\}$  as we did in last subsection, so we can partition $g_{j}$ as
\begin{equation}
\label{wave-packet-2}
    g_{j}=\sum_{\om\in\Om}\sum_{T_\om\in\ZT_\om[\rho]}g_{j,T_\om},\hspace{5mm}g_{j,T_\om}:=g_{j,\om}\Id_{T_\om}^\ast.
\end{equation}
This is the scale $\rho$ wave packet decomposition and $g_{j,T_\om}$ is a single wave packet. We also define $\vg_\om:=\{g_{1,\om},\ldots,g_{R,\om}\}$ and $\vg_{T_\om}=\vg_{\om}\Id^\ast_{T_\om}:=\{g_{1,T_\om},\ldots,g_{R,T_\om}\}$, and call $\vg_{T_\om}$ a single wave packet.

Similarly, the next two lemmas contain some useful properties about the wave packet decomposition \eqref{wave-packet-2}.
% We partition $\ZS^2$ into $\rho^{-1/2}$-caps $\Om=\{\om\}$. Uniformly for all $g_j$, we define
% \begin{equation}
% \label{wave-packet-nested}
%     g_{j,\om}=\sum_{\overline\om<\om}g_{j,\overline\om}.
% \end{equation}
% Here $\overline\om<\om$ have two meanings: On one hand it means that $\overline\om\cap\om\not=\varnothing$; on the other hand it means that the following partition is satisfied
% \begin{equation}
%     g_{j}=\sum_{\om\in\Om}g_{j,\om}.
% \end{equation}
% The precise definition of  $``\overline\om<\om"$ is not important to us. Intuitively, we can understand $``\overline\om<\om"$ as $``\overline\om\subset\om"$. 

% Now for each cap $\om$, define
% \begin{equation}
% \label{scale-rho-tube-set}
%     \ZT_{\om}[\rho]:=\{T_\om\}
% \end{equation}

\begin{lemma}[Fourier support]
\label{Fourier-support-small-scale}
The Fourier transform of each wave packet $g_{j,T_\om}$ is contained a $\rho^{-1}\times \rho^{-1/2}\times \rho^{-1/2}$ -slab $S_j(\om)$ whose shortest side is parallel to the direction $c_\om$. Also, $S_j(\om)\subset N_{C\rho^{-1/2}}(\Ga_j)$, and the slabs $\{S_j(\om)\}_{\om\in\Om}$ are fnitely overlapped.
\end{lemma}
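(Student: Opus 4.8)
The plan is to follow the template of Lemma~\ref{fourier-support-big}, tracking the Fourier support through the two operations that build $g_{j,T_\om}$ out of $g_j$: the frequency regrouping $g_j\mapsto g_{j,\om}$ coming from \eqref{wave-packet-nested}, and multiplication by the spatial cutoff $\Id_{T_\om}^\ast$. Since $g_{j,T_\om}=g_{j,\om}\,\Id_{T_\om}^\ast$, Plancherel gives $\wh{g_{j,T_\om}}=\wh{g_{j,\om}}\ast\wh{\Id_{T_\om}^\ast}$, so it suffices to locate the support of each factor and then take the Minkowski sum.

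First I would pin down the support of $\wh{g_{j,\om}}$. By \eqref{wave-packet-nested} one has $g_{j,\om}=\sum_{\bar\om<\om}g_{j,\bar\om}$, and by the hypotheses \eqref{property1}--\eqref{property2} each $\wh{g_{j,\bar\om}}$ lies in a $\rho^{-1}\times r^{-1/2}\times r^{-1/2}$ slab inside $N_{\rho^{-1}}(\Ga_j)$ whose shortest side points along the normal direction $c_{\bar\om}$. The caps $\bar\om$ with $\bar\om<\om$ all satisfy $\bar\om\subset2\om$ (Definition~\ref{caprelation}), so the union of the corresponding slabs is contained in the piece of $N_{\rho^{-1}}(\Ga_j)$ over $2\om$, i.e.\ the $\rho^{-1}$-thickening of the patch $\{(\bar\xi,\Phi(\bar\xi;t_j)):G_j(\bar\xi)\in2\om\}\subset\Ga_j$. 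By Lemma~\ref{Gauss-map-spt} (applied with $M=\rho^{1/2}\le R^{1/2}$) the base $G_j^{-1}(2\om)$ is comparable to a $\rho^{-1/2}$-ball, and since the eigenvalues of $\nabla^2_{\bar\xi}\Phi$ are $\sim-1$ by \eqref{Phi-conditions}, the graph of $\Phi$ over this ball deviates from its tangent plane at the cap center by $O(\rho^{-1})$; hence the whole region fits inside one $\rho^{-1}\times\rho^{-1/2}\times\rho^{-1/2}$ slab $S_j'(\om)$ with short axis $c_\om$. This is the only geometric input, and it is the same step used at scale $R$ in Lemma~\ref{fourier-support-big}.

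Next, $\wh{\Id_{T_\om}^\ast}$ is supported in a box dual to $T_\om$ and centered at the origin, hence of dimensions $\sim\rho^{-1/2-\be}\times\rho^{-1/2-\be}\times\rho^{-1-\be}$ with the same axes as $T_\om$ (and as $S_j'(\om)$), its shortest side along $c_\om$. Since $\rho>1$, every side of this box is shorter than the corresponding side of $S_j'(\om)$, so $\wh{g_{j,T_\om}}\subset S_j'(\om)+(\text{box})\subset C\,S_j'(\om)=:S_j(\om)$, still a $\rho^{-1}\times\rho^{-1/2}\times\rho^{-1/2}$ slab with shortest side parallel to $c_\om$; this is the first assertion. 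For $S_j(\om)\subset N_{C\rho^{-1/2}}(\Ga_j)$ I would observe that each point of $S_j'(\om)$ is within $O(\rho^{-1})$ of the surface patch above, and the convolution displaces points by at most $\rho^{-1/2-\be}<\rho^{-1/2}$, so $C$ may be taken absolute. Finally, the finite overlap of $\{S_j(\om)\}_{\om\in\Om}$ follows from the finite overlap of the $\rho^{-1/2}$-cap cover $\Om$ of $\ZS^2$ together with Lemma~\ref{Gauss-map-lem}: smoothness and injectivity of $G_j$ with uniformly bounded derivatives transfer this finite overlap to the bases $\{G_j^{-1}(2\om)\}_\om$, hence to the thickened slabs, exactly as in the proof of Lemma~\ref{fourier-support-big}. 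The only point that is not routine bookkeeping with Plancherel and Minkowski sums of boxes is the curvature estimate in the second paragraph, which prevents the union of the thin $\bar\om$-slabs from spilling tangentially beyond a single $\rho^{-1/2}$-slab.
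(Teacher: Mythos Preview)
Your proposal is correct and matches the paper's approach: the paper does not write out a proof of this lemma, remarking only that it (and the companion $L^2$-orthogonality lemma) follow ``similarly'' to the scale-$R$ versions, which in turn are said to follow ``directly by definition and Plancherel.'' You have simply written out those details---tracking $\textup{supp}\,\wh{g_{j,\om}}$ via \eqref{wave-packet-nested}, property~\eqref{property2}, Definition~\ref{caprelation}, and the curvature condition in \eqref{Phi-conditions}, then convolving with the dual box of $T_\om$---and this is exactly what the paper intends.
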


\begin{lemma}[$L^2$-orthogonality]
Suppose that $\ZT'\subset\cup_\om\ZT_\om[\rho]$. Then
\begin{equation}
\label{L2-orthogonality-small-1}
    \Big\|\sum_{T\in\ZT'}g_{j,T}\Big\|_2^2\lesssim \sum_{T\in\ZT'}\|g_{j,T}\|_2^2.
\end{equation}
The estimate is uniform for all $1\leq j\leq R$.
\end{lemma}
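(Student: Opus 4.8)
The plan is to follow exactly the scheme behind the largest--scale orthogonality Lemma~\ref{l2-orthogonality-big}: split the sum over $\ZT'$ according to which cap $\om\in\Om$ a tube belongs to, use almost--orthogonality of the essentially disjoint Fourier supports $\{S_j(\om)\}_{\om\in\Om}$ to separate the caps, and then for a single fixed $\om$ exploit the bounded overlap of the spatial cutoffs $\{\Id^\ast_{T_\om}\}$.

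First I would fix $1\le j\le R$, set $\ZT'_\om:=\ZT'\cap\ZT_\om[\rho]$ for each $\om\in\Om$, and write $h_{j,\om}:=\sum_{T_\om\in\ZT'_\om}g_{j,T_\om}$, so that $\sum_{T\in\ZT'}g_{j,T}=\sum_{\om\in\Om}h_{j,\om}$. By Lemma~\ref{Fourier-support-small-scale} the Fourier transform of every $g_{j,T_\om}$, hence of $h_{j,\om}$, is contained in the slab $S_j(\om)$, and the slabs $\{S_j(\om)\}_{\om\in\Om}$ are finitely overlapped. Partitioning $\Om$ into $O(1)$ subfamilies on each of which these slabs are pairwise disjoint and applying Plancherel on each subfamily gives
\[
  \Big\|\sum_{\om\in\Om}h_{j,\om}\Big\|_2^2\lesssim\sum_{\om\in\Om}\|h_{j,\om}\|_2^2 ,
\]
which reduces the lemma to the single--cap inequality $\|h_{j,\om}\|_2^2\lesssim\sum_{T_\om\in\ZT'_\om}\|g_{j,T_\om}\|_2^2$.

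For the single--cap step I would use that $g_{j,T_\om}=g_{j,\om}\,\Id^\ast_{T_\om}$, that $\{\Id^\ast_{T_\om}\}_{T_\om\in\ZT_\om[\rho]}$ is a smooth partition of unity built from anisotropically rescaled lattice translates of a fixed bump, with $0\le\Id^\ast_{T_\om}\lesssim\rap(R)\,w_{T_\om,N}$, and that the tubes $\{T_\om\}$ are finitely overlapped, so that at every point only $O(1)$ of the cutoffs $\Id^\ast_{T_\om}$ are non--negligible. A pointwise Cauchy--Schwarz in the tube index then gives
\[
  \Big(\sum_{T_\om\in\ZT'_\om}\Id^\ast_{T_\om}(x)\Big)^2\lesssim\sum_{T_\om\in\ZT'_\om}\bigl|\Id^\ast_{T_\om}(x)\bigr|^2+\rap(R) ,
\]
and multiplying by $|g_{j,\om}(x)|^2$ and integrating produces the main term $\sum_{T_\om\in\ZT'_\om}\|g_{j,T_\om}\|_2^2$, the $\rap(R)$ contribution being negligible. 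Summing over $\om$ and combining with the previous display yields \eqref{L2-orthogonality-small-1}; every implicit constant is uniform in $j$ because the configurations $\{S_j(\om)\}$ and $\{T_\om\}$ have $j$--independent geometry.

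The only point that needs a little care is the bookkeeping of the rapidly decaying tails of the physical cutoffs $\Id^\ast_{T_\om}$ (together with the harmless fattening of the frequency slab $S_j(\om)$ caused by the Fourier support of $\Id^\ast_{T_\om}$ in the across--$\om$ step): both are handled exactly as in the proof of the large--scale Lemma~\ref{l2-orthogonality-big}, the error terms being absorbed into $\rap(R)$ factors that are negligible against the $L^2$ norms at the scales in play, $R^{\e/10}\le\rho<R$. No genuinely new ingredient beyond ``definition and Plancherel'' is required.
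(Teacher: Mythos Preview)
Your proposal is correct and follows exactly the approach the paper has in mind: the paper gives no detailed proof for either Lemma~\ref{l2-orthogonality-big} or this lemma, stating only that they ``follow directly by definition and Plancherel,'' and your two-step argument (Plancherel across the finitely overlapping frequency slabs $S_j(\om)$, then bounded overlap of the spatial cutoffs $\Id^\ast_{T_\om}$ within a fixed cap) is precisely the unpacking of that phrase. The handling of the $\rap(R)$ tails is also in line with how the paper treats such errors throughout.
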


We can similarly conclude the wave packet decomposition at a smaller scale into a proposition.

\begin{proposition}[Wave packet decomposition at a smaller scale]
\label{scale-rho-wpt}
Let $\vg$ a vector satisfying \eqref{property1} and \eqref{property2}. Then for each $\rho^{-1/2}$-cap $\om\in\Om$, there is a collection of finitely overlapping tubes $\ZT_\om[\rho]$ defined in \eqref{scale-rho-tube-set}, and a smooth, positive partition of unity $\{\Id_{T_\om}^\ast\}_{T_\om\in\ZT_\om[\rho]}$, such that
\begin{equation}
\label{wave-packet-small}
        g_{j,\om}=\sum_{\overline\om<\om}g_{j,\overline\om},\hspace{5mm}g_{j}=\sum_{\om\in\Om}\sum_{T_\om\in\ZT_\om[\rho]}g_{j,T_\om},\hspace{5mm}g_{j,T_\om}:=g_{j,\om}\Id_{T_\om}^\ast..
\end{equation}
Also, recalling \eqref{weight}, the smooth function $\Id_{T_\theta}^\ast$ satisfies that
\begin{equation}
    \Id_{T_\om}^\ast(x) \leq \rap(R)C_Nw_{T_\om,N}(x),
\end{equation}
and the Fourier transform of $\Id^\ast_{T_\om}$ is supported in a dual rectangular box of $T_\om$ that is centered at the origin. In addition, Lemma \ref{Fourier-support-small-scale} and Lemma \ref{L2-orthogonality-small-1} are true.
\end{proposition}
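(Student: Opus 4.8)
The plan is to transcribe, with $R$ replaced by $\rho$ throughout, the largest-scale wave packet construction from the first part of this section, and then to verify the two accompanying lemmas by Plancherel. First I would dispose of the cap decomposition: the identity $g_{j,\om}=\sum_{\bar\om<\om}g_{j,\bar\om}$ is simply Definition~\ref{caprelation} together with \eqref{wave-packet-nested}, and summing over $\om\in\Om$ and using the partition \eqref{partitioningproperty} produces $g_j=\sum_{\om\in\Om}g_{j,\om}$. Next, for each fixed $\rho^{-1/2}$-cap $\om$, after rotating so that $c_\om=e_3$, I would repeat the physical-space construction: start from the standard lattice partition of unity $\{\eta_v\}_{v\in\ZZ^3}$, coarsen it to a morally orthogonal partition of unity $\{\eta_u\}_{u\in\rho^{\be}\ZZ^3}$ by summing over $\rho^{\be}$-blocks, and apply the anisotropic dilation $(x_1,x_2,x_3)\mapsto(\rho^{-1/2}x_1,\rho^{-1/2}x_2,\rho^{-1}x_3)$. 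This yields the collection $\ZT_\om[\rho]$ in \eqref{scale-rho-tube-set} together with the smooth partition of unity $\{\Id^\ast_{T_\om}\}$; the pointwise bound $\Id^\ast_{T_\om}\leq\rap(R)C_Nw_{T_\om,N}$ comes from the rapid decay of $\eta$ exactly as at scale $R$, and the Fourier transform of $\Id^\ast_{T_\om}$ is supported in the box dual to $T_\om$ centered at the origin, since each $\wh\eta_v$ lives in the unit ball and a physical translation only modulates, never displaces, the frequency support. Putting $g_{j,T_\om}:=g_{j,\om}\Id^\ast_{T_\om}$ and summing then gives \eqref{wave-packet-small}.

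To establish Lemma~\ref{Fourier-support-small-scale} I would argue by Plancherel. Since $\wh{g_{j,T_\om}}=\wh{g_{j,\om}}\ast\wh{\Id^\ast_{T_\om}}$ and $\wh{\Id^\ast_{T_\om}}$ is supported in a box of dimensions $\sim\rho^{-1/2}\times\rho^{-1/2}\times\rho^{-1}$, it suffices to show that $\wh{g_{j,\om}}$ lies in a slab $S_j(\om)$ of dimensions $\sim\rho^{-1}\times\rho^{-1/2}\times\rho^{-1/2}$ whose shortest side is parallel to $c_\om$, since the convolution then dilates it by at most $\rho^{\be}$. By property~\eqref{property2}, $\wh{g_{j,\om}}$ is contained in the union of the thin slabs attached to the pieces $g_{j,\bar\om}$ with $\bar\om<\om$, all of which lie over $2\om$; combined with property~\eqref{property1} this union is contained in $N_{C\rho^{-1}}\big(\Ga_j(\om')\big)$ for a slightly dilated cap $\om'$. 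By Lemma~\ref{Gauss-map-spt}, $\Ga_j(\om')$ is the graph of $\Phi$ over a $\sim\rho^{-1/2}$-ball in the $\bar\xi$ variable, and the uniform parabolicity in \eqref{Phi-conditions} forces that graph to differ from its tangent plane by $O(\rho^{-1})$; hence, in the coordinates with $e_3=c_\om$, the neighbourhood $N_{C\rho^{-1}}(\Ga_j(\om'))$ sits inside the desired slab $S_j(\om)$. The inclusion $S_j(\om)\subset N_{C\rho^{-1/2}}(\Ga_j)$ is then immediate, and finite overlap of the family $\{S_j(\om)\}_\om$ follows from finite overlap of the caps $\{\om\}$ and the injectivity of $G_j$ from Lemma~\ref{Gauss-map-lem}.

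For the $L^2$-orthogonality assertion \eqref{L2-orthogonality-small-1} I would split an arbitrary subfamily $\ZT'$ according to the cap $\om$. Within a fixed $\om$ one has $\sum_{T_\om\in\ZT'_\om}g_{j,T_\om}=g_{j,\om}\big(\sum_{T_\om\in\ZT'_\om}\Id^\ast_{T_\om}\big)$, and Cauchy--Schwarz together with the bounded overlap of $\{\Id^\ast_{T_\om}\}$ gives the pointwise inequality $\big(\sum_{T_\om\in\ZT'_\om}\Id^\ast_{T_\om}\big)^2\lesssim\sum_{T_\om\in\ZT'_\om}(\Id^\ast_{T_\om})^2$, whence $\big\|\sum_{T_\om\in\ZT'_\om}g_{j,T_\om}\big\|_2^2\lesssim\sum_{T_\om\in\ZT'_\om}\|g_{j,T_\om}\|_2^2$. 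Across distinct caps, the functions $\sum_{T_\om\in\ZT'_\om}g_{j,T_\om}$ have Fourier transforms supported in the finitely overlapping slabs $S_j(\om)$, so Plancherel allows one to sum the squared $L^2$-norms over $\om$; combining the two steps gives \eqref{L2-orthogonality-small-1}, with constants depending only on $\Phi$ and the fixed overlap functions, hence uniform in $j$.

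The only step where I expect to need genuine care is the slab containment in Lemma~\ref{Fourier-support-small-scale}: one must check that the union of the $r^{-1/2}$-scale slabs furnished by property~\eqref{property2}, rather than merely their common $\rho^{-1}$-thickening, actually fits inside a single $\rho^{-1}\times\rho^{-1/2}\times\rho^{-1/2}$ slab, and this is exactly the point where the uniform parabolicity of $\Phi$ in \eqref{Phi-conditions} is used. Everything else is a faithful copy of the largest-scale construction together with routine bookkeeping via Plancherel.
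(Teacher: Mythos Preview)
Your proposal is correct and matches the paper's approach: the paper treats this proposition purely as a summary of the construction already carried out in Subsection~\ref{wpdsmallscale}, and for the two accompanying lemmas it offers no proof beyond the remark (made for the analogous scale-$R$ statements) that they ``follow directly by definition and Plancherel.'' You have simply filled in those omitted details, and the slab-containment argument via the parabolicity condition \eqref{Phi-conditions} is exactly the mechanism the paper has in mind.
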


\begin{remark}
{\rm
Since $\rho\geq R^{\e/10}$, we can always view ${\rm RapDec}(\rho)$ as ${\rm RapDec}(R)$.
}
\end{remark}

\section{Modified polynomial partitioning}
\label{modified-poly}
\noindent{\bf Some notations.} Starting in this section, we set $\de:=\e^2$, $d:=R^{\e^6}$, $\be:=\e^{1000}$.

\medskip

Unlike the case in \cite{Wu}, here in the desired estimate \eqref{local-main}, the target function $\Id_{\cp_R}\Br_{A}\Sq f^p$ is assumed to be supported in the rescaled ball $\cp_R$ instead of $B_R$. Hence we need to modify the original polynomial partitioning accordingly. First, let us introduce some definitions regarding to the zero set of polynomials.
\begin{definition}
Suppose $Q_1,\ldots,Q_k$ are polynomials in $\ZR^n$. We say $Z(Q_1,\ldots,Q_k)$ is a transverse complete intersection if for any $x\in Z(Q_1,\ldots,Q_k)$, the vectors $\nabla Q_1(x),\ldots,\nabla Q_k(x)$ are linearly independent.
\end{definition}
\begin{definition}
We say a polynomial $P$ in $\ZR^n$ is non-singular, if $\nabla P(x)\not=0$ for any $x\in Z(P)$.
\end{definition}

The modified polynomial partitioning we need is the following.
\begin{proposition}
\label{final-partition}
Let $\rho>0$. Suppose that $g$ is a non-negative $L^1$ function in $\ZR^3$ supported in a rescaled $\rho$-ball $\cp$. Then for any $d\in\ZZ^+$, there exists a polynomial $P$ with degree $O(d)$, such that
\begin{enumerate}
    \item There are $\sim d^3$ many cells $O$ contained in $\cp\setminus Z(P)$, satisfying 
    \begin{equation}
        \int_{O} g\sim d^{-3}\int_{\ZR^n}g\,.
    \end{equation}
    \item Each of these cells $O$ in {\rm (1)} lie in a smaller rescaled ball of radius $\rho d^{-1}$.
\end{enumerate}
\end{proposition}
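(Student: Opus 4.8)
The plan is to reduce the statement to the standard polynomial partitioning of Guth (and its refinements), applied after a linear change of variables that straightens the eccentric "rescaled ball" into an ordinary ball. First I would apply the affine map $L$ that dilates by a factor $M$ in the two directions orthogonal to $c_\si$, so that $L(\cp)$ is a genuine ball $B$ of radius $\sim\rho$ in $\ZR^3$ and $L$ carries rescaled $\rho d^{-1}$-balls to ordinary $\rho d^{-1}$-balls. Let $h := g\circ L^{-1}$, a non-negative $L^1$ function supported on $B$. Applying the usual polynomial partitioning theorem (the Stone–Tukey / ham-sandwich version used in \cite{Guth-R3}) to $h$ at degree $d$, we obtain a polynomial $\wt P$ of degree $O(d)$ whose zero set $Z(\wt P)$ partitions $\ZR^3\setminus Z(\wt P)$ into $\sim d^3$ open cells $\{O'\}$, each satisfying $\int_{O'}h\sim d^{-3}\int h$. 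Then $P := \wt P\circ L$ is a polynomial of the same degree $O(d)$, the cells $O := L^{-1}(O')$ satisfy $\int_O g = \int_{O'}h\sim d^{-3}\int g$, and $O\subset L^{-1}(B)=\cp$, giving (1).

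For part (2) I would invoke the standard fact (again from \cite{Guth-R3}, via a dyadic pigeonholing / "further partitioning" step) that after possibly multiplying $\wt P$ by a product of $O(d)$ affine functions, one may assume each cell $O'$ lies inside a ball of radius $\rho d^{-1}$; transporting back through $L$, each $O$ lies in a rescaled ball of radius $\rho d^{-1}$ (since $L^{-1}$ sends a $\rho d^{-1}$-ball to a rescaled $\rho d^{-1}$-ball). One has to check only that the number of cells is still $\sim d^3$ and the equidistribution $\int_O g\sim d^{-3}\int g$ survives this refinement, which is routine bookkeeping: the extra affine factors raise the degree by a constant multiple, hence the cell count stays $O(d^3)$, and one pigeonholes to a subcollection of $\gtrsim d^3$ cells on which the mass is comparable.

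The main obstacle — really the only non-formal point — is making sure the change of variables $L$ is harmless, i.e. that "degree $O(d)$", "non-singularity", and "transverse complete intersection" are all preserved under the affine map $L$, and that rescaled balls behave covariantly (a $\rho$-ball maps to a rescaled $\rho$-ball and vice versa). Since $L$ is a diagonal linear map in the frame $\{e_1,e_2,e_3\}$ with $e_3=c_\si$, composition with $L$ is a linear automorphism of the space of polynomials of each degree, so all of these properties transfer directly; I would state this as a short lemma and then the rest of the argument is a verbatim citation of the polynomial partitioning machinery. I would remark that we do not need the sharper "either cellular or algebraic" dichotomy here — only the cellular case — since in this proposition $\rho$ is arbitrary and the algebraic/low-dimensional analysis is deferred to the iteration in Sections \ref{iteration} and beyond.
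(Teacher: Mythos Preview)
Your proposal is correct and matches the paper's approach exactly: the paper omits the proof but states that one reduces to the ordinary-ball case by a horizontal non-isotropic scaling (your map $L$) and then cites existing polynomial partitioning machinery. The only cosmetic difference is that the paper cites \cite{Wang-restriction-R3} for the refined partitioning in which each cell lies in a smaller ball, whereas you attribute this step to the ``multiply by $O(d)$ affine factors'' argument from \cite{Guth-R3}; both routes are standard and yield the same conclusion.
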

\noindent We omit the proof of this proposition. In fact, by a horizontal non-isotropic scaling, we can reduce Proposition \ref{final-partition} to the case where the support of $g$ is the usual ball $B_\rho$. In this case, the corresponding polynomial argument was showed in \cite{Wang-restriction-R3}.

\begin{remark}
\rm
In Proposition \ref{final-partition}, the second outcome not seems to be necessary in our proof. While we still use it later since we feel that it is nice property to emphasize.
\end{remark}

\medskip

%%%%%%%%%%%%%%%%% remark wave packet
\begin{comment}

\begin{remark}
\color{blue} We use $\vf$ to denote a vector $\{f_1,f_2,\ldots,f_{R}\}$. When we talk about doing wave packet decomposition for $\vf$, we mean doing wave packet decomposition for each component of $\vf$. When we talk about wave packet a $\vf_T$, we mean the vector $\{f_{1,T},\ldots,f_{R,T}\}$.
\end{remark}

\end{comment}
%%%%%%%%%%%% remark wave packet

Next, we state the one-step polynomial partitioning algorithm based on the idea in \cite{Guth-R3}. This may be less intuitive, but one of its advantage is that when building up our iteration in next section, we can apply this algorithm directly.

\begin{algorithm}\hfill
\label{algorithm}
\rm

{\bf Inputs: } $(\rho,U,\vg_U,A)$. More precisely our inputs are:
\begin{enumerate}
    \item A scale $\rho$ ($1\le\rho\le R$);
    \item A set $U$ which is contained in a $\rho$-rescaled ball $\cp_U$;
    \item A vector-valued function $\vg_U=\{ g_{1,U},\cdots,g_{R,U} \}$. Each component $g_{j,U}$ has Fourier support in $N_{C\rho^{-1}}\big(\Gamma_j(\si)\big)$, and satisfies  \eqref{prop2} in Section 4.2;
    \item An integer $A\geq10$. 
\end{enumerate}

{\bf Outputs: }
We obtain a polynomial $P$ of degree $O(d)$ where $d:=R^{\e^6}$, and a wall $W=U\cap N_{\rho^{1/2+\be}}Z(P)$. Also, we distinguish three scenarios: cellular case, transverse case and algebraic case.

\textit{ Cellular case}:
\begin{enumerate}
    \item We obtain a collection of cells $\co=\{O\}$ which we call cellular cells. They satisfies: each $O$ is contained in a $\rho d^{-1}$- rescaled ball $\cp_O$ and
    \begin{equation}\label{cellular1}
        |\co|\sim d^3.
    \end{equation}
    
    \item We obtain tube sets $\{\T_O\}_{O\in\co}$ and a set of functions $\{\vg_{O}\}_{O\in\co}$ which are indexed by $\co$. They satisfy an $L^2$-estimate
    \begin{equation}\label{cellular2}
        \sum_{O\in\co}\|\Sq\vg_{O}\|^2\le \|\Sq\vg_U\|^2_2;
    \end{equation}
    and a broad estimate
    \begin{equation}\label{cellular3}
        \int_U|\Br_A\Sq\vg_U|^p\sim d^3 \int_O |\Br_{A}\Sq\vg_O|^p \textup{~for~each~}O\in\co.
    \end{equation}
    \item The Fourier transform of each component $g_{j,O}$ of $\vg_{O}$ satisfies:
    \begin{equation}\label{cellular4}
        \textup{supp~}\widehat{g_{j,O}} \subset N_{C(\rho d^{-1})^{-1}} (\Ga_{j}(\si)).  
    \end{equation}
    
\end{enumerate}

\textit{Transverse case}:
\begin{enumerate}
    \item We obtain a collection of cells $\cb=\{B\}$ which we call transverse cells. They satisfies: each $B$ is a subset of $W$ and each $B$ is contained in a $\rho R^{-\de}$-rescaled ball $\cp_B$ with $\de=\e^2$.\label{transverse1}
    \item We obtain tube sets $\{\T_{B,trans}\}_{B\in\cb}$ and a set of functions $\{\vg_{B,trans}\}_{B\in\cb}$ which are indexed by $\cB$. They satisfy an $L^2$-estimate
    \begin{equation}\label{transverse2}
        \sum_{B\in\cb}\|\Sq\vg_{B,trans}\|^2_2\le {\poly} (d) R^{-\de}\|\Sq\vg_{U}\|^2_2;
    \end{equation}
    and a broad estimate
    \begin{equation}\label{transverse3}
        \int_U|\Br_A\Sq\vg_U|^p\lesssim \log R \sum_{B\in\cb}\int_B|\Br_{A/2}\Sq\vg_{B,trans}|^p.
    \end{equation}
    Also, for each $B\in\cb$, the quantity
    \begin{equation}
        \int_B|\Br_{A/2}\Sq\vg_{B,trans}|^p
    \end{equation}
    are the same up to a constant factor.
    \item The Fourier transform of each component $g_{j,B,trans}$ of $\vg_{B,trans}$ satisfies:
    \begin{equation}\label{transverse4}
        \textup{supp~}\widehat{g}_{j,B,trans} \subset N_{C(\rho R^{-\de})^{-1}} (\Ga_{j}(\si)).  
    \end{equation}
    
\end{enumerate}

\textit{Tangent case}:
\begin{enumerate}
    \item We obtain a collection of cells $\cb=\{B\}$ which we call tangent cells. They satisfies: each $B\subset W$, each $B$ is contained in a $\rho R^{-\de}$-rescaled ball $\cp_B$.\label{tangent1}
    \item We obtain tube sets $\{\T_{B,tang}\}_{B\in\cb}$ and a set of functions $\{\vg_{B,tang}\}_{B\in\cb}$ which are indexed by $\cB$. They satisfy an $L^2$-estimate
    \begin{equation}\label{tangent2}
        \sum_{B\in\cb}\|\Sq\vg_{B,tang}\|^2_2\le {\poly} (d) R^{-\de}\|\Sq\vg_{U}\|^2_2;
    \end{equation}
    and a broad estimate
    \begin{equation}\label{tangent3}
        \int_U|\Br_A\Sq\vg_U|^p\lesssim \log R \sum_{B\in\cb}\int_O|\Br_{A/2}\Sq\vg_{B,tang}|^p.
    \end{equation}
    \item The Fourier transform of each component $g_{j,B,tang}$ of $\vg_{B,tang}$ satisfies:
    \begin{equation}\label{tangent4}
        \textup{supp~}\widehat{g}_{j,B,tang} \subset N_{C(\rho R^{-\de})^{-1}} (\Ga_{j}(\si)).  
    \end{equation}
    
\end{enumerate}
\end{algorithm}

\begin{proof}

The rest of this section is devoted to the proof of the Algorithm \ref{algorithm}.
We apply Proposition \ref{final-partition} to the function $\Id_U |\BR_{A}\Sq\vg_U|^p$ to obtain a
polynomial $P$. Also, we obtain a collection of cells $\wt\co=\{\wt O\}$ such that: $|\wt \co|\sim d^3$, each $\wt O$ is contained in a $\rho d^{-1}$-rescaled ball, and 
\begin{equation}\label{cellequal}
    \int_{\wt O}|{\rm Br}_A \Sq\vg_U|^p\sim d^{-3}\int_{U}|{\rm Br}_A \Sq\vg_U|^p
\end{equation}
for each cell $\wt O$. 

Let the wall be $W=U\bigcap N_{\rho^{1/2+\beta}}Z(P)$. We define the shrunken cell $O:=\wt O\setminus W$ and the collection of them $\co:=\{O\}$. Since $U=(\sqcup O)\sqcup W$ , we have the following inequality:
\begin{equation}
\label{cell-only-1}
    \int_{U}|{\rm Br}_A \Sq\vg_U|^p\lesssim \sum_{O\in\co}\int_{O}|{\rm Br}_A \Sq\vg_U|^p+\int_W|{\rm Br}_A \Sq\vg_U|^p.
\end{equation}
Invoking the wave packet decomposition at scale $\rho$, we can write
\begin{equation}
\label{wpt-algorithm}
    \vg_U=\sum_{T\in\T} (\vg_U)_{T},
\end{equation}
where $\T$ is a set of $\rho$-tubes coming from the wave packet decomposition at scale $\rho$. In the following discussion, we will define $\T_O, \T_{B,trans}$ and $\T_{B,tang}$ which are subsets of $\T$. 

First, we analyze the first term on the right hand side of \eqref{cell-only-1}.
For each cell $O\in\co$, define $\ZT_{O}$ as the collection of tubes such that $O\cap T\not=\varnothing$. It is proved in \cite{Guth-R3} that
\begin{lemma}\label{cellulartube}
Each $T\in\T$ belongs to at most $deg(P)+1=O(d)$ many sets $\T_O$.
\end{lemma}

\noindent Pick a smooth cutoff function $\vp_O$ satisfying the following properties: \begin{enumerate}
    \item $\vp_O(x)\sim 1$ when $x\in \cp_O$ ($\cp_O$ is the $\rho d^{-1}$-rescaled ball containing the cell $O$).
    \item \label{rescaled-ball-dual} If $\wh{\cp}_O$ is the dual slab of $\cp_O$ centered at the origin whose dimensions are $ Md \rho^{-1}\times Md \rho^{-1}\times d\rho^{-1}$, we require $\wh\vp_O(\xi)\gtrsim 1$ when $\xi\in\wh{\cp}_O$ and $\wh\vp_O$ is supported in $2\wh{\cp}_O$.
\end{enumerate}
Note that the set $\ZT_{O}$ is independent of any component $g_j$. We define the vector-valued function $\vg_O$ associated to the cell $O$ as
\begin{equation}\label{cellularfunction}
    \vg_{O}:=
    \vp_O\sum_{T\in\T_O}(\vg_U)_{T}.
\end{equation}
From the definition of $\vp_O$, we see that the Fourier support of each component $g_{j,O}$ of $\vg_O$ is contained $\wh\cp_O+N_{C\rho^{-1}}(\Ga_{j}(\si))\subset N_{C'(\rho d^{-1})^{-1}}(\Ga_j(\si))$. This verifies \eqref{cellular4}.

\begin{remark}
\rm

By the definition of rescaled call, $\cp_O$ is of dimensions $M^{-1}\rho d^{-1}\times M^{-1}\rho d^{-1}\times \rho d^{-1}$, so its dual $\wh\cp_O$ is of dimensions 
$M(\rho d^{-1})^{-1}\times M(\rho d^{-1})^{-1}\times (\rho d^{-1})^{-1}$. Since $\Gamma_j(\si)\subset \Ga_j$ is a $M^{-1}$-cap, it is true that $\wh\cp_O+N_{C\rho^{-1}}(\Ga_{j}(\si))\subset N_{C'(\rho d^{-1})^{-1}}(\Ga_j(\si))$. If $\wt\cp_O$ is of dimensions $M'(\rho d^{-1})^{-1}\times M'(\rho d^{-1})^{-1}\times (\rho d^{-1})^{-1}$ with $M'>>M$, we no longer have $\wt\cp_O+N_{C\rho^{-1}}(\Ga_{j}(\si))\subset N_{C'(\rho d^{-1})^{-1}}(\Ga_j(\si))$.
\end{remark}

\begin{remark}
\rm

The reason that we multiply the function $\vp_O$ is to make $\vg_O$ essentially supported in $\cp_O$. That's the same reason for $\vg_{B,trans}$ and $\vg_{B,tang}$ which will be defined later.
\end{remark}

\noindent Since for any $x\in O$, one has $\vp_O(x)\sim 1$. We deduce that for $x\in O$,
\begin{equation}
\label{broadness-cell}
    {\rm Br}_A \Sq(\vg_U)(x)\sim{\rm Br}_{A} \Sq(\vg_{O})(x).
\end{equation}
Consequently, we have
\begin{equation}\label{cellular}
    \sum_{O\in\co}\int_{O}|{\rm Br}_A\Sq\vg_U|^p\lesssim \sum_{O\in\co}\int_{O}|{\rm Br}_A\Sq\vg_O|^p.
\end{equation}

Next, let us analyze the second term on the right hand side of \eqref{cell-only-1}. We choose a collection of $\rho R^{-\de}$-rescaled balls $\wt\cb=\{\wt B\}$ that form a finitely overlapping cover of $U$. Define 
\begin{equation}\label{defineB}
    B:=\wt B\cap W \textup{~and~} \cb:=\{B\}.
\end{equation}
For each $B$, we define $\ZT_{B,tang}$ and $\ZT_{B,trans}$ which are subsets of $\T$ as follows.
\begin{definition}
\label{tangent-definition}
$\ZT_{B,tang}$ is the set of $\rho$-tubes $T\in\T$ obeying the following two conditions:
\begin{enumerate}
    \item [$\bullet$]$T\cap B\not=\varnothing$, %\color{red}$T\cap B\subset N_{\rho^{1/2+\beta}}(Z(P))$}
    \item [$\bullet$]If $z$ is any non-singular point of $Z$ lying in $10 \wt B\cap 10 T$, then 
    \begin{equation}
    \label{angular-condition-1}
        |{\rm Angle}(v(T),T_zZ(P)|\leq \rho^{-1/2}R^{\de}.
    \end{equation}
\end{enumerate}
\end{definition}
\begin{definition}
\label{transverse-definition}
$\ZT_{B,trans}$ is the set of $\rho$-tubes $T\in\ZT$ obeying the following two conditions:
\begin{enumerate}
    \item [$\bullet$]$T\cap B\not=\varnothing$.
    \item [$\bullet$]There exists a non-singular point $z$ of $Z$ lying in $10 \wt B\cap 10 T$, such that 
    \begin{equation}
    \label{angular-condition-2}
        |{\rm Angle}(v(T),T_zZ(P)|> \rho^{-1/2}R^{\de}.
    \end{equation}
\end{enumerate}
\end{definition}
\noindent The following lemma for the transverse tubes in proved in \cite{Guth-R3}.
\begin{lemma}\label{transversetube}
Each $T\in\T$ belongs to at most ${\poly}(deg(P))={\poly}(d)$ many sets $\T_{B,trans}$.
\end{lemma}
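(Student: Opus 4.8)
The plan is to follow the standard transverse-tube multiplicity argument of the polynomial partitioning method (Guth \cite{Guth-R3}), which carries over to the present anisotropic situation after one normalization. First I would pass to the rescaled coordinates in which Proposition \ref{final-partition} was proved (the horizontal non-isotropic scaling by $M$), so that the cells $\widetilde B$ become genuine $\rho R^{-\de}$-balls, $Z(P)$ may be treated isotropically, and $v(T)$ becomes comparable to a fixed unit vector, which after a rotation I take to be $e_1$. The goal then is: with $T$ fixed, bound by $\poly(d)$ the number of cells $B\in\cb$ that carry a \emph{witness}, i.e. a non-singular point $z_B\in Z(P)\cap 10\widetilde B\cap 10T$ with $|\mathrm{Angle}(v(T),T_{z_B}Z(P))|>\rho^{-1/2}R^{\de}$.

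Next I would extract the geometry from the quantitative transversality. At a witness $z_B$ one has $|\partial_1 P(z_B)|\gtrsim\rho^{-1/2}R^{\de}\,|\nabla P(z_B)|$, so locally $Z(P)$ is a graph $x_1=g(x_2,x_3)$ with $|\nabla g|\lesssim\rho^{1/2}R^{-\de}$; over the transverse cross-section of $10T$, a disk of radius $\sim\rho^{1/2+\be}$, such a sheet has $x_1$-oscillation $\lesssim\rho^{1/2}R^{-\de}\cdot\rho^{1/2+\be}=\rho^{1+\be}R^{-\de}$ on each connected piece of its domain. Since $\deg P=O(d)$, a line parallel to $e_1$ meets $Z(P)$ in $O(d)$ points, and a B\'ezout/Milnor--Thom-type bound caps the number of connected transverse pieces of $Z(P)\cap 10T$ at $\poly(d)$; hence the $x_1$-extent of the union of transverse pieces of $Z(P)\cap 10T$ is $\lesssim\poly(d)\,\rho^{1+\be}R^{-\de}$. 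Dividing by the $x_1$-extent $\sim\rho R^{-\de}$ of a cell, and using the finite overlap of the balls $\widetilde B$ (together with the analogous bound in the two transverse directions, harmless in the range of scales where the algorithm is run), I would conclude that at most $\poly(d)\cdot\rho^{\be}=\poly(d)$ cells can carry a witness.

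The step I expect to be the main obstacle --- and the reason the threshold in Definition \ref{transverse-definition} is taken to be $\rho^{-1/2}R^{\de}$ and not merely positive --- is making rigorous that each transverse crossing of $10T$ by $Z(P)$ is localized at the cell scale, even though $z_B$ may lie anywhere in the fat tube $10T$ and $Z(P)$ may be strongly curved. The factor $R^{\de}$ is precisely a fixed power of $R$ above the tangency threshold $\rho^{-1/2}$, which is exactly what forces a transverse sheet of $Z(P)$ to traverse a $\rho^{-\be}$-proportion of the cross-section of $T$ inside a single cell; pinning this down, together with the real-algebraic count of transverse pieces, is the only non-routine input. With the reductions of Proposition \ref{final-partition} in hand this is exactly Guth's transverse-tube lemma, so in the write-up I would reproduce his short argument or cite it directly.
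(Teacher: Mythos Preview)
Your sketch is correct and in fact more detailed than what the paper provides: the paper does not prove this lemma at all but simply cites \cite{Guth-R3}. Your outline is essentially Guth's transverse-tube argument, and the identification of the Milnor--Thom/B\'ezout count of transverse pieces as the one nontrivial input is accurate.

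One comment on your reduction step: the horizontal rescaling by $M$ is not needed and slightly obscures the picture, since under that map the tube $T$ becomes fatter (cross-section $\sim M\rho^{1/2+\be}$) and the angle threshold transforms nontrivially. Guth's argument concerns only the single tube $T$, the variety $Z(P)$, and the angle condition, all of which are coordinate-free; it therefore applies verbatim in the original anisotropic coordinates. The rescaled cells $\widetilde B$ enter only at the final counting step, and there one only needs that the extent of $\widetilde B$ in the direction $v(T)$ is $\sim\rho R^{-\de}$, which holds because $v(T)$ lies in the $M^{-1}$-cap $\sigma$ and $\widetilde B$ has long side $\rho R^{-\de}$ in the direction $c_\sigma$. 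With that observation your steps (2)--(6) go through directly, yielding $\lesssim\poly(d)\cdot\rho^{\be}=\poly(d)$ cells since $\be=\e^{1000}\ll\e^6$.
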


We will not use the lemma below until Section 10. While we feel that this is the best place to state it. First, we cover $B$ using finitely overlapping rescaled balls $\{Q\}$ of radius $M\rho^{1/2+\be}$ that $Q\cap B\not=\varnothing$. Then we define
\begin{equation}
    \ZT_{B,tang,Q}:=\{T\in\ZT_{B,tang},  T\cap 2Q\not=\varnothing\}.
\end{equation}
Then for any $Q$, all tangent tubes intersecting $Q$ are morally lie in a thin neighborhood of a plane. This is proved in the next lemma.
\begin{lemma}
\label{tangent-lemma}
There is a plane $V\subset\ZR^3$ such that every tubes in $\ZT_{B,tang,Q}$ is contained in $N_{\rho^{1/2}R^{O(\de)}}(V)$. 
\end{lemma}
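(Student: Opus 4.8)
The plan is to reduce the lemma to the Euclidean tangency lemma of Guth \cite{Guth-R3} (see also \cite{Wu}), which proves the analogous statement for ordinary balls and tubes. First I would apply the anisotropic horizontal dilation used to reduce Proposition \ref{final-partition} to the isotropic case: it sends rescaled balls to ordinary balls, sends $Z(P)$ to the zero set of a polynomial of the same degree $O(d)=R^{O(\e^6)}$, and distorts the wave-packet tubes $T$ and the ball $Q$ into tubes and balls of controlled eccentricity, all with directions within $O(M^{-1})$ of the image of $c_\si$. Because $M\le R^{1/2}$, $\rho\ge R^{\e/10}$, $\be=\e^{1000}$ and $\de=\e^2$, every $M$-dependent distortion stays inside the allowed loss $R^{O(\de)}$, so it is enough to find an ordinary plane $V$ whose $\rho^{1/2}R^{O(\de)}$-neighbourhood contains the image of each $T\in\ZT_{B,tang,Q}$.

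In the Euclidean picture I would carry out the standard three steps, which I describe back in the original coordinates. (i) Since $Q\cap B\ne\varnothing$ and $B\subset W\subset N_{\rho^{1/2+\be}}Z(P)$, there is a point of $Z(P)$ within $O(\rho^{1/2+\be})$ of $Q$; as the singular locus of $Z(P)$ has dimension at most one (and, in the exceptional event that it contains all of $Z(P)$ near $Q$, an arbitrarily small perturbation of $P$ removes this), I may take such a point $z_0$ to be non-singular and set $V:=z_0+T_{z_0}Z(P)$. (ii) For each $T\in\ZT_{B,tang,Q}$, the facts that $T$ meets $2Q$ and $B\subset N_{\rho^{1/2+\be}}Z(P)$ produce a non-singular $z_T\in Z(P)$ lying in a bounded dilate of $\wt B\cap T$ and within $O(M\rho^{1/2+\be})$ of $z_0$, and Definition \ref{tangent-definition} gives $|{\rm Angle}(v(T),T_{z_T}Z(P))|\le\rho^{-1/2}R^{\de}$; from this I would deduce $|{\rm Angle}(v(T),V)|\le\rho^{-1/2}R^{O(\de)}$. (iii) Granting (ii): some $T\in\ZT_{B,tang,Q}$ through $Q$ has $v(T)$ within $O(M^{-1})$ of $c_\si$, so $V$ makes an angle $O(M^{-1})$ with $c_\si$; hence the long (length $\sim M\rho^{1/2+\be}$) axis of $2Q$ deviates from $V$ by only $O(\rho^{1/2+\be})$ and $2Q\subset N_{\rho^{1/2}R^{O(\de)}}(V)$. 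Since any $T\in\ZT_{B,tang,Q}$ meets $2Q$, has length $\rho$ and radius $\le\rho^{1/2+\be}$, and by (ii) a direction within $\rho^{-1/2}R^{O(\de)}$ of $V$, every point of $T$ lies within $O(\rho^{1/2+\be})+\rho\cdot\rho^{-1/2}R^{O(\de)}\le\rho^{1/2}R^{O(\de)}$ of $V$, which is the claim.

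The hard part is step (ii). The bound on $|{\rm Angle}(v(T),V)|$ cannot come from $z_T$ merely being close to $z_0$, because the tangent plane of a variety of degree $O(d)$ can turn arbitrarily fast at scale $\rho^{1/2}$; the extra ingredient is that $T\in\ZT_{B,tang}$ also lies in $N_{\rho^{1/2+\be}}Z(P)$ along its \emph{entire} length $\rho$. I would exploit this by restricting $P$ to the coreline of $T$ and to the lines parallel to it through $T$, which yields degree-$O(d)$ one-variable polynomials that are $O(\rho^{1/2+\be})$-small on an interval of length $\rho\gg\rho^{1/2}$, hence have first derivatives of size $O(\rho^{-1/2}R^{O(\e^6)})$ there; feeding this into the Gauss map of $Z(P)$, whose derivatives are $O(1)$ at the relevant scale, controls how much $T_zZ(P)$ can rotate between $z_T$ and $z_0$ and gives the stated angle estimate. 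The remaining verifications---that the anisotropic dilation preserves every relevant object up to $R^{O(\de)}$ factors, and that harmlessly enlarging the absolute constant $10$ in Definitions \ref{tangent-definition}--\ref{transverse-definition} does not change anything---are routine, so I would only indicate them.
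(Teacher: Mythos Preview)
Your approach is far more complicated than necessary, and the ``hard part'' you identify rests on a premise that is not supplied by Definition~\ref{tangent-definition}. You assert that every $T\in\ZT_{B,tang}$ lies in $N_{\rho^{1/2+\be}}Z(P)$ along its entire length; but the definition only requires $T\cap B\neq\varnothing$ together with the angular condition~\eqref{angular-condition-1}, so there is no reason the whole tube should stay in the wall. The rotation estimate you propose for $T_zZ(P)$ (via restricting $P$ to corelines) therefore has no foundation here, and in any case ``$P$ is small on $T$'' does not follow from ``$T$ is close to $Z(P)$'' without substantial further argument.

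The observation you are missing is that the angular condition in Definition~\ref{tangent-definition} applies to \emph{every} non-singular $z\in Z(P)\cap 10\wt B\cap 10T$, and the geometry of the rescaled ball $Q$ forces a \emph{single} such point to lie in $10T'$ simultaneously for every $T'\in\ZT_{B,tang,Q}$. Indeed, $Q$ is a tube of length $M\rho^{1/2+\be}$ and radius $\rho^{1/2+\be}$ in direction $c_\si$, and each $T'\in\ZT_{B,tang,Q}$ has direction within $M^{-1}$ of $c_\si$ and meets $2Q$; the transverse drift of $2Q$ relative to the axis of $T'$ over the length of $2Q$ is therefore at most $M^{-1}\cdot 2M\rho^{1/2+\be}=O(\rho^{1/2+\be})$, forcing $2Q\subset 6T'\subset 10T'$. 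Picking any non-singular $z\in Z(P)\cap 2Q\cap 10\wt B$ (which exists since $Q\cap B\neq\varnothing$ and $B\subset W$) and setting $V=T_zZ(P)$, condition~\eqref{angular-condition-1} applied with this \emph{same} $z$ for each $T'$ gives $|{\rm Angle}(v(T'),V)|\le\rho^{-1/2}R^{\de}$ directly, and the containment $T'\subset N_{\rho^{1/2}R^{O(\de)}}(V)$ follows at once. No anisotropic rescaling, no separate base points $z_{T}$, and no control on the variation of the Gauss map of $Z(P)$ are needed; you essentially had this geometric ingredient in your step~(iii) but used it in the wrong place.
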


\begin{proof}
A crucial fact we use in the proof is that the directions of all tangent tubes are roughly $c_\si$, which is also the direction of the rescaled ball $Q$. We pick an arbitrary tube $T\in\ZT_{B,tang,Q}$. Since $Q\cap W\not=\varnothing$, there is a non-singular point $z\in Z(P)\cap 10 \wt B$ such that $z\in 2Q$. Let $V=T_z Z(P)$ be the tangent plane at $z$, so from the angular condition \eqref{angular-condition-1}, clearly we have $T\subset N_{\rho^{1/2}R^{O(\de)}}(V)$.

It remains to verify $T'\subset N_{\rho^{1/2}R^{O(\de)}}(V)$ for every other tube $T'\in\ZT_{B,tang,Q}$. Indeed, since $T'\cap Q$ is not empty, and since the direction of $T'$ and the direction of $Q$ make an angle less than $M^{-1}$, the dilated set $2Q$ is contained in $6T'$. Hence for the same non-singular point $z\in Z(P)\cap 10\wt B$ chosen above, $z\in 6T'$. Employing the angular condition \eqref{angular-condition-1} again, we see that $T'\subset N_{\rho^{1/2}R^{O(\de)}}(V)$ as desired.
\end{proof}

Similar to the cellular case, for each $B$ we pick a smooth cutoff function $\vp_{B}$ associated to the $\rho R^{-\de}$-rescaled ball $\wt B$ containing $B$, such that $\vp_B$ satisfies the following properties:
\begin{enumerate}
    \item $\vp_B(x)\sim 1$ when $x\in \wt B$.
    \item If $\wh{B}$ is the dual slab of $\wt B$ centered at the origin whose dimensions are $ MR^\de\rho^{-1}\times MR^\de\rho^{-1}\times R^\de\rho^{-1}$, we require $\wh\vp_B(\xi)\gtrsim1$ when $\xi\in\wh{B}$ and $\wh\vp_B$ is supported in $2\wh{B}$.
\end{enumerate}
Now we define $\vg_{B,tang}$ and $\vg_{B,trans}$ as
\begin{equation}
\label{tangent-transverse-vector}
    \vg_{B,trans}:=\vp_B\sum_{T\in\T_{B,trans}}(\vg_U)_{T},\hspace{5mm}\vg_{B,tang}:=\vp_B \sum_{T\in\T_{B,tang}}(\vg_U)_{T}.
\end{equation}
From the definition of $\vp_B$, we see that the Fourier support of each component $g_{j,B,trans}$ of $\vg_{B,trans}$ is contained in $\wh B+N_{C\rho^{-1}}\Ga_{j}(\si)\subset N_{C'(\rho R^{-\de})^{-1}}\Ga_j(\si)$. This verifies \eqref{transverse4}. Similarly, we can verify \eqref{tangent3}.

By the triangle inequality for broad function \eqref{broad-triangle}, for any $x\in B$, we have
\begin{equation}
\label{broadness-algebraic}
    {\rm Br}_A \Sq\vg_U(x)\leq {\rm Br}_{A/2} \Sq\vg_{B,trans}(x)+{\rm Br}_{A/2} \Sq\vg_{B,tang}(x).
\end{equation}
Consequently, we have
\begin{equation}\label{transverse}
    \int_W|\Br\Sq\vg_U|^p\lesssim \sum_B\int_B|\Br\Sq\vg_{B,trans}|^p+\sum_B\int_B|\Br\Sq\vg_{B,tang}|^p.
\end{equation}

Combining \eqref{cell-only-1}, \eqref{cellular} and \eqref{transverse}, we get
\begin{align}
\label{algorithm-crude}
    \int_{U}|{\rm Br}_A \Sq\vg_U|^p\lesssim&\sum_{O\in\co}\int_{O}|{\rm Br}_{A} \Sq\vg_{O}|^p\\ \nonumber
    &+\sum_{B\in\cb}\int_{B}| {\rm Br}_{A/2} \Sq\vg_{B,trans}|^p\\ \nonumber
    &+\sum_{B\in\cb}\int_{B}| {\rm Br}_{A/2} \Sq\vg_{B,tang}|^p.
\end{align}
Now we determine which one of the three cases we are in according to which term on the right hand side of \eqref{algorithm-crude} dominates.

\textit{Cellular case}:
If the first term on the right hand side of \eqref{algorithm-crude} dominates, we say ``we are in the cellular case". Together with \eqref{cellequal} and \eqref{broadness-cell}, one has
\begin{equation}
\nonumber
    \int_{U}|{\rm Br}_A \Sq\vg_U|^p\lesssim\sum_{O\in\co}\int_{O}|{\rm Br}_{A} \Sq\vg_{O}|^p\le  \sum_{\wt O\in\wt \co}\int_{\wt O}|{\rm Br}_{A} \Sq\vg_{O}|^p\sim \int_{U}|{\rm Br}_A \Sq\vg_U|^p,
\end{equation}
which implies
\begin{equation}
    \int_{U}|{\rm Br}_A \Sq(\vg_U)|^p\sim\sum_{O\in\co}\int_{O}|{\rm Br}_{A} \Sq(\vg_{O})|^p.
\end{equation}
Also noting that $\int_{O}|{\rm Br}_{A} \Sq(\vg_{O})|^p\le \int_{\wt O}|{\rm Br}_{A} \Sq(\vg_{O})|^p$ and $|\co|=|\wt\co|\sim d^3$,
by pigeonholing, we can choose a subset of $\co$ which we still denote by $\co$ such that \eqref{cellular3} holds for every $O\in \co$, and \eqref{cellular1} also holds. To show that \eqref{cellular2} is true, we first use the local $L^2$ estimate, Lemma \ref{localL2}, to obtain
\begin{equation}
\nonumber
    \sum_{O\in\co}\|\Sq\vg_O\|_2^2=\sum_{O\in\co}\big\|\Sq\big(\vp_O\sum_{T\in\T_O}(\vg_U)_{T}\big)\big\|^2\lesssim d^{-1}\sum_{O\in\co}\big\|\Sq\big(\sum_{T\in\T_O}(\vg_U)_{T}\big)\big\|_2^2.
\end{equation}
Then, from Lemma \ref{cellulartube} we note that each tube $T$ belongs to $O(d)$ many sets $\T_O$. This implies
\begin{equation}
    \sum_{O\in\co}\big\|\Sq\big(\sum_{T\in\T_O}(\vg_U)_{T}\big)\big\|_2^2\lesssim d\big\|\Sq(\sum_{T\in\T}(\vg_U)_{T})\big\|^2_2=d\|\Sq\vg_U\|^2_2.
\end{equation}
Combining the above two inequalities, we prove \eqref{cellular2}.

\textit{Transverse case}: If the second term on the right hand side of \eqref{algorithm-crude} dominates, we say ``we are in the transverse case". In this case, we have
\begin{equation}
    \int_{U}|{\rm Br}_A \Sq(\vg_U)|^p\lesssim 
    \sum_{B\in\cb}\int_{B}| {\rm Br}_{A/2} \Sq(\vg_{B,trans})|^p. 
\end{equation}
In order to satisfy \eqref{transverse3}, we use pigeonhole principle on $\int_B|{\rm Br}_{A/2} \Sq(\vg_{B,trans})|^p$ to find a subset of $\cb$, which we still denoted by $\cb$, such that
\begin{equation}
    \int_B| {\rm Br}_{A/2} \Sq(\vg_{B,trans})|^p \textup{~are~comparable~for~}B\in\cb,  
\end{equation}
and
\begin{equation}
    \int_{U}|{\rm Br}_A \Sq(\vg_U)|^p\lesssim 
    \log R \sum_{B\in\cb}\int_{B}| {\rm Br}_{A/2} \Sq(\vg_{B,trans})|^p.
\end{equation}
This verifies \eqref{transverse3}.    
    
To show \eqref{transverse2}, from Lemma \ref{transversetube} we note that that each $T$ belongs to at most ${\poly}(d)$ many sets $\T_{B,trans}$. Combining this fact with Lemma \ref{localL2} and using the same reasoning as in the cellular case, one can show
\begin{equation}
\nonumber
    \sum_{B\in\cb}\|\Sq\vg_{B,trans}\|_2^2\lesssim R^{-\de}\sum_{B\in\cb}\big\|\Sq\big(\sum_{T\in\T_{B,trans}}(\vg_U)_{T}\big)\big\|_2^2
\end{equation}
and
\begin{equation}
\nonumber
    \sum_{B\in\cb}\big\|\Sq\big(\sum_{T\in\T_{B,trans}}(\vg_U)_{T}\big)\big\|_2^2\lesssim {\poly}(d)\big\|\Sq\big(\sum_{T\in\T}(\vg_U)_{T}\big)\big\|^2_2={\poly}(d)\|\Sq\vg_U\|^2_2.
\end{equation}
Combining the above two inequalities, we prove
\eqref{transverse2}.

\textit{Tangent case}: If the third term on the right hand side of \eqref{algorithm-crude} dominates, we say ``we are in the tangent case". The tangent case is easier than cellular case and transverse case, because we don't need to prove the $L^2$-relation like \eqref{cellular2} or \eqref{transverse2}. The proof of \eqref{tangent2} the same as in the transverse case, so we omit the details here.
\end{proof}
\begin{remark}
\rm

Actually, we will only encounter one tangent case in our iteration process, so the estimate for the tangent case is not that important.
\end{remark}

\section{Polynomial partitioning iteration}
\label{iteration}

In this section, we repeatedly use Algorithm \ref{algorithm} to build our iteration. At each step of the iteration, we endow one of the states: cellular state, transverse state and tangent state.
The iteration end when we arrive in the tangent state, or the scale is very small (slightly larger than $M^2$). We will discuss more carefully about these two scenarios later. 

\subsection{Iteration outputs} \hfill

Recall \eqref{sigma-sqfcn-2} and \eqref{vector-f} that $\vf$ implicitly depends on the two factors $\si$ and $M$. Let us do scale $R$ wave packet decomposition for $\vf$, and make the following definition.

\begin{definition}
\label{f-cp-R}
Define $\vf_{\cp_R}$ as the sum of the wave packets $\vf_T$ that $2T\cap \cp_R\not=\varnothing$.
\end{definition}

\noindent The definition implies that $\Sq\vf\lesssim\Sq\vf_{\cp_R}$ when $x\in\cp_R$. Therefore, recalling the fact $\Br_A \Sq f(x)=\Br_A\Sq \vf(x)$ if $x\in\cp_R$, which was derived just after Definition \ref{broad-vector}, we have
\begin{equation}
\label{localized-rescaled-ball}
    \int_{\cp_R}|{\rm Br}_{A} \Sq f|^p=\int_{\cp_R}|{\rm Br}_{A} \Sq\vf|^p\lesssim\int_{\cp_R}|{\rm Br}_{A} \Sq\vf_{\cp_R}|^p.
\end{equation}

\begin{theorem}
{\rm
Fix an integer $A$. Let $\vf=\{f_1,\cdots,f_R\}$ be defined in \eqref{vector-f} so that the Fourier support of each $f_j$ is contained in $N_{CR^{-1}}(\Ga_j(\si))$. Then we have the following outputs:

\medskip

\noindent$\bullet$ There exists an integer $s$ $(0\le s\le \e^{-10})$ which is the total number of iteration steps. There is a function STATE which we use to record the state of each step:
\begin{equation}
    \textup{STATE}:\{1,2,\cdots,s+1\}\rightarrow \{\textup{cell,~trans,~tang}\}.
\end{equation}
We require the tangent case appear at most once, and if it appears, it should only appear at the last step. That is: $\textup{STATE}(u)=$ tang implies $u=s+1$.

\medskip

\noindent$\bullet$ At each step $u$, $u\in\{1,\cdots,s+1\}$, we have:

\smallskip

\noindent{\bf 1.} A scale $r_u$ for which the explicit formula is
\begin{equation}\label{it1.1}
    r_u=Rd^{-s_c(u)}R^{-\de s_t(u) },
\end{equation}
where $r_0=R$ and the two parameters $s_c(u)$ and $s_t(u)$ are defined as
\begin{align}
\nonumber
    &s_c(u):=\#\{1\le i\le u:~\textup{STATE}(i)=\textup{cell}\}, \\ \nonumber
    &s_t(u):=\#\{1\le i\le u:~\textup{STATE}(i):=\textup{trans}\}.
\end{align}

\smallskip

\noindent{\bf 2.} A the number $a(u)$ defined as
\begin{equation}\label{it1.2}
    a(u)=\#\{1\le i\le u:~\textup{STATE}(i)=\textup{trans~or~tang}\}.
\end{equation}
For convenience we also set $a(0)=0$. We will see later that we consider the broad operator $\Br_{A/2^{a(u)}}$ at step $u$.

\smallskip

\noindent{\bf 3.} A set of cells $\co_u=\{O_u\}$ such that each $O_u$ is contained in a $r_u$-rescaled ball $\cp_{O_u}$. For convenience, we set $O_0=\cp_R$. Each $O_u$ has a unique parent $O_{u-1}\in\co_{u-1}$, which we denoted by 
\begin{equation}
    O_u<O_{u-1}.
\end{equation}
Moreover we have the nested property for these cells. That is, for any cell $O_{s+1}\in\co_{s+1}$, there exist unique $O_u\in \co_u$ $(u=1,2,\cdots,s)$ such that
\begin{equation}
    O_{s+1}<O_s<\cdots<O_1<O_0=\cp_R. 
\end{equation}

\smallskip

\noindent{\bf 4.} A set of $r_{u-1}$-tubes $\T_{O_u}[r_{u-1}]$ and a set of functions $\{\vf_{O_u}\}_{O_u\in\co_{u}}$ defined by
\begin{equation}\label{it3}
    \vf_{O_u}:=\vp_{O_u}\sum_{T\in\T_{O_u}[r_{u-1}]}(\vf_{O_{u-1}})_T.
\end{equation}
Here $(\vf_{O_{u-1}})_T$ is a scale $r_{u-1}$ wave packet, and $\vp_{O_u}$ is a smooth cutoff of the cell $O_u$ that $\wh\vp_{O_u}$ is supported in $\wh\cp_{O_u}$ ($\wh\cp_{O_u}$ was defined in item \eqref{rescaled-ball-dual} below Lemma \ref{cellulartube}).

\smallskip

\noindent{\bf 5.} There are three possible cases for each step $u$: cellular case, transverse case and tangent case. The outputs for each case are the following:

\noindent\textit {Cellular state}: If $\textup{STATE}(u)=$ cell, we have the following outputs.

\begin{enumerate}
    \item[i.] The cells $\co_u$ at step $u$ and the preceding cells $\co_{u-1}$ satisfy the following quantitative relation:
    \begin{equation}\label{itcellular1}
        |\co_u|\sim d^3 |\co_{u-1}|.
    \end{equation}

    \item[ii.] We have the following $L^2$-relation between two nearby steps:
    \begin{equation}\label{itcellular2}
        \sum_{O_u}\|\Sq\vf_{O_{u}}\|^2_2\lesssim  \sum_{O_{u-1}}\|\Sq\vf_{O_{u-1}}\|^2_2.
    \end{equation}

    \item[iii.] All the $\int_{O_u}|\Br_{A/2^{a(u)}}\Sq(\vf_{O_u})|^p$ ($O_u\in\co_u$) are same up to a constant factor, and
    \begin{equation}\label{itcellular3}
        \sum_{O_{u-1}}\int_{O_{u-1}}|\Br_{A/2^{a(u-1)}}\Sq\vf_{O_{u-1}}|^p\lesssim \sum_{O_{u}}\int_{O_u}|\Br_{A/2^{a(u)}}\Sq\vf_{O_u}|^p
    \end{equation}
    
    \item[iv.] The Fourier transform of each component $f_{j,O_u}$ of $\vf_{O_u}$ satisfies:
    \begin{equation}\label{itcellular4}
       \textup{supp} \wh{f}_{j,O_u}\subset N_{Cr_u^{-1}}(\Ga_{j}(\si)).  
    \end{equation}
\end{enumerate}

\noindent\textit{Transverse state}: If $\textup{STATE}(u)=$ trans, we have the following outputs.
\begin{enumerate}
    \item[i.] There exists a number $\mu_u$ such that for any $O_{u-1}\in \co_{u-1}$, the quantity $\#\{O_u\in\co_u: O_u<O_{u-1}\}$ is either $0$ or $\sim \mu_u$. In particular, we have
    \begin{equation}\label{ittransverse1}
        |\co_u|\le \mu_u |\co_{u-1}|.
    \end{equation}
    
    \item[ii.] We have the following $L^2$-relation between nearby steps:
    \begin{equation}\label{ittransverse2}
        \sum_{O_u}\|\Sq\vf_{O_{u}}\|^2_2\lesssim  {\poly}(d)R^{-\de}\sum_{O_{u-1}}\|\Sq\vf_{O_{u-1}}\|^2_2.    
    \end{equation}
    
    \item[iii.] All the $\int_{O_u}|\Br_{A/2^{a(u)}}\Sq(\vf_{O_u})|^p$ ($O_u\in\co_u$) are same up to a constant factor, and
    \begin{equation}\label{ittransverse3}
        \sum_{O_{u-1}}\int_{O_{u-1}}|\Br_{A/2^{a(u-1)}}\Sq\vf_{O_{u-1}}|^p\lesssim (\log R)^3 \sum_{O_{u}}\int_{O_u}|\Br_{A/2^{a(u)}}\Sq\vf_{O_u}|^p.
    \end{equation}
    
    \item[iv.] The Fourier transform of each component $f_{j,O_u}$ of $\vf_{O_u}$ satisfies:
    \begin{equation}\label{ittransverse4}
       \textup{supp} \wh{f}_{j,O_u}\subset N_{Cr_u^{-1}}(\Ga_{j}(\si)).  
    \end{equation}
\end{enumerate}

\noindent\textit{Tangent state}: If $\textup{STATE}(u)=$ tang, which means $u=s+1$, then we have the following outputs.
\begin{enumerate}
    \item[i.] The cells $\co_{s+1}$ satisfies
    \begin{equation}\label{ittangent1}
        |\co_{s+1}|\lesssim R^{3\de} |\co_{s}|.
    \end{equation}
    
    \item[ii.] We have the following $L^2$-relation:
    \begin{equation}\label{ittangent2}
        \sum_{O_{s+1}}\|\Sq\vf_{O_{s+1}}\|^2_2\lesssim  {\poly}(d)R^{-\de}\sum_{O_{s}}\|\Sq\vf_{O_{s}}\|^2_2.    
    \end{equation}
    
    \item[iii.] All the  $\int_{O_{s+1}}|\Br_{A/2^{a(s)+1}}\Sq\vf_{O_{s+1}}|^p$ ($O_{s+1}\in\co_{s+1}$) are same up to a constant factor, and
    \begin{equation}\label{ittangent3}
        \sum_{O_{s}}\int_{O_{s}}|\Br_{A/2^{a(s)}}\Sq\vf_{O_{s}}|^p\lesssim (\log R)^3 \sum_{O_{s+1}}\int_{O_{s+1}}|\Br_{A/2^{a(s+1)}}\Sq\vf_{O_{s+1}}|^p.
    \end{equation}
    
    \item[iv.] The Fourier transform of each component $f_{j,O_u}$ of $\vf_{O_u}$ satisfies:
    \begin{equation}\label{ittangent4}
       \textup{supp} \wh{f}_{j,O_u}\subset N_{Cr_u^{-1}}(\Ga_{j}(\si)).  
    \end{equation}
    
\end{enumerate}

}
\end{theorem}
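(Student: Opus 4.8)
The plan is to prove the iteration theorem by repeatedly invoking Algorithm \ref{algorithm}: at step $1$ we run it on the data $(\rho,U,\vg_U,A)=(R,\cp_R,\vf_{\cp_R},A)$ coming from Definition \ref{f-cp-R} and \eqref{localized-rescaled-ball}, and at every later step $u$ we run it, for each cell $O_{u-1}\in\co_{u-1}$, on the data $(r_{u-1},O_{u-1},\vf_{O_{u-1}},A/2^{a(u-1)})$ produced by step $u-1$. Every claimed output at level $u$ is then read off from the outputs of Algorithm \ref{algorithm} at scale $r_{u-1}$ after a bounded number of pigeonholings, with $r_u,s_c(u),s_t(u),a(u)$ defined by the stated formulas \eqref{it1.1}, \eqref{it1.2}.

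First I would check that the hypotheses of Algorithm \ref{algorithm} hold at each step. The containment $O_{u-1}\subset\cp_{O_{u-1}}$ and the Fourier localization $\textup{supp}\,\wh{f}_{j,O_{u-1}}\subset N_{Cr_{u-1}^{-1}}(\Ga_j(\si))$ are exactly the step-$(u-1)$ outputs \eqref{itcellular4}/\eqref{ittransverse4} (and the definition of $\vf$ when $u=1$), and $A/2^{a(u-1)}\geq 10$ holds because $a(u-1)\leq s_t(u-1)+1$ stays bounded (shown below) while $A$ is large. The real consistency point --- and what I expect to be \emph{the main obstacle} --- is the structural hypothesis \eqref{prop2}: since $\vf_{O_{u-1}}$ is assembled by \eqref{it3} from scale-$r_{u-2}$ wave packets of $\vf_{O_{u-2}}$ and then truncated by $\vp_{O_{u-1}}$ (whose Fourier support is a box of dimensions $\lesssim Mr_{u-1}^{-1}\times Mr_{u-1}^{-1}\times r_{u-1}^{-1}$), each of its $r_{u-2}^{-1/2}$-cap pieces has Fourier support in an $r_{u-1}^{-1}\times r_{u-2}^{-1/2}\times r_{u-2}^{-1/2}$-slab of the shape demanded by \eqref{property2}; verifying this is a Fourier-support book-keeping whose only real content is that the tangential spread $\sim Mr_{u-1}^{-1}$ coming from $\vp_{O_{u-1}}$ is $\lesssim r_{u-2}^{-1/2}$, i.e.\ that $r_{u-2}\gtrsim(Md)^2$, which holds because the iteration is stopped while the scales are still a factor $R^{O(\de)}$ above $M^2$. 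Granting this, Proposition \ref{scale-rho-wpt} applies with $\rho=r_{u-1}$, $r=r_{u-2}$ (the admissibility $r>\rho>r^{1/2}$ also following from $r_{u-1}\geq r_{u-2}d^{-1}>r_{u-2}^{1/2}$), and for $u=1$ the scale-$R$ decomposition of Section 4.1 plays the role of this structure.

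Next I would assign $\textup{STATE}(u)$ and collect the data. For a fixed $O_{u-1}$, Algorithm \ref{algorithm} returns one of three cases; since the step-$(u-1)$ output \eqref{itcellular3}/\eqref{ittransverse3} makes all the $\int_{O_{u-1}}|\Br_{A/2^{a(u-1)}}\Sq\vf_{O_{u-1}}|^p$ comparable, a pigeonhole over the three possibilities retains a $\gtrsim 1/3$ fraction of the cells $O_{u-1}$ lying in a single case, which we declare to be $\textup{STATE}(u)$; if it is $\textup{tang}$ we stop, so the tangent state occurs at most once and only as the last step. We set $\co_u$ to be the union over the retained parents of the cells Algorithm \ref{algorithm} produces, with $\vf_{O_u}$ as in \eqref{cellularfunction}/\eqref{tangent-transverse-vector} --- this matches \eqref{it3}. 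Then \eqref{itcellular4}/\eqref{ittransverse4}/\eqref{ittangent4} are \eqref{cellular4}/\eqref{transverse4}/\eqref{tangent4}; summing the $L^2$-outputs \eqref{cellular2}/\eqref{transverse2}/\eqref{tangent2} over $O_{u-1}$ gives \eqref{itcellular2}/\eqref{ittransverse2}/\eqref{ittangent2}; and summing the broad outputs \eqref{cellular3}/\eqref{transverse3}/\eqref{tangent3} over $O_{u-1}$, with the step-$(u-1)$ comparability, gives \eqref{itcellular3}/\eqref{ittransverse3}/\eqref{ittangent3}. For the cardinalities one more pigeonhole is needed: in the cellular case each retained parent has $\sim d^3$ children whose broad integrals have the common upper bound $\sim d^{-3}\int_{O_{u-1}}|\Br_{A/2^{a(u-1)}}\Sq\vf_{O_{u-1}}|^p$ (Proposition \ref{final-partition}) and sum to $\gtrsim\int_{O_{u-1}}|\Br_{A/2^{a(u-1)}}\Sq\vf_{O_{u-1}}|^p$, so a constant fraction are of that size; keeping those preserves $|\co_u|\sim d^3|\co_{u-1}|$, gives output (iii), and makes all the $\int_{O_u}|\Br_{A/2^{a(u)}}\Sq\vf_{O_u}|^p$ comparable over $\co_u$. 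The transverse and tangent cases are analogous, the multiplicity $\mu_u$ being extracted by the same pigeonhole, giving \eqref{ittransverse1}/\eqref{ittangent1}; the nested structure $O_{s+1}<\cdots<O_1<\cp_R$ is built into the construction.

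Finally I would settle termination and the bound $s\leq\e^{-10}$. Apart from a tangent step, the iteration stops exactly when $r_u$ first drops below the threshold ($\sim M^2$ up to a factor $R^{O(\de)}$, and at least $R^{\e/10}$ so that Section 4.2 is available). Since \eqref{it1.1} gives $r_u=Rd^{-s_c(u)}R^{-\de s_t(u)}$ and this must stay above the threshold, we get $s_c(u)\e^6+s_t(u)\e^2\leq 1$, hence $s_c(u)\leq\e^{-6}$ and $s_t(u)\leq\e^{-2}$; so the number of steps is at most $\e^{-6}+\e^{-2}+1\leq\e^{-10}$ for $\e$ small, which also gives $a(u)\leq\e^{-2}+1$, closing the loop with the hypothesis-checking above. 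In summary, the whole argument is a disciplined iteration of Algorithm \ref{algorithm}; the delicate points, as noted, are the preservation of the nested wave-packet hypothesis \eqref{prop2} across steps and the pigeonholing that keeps $\textup{STATE}(u)$ uniform and the cell counts clean powers of $d$.
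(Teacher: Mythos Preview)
Your proposal is correct and follows essentially the same approach as the paper: iterate Algorithm \ref{algorithm}, pigeonhole at each step to fix a common $\textup{STATE}$, and read off the outputs from \eqref{cellular1}--\eqref{tangent4}. You are in fact more careful than the paper about verifying the structural hypothesis \eqref{prop2} and about the quantitative termination bound $s\le\e^{-10}$; the only spot where your sketch is thinner is the transverse case, where the paper runs \emph{two} separate dyadic pigeonholings---first on the broad integrals $\int_B|\Br\Sq\vg_{B,trans}|^p$ to make all children comparable across all retained parents, and then on the child-counts $|\co_{u+1}(O_u)|$ to extract the uniform multiplicity $\mu_{u+1}$---costing the extra $(\log R)^3$ in \eqref{ittransverse3}.
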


\begin{proof} We are going to iteratively apply the Algorithm \ref{algorithm} from the last section. Let us begin with the initial step $u=1$.

\medskip

{\bf{Initial step}}: For convenience, we set $r_0=R$, $O_0=\cp_R$, $\vf_{O_0}=\vf_{\cp_R}$.  Note that we already have a wave packet decomposition on $\vf_{\cp_R}$ (See Definition \ref{f-cp-R}). Apply Algorithm \ref{algorithm} to the group $(R,O_0,\vf_{O_0},A)$, but without the wave packet decomposition \eqref{wpt-algorithm}. Then there are three possible states: cellular case, transverse case and tangent case. We discuss them separately.

\smallskip

\textit{Cellular state}: If Algorithm \ref{algorithm} results in cellular case, we set 
\begin{equation}
    \textup{STATE}(1)=\textup{cell},\ \ \ r_1=Rd^{-1}.
\end{equation}
From the cellular case in Algorithm \ref{algorithm}, we obtain a collection of cells $\co$, tube sets $\{\T_O\}_{O\in\co}$ and a set of functions $\{\vg_{O}\}_{O\in\co}$ that satisfy \eqref{cellular1}, \eqref{cellular2}, \eqref{cellular3} and \eqref{cellular4}. We define
\begin{equation}
\nonumber
    \co_1:=\co,\ \ \{\T_{O_1}[r_0]\}_{O_1\in\co_1}:=\{\T_O\}_{O\in\co},\ \   \{\vf_{O_1}\}_{O_1\in\co_1}:=\{\vg_O\}_{O\in\co}.
\end{equation}
Also, note that $a(1)=0$ by \eqref{it1.2}.
We can check that \eqref{itcellular1}, \eqref{itcellular2}, \eqref{itcellular3} and \eqref{itcellular4} all holds for $u=1$, since they are exactly \eqref{cellular1}, \eqref{cellular2}, \eqref{cellular3} and \eqref{cellular4} respectively. We can also verify \eqref{it3}, since it is exactly \eqref{cellularfunction}.

\smallskip

\textit{Transverse state}: If Algorithm results in transverse case, we set 
\begin{equation}
    \textup{STATE}(1)=\textup{trans},\ \ \ r_1=R R^{-\de}.
\end{equation}
From the transverse case in Algorithm \ref{algorithm}, we obtain a collection of cells $\cb$, tube sets $\{\T_{B,trans}\}_{B\in\cb}$ and a set of functions $\{\vg_{B,trans}\}_{B\in\cb}$ that satisfy \eqref{transverse2}, \eqref{transverse3} and \eqref{transverse4}. We define
\begin{equation}
\nonumber
    \co_1:=\cb,\ \ \{\T_{O_1}[r_0]\}_{O_1\in\co_1}:=\{\T_{B,trans}\}_{B\in\cb},\ \ \{\vf_{O_1}\}_{O_1\in\co_1}:=\{\vg_{B,trans}\}_{B\in\cb}.
\end{equation}
Also, note that $a(1)=1$ by \eqref{it1.2}.
We can check that \eqref{ittransverse2}, \eqref{ittransverse3} and \eqref{ittransverse4} all holds for $u=1$, since they are exactly \eqref{transverse2}, \eqref{transverse3} and \eqref{transverse4} respectively. To check \eqref{ittransverse1}, we just set
\begin{equation}
    \mu_1=|\co_1|,
\end{equation}
and note that there is only one cell in $\co_0$: $\co_0=\{\cp_R\}$.
We can also verify \eqref{it3}, since it is exactly \eqref{tangent-transverse-vector}.

\smallskip

\textit{Tangent state}: If Algorithm \ref{algorithm} results in cellular case, the iteration stop, so $s=0$. Actually, no matter the tangent state appears in the first step or in the later step, the discussion of tangent state is the same. We include the proof here for clarity.
We set 
\begin{equation}
    \textup{STATE}(1)=\textup{tang},\ \ \ r_1=RR^{-\de}.
\end{equation}
The proof is essentially the same as for the transverse state.
From the tangent case in Algorithm \ref{algorithm}, we obtain a collection of cells $\cb$, tube sets $\{\T_{B,tang}\}_{B\in\cb}$ and a set of functions $\{\vg_{B,tang}\}_{B\in\cb}$ that satisfy \eqref{tangent2}, \eqref{tangent3}. We set
\begin{equation}
\nonumber
    \co_1:=\cb,\ \ \{\T_{O_1}[r_0]\}_{O_1\in\co_1}:=\{\T_{B,tang}\}_{B\in\cb},\ \ \{\vf_{O_1}\}_{O_1\in\co_1}:=\{\vg_{B,tang}\}_{B\in\cb}.
\end{equation}
Also, note that $a(1)=1$ by \eqref{it1.2}.
We can check that \eqref{ittangent2}, \eqref{ittangent3} and \eqref{ittangent4} all holds for $u=1$, since they are exactly \eqref{tangent2}, \eqref{tangent3} and \eqref{tangent4} respectively. To check \eqref{ittangent1}, we just note that the number of tangent cells is $\lesssim R^{3\de}$.
We can also verify \eqref{it3}, since it is exactly \eqref{tangent-transverse-vector}.

\smallskip

The discussion of {\bf{Initial step}} is finished.

\medskip

Next let us move onto any intermediate step. We are going to see how to pass from step $u$ to step $u+1$.

\medskip

{\bf{Iteration step}}: Suppose the iteration is done for step $u$, so we have a scale $r_{u}$, a set of cells $\co_{u}$, tube sets $\{\T_{O_{u}}[r_{u-1}]\}_{O_{u}\in\co_{u}}$ and a set of functions $\{\vf_{O_{u}}\}_{O_{u}\in\co_{u}}$. For each $O_u$, we apply Algorithm \ref{algorithm} to the group $(r_u,O_u,\vf_{O_u},A/2^{a(u)})$. To verify this is a valid input, we note that $O_u$ is contained in a $r_u$-rescaled ball and also \eqref{itcellular4} and \eqref{ittransverse4} verify the condition on the Fourier support of $\vf_{O_u}$.

There are three possible states: cellular state, transverse state and tangent state, depending on the outputs of Algorithm on each $O_u$. We discuss them separately.

\smallskip

\textit{Cellular state}: We say ``the step $u+1$ is in the cellular state", if at least $1/3$ of the cells $O_u\in\co_u$ are in the cellular case. Denote these cells by $\co'_{u}$ so we have $|\co'_u|\ge \frac{1}{3}|\co_u|$.
We set
\begin{equation}
    \textup{STATE}(u+1)=\textup{cell},\ \ \ r_{u+1}=r_u d^{-1}.
\end{equation}
Also note that 
\begin{equation}
    a(u+1)=a(u).
\end{equation}
For each cell $O_{u}\in\co'_u$, we have following outputs due to the Algorithm \ref{algorithm}.
\begin{enumerate}
    \item We obtain a collection of cells denoted by $\co_{u+1}(O_u)=\{O\}$. They satisfy: $|\co_{u+1}(O_u)|\sim d^3$; each $O\in \co_{u+1}(O_u)$ is contained in a $r_{u+1}$- rescaled ball $\cp_O$.
    \item We obtain tube sets $\{\T_O\}_{O\in\co_{u+1}(O_u)}$ and function sets $\{\vg_{O}\}_{O\in\co_{u+1}(O_u)}$ that are indexed by cells in $\co_{u+1}(O_u)$. They satisfy an $L^2$ estimate
    \begin{equation}\label{itcel1}
        \sum_{O\in\co_{u+1}(O_u)}\|\Sq\vg_{O}\|^2\le \|\Sq\vf_{O_u}\|^2_2,
    \end{equation}
    and for each $O\in\co_{u+1}(O_u)$, a broad estimate 
    \begin{equation}\label{itcel2}
        \int_{O_u}|\Br_{A/2^{a(u)}}\Sq\vf_{O_u}|^p\sim d^{3}\int_O |\Br_{A/2^{a(u+1)}}\Sq\vg_O|^p.
    \end{equation}
    
    \item The Fourier support of each component $g_{j,O}$ of $\vg_{O}$ is contained in the set $N_{Cr_{u+1}^{-1}} (\Ga_j(\si))$.  
\end{enumerate}
Now we define respectively
\begin{equation}
\label{celdef1}
    \co_{u+1}:=\bigcup_{O_u\in\co'_u}\co_{u+1}(O_u),
\end{equation}
\begin{equation}\label{celdef2}
    \{\T_{O_{u+1}}[r_{u}]\}_{O_{u+1}\in\co_{u+1}}:=\{\T_O\}_{O\in\co_{u+1}},
\end{equation}
\begin{equation}\label{celdef3}
    \{\vf_{O_{u+1}}\}_{O_{u+1}\in\co_{u+1}}:=\{\vg_O\}_{O\in\co_{u+1}}.
\end{equation}

With all the properties enumerated above, we can prove the following results.
\begin{enumerate}
    \item Note that $|\co_u'|\ge\frac{1}{3}|\co_u|$ and $|\co_{u+1}(O_u)|\sim d^3$. We have
    \begin{equation}
        |\co_{u+1}|=\sum_{O_u\in\co'_u}|\co_{u+1}(O_u)|\sim d^3 |\co'_{u}|  \sim d^3 |\co_{u}|,
    \end{equation}
which verifies \eqref{itcellular1}.
    \item By inequality \eqref{itcel1} and the definition in \eqref{celdef3}, we have
    \begin{align}
        \sum_{O_{u+1}\in\co_{u+1}}\|\Sq\vf_{O_{u+1}}\|^2_2&=\sum_{O_u\in\co_u'}\sum_{O\in\co_{u+1}(O_u)}\|\Sq\vf_{O_{u+1}}\|^2_2\\
        \nonumber&\lesssim\sum_{O_{u}\in\co_u'}\|\Sq\vf_{O_{u}}\|^2_2\le \sum_{O_{u}\in\co_u}\|\Sq\vf_{O_{u}}\|^2_2,
    \end{align}
which verifies \eqref{itcellular2}.

    \item By \eqref{itcel2} and note that all the $\int_{O_u}|\Br_{A/2^{a(u)}}\Sq(\vf_{O_u})|^p$ ($O_u\in\co_u$) are same up to a constant factor, we have that all the
    $\int_{O_{u+1}}|\Br_{A/2^{a(u+1)}}\Sq(\vf_{O_{u+1}})|^p$ ($O_{u+1}\in\co_{u+1}$) are the same up to a different constant. Also note that $|\co_{u}'|\ge\frac{1}{3}|\co_u|$. Thus, we have
    \begin{align}
        \sum_{O_{u}\in\co_u}\int_{O_{u}}|\Br_{A/2^{a(u)}}\Sq\vf_{O_{u}}|^p&\le 3\sum_{O_{u}\in\co'_u}\int_{O_{u}}|\Br_{A/2^{a(u)}}\Sq\vf_{O_{u}}|^p\\
        \nonumber &\lesssim\sum_{O_{u+1}\in\co_{u+1}}\int_{O_{u+1}}|\Br_{A/2^{a(u+1)}}\Sq\vf_{O_{u+1}}|^p.
    \end{align}
This verifies \eqref{itcellular3}.
    \item One also sees that \eqref{itcellular4} holds for $u+1$.
\end{enumerate}
At this point we finish the proof for \textit{Cellular state}.

\smallskip

\textit{Transverse state}: We say ``the step $u+1$ is in the transverse state", if at least $1/3$ of the cells $O_u\in\co_u$ are in the transverse case. Denote these cells by $\co'_u$ so we have $|\co'_u|\ge\frac{1}{3}|\co_u|$. We set
\begin{equation}
    \textup{STATE}(u+1)=\textup{trans},\ \ \ r_{u+1}=r_u R^{-\de},
\end{equation}
and note that 
\begin{equation}
    a(u+1)=a(u)+1.
\end{equation}
For each cell $O_{u}\in\co'_u$, we have the following outputs due to Algorithm \ref{algorithm}.

\begin{enumerate}
    \item We obtain a collection of cells denoted by $\co_{u+1}(O_u)=\{B\}$. Each $B$ is contained in a $r_{u+1}$-rescaled ball.
    \item We obtain tube sets $\{\T_{B,trans}\}_{B\in\co_{u+1}(O_u)}$ and corresponding function sets $\{\vg_{B,trans}\}_{B\in\co_{u+1}(O_u)}$, which are indexed by cells in $\co_{u+1}(O_u)$. They satisfy an $L^2$ estimate
    \begin{equation}\label{ittrans2}
        \sum_{B\in\co_{u+1}(O_u)}\|\Sq\vg_{B,trans}\|^2_2\le {\poly} (d) R^{-\de}\|\Sq\vf_{O_u}\|^2_2,
    \end{equation}
    and for each $B\in\co_{u+1}(O_u)$, a broad estimate
    \begin{align}\label{ittrans3}
        \int_{O_u}|\Br_{A/2^{a(u)}}\Sq\vf_{O_u}|^p\lesssim\log R\sum_{B\in\co_{u+1}(O_u)}\int_{B}|\Br_{A/2^{a(u+1)}}\Sq\vg_{B,trans}|^p.
    \end{align}
    Also, the quantity 
    \begin{equation}
        \int_{B}|\Br_{A/2^{a(u+1)}}\Sq\vg_{B,trans}|^p
    \end{equation}
    are the same up to a constant factor.
    \item The Fourier transform of each component $g_{j,B,trans}$ of $\vg_{B,trans}$ is contained in $N_{C(r_{u+1})^{-1/2}} \Ga_j(\si)$.  
    \end{enumerate}
    
To derive our outputs, we need to work a bit more harder than in the cellular state. We use pigeonhole principle twice to guarantee the uniformity properties \eqref{ittransverse1}, \eqref{ittransverse3}. First we note that \eqref{ittrans3} implies
\begin{equation}
\nonumber
    \sum_{O_u\in\co'_u}\int_{O_u}\!\!\!|\Br_{A/2^{a(u)}}\Sq\vf_{O_u}|^p\lesssim \log R\sum_{O_u\in\co'_u}\sum_{B\in\co_{u+1}(O_u)}\int_{B}|\Br_{A/2^{a(u+1)}}\Sq\vg_{B,trans}|^p.
\end{equation}
Dyadic pigeonholing on $\int_{B}|\Br_{A/2^{a(u+1)}}\Sq(\vg_{B,trans})|^p$, we can find a refinement
\begin{equation}
    \co_{u+1}\subset \bigcup_{O_u\in\co'_u}\co_{u+1}(O_u),
\end{equation}
such that all the $\int_{B}|\Br_{A/2^{a(u+1)}}\Sq(\vf_{B,trans})|^p$ ($B\in \co_{u+1}$) are same up to a constant factor, and
\begin{equation}
\nonumber
    \sum_{O_u\in\co'_u}\int_{O_u}|\Br_{A/2^{a(u)}}\Sq\vf_{O_u}|^p\lesssim (\log R)^2\sum_{B\in\co_{u+1}}\int_{B}|\Br_{A/2^{a(u+1)}}\Sq\vg_{B,trans}|^p.
\end{equation}
Now we replace each $\co_{u+1}(O_u)$ by the refinement $\co_{u+1}\cap\co_{u+1}(O_u)$, but still denote it by $\co_{u+1}(O_u)$. 

The second pigeonhole argument is on the size of $\co_{u+1}(O_u)$. By dyadic pigeonholing, we can find a dyadic number $\mu_{u+1}$ and a subset $\co_u''\subset \co'_u$ such that
\begin{equation}
\label{pigeonhole2.1}
    |\co_{u+1}(O_u)|\sim\mu_{u+1} \textup{~for~every~}O_u\in\co''_u,
\end{equation}  
and
\begin{align}
\label{pigeonhole2.2}
    &\sum_{O_u\in\co'_u}\int_{O_u}|\Br_{A/2^{a(u)}}\Sq\vf_{O_u}|^p\\ \nonumber
    \lesssim& (\log R)^3\sum_{O_u\in\co''_u}\sum_{B\in\co_{u+1}(O_u)}\int_{B}|\Br_{A/2^{a(u+1)}}\Sq\vg_{B,trans}|^p.
\end{align} 
We define respectively
\begin{equation}\label{transdef1}
    \co_{u+1}:=\bigcup_{O_u\in\co''_u}\co_{u+1}(O_u),
\end{equation}
\begin{equation}\label{transdef2}
    \{\T_{O_{u+1}}[r_{u}]\}_{O_{u+1}\in\co_{u+1}}:=\{\T_{B,trans}\}_{B\in\co_{u+1}},
\end{equation}
\begin{equation}\label{transdef3}
    \{\vf_{O_{u+1}}\}_{O_{u+1}\in\co_{u+1}}:=\{\vg_{B,trans}\}_{B\in\co_{u+1}}.
\end{equation}

With all the properties enumerated above, we can show the following results.
\begin{enumerate}
    \item From the second pigeonhole argument \eqref{pigeonhole2.1} and recall that $\co_{u+1}(O_u)=\{O_{u+1}\in\co_{u+1}:O_{u+1}<O_u\}$, we verify \eqref{ittransverse1}.
    
    \item By
    inequality \eqref{ittrans2} and the definition in \eqref{transdef3}, we have
    \begin{align}
        \sum_{O_{u+1}\in\co_{u+1}}\|\Sq\vf_{O_{u+1}}\|^2_2=\sum_{O_u\in\co_u''}\sum_{O\in\co_{u+1}(O_u)}\|\Sq\vf_{O_{u+1}}\|^2_2\\
        \nonumber\lesssim{\poly}(d)R^{-\de}\sum_{O_{u}\in\co_u''}\|\Sq\vf_{O_{u}}\|^2_2\le {\poly}(d)R^{-\de}\sum_{O_{u}\in\co_u}\|\Sq\vf_{O_{u}}\|^2_2,
    \end{align}
which verifies \eqref{ittransverse2}.

    \item By \eqref{ittrans3} and note that all $\int_{O_u}|\Br_{A/2^{a(u)}}\Sq(\vf_{O_u})|^p$ ($O_u\in\co_u$) are same up to a constant factor, we have that all $\int_{O_{u+1}}|\Br_{A/2^{a(u+1)}}\Sq(\vf_{O_{u+1}})|^p$ ($O_{u+1}\in\co_{u+1}$) are the same up to another constant. Also note that $|\co_{u}'|\ge\frac{1}{3}|\co_u|$ and inequality \eqref{pigeonhole2.2}. Hence we have
    \begin{align}
        \sum_{O_{u}\in\co_u}\int_{O_{u}}|\Br_{A/2^{a(u)}}\Sq\vf_{O_{u}}|^p&\le 3\sum_{O_{u}\in\co'_u}\int_{O_{u}}|\Br_{A/2^{a(u)}}\Sq\vf_{O_{u}}|^p\\ \nonumber &\lesssim\sum_{O_{u+1}\in\co_{u+1}}\int_{O_{u+1}}|\Br_{A/2^{a(u+1)}}\Sq\vf_{O_{u+1}}|^p.
    \end{align}
This verifies \eqref{ittransverse3}.
    \item One also sees that \eqref{ittransverse4} holds for $u+1$.
\end{enumerate}
We finish the proof for \textit{Transverse state}.

\smallskip  

\textit{Tangent state}: We say ``the step $u+1$ is in the tangent state", if at least $1/3$ of the cells $O_u\in\co_u$ are in the tangent case. Actually, one may not encounter the tangent state throughout the iteration. However, once the tangent state appears, the iteration stops and so we have $u=s$. 

We can proceed in exactly the same way as we did for the \textup{transverse state}. We define $\co_{s+1}$, $\{f_{O_{s+1}}\}_{O_{s+1}\in\co_{s+1}}$ and $\{\T_{O_{s+1}}[r_s]\}$ in the same way as we did for the \textup{transverse state} (see \eqref{transdef1}, \eqref{transdef2} and \eqref{transdef3}). 
The proofs for the properties \eqref{ittangent2}, \eqref{ittangent3} and \eqref{ittangent4} are the same as in the \textup{transverse case}, so we omit the details. For the proof of \eqref{ittangent1}, we just note that each cell $O_s$ has at most $R^{3\de}$ children $O_{s+1}$ from $\co_{s+1}$.
\end{proof}

\subsection{Iteration formulae between functions at different scales}\hfill

We also want to keep track of the  expression of the step-$u$ function $\vf_{O_u}$. Note that at each step $u$, there is a family of cells $\cO_u$. For each cell $O_u\in\co_u$, there is a vector-valued function
\begin{equation}
    \vf_{O_u}=\{f_{1,O_u},\cdots,f_{R,O_u}\}
\end{equation}
given in \eqref{it3} that
\begin{equation}\label{functioniteration}
    \vf_{O_u}=\vp_{O_u}\sum_{T\in\T_{O_u}[r_{u-1}]}(\vf_{O_{u-1}})_T.
\end{equation}
In step $u$ of our iteration, we do polynomial partitioning with respect to the pair $(O_u,|\Br_{A/2^{a(u)}}\Sq\vf_{O_u}|^p)$. After doing so, we obtain another family of cells $\co_{u+1}$, tube sets $\{\ZT_{O_{u+1}}[r_u]\}_{O_{u+1}\in \co_{u+1}}$ at scale $r_u$, and function sets $\{\vf_{O_{u+1}}\}_{O_{u+1}\in\co_{u+1}}$. Also recall that we say ``$O_{u+1}$ is the child of $O_{u}$", if $O_{u+1}$ is obtained from doing polynomial partitioning with respect to $O_{u}$, and we denote this relation by
\begin{equation}
    O_{u+1}<O_u.
\end{equation}
We also have the nested property for cells. That is, for any cell $O_{s+1}\in\co_{s+1}$, there exists a unique $O_u\in\co_u$ ($u=1,2\cdots,s$) such that
\begin{equation}
     O_{s+1}<O_s<\cdots<O_1<O_0=\cp_R.
\end{equation} 

To track some finer structures of the function $\vf_{O_u}$, recall that in \eqref{it3} we have introduced a collection of $r_u$-tubes, $\ZT_{O_u}[r_{u-1}]$. In this collection, each tube is pointing to a direction $c_{\om_{u-1}}$ for some $r_{u-1}$-cap $\om_{u-1}$. For such cap $\om_{u-1}$, let us make the following definitions.

\begin{definition}\label{smalltube1}
Suppose $1\leq u\leq s+1$ and suppose that we have the convention $\om_0=\theta$ for some $R^{-1/2}$-cap $\theta$ (See also \eqref{scale-R-tube-set} and \eqref{scale-rho-tube-set}). Let $\om_{u-1}\subset\ZR^2$ be an $r_{u-1}$-cap. Define
\begin{equation}
    \ZT_{O_{u},\om_{u-1}}[r_{u-1}]:=\{T:T\in\ZT_{O_{u}}[r_{u-1}],~T\textup{~has~direction~}c_{\om_{u-1}}\}.
\end{equation}
\end{definition}

%Note that $\T_{O_{u+1}}[r_u]$ is the tube set obtained from the polynomial partitioning. For our purpose, we need to define another tube set:

%{\color{blue} Define $\ZT_{\om_u}[r_u]$, (Now see \eqref{scale-R-tube-set} and \eqref{scale-rho-tube-set})}
%\begin{definition}\label{smalltube2}
%We define $\T_{O_u}[r_u]$ as the collection of $r_u$-tubes whose intersection with $O_u$ is nonempty.
%Similarly as in Definition \ref{smalltube1}, for a cap $\om\subset \ZS^2$ of radius bigger than $r_u^{-1/2}$, we define
%\begin{equation}
%    \ZT_{O_{u},\om}[r_u]:=\{T:T\in\ZT_{O_{u}}[r_u],~T\textup{~has~direction~in~}\om\}.
%\end{equation}
%\end{definition}

%\begin{remark}
%If $O_{u+1}<O_u$, we have $\T_{O_{u+1}}[r_u]\subset\T_{O_u}[r_u]$. The purpose of adding the subscript $\om$ to $\T_{O_{u+1}}[r_u]$ or $\T_{O_u}[r_u]$ is to find some finer structures of the function $\vf_{O_u}$. Maybe our notation is tedious, so we suggest the readers to think of $\T_{O,\om}[r]$ as a set of $r$-tubes that point to direction $\om$ and is related to $O$. 
%\end{remark}

For any $u\in\{0,1,\cdots,s,s+1\}$, we define
\begin{equation}\label{caprelation1}
    \Om_u:=\{\om_u:\ZT_{O_{u+1},\om_{u}}[r_u]\not=\varnothing,~\text{for~some~}O_{u+1}\in\co_{u+1}\},
\end{equation}
so $\Om_u$ is the collection of $r_u$-cap appear in the scale $r_u$ wave packet decomposition. Note that we can endow a nested property with $\{\Om_u\}_{u=1}^{s+1}$ (See Definition \ref{caprelation}): For any $\om_{u-1}\in\Om_{u-1}$, there is a unique $\om_{u}\in\Om_{u}$ so that 
\begin{equation}\label{caprelation2}
     \om_{u-1}<\om_{u}.
\end{equation}
This implies that for any cap $\om_{1}\in \Om_{1}$, there exists a unique $\om_u\in\Om_u$ ($u=2,\cdots,s+1$) such that
\begin{equation}\label{caprelation3}
     \om_{1}<\om_2<\cdots<\om_{s+1}\subset\ZS^2.
\end{equation} 
%\begin{remark}
%Actually, we already discussed the relationship $``<"$ in Definition \ref{caprelation}. Intuitively, one can think of $\om_{u-1}<\om_{u}$ just as $\om_{u-1}\subset\om_{u}$.
%\end{remark}

 Now we can derive a finer version of \eqref{functioniteration}. 

\begin{definition}
For $2\leq u\leq s+1$, $O_u\in\co_u$ and $\om_u\in\Om_u$, define
\begin{equation}\label{inductiveformula1}
    \vf_{O_{u},\om_u}:=\vp_{O_u}\!\!\!\sum_{\om_{u-1}<\om_{u}}\sum_{T\in\ZT_{O_{u},\om_{u-1}}[r_{u-1}]}\!\!\!(\vf_{O_{u-1}})_T=\vp_{O_u}\sum_{\om_{u-1}<\om_u}\vf_{O_{u-1},\om_{u-1}}.
\end{equation}
For $u=1$, $O_1\in\co_1$ and $\om_1\in\co_1$, define
\begin{equation}
\label{inductiveformulaR}
    \vf_{O_1,\om_1}:=\vp_{O_1}\sum_{\theta<\om_1}\sum_{T\in\ZT_{O_1,\theta}[R]}\vf_T=\vp_{O_1}\sum_{\theta<\om_1}\sum_{T\in\ZT_{O_1,\theta}[R]}\vf_{\theta}\Id_{T}^\ast
\end{equation}

\end{definition}

\noindent One can check that the Fourier support of each component $f_{j,O_u,\om_u}$ of $\vf_{O_u,\om_u}$ is contained in $N_{C r_u^{-1}}\Ga_{j}(\om_u)$, which is roughly a $r_u^{-1/2}\times r_u^{-1/2}\times r_u^{-1}$-slab. Also, 
\begin{equation}\label{inductiveformula2}
    \vf_{O_{u}}=\vp_{O_u}\sum_{\om_{u}\in\Om_{u}}\Big(\sum_{\om_{u-1}<\om_{u}}\sum_{T\in\ZT_{O_{u},\om_{u-1}}[r_{u-1}]}\!\!\!\!\!\vf_{O_{u-1},\om_{u-1}}\Id_T^\ast\Big) 
    =\sum_{\om_{u}\in\Om_{u}}\vf_{O_{u},\om_{u}}.
\end{equation}
Together with Definition \ref{f-cp-R}, we also have
\begin{lemma}
\label{f-cp-R-lem}
Let $\vf_{O_{u},\om_u}$ be defined \eqref{inductiveformula1}. Then 
\begin{equation}
\label{f-cp-R-esti}
    \vf_{O_{u},\om_u}\lesssim\vf_{\cp_R}.
\end{equation}
\end{lemma}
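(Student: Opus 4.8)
The plan is to prove \eqref{f-cp-R-esti} by induction on $u$, by simply unwinding the recursive definitions \eqref{inductiveformulaR} and \eqref{inductiveformula1} and bookkeeping which of the scale-$R$ wave packets of $\vf$ from \eqref{wave-packet-big} are involved. The precise form I would aim at is that, for every $u$, every $O_u$ and every $\om_u\in\Om_u$,
\begin{equation*}
    \vf_{O_u,\om_u}=\Phi_{O_u}\sum_{T\in\ZT^{\ast}_{O_u,\om_u}}\psi_{u,T}\,\vf_T\;+\;\rap(R)(\cdots),
\end{equation*}
where $\vf_T$ runs over the scale-$R$ wave packets, $\ZT^{\ast}_{O_u,\om_u}$ is a subcollection of the tubes with $2T\cap\cp_R\neq\varnothing$ (namely those whose direction $\theta$ lies in the unique nested chain \eqref{caprelation3} below $\om_u$ and whose tube meets the relevant cell), each coefficient obeys $0\le\psi_{u,T}\le 1$, and $\Phi_{O_u}=\vp_{O_1}\vp_{O_2}\cdots\vp_{O_u}$ is a product of cell cutoffs, each of which is $\lesssim 1$. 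Since $\vf_{\cp_R}=\sum_{2T\cap\cp_R\neq\varnothing}\vf_T$ by Definition \ref{f-cp-R}, this representation is precisely what $\vf_{O_u,\om_u}\lesssim\vf_{\cp_R}$ records, and it is in this form that the estimate is used in the sequel (in particular together with the $L^2$-orthogonality of Lemma \ref{l2-orthogonality-big}).

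For the base case $u=1$, formula \eqref{inductiveformulaR} reads $\vf_{O_1,\om_1}=\vp_{O_1}\sum_{\theta<\om_1}\sum_{T\in\ZT_{O_1,\theta}[R]}\vf_\theta\Id_T^{\ast}$, and $\vf_\theta\Id_T^{\ast}=\vf_T$ is by definition a single scale-$R$ wave packet. Every $T$ occurring here lies in $\ZT_{O_1}[R]$, hence meets $O_1$ in the cellular case and the enlarged rescaled ball $\wt B$ in the transverse/tangent cases; since $O_1,\wt B\subset C\cp_R$, this forces $2T\cap\cp_R\neq\varnothing$, so these are indeed among the wave packets constituting $\vf_{\cp_R}$. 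Thus one may take $\Phi_{O_1}=\vp_{O_1}$, $\psi_{1,T}=1$.

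For the inductive step I would substitute the representation for $u-1$ into \eqref{inductiveformula1}, using that $O_{u-1}$ is the unique parent of $O_u$. Because all tubes in $\ZT_{O_u,\om_{u-1}}[r_{u-1}]$ share the direction $c_{\om_{u-1}}$, one has $\vf_{O_u,\om_u}=\vp_{O_u}\sum_{\om_{u-1}<\om_u}\Lambda_{\om_{u-1}}\,\vf_{O_{u-1},\om_{u-1}}$, where $\Lambda_{\om_{u-1}}:=\sum_{T\in\ZT_{O_u,\om_{u-1}}[r_{u-1}]}\Id_T^{\ast}\in[0,1]$ is a sub–partition of unity of $\ZR^3$ associated to that fixed direction. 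Plugging in the inductive expression for $\vf_{O_{u-1},\om_{u-1}}$ and re-indexing by the scale-$R$ tube $T$, the cutoff becomes $\Phi_{O_u}=\vp_{O_u}\Phi_{O_{u-1}}$ (still $\lesssim 1$), and the coefficient of $\vf_T$ becomes $\Lambda_{\om_{u-1}(\theta(T))}\,\psi_{u-1,T}$, a product of quantities in $[0,1]$, hence again in $[0,1]$; here $\om_{u-1}(\theta)$ is the unique $r_{u-1}$-cap above $\theta$ in the chain \eqref{caprelation3}, which is the point at which the nesting of the caps is used. One also checks that the condition $2T\cap\cp_R\neq\varnothing$ is preserved because the tube collections only shrink along the iteration, and that the rapidly decaying tails produced by the $\Id_T^{\ast}$ at each of the at most $\e^{-10}$ scales accumulate to at most $\rap(R)$; finally \eqref{inductiveformula2} is consistent with this, since summing $\vf_{O_u,\om_u}$ over $\om_u$ restores all directions.

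The only mildly technical point — and the one I would write out with care — is this last bookkeeping: the wave packet decompositions at the successive scales $R>r_1>\cdots>r_s$ are genuinely different, so one must verify that composing them with the interleaved physical cutoffs $\Id^{\ast}_{T}$ and cell cutoffs $\vp_{O_v}$ still expresses $\vf_{O_u,\om_u}$ through the original scale-$R$ wave packets $\vf_T$ with coefficients in $[0,1]$. This is exactly where the nesting \eqref{caprelation3} is essential (once $\theta(T)$ is fixed, exactly one direction survives at each coarser scale) and where the partition-of-unity property of each family $\{\Id^{\ast}_{T}\}$ together with the uniform bound $|\vp_{O_v}|\lesssim 1$ keeps all coefficients bounded. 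Everything else is a direct consequence of the definitions, Plancherel, and the rapid decay built into the cutoffs, exactly as in the proofs of Lemma \ref{fourier-support-big} and Lemma \ref{l2-orthogonality-big}.
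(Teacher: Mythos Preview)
Your proposal is correct and is precisely the natural fleshing-out of what the paper leaves implicit: the paper gives no proof at all, merely stating the lemma as an immediate consequence of Definition~\ref{f-cp-R} and the recursive formulae \eqref{inductiveformula1}--\eqref{inductiveformulaR}. Your induction on $u$, tracking that each $\vf_{O_u,\om_u}$ is $\Phi_{O_u}$ times a sub-sum of the scale-$R$ wave packets of $\vf_{\cp_R}$ with $[0,1]$-valued functional coefficients, is exactly this unwinding, and your identification $(\vf_{O_{u-1}})_{\om_{u-1}}=\vf_{O_{u-1},\om_{u-1}}$ together with $\Lambda_{\om_{u-1}}=\sum_{T\in\ZT_{O_u,\om_{u-1}}[r_{u-1}]}\Id_T^\ast\in[0,1]$ is the correct inductive mechanism.

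Two minor remarks. First, the $\rap(R)$ error you carry is not actually needed: the recursion \eqref{inductiveformula1} is exact, and each $\Lambda_{\om_{u-1}}$ is genuinely a sub-partition of unity, so no tails are discarded in the representation itself. Second, you correctly note that the inequality $\vf_{O_u,\om_u}\lesssim\vf_{\cp_R}$ should be read as shorthand for the $L^2$ control $\|\Sq\vf_{O_u,\om_u}\|_2\lesssim\|\Sq\vf_{\cp_R}\|_2$ (which is how it is used in \eqref{rapiddecay} and in the discussion around \eqref{cell-backward-4}); this follows from your representation together with $L^2$-orthogonality, or alternatively---and slightly more directly---from iterating the $L^2$-relations \eqref{itcellular2}, \eqref{ittransverse2} already built into the forward algorithm.
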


\subsection{The main results from the iteration}\hfill

Let us make a conclusion of the polynomial partitioning iteration. There are two scenarios. The first one is when STATE$(s+1)$=tang, which means the iteration ends up in the tangent state. The second one is when STATE$(s+1)\neq$tang, which means the iteration ends up with a tiny scale $r_s\le R^{\e/10} M^2$.

We define two numbers 
\begin{align}
    s_c &:=\#\{1\le u\le s: \textup{STATE}(u)=\textup{cell}\},\\  s_t &:=\#\{1\le u\le s: \textup{STATE}(u)=\textup{trans}\}. 
\end{align}
It implies that 
\begin{equation}
    r_s=Rd^{-s_c}R^{-\de s_t}.
\end{equation}
From now on we write $r=r_s$ for simplicity. Henceforth, recalling \eqref{localized-rescaled-ball}, we can draw a conclusion:

\smallskip

If we are in the \textit{first scenario}, we have the theorem:
\begin{theorem}[Tangent case]\label{scenario1}
If \textup{STATE}$(s+1)$=\textup{tang}, and $s_c,s_t,r$ is defined as above, we have

\begin{equation}\label{scenario1.1}
\int_{\cp_R}|\Br_A\Sq f|^p\lesssim (\log R)^{3s_t}\sum_{O_{s+1}\in\co_{s+1}}\int_{O_{s+1}}|\Br_{A/2^{a(s+1)}}\Sq\vf_{O_{s+1}}|^p.
\end{equation}

\begin{equation}\label{scenario1.2}
\int_{O_{s+1}}\!\!\!|\Br_{A/2^{a(s+1)}}\Sq\vf_{O_{s+1}}|^p~(O_{s+1}\in\co_{s+1})\textup{~are~the~same~up~to~a ~constant}.
\end{equation}

\begin{equation}\label{scenario1.3}
    \sum_{O_{s+1}\in\co_{s+1}}\|\Sq\vf_{O_{s+1}}\|^2_2\lesssim {\poly}(d)^{s_t}R^{-\de s_t}\|\Sq\vf_{\cp_R}\|_2^2.
\end{equation}

\begin{equation}\label{scenario1.4}
    |\co_{s+1}|\gtrsim d^{3s_c}\prod_{\textup{STATE}(u)=\textup{trans}} \mu_u.
\end{equation}
\end{theorem}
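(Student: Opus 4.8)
The plan is to prove Theorem \ref{scenario1} purely by concatenating, along a chain of cells $O_{s+1}<O_s<\cdots<O_1<O_0=\cp_R$, the one‑step relations already produced by the iteration in the preceding theorem; no new analytic estimate is required. I would first record the three kinds of transitions: a \emph{cellular} step $u-1\to u$ supplies \eqref{itcellular1}, \eqref{itcellular2}, \eqref{itcellular3}; a \emph{transverse} step supplies \eqref{ittransverse1}, \eqref{ittransverse2}, \eqref{ittransverse3}; and the single \emph{tangent} step, which by hypothesis is the last one ($u=s+1$), supplies \eqref{ittangent1}, \eqref{ittangent2}, \eqref{ittangent3}. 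I would also note that the index $a(u)$ is constant across cellular steps and increases by $1$ across transverse and tangent steps, so that after the full run of $s+1$ steps the broad index has descended exactly to $A/2^{a(s+1)}$ with $a(s+1)=s_t+1$, matching the subscripts in \eqref{scenario1.1}--\eqref{scenario1.3}.

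For \eqref{scenario1.1} I would start from \eqref{localized-rescaled-ball}, which together with $O_0=\cp_R$, $\vf_{O_0}=\vf_{\cp_R}$ and $a(0)=0$ gives $\int_{\cp_R}|\Br_A\Sq f|^p\lesssim\int_{O_0}|\Br_{A/2^{a(0)}}\Sq\vf_{O_0}|^p$. Then I apply, for $u=1,\dots,s$, either \eqref{itcellular3} (no loss) or \eqref{ittransverse3} (loss $(\log R)^3$) according to $\textup{STATE}(u)$, and finally \eqref{ittangent3} at the last step; telescoping the right‑hand sides and using that exactly $s_t$ of the first $s$ steps are transverse collects a total factor $(\log R)^{O(s_t)}$ and produces \eqref{scenario1.1}. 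Estimate \eqref{scenario1.2} needs nothing new: it is exactly the uniformity clause of item (iii) of the tangent state in the iteration theorem, combined with $a(s+1)=a(s)+1$.

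For \eqref{scenario1.3} I would chain the $L^2$‑relations in the same fashion: \eqref{itcellular2} loses nothing, each of the $s_t$ transverse steps loses a factor ${\poly}(d)R^{-\de}$ via \eqref{ittransverse2}, and the tangent step loses one more such factor via \eqref{ittangent2}. Since the chain starts at $\|\Sq\vf_{O_0}\|_2^2=\|\Sq\vf_{\cp_R}\|_2^2$, multiplying these out gives a total gain ${\poly}(d)^{s_t+1}R^{-\de(s_t+1)}$, which is $\le{\poly}(d)^{s_t}R^{-\de s_t}$ for $R$ large, using $d=R^{\e^6}$, $\de=\e^2$ so that ${\poly}(d)R^{-\de}\le1$. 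This is \eqref{scenario1.3}.

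For \eqref{scenario1.4} I would run the analogous telescoping on the cell counts, now in the multiplicative (lower‑bound) direction: by \eqref{itcellular1} a cellular step multiplies $|\co_{u-1}|$ by $\sim d^3$, by the ``either $0$ or $\sim\mu_u$'' dichotomy (item (i) of the transverse state, which is how \eqref{ittransverse1} is obtained) a transverse step multiplies the number of cells with descendants by $\sim\mu_u$, and the tangent step contributes a factor $\gtrsim1$; since $|\co_0|=1$, collecting these gives $|\co_{s+1}|\gtrsim d^{3s_c}\prod_{\textup{STATE}(u)=\textup{trans}}\mu_u$. The point requiring care is that the dyadic pigeonholings carried out inside the transverse and tangent steps only keep subcollections $\co_u''\subset\co_u'$; one must check these retained subcollections still carry a full count up to factors $\log R$ (absorbed into $\lesssim$), which follows because the local integrals $\int_{O}|\Br_{A/2^{a(u)}}\Sq\vf_{O}|^p$ were first made comparable across $O$, so that mass control over $\co_u''$ translates into count control. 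Apart from this bookkeeping, the only real obstacle is organizational: aligning the broad indices $A/2^{a(u)}$, the scales $r_u$, and the three types of loss factors consistently across the at most $\e^{-10}$ steps—there is no inequality left to prove beyond what the iteration theorem already gives.
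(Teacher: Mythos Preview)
Your proposal is correct and is exactly the argument the paper intends: the paper does not give a separate proof of Theorem~\ref{scenario1} at all, simply stating it as a ``conclusion'' drawn from the iteration theorem, so the telescoping of \eqref{itcellular3}/\eqref{ittransverse3}/\eqref{ittangent3} for \eqref{scenario1.1}, of \eqref{itcellular2}/\eqref{ittransverse2}/\eqref{ittangent2} for \eqref{scenario1.3}, and of the cell-count relations for \eqref{scenario1.4} is precisely what is meant. Your identification of the only delicate point---that the pigeonholings in the transverse and tangent steps a priori shrink $\co''_u$ and hence threaten the lower bound \eqref{scenario1.4}---and your resolution via the comparability of the local integrals (so that the $(\log R)^{O(1)}$ mass loss per step translates to a $(\log R)^{O(1)}$ count loss, absorbed into the implicit constants since $s\le\e^{-10}$) is the right mechanism; this is also why the paper later invokes \eqref{scenario1.4} only in the form $|\co_{s+1}|\gtrsim R^{-O(\de)}d^{3s_c}\mu$.
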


If we are in the \textit{second scenario}, we have the theorem:
\begin{theorem}[Small-$r$ case]\label{scenario2}
If \textup{STATE}$(u)\neq$ \textup{tang} for all $u\in\{1,\cdots,s\}$, and $s_c,s_t,r$ is defined as above, we have

\begin{equation}\label{scenario2.1}
\int_{\cp_R}|\Br_A\Sq f|^p\lesssim (\log R)^{3s_t}\sum_{O_{s}\in\co_{s}}\int_{O_{s}}|\Br_{A/2^{a(s)}}\Sq\vf_{O_{s}}|^p.
\end{equation}

\begin{equation}\label{scenario2.2}
\int_{O_{s}}|\Br_{A/2^{a(s)}}\Sq\vf_{O_{s}}|^p~ (O_{s}\in\co_{s})\textup{~are~the~same~up~to~a~constant}.
\end{equation}

\begin{equation}\label{scenario2.3}
    \sum_{O_{s}\in\co_{s}}\|\Sq\vf_{O_{s}}\|^2_2\lesssim {\poly}(d)^{s_t}R^{-\de s_t}\|\Sq\vf_{\cp_R}\|_2^2.
\end{equation}

\begin{equation}\label{scenario2.4}
    |\co_{s}|\gtrsim d^{3s_c}\prod_{\textup{STATE}(u)=\textup{trans}} \mu_u.
\end{equation}

\begin{equation}\label{scenario2.5}
    r\lesssim R^{\e/10} M^2.
\end{equation}

\end{theorem}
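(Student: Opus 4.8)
The plan is to read off Theorem~\ref{scenario2} from the iteration constructed in the preceding subsection, by telescoping its step-to-step estimates over $u=1,\dots,s$. No new analytic input is needed beyond what is already packaged in Algorithm~\ref{algorithm} and in the iteration theorem; the work is purely bookkeeping. Throughout, the hypothesis $\textup{STATE}(u)\neq\textup{tang}$ for all $u\le s$ guarantees that every step is cellular or transverse, so only the outputs (i)--(iv) of those two states are invoked.

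First I would pin down the terminal scale, which is \eqref{scenario2.5}. Each step shrinks the scale by a factor $d=R^{\e^6}$ in the cellular case and $R^{\de}$ in the transverse case, and the iteration is run only as long as the scale stays above the threshold $\sim R^{\e/10}M^{2}$ (this also keeps every application of Algorithm~\ref{algorithm} admissible, since the wave packet decomposition of Proposition~\ref{scale-rho-wpt} requires scale $\ge R^{\e/10}$). Hence the last scale $r=r_s=Rd^{-s_c}R^{-\de s_t}$ --- the formula recorded just before the statement of the theorem --- satisfies $r\lesssim R^{\e/10}M^{2}$.

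Next is the broad inequality \eqref{scenario2.1}. Starting from \eqref{localized-rescaled-ball} and using $\co_0=\{\cp_R\}$, $\vf_{O_0}=\vf_{\cp_R}$, $a(0)=0$, I write $\int_{\cp_R}|\Br_A\Sq\vf_{\cp_R}|^p=\sum_{O_0}\int_{O_0}|\Br_{A/2^{a(0)}}\Sq\vf_{O_0}|^p$ and then apply \eqref{itcellular3} at each cellular step (an inequality with an absolute implied constant) and \eqref{ittransverse3} at each transverse step (implied constant $\lesssim(\log R)^3$), chaining from $u=1$ to $u=s$. Exactly $s_t$ of the steps are transverse, and the $s_c$ cellular steps together with all the absolute constants contribute at most $C^{s}$ with $s\le\e^{-10}$, hence only an $\e$-dependent factor; this gives \eqref{scenario2.1}. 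Then \eqref{scenario2.2} is immediate, since item (iii) of the cellular and transverse outputs already records that $\int_{O_u}|\Br_{A/2^{a(u)}}\Sq\vf_{O_u}|^p$ ($O_u\in\co_u$) are all equal up to a constant, in particular at $u=s$. Here one checks that the broad index $A/2^{a(u)}$ decreases only at transverse steps, consistently with \eqref{it1.2}, so that the two sides of consecutive inequalities in the chain genuinely match.

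For the $L^{2}$-estimate \eqref{scenario2.3} I telescope \eqref{itcellular2} (factor $\lesssim1$) and \eqref{ittransverse2} (factor $\lesssim\poly(d)R^{-\de}$) from $u=s$ down to $u=0$, again using $\vf_{O_0}=\vf_{\cp_R}$: only the $s_t$ transverse steps contribute, producing $\poly(d)^{s_t}R^{-\de s_t}$, while the cellular factors collapse into the $\e$-dependent constant. For the cell-count bound \eqref{scenario2.4} I multiply \eqref{itcellular1}, giving $|\co_u|\sim d^{3}|\co_{u-1}|$ at a cellular step, against the transverse relation $|\co_u|\sim\mu_u|\co''_{u-1}|$ underlying \eqref{ittransverse1}, where $\co''_{u-1}\subseteq\co_{u-1}$ keeps a fraction $\gtrsim(\log R)^{-O(1)}$ of the step-$(u-1)$ cells --- a stable fraction because all step-$(u-1)$ broad integrals are comparable; multiplying over the $s$ steps and absorbing the resulting $(\log R)^{-O(s_t)}$ loss yields \eqref{scenario2.4}. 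The one point needing care --- and the closest thing to an obstacle --- is keeping the $\log R$-, $\poly(d)$- and $C^{s}$-bookkeeping mutually consistent over the at most $\e^{-10}$ steps, and confirming that every factor charged to a cellular step (and every absolute constant) is genuinely $O_\e(1)$ rather than a growing power of $R$; this is routine because $s\le\e^{-10}$ is fixed. Finally, the tangent case Theorem~\ref{scenario1} is proved identically, using the tangent outputs \eqref{ittangent1}--\eqref{ittangent4} at the last step $s+1$.
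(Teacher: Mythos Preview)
Your approach---telescoping the step-to-step outputs of the iteration theorem---is exactly what the paper does: Theorems~\ref{scenario1} and~\ref{scenario2} are stated as immediate conclusions of the iteration, with no separate proof given. Your derivations of \eqref{scenario2.1}--\eqref{scenario2.3} and \eqref{scenario2.5} are correct.

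There is, however, a gap in your argument for the cell-count lower bound \eqref{scenario2.4}. You assert that at a transverse step the refined set $\co''_{u-1}\subset\co_{u-1}$ retains a fraction $\gtrsim(\log R)^{-O(1)}$ of the cells, ``because all step-$(u-1)$ broad integrals are comparable.'' This does not follow. The second pigeonhole \eqref{pigeonhole2.1}--\eqref{pigeonhole2.2} selects $\co''_{u-1}$ so as to recover a $(\log R)^{-1}$ fraction of the total \emph{child count} $\sum_{O_{u-1}\in\co'_{u-1}}N_{O_{u-1}}$ (equivalently, of the $B$-integral sum, since all surviving $B$-integrals are comparable after the first pigeonhole), not a fraction of the parent count $|\co'_{u-1}|$. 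The child counts $N_{O_{u-1}}$ are not a priori uniform across $\co'_{u-1}$: there is no reverse inequality to \eqref{transverse3} bounding $\sum_{B\in\cb_{O_{u-1}}}\int_B$ from above by $\int_{O_{u-1}}|\Br\Sq\vf_{O_{u-1}}|^p$, so one parent with many children can carry almost all of the $B$-mass while the remaining parents (with few children each) are discarded, leaving $|\co''_{u-1}|$ much smaller than $|\co_{u-1}|$. The comparability of the step-$(u-1)$ integrals pins down the left side of \eqref{pigeonhole2.2} but says nothing about how $N_{O_{u-1}}$ is distributed. What does iterate cleanly is the product $|\co_u|\cdot I_u$ (this is precisely the content of \eqref{scenario2.1}); isolating the pure count $|\co_s|$ from it would require separate control on the ratios $I_{u}/I_{u-1}$ at transverse steps, which you have not supplied. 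The paper does not spell out this step either, so you should flag it as a point needing care rather than claim it is routine bookkeeping.
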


\section{End when the radius \texorpdfstring{$r_s$}{Lg} is small}
In this section we discuss the second scenario which is much easier than the first scenario.
We will use the ineqaulities stated in Theorem \ref{scenario2}. First, we consider \eqref{scenario2.1}. Since terms on the right hand side of \eqref{scenario2.1} are same up to a constant factor, by pigeonholing, there exists a cell $O_s$ with minimal $\|\Sq\vf_{O_s}\|_2^2$ such that
\begin{equation}
\label{lp-end-small}
    \int_{\cp_R}\!\!|{\rm Br}_{A} \Sq f|^p\lesssim R^{O(\be)}|\co_s|\int_{O_{s}}\!\!|\Br_{A/2^{a(s)}}\Sq\vf_{O_{s}}|^p\lesssim R^{O(\be)}|\co_s|\int_{O_s}\!\!|\Sq\vf_{O_s}|^p.
\end{equation}
The minimality of $\|\Sq\vf_{O_s}\|_2^2$ and \eqref{scenario2.3} yield that
\begin{equation}
\label{l2-end-small}
    \|\Sq\vf_{O_s}\|_2^2\lesssim R^{O(\de)}R^{-s_t\de}|\co_s|^{-1}\|\Sq\vf_{\cp_R}\|_2^2.
\end{equation}

To estimate each term on the right hand side of \eqref{lp-end-small}, we need the following lemma.
\begin{lemma}
For $2\le p\le\infty$,
\begin{equation}
\label{estimate-end-small}
    \|\Id_{O_s}\Sq\vf_{O_s}\|_p\lesssim M^{4(\frac{1}{p}-\frac{1}{2})}\|\Sq\vf_{O_s}\|_2
\end{equation}
\end{lemma}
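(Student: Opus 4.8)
The plan is to exploit the Fourier localization of the components of $\vf_{O_s}$ together with Bernstein's inequality on rescaled balls. Recall from the iteration (see \eqref{ittransverse4}, \eqref{itcellular4}) that each component $f_{j,O_s}$ has $\wh f_{j,O_s}$ supported in $N_{Cr_s^{-1}}(\Ga_j(\si))$, which is contained in a slab of dimensions $\sim r_s^{-1/2}\times r_s^{-1/2}\times r_s^{-1}$ whose thin direction points along $c_\si$; moreover $f_{j,O_s}$ is essentially supported in the rescaled ball $\cp_{O_s}$ of dimensions $M^{-1}r_s\times M^{-1}r_s\times r_s$. The point is that on a rescaled ball, each $f_{j,O_s}$ is ``locally constant'' at the scale dual to its frequency slab, and the ratio between the volume of $\cp_{O_s}$ and the volume of the dual slab of the frequency support is $\sim M^{-2}r_s^3 \cdot (r_s^{-1/2}\cdot r_s^{-1/2}\cdot r_s^{-1})^{-1}=M^{-2}r_s^3\cdot r_s^2 \cdot r_s^{-3}\cdot$... — more precisely, the number of dual cells packing $\cp_{O_s}$ in the two long directions is only $M^{-1}$ of what it would be for an ordinary $r_s$-ball, so Bernstein on $\cp_{O_s}$ gives, for $2\le p\le\infty$,
\begin{equation}
\label{bernstein-Os}
    \|\Id_{O_s}f_{j,O_s}\|_p\lesssim M^{4(\frac1p-\frac12)}\Big(\frac{1}{|\cp_{O_s}|}\int |f_{j,O_s}|^2 w_{\cp_{O_s}}\Big)^{1/2}|\cp_{O_s}|^{1/p},
\end{equation}
uniformly in $j$; the power $M^{4(1/p-1/2)}$ is exactly the anisotropic loss coming from the two compressed directions of the rescaled ball relative to a genuine ball.

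First I would make \eqref{bernstein-Os} precise: write $f_{j,O_s}=\psi_{\cp_{O_s}}\ast f_{j,O_s}$ for a suitable bump $\psi_{\cp_{O_s}}$ whose Fourier transform equals $1$ on $N_{Cr_s^{-1}}(\Ga_j(\si))+\wh\cp_{O_s}$ and which decays rapidly off the dual box of $\cp_{O_s}$, and apply Young's inequality in the form $\|g\ast h\|_p \le \|g\|_2\|h\|_{q}$ with $1+1/p=1/2+1/q$, noting $\|\psi_{\cp_{O_s}}\|_q \sim |\cp_{O_s}^\vee|^{1-1/q}=|\cp_{O_s}|^{1/q-1}$ up to the $M$-anisotropy; unwinding the anisotropic Hausdorff–Young constant produces $M^{4(1/p-1/2)}$ rather than the isotropic $r_s^{3(1/p-1/2)\cdot\text{(something)}}$... the cleanest route is to apply Bernstein directly on the non-isotropically rescaled ball, i.e.\ use the dilation $\dil:(x_1,x_2,x_3)\mapsto(Mx_1,Mx_2,x_3)$ which maps $\cp_{O_s}$ to an ordinary $r_s$-ball and maps the frequency slab to a slab of dimensions $M^{-1}r_s^{-1/2}\times M^{-1}r_s^{-1/2}\times r_s^{-1}$, then apply the standard $L^2\to L^p$ Bernstein inequality on an $r_s$-ball for that slab, which costs $(M^{-1}r_s^{-1/2}\cdot M^{-1}r_s^{-1/2}\cdot r_s^{-1})^{1/2-1/p}\cdot(r_s^{3})^{1/2-1/p}=(M^{-2})^{1/2-1/p}=M^{4(1/p-1/2)}$, and the $r_s$ powers cancel because the slab has volume $\sim r_s^{-2}$ and the ball has volume $\sim r_s^3$ — wait, one must be careful: the standard Bernstein gain for a slab of volume $V$ inside a ball of volume $W$ is $(V\cdot W)^{1/2-1/p}$ only when the slab tiles the ball, i.e.\ $V\sim W^{-1}$; here after rescaling $V\sim M^{-2}r_s^{-2}$ and $W\sim r_s^3$ so $V\cdot W\sim M^{-2}r_s$, not $1$. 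The correct statement is: for $\wh h$ supported in a slab $S$, $\|h\|_{L^p(B_{r})}\lesssim |S|^{1/2-1/p}\,|B_r|^{?}\|h\|_2$ — I would instead just use that $h$ is locally constant on translates of $S^\vee$ and sum: $\|\Id_{O_s}f_{j,O_s}\|_p^p \lesssim \sum_{S^\vee\text{-cells in }\cp_{O_s}} \|f_{j,O_s}\|_{L^\infty(\text{cell})}^p |S^\vee| \lesssim |S^\vee|^{1-p/2}\sum_{\text{cells}}\|f_{j,O_s}\|_{L^2(\text{cell},w)}^{p}$... and Hölder over the cells, with the number of cells being $|\cp_{O_s}|/|S^\vee|\sim M^{-2}r_s^3/(r_s^{-1/2}r_s^{-1/2}r_s^{-1})\cdot$ hmm, $|S^\vee|\sim r_s^{1/2}r_s^{1/2}r_s=r_s^2$, so the count is $M^{-2}r_s^3/r_s^2=M^{-2}r_s$; this gives the bound $(M^{-2}r_s)^{1-p/2}$ times $(r_s^2)^{1-p/2}$ times $(\int|f_{j,O_s}|^2w)^{p/2}$, and combining powers of $r_s$ — this is exactly where I expect the routine but delicate bookkeeping to live, and it is the main obstacle.

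Having established the one-component estimate $\|\Id_{O_s}f_{j,O_s}\|_p\lesssim M^{4(1/p-1/2)}\|f_{j,O_s}\|_{L^2(w_{\cp_{O_s}})}\cdot(\text{normalization})$ uniformly in $j$, I would square-sum over $j$. Since $p\ge 2$, by Minkowski's inequality in $\ell^2(L^p)\hookrightarrow L^p(\ell^2)$ direction — more precisely using $\|(\sum_j|a_j|^2)^{1/2}\|_p\le (\sum_j\|a_j\|_p^2)^{1/2}$ for $p\ge2$ — we get
\begin{equation}
\label{sq-sum}
    \|\Id_{O_s}\Sq\vf_{O_s}\|_p=\Big\|\big(\textstyle\sum_j|\Id_{O_s}f_{j,O_s}|^2\big)^{1/2}\Big\|_p \lesssim M^{4(\frac1p-\frac12)}\Big(\sum_j\|f_{j,O_s}\|_{L^2(w_{\cp_{O_s}})}^2\Big)^{1/2},
\end{equation}
and the right-hand square-sum is $\lesssim \|\Sq\vf_{O_s}\|_2$ after absorbing the rapidly decaying weight $w_{\cp_{O_s}}$ into the $L^2$ norm (using that $\{f_{j,O_s}\}$ is essentially supported in $\cp_{O_s}$, so $\|f_{j,O_s}\|_{L^2(w_{\cp_{O_s}})}\lesssim \|f_{j,O_s}\|_2+\rap(R)\|\vf_{\cp_R}\|_2$, and Lemma \ref{f-cp-R-lem} controls the tail). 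This yields \eqref{estimate-end-small}. The only genuinely subtle point is getting the exponent $4(1/p-1/2)$ of $M$ exactly right — i.e.\ verifying that the two ``short'' directions of the rescaled ball $\cp_{O_s}$ (each of which is $M^{-1}$ times shorter than in an ordinary $r_s$-ball) each contribute a factor $M^{2(1/p-1/2)}$ to the anisotropic Bernstein loss, for a total of $M^{4(1/p-1/2)}$; everything else is a standard locally-constant-property plus Hölder argument, and I would present it by invoking the non-isotropic dilation $\dil:(x_1,x_2,x_3)\mapsto(Mx_1,Mx_2,x_3)$ to reduce cleanly to the isotropic Bernstein inequality on a ball of radius $r_s$.
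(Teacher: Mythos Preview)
There is a genuine gap in your argument, and it stems from a misidentification of the Fourier support. You write that $\wh f_{j,O_s}$ is supported in $N_{Cr_s^{-1}}(\Ga_j(\si))$, ``which is contained in a slab of dimensions $\sim r_s^{-1/2}\times r_s^{-1/2}\times r_s^{-1}$''. This is false: $\Ga_j(\si)$ is a cap of diameter $\sim M^{-1}$ (because $\si$ is a $M^{-1}$-cap), not of diameter $r_s^{-1/2}$. In the small-$r$ scenario one has $r_s>M^2$ (since $r_{s-1}>R^{\e/10}M^2$ and $r_s\ge r_{s-1}R^{-\de}$), so $r_s^{-1/2}<M^{-1}$ and the actual support is strictly larger than your claimed slab. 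The correct containing box has dimensions $\sim M^{-1}\times M^{-1}\times M^{-2}$: the two horizontal sides are $M^{-1}$ from the cap diameter, and the thickness is $\sim M^{-2}$ coming from the curvature of the cap (which dominates $r_s^{-1}$). All of your subsequent bookkeeping with the non-isotropic dilation and the rescaled ball $\cp_{O_s}$ is built on the wrong support and never converges; in fact the anisotropy of $\cp_{O_s}$ is a red herring --- the $M$-gain lives entirely on the Fourier side.

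The paper's proof is much shorter. One interpolates (via H\"older) between $p=2$, which is trivial, and $p=\infty$. For $p=\infty$, since $\wh f_{j,O_s}$ sits in a box of volume $\sim M^{-4}$, the ordinary Bernstein inequality gives $\|f_{j,O_s}\|_\infty\lesssim M^{-2}\|f_{j,O_s}\|_2$, uniformly in $j$. Then
\[
\|\Id_{O_s}\Sq\vf_{O_s}\|_\infty\le\Big(\sum_j\|f_{j,O_s}\|_\infty^2\Big)^{1/2}\lesssim M^{-2}\Big(\sum_j\|f_{j,O_s}\|_2^2\Big)^{1/2}=M^{-2}\|\Sq\vf_{O_s}\|_2,
\]
which is exactly \eqref{estimate-end-small} at $p=\infty$. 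No locally-constant argument on $\cp_{O_s}$, no anisotropic rescaling, and no weight $w_{\cp_{O_s}}$ are needed; the indicator $\Id_{O_s}$ plays no role beyond restricting the left-hand side.
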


\begin{proof}
Recalling the definition of $\Sq\vf_{O_s}$ in Definition \ref{defsqfunc}, we have
\begin{equation}
    \|\Id_{O_s}\Sq\vf_{O_s}\|_p=\Big(\int_{O_s}\Big(\sum_j |f_{j,O_s}|^2\Big)^{p/2}\Big)^{1/p}.
\end{equation}
By H\"older's inequality, it suffices to prove \eqref{estimate-end-small} when $p=2$ and $p=\infty$. The case $p=2$ follows easily from Placherel.

Let us consider the case for $p=\infty$. We note that $\wh{f}_{j,O_s}$ is supported in a slab of dimensions $M^{-1}\times M^{-1}\times M^{-2}$, so via Bernstein's inequality,
\begin{equation}
\label{bernstein}
    \|f_{j,O_s}\|_\infty\lesssim M^{-2}\|f_{j,O_s}\|_2.
\end{equation}
Now we can estimate $\|\Sq(\vf_{O_s})\Id_{O_s}\|_p$ as
\begin{equation}
    \|\Sq(\vf_{O_s})\Id_{O_s}\|_\infty=\sup_{x\in O_s}\Big(\sum_j |f_{j,O_s}(x)|^2\Big)^{1/2}\le \Big(\sum_j \sup_{x\in O_s}|f_{j,O_s}(x)|^2\Big)^{1/2},
\end{equation}
which, using the Berstein's estimate \eqref{bernstein}, is bounded above by
\begin{equation}
    M^{-2}\Big(\sum_j \|f_{j,O_s}\|_2^2\Big)^{1/2}=M^{-2}\|\Sq\vf_{O_s}\|_2.
\end{equation}
This is the desired estimate when $p=\infty$.
\end{proof}
Combining \eqref{lp-end-small}, \eqref{l2-end-small} and \eqref{estimate-end-small}, we have
\begin{equation}
    \int_{\cp_R}|{\rm Br}_{A} \Sq f|^p\lesssim R^{O(\be)}R^{-\frac{ps_t\de }{2}}|\co_s|^{1-\frac{p}{2}}M^{4-2p}\|\Sq\vf_{\cp_R}\|_2^p.
\end{equation}
Now recalling $d^{s_c}R^{s_t\de}=Rr^{-1}\ge M^{-2}R^{1-\e/10}$ and
$|\co_s|\gtrsim d^{3s_c}$ in \eqref{scenario2.4}, simple calculations yield that for $p>3$,
\begin{equation}
\label{end-small-auxiliary-1}
    \int_{\cp_R}|\Br_{A}\Sq f|^p\lesssim R^{p\e/3}R^{-p/2}M^{4-p}\|\Sq\vf_{\cp_R}\|_2^p.
\end{equation}
Recall that $\vf_{\cp_R}$ is the sum of the  wave packet $\vf_T$ that $2T\cap \cp_R\not=\varnothing$. Hence one can choose a function $\vp_{\cp_R}$ such that
\begin{enumerate}
    \item $\vp_{\cp_R}\lesssim w_{\cp_R}$, recalling \eqref{weight-cp-R},
    \item $\wh\vp_{\cp_R}$ is supported a the dual slab of $\cp_R$ centered at the origin,
    \item $\vf_{\cp_R}\lesssim R^{O(\be)}\vp_{\cp_R}\vf$.
\end{enumerate}
It implies $\|\Sq\vf_{\cp_R}\|_2^2\lesssim R^{O(\be)}\|\vp_{{\cp_R}}\Sq\vf\|_2^2$ from \eqref{weight-cp-R}. By $L^2$ orthogonality, 
\begin{equation}
\label{end-small-auxiliary-2}
    \|\vp_{{\cp_R}}\Sq\vf\|_2^2\lesssim\int\sum_{\theta\in\Theta}\sum_{j=1}^R|m_{j,\theta}\ast f|^2\vp_{\cp_R}\lesssim\int\sum_{\theta\in\Theta}\sum_{j=1}^R|m_{j,\theta}\ast f|^2w_{\cp_R}.
\end{equation}
We let $\wt T_{\theta}$ be the tube in $\ZT_{\theta}$ that centered at the origin. Then the kernel estimate \eqref{lemdecaykernel} yields that for fixed $\theta$, the collection of smooth functions $\{R^{-O(\be)}m_{j,\theta}\}_{1\leq j\leq R}$ is adapted to $\wt T_{\theta}$. By Lemma \ref{weightedlemma} and recalling \eqref{lattice-sqfcn}, \eqref{q-tau}, we can argue similarly as in Lemma \ref{half-lem} to get
\begin{equation}
\label{end-small-auxiliary-3}
    \int\sum_{\theta\in\Theta}\sum_{j=1}^R|m_{j,\theta}\ast f|^2w_{\cp_R}\lesssim R^{O(\be)}\int\sum_{\theta\in\Theta}\sum_{q\in\bq(\theta)}|\De_qf|^2\wt w_{\wt T_{\theta},N}\ast w_{\cp_R}.
\end{equation}
Note that tube $\wt T_\theta$ is contained in the rescaled ball $\cp_R$. Hence $\sup_\theta \wt w_{\wt T_{\theta},N}\ast w_{\cp_R}\lesssim w_{\cp_R}$, which, via Lemma \ref{finiely-overlap-lattice-cube}, implies 
\begin{equation}
\label{end-small-auxiliary-4}
    \int\sum_{\theta\in\Theta}\sum_{q\cap\Ga(\theta)\not=\varnothing}|\De_qf|^2 \wt w_{\wt T_{\theta},N}\ast w_{\cp_R}\lesssim\int \sum_{q\in\bq}|\De_qf|^2 w_{\cp_R}.
\end{equation}
Combining \eqref{end-small-auxiliary-1}, \eqref{end-small-auxiliary-2}, \eqref{end-small-auxiliary-3} and \eqref{end-small-auxiliary-4}, we end up with
\begin{equation}
\nonumber
    \int_{\cp_R}|{\rm Br}_{A} \Sq f|^p\leq C_\e R^{p\e} R^{p-3} M^{6-2p}\int\Big(\sum_{q\in\bq}|\De_qf|^2\Big)^{p/2}w_{{\cp_R}},
\end{equation}
which is just \eqref{local-broad-thm}. \qed

%%%%%%% end with small 

\section{Backward algorithm}\label{backward}
In this section, we discuss the first scenario. That is, the iteration ends with STATE($s+1$)=Tang. In this case, we use Theorem \ref{scenario1}. However, when doing so, we lose some global information among cells in $\co_{s+1}$. The new ingredient is to build up a backward algorithm as in \cite{Wu} to sum up $\|\Sq\vf_{O_{s+1}}\|^2_2$ for $O_{s+1}\in\co_{s+1}$ efficiently. We remark that the formulas \eqref{inductiveformula1}, \eqref{inductiveformulaR} and \eqref{inductiveformula2} that relate functions at different scales are very important in this section.

\smallskip

Let us introduce more notations for the backward algorithm.
\begin{definition}[partial order]
\label{partial-order-tube}
For two tubes $T$ and $T'$ at two different scales, we say $T<T'$ if
\begin{equation}
    T\subset 10T'\ and\ \om_{T'}<\om_T. 
\end{equation}
Here $\om_T\subset\ZS^2$ denotes the dual cap of $T$.
\end{definition}

Recall that we defined $\vf_{O_u,\om_u}$ $(1\le u\le s+1)$ in \eqref{inductiveformula1}. Since the step $s+1$ is special, we make the following definition.

\begin{definition}
Given $O_{s+1}\in\co_{s+1}$. Suppose $O_{s+1}<O_s$ and suppose that $\om_s\in\Om_s$ is an $r_s^{-1/2}$-cap. Define
\begin{equation}\label{f_s+1}
    \vf_{O_{s+1},\om_s}:=\sum_{T\in\T_{O_{s+1},\om_s}[r_s]}(\vf_{O_s})_{T}=\sum_{T\in\T_{O_{s+1},\om_s}[r_s]}\vf_{O_s,\om_s}\Id_T^\ast.
\end{equation}
\end{definition}
\begin{remark}
\rm

$\vf_{O_{s+1},\om_s}$ is a sum of parallel wave packets at scale $r_s$ and with direction $c_{\om_s}$. Also note that we didn't multiply the cutoff function $\vp_{O_{s+1}}$ in \eqref{f_s+1}.
To get familiar with this definition, readers could check
\begin{equation}
    \vf_{O_{s+1},\om_{s+1}}=\vp_{O_{s+1}}\sum_{\om_s<\om_{s+1}}\vf_{O_{s+1},\om_s}.
\end{equation}

\end{remark}

Recall that $\be=\e^{1000}$ was fixed in Section 4. For any $a\in[-10R^{1+\be},10R^{1+\be}]$, define $L_a$ as the horizontal plane
\begin{equation}
    L_a:=\{x:x_3=a\}.
\end{equation}
For any tube $T$, we define $\mathring T$ as a stretch of $T$: the tube $\mathring T$ has the same coreline and cross section as $T$, but has infinite length. 

\begin{remark}
\rm

The annoying factor $\be$ is used for handling Schwartz tails. To grasp the main idea, one may set $\be=0$ throughout this section.

\end{remark}

In the backward algorithm, we are going to find a refinement $\bar{\cO}_u\subset\co_u$ for each step $u$. And for each $O_u\in\bar\co_u$, we will build up an auxiliary tube set $\bar\ZT_{O_u}[r_{u-1}]$ and vectors $\vh_{O_u,\om_u}$ for each directional cap $\om_u\in\Om_u$. This is discussed in the next theorem.

%These vectors $\vh_{O_u}$ are mediums that help us sum up $\|\Sq(\vf_{O,tang})\|_2^2$ on the right hand side of \eqref{backward-start}. 

%{\color{red} we need to define $f_{O_u,\omega_{u}}$}

\begin{theorem}
\label{backward-algorithm}
For every $u\in\{1,\cdots,s,s+1\}$, we can find a refinement of cell $\bar{\cO}_u\subset\co_u$ and a refinement of tubes $\{\bar\ZT_{O_u}[r_{u-1}]\}_{O_u\in\bar{\cO}_u}$ with $\bar\ZT_{O_u}[r_{u-1}]\subset\ZT_{O_u}[r_{u-1}]$. Recalling \eqref{inductiveformula1} and \eqref{inductiveformula2}, when $u\geq2$, define the vectors $\{\vh_{O_u,\om_u}\}_{O_u\in\bar{\cO}_u}$ for each $\om_u\in\Om_u$ via the tube set $\bar\ZT_{O_u}[r_{u-1}]$ as
\begin{equation}
\label{formula-for-h}
    \vh_{O_{u},\om_{u}}:=\vp_{O_u}\sum_{\om_{u-1}<\om_{u}}\sum_{T\in\bar\ZT_{O_{u},\om_{u-1}}[r_{u-1}]}\vf_{O_{u-1},\om_{u-1}}\Id_{T}^\ast;
\end{equation}
when $u=1$, recalling \eqref{inductiveformulaR}, define $\vh_{O_1,\om_1}$ as
\begin{equation}
\label{formula-for-h-2}
    \vh_{O_1,\om_1}:=\vp_{O1}\sum_{\theta<\om_1}\sum_{T\in\bar\ZT_{O_1,\theta}[R]}\vf\Id_{T}^\ast;
\end{equation}

The vectors $\vh_{O_u,\om_u}$ satisfy the following properties depending on whether we are in cellular state or transverse state at step $u$, where $1\leq u<s$.

{\bf{Cellular state}}:
If \textup{STATE}$(u+1)=$\textup{cell}, we can also find an integer $v_{u+1}$, so that the following two properties are satisfied:

\begin{align}
\label{cell1}
    \sum_{O\in \bar{\co}_{u+1}}\sum_{\om\in\Om_{u+1}}\|\Sq\vh_{O,\om}\|_2^2\lesssim&\, R^{O(\be)}d^{-1}v_{u+1}\sum_{O\in\bar{\cO}_{u}}\sum_{\om\in\Om_u}\|\Sq\vh_{O,\om}\|_2^2\\ \nonumber
    &+\rap(R)\|\Sq\vf_{\cp_R}\|_2^2.
\end{align}
Define $L_{a,O_u}:=L_a\bigcap \{\cup_{T\in \bar\ZT_{O_u}[r_{u-1}]}\mathring T\}$. Then uniformly for $a\in [-R^{1+10\beta},R^{1+10\beta}]$,
\begin{equation}
\label{cell2}
    \max_{O_u\in\bar{\cO}_u}\!\!\big|N_{R^{1+10\be}r_{u}^{-1/2}}L_{a,O_{u}}\big|\lesssim R^{O(\be)}d^3v_{u+1}^{-1}\!\!\max_{O_{u+1}\in\bar{\cO}_{u+1}}\big|N_{R^{1+10\be}r_{u+1}^{-1/2}}L_{a,O_{u+1}}\big|.
\end{equation}

{\bf{Transverse state}}: If \textup{STATE}$(u+1)=$\textup{trans}, the following two properties are satisfied:

\begin{equation}
\label{trans1}
    \sum_{O\in \bar{\co}_{u+1}}\sum_{\om\in\Om_{u+1}}\|\Sq\vh_{O,\om}\|_2^2\lesssim {\rm{Poly}}(d)R^{-\de}\sum_{O\in\bar{\cO}_{u}}\sum_{\om\in\Om_u}\|\Sq\vh_{O,\om}\|_2^2.
\end{equation}
Define $L_{a,O_u}:=L_a\bigcap \{\cup_{T\in \bar\ZT_{O_u}[r_{u-1}]}\mathring T\}$. Then uniformly for $a\in[-R^{1+10\be},R^{1+10\be}]$, 
\begin{equation}
\label{trans2}
    \max_{O_u\in\bar{\cO}_u}\big|N_{R^{1+10\be}r_{u}^{-1/2}}L_{a,O_{u}}\big|\lesssim R^{O(\be)} \mu_{u+1}\max_{O_{u+1}\in\bar{\cO}_{u+1}}\big|N_{R^{1+10\be}r_{u+1}^{-1/2}}L_{a,O_{u+1}}\big|.
\end{equation}

{\bf {Tangent state}}: If \textup{STATE}$(u+1)$=\textup{tang} which means $u=s$, the following two properties are satisfies:
\begin{equation}\label{tang1}
\sum_{O_{s+1}\in \bar{\co}_{s+1}}\sum_{\om_s\in\Om_{s}}\|\Sq\vf_{O_{s+1},\om_s}\|_2^2\lesssim R^{O(\de)}\sum_{O\in\bar{\cO}_{s}}\sum_{\om\in\Om_s}\|\Sq\vh_{O,\om}\|_2^2.  
\end{equation}
Define $L_{a,O_s}:=L_a\bigcap \{\cup_{T\in \bar\ZT_{O_s}[r_{s-1}]}\mathring T\}$. Then uniformly for $a\in[-R^{1+10\be},R^{1+10\be}]$, 
\begin{equation}
\label{tang2}
    \max_{O_s\in\bar{\cO}_s}\big|N_{R^{1+10\be}r_{s}^{-1/2}}L_{a,O_{s}}\big|\lesssim R^{O(\de)} \max_{O_{s+1}\in\bar{\cO}_{s+1}}\big|N_{R^{1+10\be}r_{s+1}^{-1/2}}L_{a,O_{s+1}}\big|.
\end{equation}

\end{theorem}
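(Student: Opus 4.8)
The plan is to construct the refinements $\bar\co_u$ and $\bar\ZT_{O_u}[r_{u-1}]$ by a downward (backward) induction on $u$, starting from $u=s+1$ and ending at $u=1$, exactly as in \cite{Wu}. At the last step we take $\bar\co_{s+1}=\co_{s+1}$ and $\bar\ZT_{O_{s+1}}[r_s]=\ZT_{O_{s+1}}[r_s]$, so that $\vh_{O_{s+1},\om_{s+1}}$ coincides with $\vf_{O_{s+1},\om_{s+1}}$, and $\vf_{O_{s+1},\om_s}$ is as in \eqref{f_s+1}. Suppose $\bar\co_{u+1}$ and $\{\bar\ZT_{O_{u+1}}[r_u]\}$ are already chosen together with the vectors $\vh_{O_{u+1},\om_{u+1}}$. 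For each $O_u\in\co_u$, I would look at all its children $O_{u+1}\in\bar\co_{u+1}$ with $O_{u+1}<O_u$; the vectors $\vh_{O_u,\om_u}$ we want to build are, per \eqref{formula-for-h}, sums of scale-$r_{u-1}$ wave packets of $\vf_{O_{u-1},\om_{u-1}}$ over a sub-collection $\bar\ZT_{O_u,\om_{u-1}}[r_{u-1}]\subset\ZT_{O_u,\om_{u-1}}[r_{u-1}]$. The sub-collection is defined by a \emph{sorting} condition: a tube $T'\in\ZT_{O_u,\om_{u-1}}[r_{u-1}]$ is kept precisely when it is related, via the partial order of Definition \ref{partial-order-tube} (i.e. $T<T'$ with $\om_{T'}<\om_T$), to ``many'' of the already-selected smaller tubes $T\in\bigcup_{O_{u+1}<O_u}\bar\ZT_{O_{u+1}}[r_u]$ sitting inside $10T'$. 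This is the incidence-geometric heart of the argument: one wants to say that most of the $L^2$-mass of $\Sq\vh_{O_{u+1},\om_{u+1}}$ summed over children is carried by wave packets whose directions refine a cap $\om_u$ of a kept parent tube, so that throwing away the un-kept tubes costs at most a $\rap(R)$ error relative to $\|\Sq\vf_{\cp_R}\|_2^2$.

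The $L^2$ inequalities \eqref{cell1}, \eqref{trans1}, \eqref{tang1} should then follow by combining three ingredients: (i) the $L^2$-orthogonality of wave packets at scale $r_u$ (Lemma \ref{l2-orthogonality-big} / Lemma \ref{L2-orthogonality-small-1}) applied within each fixed direction $\om_u$ and across the finitely-overlapping tubes; (ii) the local $L^2$ estimate Lemma \ref{localL2} to absorb the cutoff $\vp_{O_u}$, producing the factors $d^{-1}$ in the cellular case and ${\rm Poly}(d)R^{-\de}$ in the transverse case, exactly as in the derivation of \eqref{cellular2} and \eqref{transverse2}; and (iii) the bound on how many parent tubes a given small tube can belong to, which introduces the multiplicity $v_{u+1}$ (cellular) or $\mu_{u+1}$ (transverse) — here one uses Lemma \ref{cellulartube} and Lemma \ref{transversetube}. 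The tangent bound \eqref{tang1} is handled as in the transverse case, since by the iteration Theorem the tangent step obeys the same $L^2$-relation \eqref{ittangent2}; the only difference is bookkeeping with $\vf_{O_{s+1},\om_s}$ versus $\vh_{O_{s+1},\om_{s+1}}$, which is routine from \eqref{f_s+1}.

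The inequalities on the planar slices \eqref{cell2}, \eqref{trans2}, \eqref{tang2} are where the genuinely new incidence input enters. Fixing a horizontal plane $L_a$, the set $L_{a,O_u}$ is the union over $T\in\bar\ZT_{O_u}[r_{u-1}]$ of the slices $L_a\cap\mathring T$, each of which is essentially a $r_{u-1}^{1/2+\be}$-tube in the plane (an $r_{u-1}^{1/2}$-disc blown up by $R^{O(\be)}$). One wants a two-scale incidence statement: the neighborhood $N_{R^{1+10\be}r_u^{-1/2}}L_{a,O_u}$ of the \emph{coarse} slices is controlled by $R^{O(\be)}$ times the multiplicity ($d^3v_{u+1}^{-1}$ or $\mu_{u+1}$) times the neighborhood of the \emph{fine} slices at the child scale, because each kept coarse tube $T'$ was, by construction, required to contain many fine tubes $T<T'$, so the coarse slice is ``covered'' by the union of the fine slices up to an acceptable factor. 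This is proved by a pigeonholing/double-counting argument on incidences between the planar $r_u^{1/2}$-tubes and the planar $r_{u+1}^{1/2}$-tubes, using that nested caps $\om_{u}<\om_{u+1}$ force the directions of $T$ and $T'$ to be aligned within $r_u^{-1/2}$, and that tubes at a fixed scale and direction in a fixed plane are essentially disjoint. I expect \emph{this planar incidence estimate} — getting the multiplicity factors in \eqref{cell2}, \eqref{trans2} to match exactly the ones appearing in the $L^2$ bounds \eqref{cell1}, \eqref{trans1}, so that when the backward recursion is unwound the two types of gain multiply correctly — to be the main obstacle; the $L^2$ parts are essentially a re-packaging of estimates already proved in Sections 5--6, while the planar part requires the careful quantitative incidence bookkeeping that is the point of the ``backward algorithm'' of \cite{Wu}.
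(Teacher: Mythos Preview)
Your plan matches the paper's: backward induction on $u$ starting from $\bar\co_{s+1}=\co_{s+1}$, $\bar\ZT_{O_{s+1}}[r_s]=\ZT_{O_{s+1}}[r_s]$, with the $L^2$ bounds recycling the orthogonality/local-$L^2$ tools of Sections~5--6 and the planar slice bounds being the new incidence content. Two points in your description, however, are inaccurate enough that a direct implementation would fail.

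First, in the \emph{cellular} step the sorting parameter $v_{u+1}$ is not ``how many parent tubes a given small tube belongs to''. For a fixed $O_u$ and each $T'\in\ZT_{O_u}[r_{u-1}]$ one counts the number of \emph{child cells} $O_{u+1}\in\bar\co_{u+1}$, $O_{u+1}<O_u$, for which some $T\in\bar\ZT_{O_{u+1}}[r_u]$ satisfies $T<T'$; one then dyadically pigeonholes on this count to fix a value $v$ and sets $\bar\ZT_{O_u}[r_{u-1}]$ to be the tubes with count $\sim v$ (Definition~\ref{defsorttube}), \emph{not} merely the tubes with ``many'' children. Keeping only tubes with many children would give \eqref{cell2} but would destroy the $L^2$ bound: the factor $v$ in \eqref{cell1} arises because, once one restricts to $T'\in\ZT_{O_u}^v$, each small tube $T<T'$ can lie in at most $\sim v$ of the sets $\bar\ZT_{O_{u+1}}[r_u]$, so the overlap in \eqref{cell-backward-5} is $\lesssim v$. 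The dyadic pigeonholing is precisely what makes the loss $v$ in \eqref{cell1} and the gain $v^{-1}$ in \eqref{cell2} match.

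Second, the \emph{transverse} step is far simpler than you suggest: the paper does no tube refinement at all, setting $\bar\co_u=\co_u$, $\bar\ZT_{O_u}[r_{u-1}]=\ZT_{O_u}[r_{u-1}]$, so $\vh_{O_u,\om_u}=\vf_{O_u,\om_u}$. Then \eqref{trans1} is literally \eqref{ittransverse2} (your ingredient (iii) and Lemma~\ref{transversetube} are not invoked here), and \eqref{trans2} follows trivially from the fact that each $O_u$ has $\lesssim\mu_{u+1}$ children in $\bar\co_{u+1}$. Likewise, the \emph{tangent} step $u=s$ is the \emph{base case} of the backward induction, not a copy of the transverse argument: one takes $\bar\co_s=\co_s$ and $\bar\ZT_{O_s}[r_{s-1}]$ to be exactly those $T'$ admitting some $T<T'$ with $T\in\ZT_{O_{s+1}}[r_s]$ for some child $O_{s+1}$; both \eqref{tang1} and \eqref{tang2} then follow from the crude bound $\#\{O_{s+1}<O_s\}\lesssim R^{O(\de)}$.
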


\begin{remark}
\rm
One can see that \eqref{trans2} and \eqref{tang2} are the same, but \eqref{trans1} and \eqref{tang1} are different because the summand on the left hand side of \eqref{tang1} is $\vf_{O_{s+1},\om_s}$, not $\vf_{O_{s+1},\om_{s+1}}$. The readers can recall the definition of $\vf_{O_{s+1},\om_s}$ in \eqref{f_s+1}. 
\end{remark}

We divide the proof of Theorem \ref{backward-algorithm} into two main cases: cell case and transverse case. The proof relies on a backward induction from the last step $s$ to the first step. Let us first settle down the base case $s+1\to s$.

\subsection{The base case \texorpdfstring{$s+1\to s$}{Lg}.} \hfill

In step $s$, which is the first step of the backward algorithm, we set $\bar\co_{s}:=\co_{s}$. For each $O_s\in\bar\co_s$, the refined tube set $\bar\ZT_{O_s}[r_{s-1}]$ is defined as
\begin{equation}
\label{base-refined-tubes}
    \bar\ZT_{O_s}[r_{s-1}]:=\{T'\in\ZT_{O_s}[r_{s-1}]:\exists T\in\ZT_{O_{s+1}}[r_s], T<T',~\textup{for some}~O_{s+1}<O_s\}.
\end{equation}
We use the set $\bar\ZT_{O_s}[r_{s-1}]$ to define the vector $\vh_{O_s,\om_s}$ as in \eqref{formula-for-h}. Then, recalling the definition of $\vf_{O_{s+1},\om_s}$ in \eqref{f_s+1} and inductive formula \eqref{inductiveformula1}, we have when $O_{s+1}<O_s<O_{s-1}$,
\begin{align}
    \vf_{O_{s+1},\om_s}&=\sum_{T\in\ZT_{O_{s+1},\om_s}[r_s]}\vf_{O_s,\om_s}\Id^\ast_T\\ \nonumber
    &=\sum_{T\in\ZT_{O_{s+1},\om_s}[r_s]}(\varphi_{O_s}\sum_{\om_{s-1}<\om_s}\sum_{T'\in\T_{O_s,\om_{s-1}}[r_{s-1}]}\vf_{O_{s-1},\om_{s-1}}\Id^*_{T'})\Id^*_T
\end{align}
Note that in the above equation, $\Id^*_{T'}\Id^*_T=\rap(R)$ if $T\cap 10T'=\varnothing$, which means that $\Id^*_{T'}\Id^*_T$ is negligible unless $T<T'$. Hence, we can replace the summation  $\T_{O_s,\om_{s-1}}[r_{s-1}]$ by $\bar\T_{O_s,\om_{s-1}}[r_{s-1}]$ and write
\begin{align}
\label{rapiddecay}
    \vf_{O_{s+1},\om_s}=&\sum_{T\in\ZT_{O_{s+1},\om_s}[r_s]}(\varphi_{O_s}\sum_{\om_{s-1}<\om_s}\sum_{T'\in\bar\T_{O_s,\om_{s-1}}[r_{s-1}]}\vf_{O_{s-1},\om_{s-1}}\Id^*_{T'})\Id^*_T\\[1ex] \nonumber
    &+\rap (R)\vf_{O_{s-1},\om_{s-1}}\\[1ex]
    \nonumber
    =&\sum_{T\in\ZT_{O_{s+1},\om_s}[r_s]}\vh_{O_s,\om_s}\Id^\ast_T+\rap (R)\vf_{O_{s-1},\om_{s-1}}.
\end{align}
From Lemma \ref{f-cp-R-lem}, one gets $\Sq\vf_{O_{s-1},\om_{s-1}}\lesssim\Sq\vf_{\cp_R}$, which yields 
\begin{equation}
    \|\Sq\vf_{O_{s+1},\om_s}\|_2^2\le \|\Sq\vh_{O_s,\om_s}\|_2^2+ \rap(R)\|\Sq\vf_{\cp_R}\|_2^2.
\end{equation}

Therefore, noticing that for fixed $O_s\in\co_s$ there are $R^{O(\de)}$ many step-$(s+1)$ cells $O_{s+1}$ with $O_{s+1}<O_s$, one gets that on one hand
\begin{align}
\label{l2-base-case}
    \sum_{O\in \bar{\co}_{s+1}}\sum_{\om\in\Om_{s}}\|\Sq\vf_{O,\om}\|_2^2\lesssim& R^{O(\de)}\sum_{O\in\bar{\cO}_{s}}\sum_{\om\in\Om_s}\|\Sq\vh_{O,\om}\|_2^2\\ \nonumber
    &+\rap(R)\|\Sq\vf_{\cp_R}\|_2^2.
\end{align}
One the other hand, if we define $L_{a,O_{s+1}}:=L_a\bigcap \{\cup_{T\in \bar\ZT_{O_{s+1}}[r_s]}\mathring T\}$, then
\begin{equation}
\label{base-tangent}
    \max_{O_s\in\bar{\cO}_s}\big|N_{R^{1+10\be}r_{s}^{-1/2}}L_{a,O_{s}}\big|\lesssim R^{3\de}\max_{O_{s+1}\in\co_{s+1}}\big|N_{R^{1+10\be}r_{s+1}^{-1/2}}L_{a,O_{s+1}}\big|.
\end{equation}
\medskip

At step $s$, we have successfully defined the refined cell set $\bar\co_s$ and the refined tube sets $\{\bar\ZT_{O_s}[r_{s-1}]\}_{O_s\in\co_s}$ in \eqref{base-refined-tubes}. Also, \eqref{l2-base-case} and \eqref{base-tangent} are the base estimates for our backward algorithm. Generally, assume that we have constructed the refined cell set $\bar\co_{u+1}$ and the refined tube sets $\{\bar\ZT_{O_{u+1}}[r_{u+1}]\}_{O_{u+1}\in\co_{u+1}}$, and proved \eqref{cell1}, \eqref{cell2} for step $u+1$. Now at every intermediate step $u$, we consider two cases.

\subsection{Cellular state at step \texorpdfstring{$u+1$}{Lg}} \hfill

Suppose we are in the situation that \textup{STATE}$(u+1)=$\textup{trans}. Fix a cell $O_u\in\co_u$. To build  $\bar\ZT_{O_u}[r_{u-1}]$, our idea is to study the relations between bigger tubes $\ZT_{O_{u}}[r_{u-1}]$ and smaller tubes $\{\bar\ZT_{O_{u+1}}[r_{u}]\}_{O_{u+1}<O_{u}}$. Define
\begin{equation}
    \bar{\cO}_{u+1}(O_u):=\{ O_{u+1}\in\bar{\cO}_{u+1}:O_{u+1}<O_u \}.
\end{equation}
By induction hypothesis, we have
\begin{equation}
    \vh_{O_{u+1},\om_{u+1}}=\vp_{O_{u+1}}\sum_{\om_{u}<\om_{u+1}}\sum_{T\in\bar\ZT_{O_{u+1},\om_{u}}[r_{u}]}\vf_{O_{u},\om_{u}}\Id_{T}^\ast.
\end{equation}
Since we are in the cell case, the function $\vp_{O_{u+1}}$ is a smooth cutoff function of a rescaled ball of radius $r_{u+1}$. So via the local $L^2$ estimate \eqref{local-l2} and the $L^2$ orthogonality \eqref{L2-orthogonality-small-1}, one gets
\begin{align}
    \|\Sq\vh_{O_{u+1},\om_{u+1}}\|_2^2&\lesssim d^{-1}\int\Sq\Big(\sum_{\om_{u}<\om_{u+1}}\sum_{T\in\bar\ZT_{O_{u+1},\om_{u}}[r_{u}]}\vf_{O_{u},\om_{u}}\Id_{T}^\ast\Big)^2\\ \nonumber
    &\lesssim d^{-1}\sum_{\om_{u}<\om_{u+1}}\sum_{T\in\bar\ZT_{O_{u+1},\om_{u}}[r_{u}]}\int|\Id_T^\ast\Sq\vf_{O_{u},\om_{u}}|^2.
\end{align}
Summing over all directional caps in $\Om_{u+1}$ and all cells in $\bar\co_{u+1}(\co_u)$, we have
\begin{align}
\label{cell-backward-1}
    &\sum_{O\in\bar\co_{u+1}(O_u)}\sum_{\om\in\Om_{u+1}}\|\Sq\vh_{O,\om}\|_2^2\\ \nonumber
    &\lesssim d^{-1}\sum_{\om_u\in\Om_u}\sum_{O\in\bar\co_{u+1}(O_u)}\sum_{T\in\bar\ZT_{O,\om_u}[r_{u}]}\int|\Id_T^\ast\Sq\vf_{O_{u},\om_u}|^2.
\end{align}
Now recall the inductive formula \eqref{inductiveformula1}
\begin{equation}
\nonumber
    \vf_{O_{u},\om_{u}}:=\vp_{O_u}\sum_{\om_{u-1}<\om_{u}}\sum_{T\in\ZT_{O_{u},\om_{u-1}}[r_{u-1}]}\vf_{O_{u-1},\om_{u-1}}\Id_{T}^\ast.
\end{equation}
We plug it back to \eqref{cell-backward-1} so that
\begin{align}
\label{cell-backward-2}
    \eqref{cell-backward-1}\lesssim &\,d^{-1}\sum_{\om_u\in\Om_u}\sum_{O\in\bar\co_{u+1}(O_u)}\sum_{T\in\bar\ZT_{O,\om_u}[r_{u}]}\\ \nonumber &\int\Sq\Big(\vp_{O_u}\sum_{\om_{u-1}<\om_{u}}\sum_{T'\in\ZT_{O_{u},\om_{u-1}}[r_{u-1}]}\vf_{O_{u-1},\om_{u-1}}\Id_{T'}^\ast\Big)^2|\Id_T^\ast|^2.    
\end{align}

Let us take a look at the first line of $\eqref{cell-backward-2}$. Note that a bigger tube $T'\in\ZT_{O_u,\om_{u-1}}[r_{u-1}]$ would only make contribution if there exists a smaller tube $T\in\cup_{\om_u}\cup_{O\in\bar\co_{u+1}}\bar\ZT_{O,\om_u}[r_u]$, such that $T\cap T'\not=\varnothing$. Thus, if the collections of smaller tubes $\{\bar\ZT_{O,\om_u}[r_u]\}_{O\in\bar\co_{u+1}(O_u)}$ are highly overlapped, then there fewer larger tubes in  $\ZT_{O_u,\om_{u-1}}[r_{u-1}]$ would make contribution, which yields a better support estimate in \eqref{cell2}. On the contrary, if the sets $\{\bar\ZT_{O,\om_u}[r_u]\}_{O\in\bar\co_{u+1}(O_u)}$ only overlap a little, then there is an immediate gain in \eqref{cell-backward-2} when counting the multiplicity of small tubes. Since cells in $\bar\co_{u+1}(O_u)$ are all children of $O_u$, and since we had cell case in step $u$, any tube $T\in\ZT_{O_u}[r_u]$ belongs to $d+1$ sets in $\{\bar\ZT_{O,\om_u}[r_u]\}_{O\in\bar\co_{u+1}(O_u)}$, which gives an upper bound of the multiplicity. 

To provide a rigorous argument, we sort the bigger tubes in $\ZT_{O_u}[r_{u-1}]$ via the following definition.

\begin{definition}\label{defsorttube}
For a dyadic integer $v$, $1\leq v\leq O(d)$, we define a subcollection $\ZT_{O_u}^v[r_{u-1}]\subset\ZT_{O_u}[r_{u-1}]$ as
\begin{equation}
\nonumber
    \ZT_{O_u}^v[r_{u-1}]:=\{T': \#\{O_{u+1}: T'>T{\rm{~for~some}}\  T\in\ZT_{O}[r_u],O\in\bar\co_{u+1}(O_u)\}\sim v \}.
\end{equation}
As usual, we also define
\begin{equation}
    \ZT_{O_u,\om_{u-1}}^v[r_{u-1}]:=\ZT_{O_u}^v[r_{u-1}]\cap \ZT_{O_u,\om_{u-1}}[r_{u-1}].
\end{equation}
\end{definition}

\begin{remark}
\rm 
The readers can check that
\begin{equation}
    \ZT_{O_u}[r_{u-1}]=\bigcup_v\ZT_{O_u}^v[r_{u-1}],\hspace{3mm}\ZT_{O_u,\om_{u-1}}[r_{u-1}]=\bigcup_v\ZT_{O_u,\om_{u-1}}^v[r_{u-1}]
\end{equation}
and  
\begin{align}
\label{tuberelation}
    \ZT_{O_u,\om_{u-1}}^v[r_{u-1}]:=\{T':& \#\{O_{u+1}: T'>T{\rm{~for~some}}\  T\in\ZT_{O_{u+1},\om_u}[r_u],\\ \nonumber
    &O_{u+1}\in\bar\co_{u+1}(O_u)\}\sim v \}.
\end{align}

\end{remark}

Let us return to \eqref{cell-backward-2}. By pigeonholing and the triangle inequality, there is a dyadic number $v=v_{O_u}$ such that
\begin{align}
\label{cell-backward-3}
    \eqref{cell-backward-1}\lesssim &\,d^{-1}(\log R)^2\sum_{\om_u\in\Om_u}\sum_{O\in\bar\co_{u+1}(O_u)}\sum_{T\in\bar\ZT_{O,\om_u}[r_{u}]}\\ \nonumber &\int\Sq\Big(\vp_{O_u}\sum_{\om_{u-1}<\om_{u}}\sum_{T'\in\ZT_{O_{u},\om_{u-1}}^v[r_{u-1}]}\vf_{O_{u-1},\om_{u-1}}\Id_{T'}^\ast\Big)^2|\Id_T^\ast|^2.
\end{align}
After changing the summation and the integration, we can rewrite the summation in \eqref{cell-backward-3} as
\begin{align}
\label{cell-backward-4}
    \sum_{\om_u\in\Om_u}&\int\Sq\Big(\vp_{O_u}\sum_{\om_{u-1}<\om_{u}}\sum_{T'\in\ZT_{O_{u},\om_{u-1}}^v[r_{u-1}]}\vf_{O_{u-1},\om_{u-1}}\Id_{T'}^\ast\Big)^2\\ \label{cell-backward-5} &\Big(\sum_{O\in\bar\co_{u+1}(O_u)}\sum_{T\in\bar\ZT_{O,\om_u}[r_{u}]}|\Id_T^\ast|^2\Big).
\end{align}
We will show 
\begin{align}
\label{cell-backward-6}
    \eqref{cell-backward-4}\lesssim&\, v\sum_{\om_u\in\Om_u}
    \int\Sq\Big(\vp_{O_u}\sum_{\om_{u-1}<\om_{u}}\sum_{T\in\ZT_{O_{u},\om_{u-1}}^v[r_{u-1}]}\vf_{O_{u-1},\om_{u-1}}\Id_{T}^\ast\Big)^2\\ \nonumber
    &+\rap(R)\|\Sq\vf_{\cp_R}\|_2^2.
\end{align}
To do this, we make two observations:

{\emph {First observation:}}  Note that for $T'$ and $T$ in the integral \eqref{cell-backward-4}, if $T\cap 10T'=\varnothing$, then $\Id^*_{T'}\Id^*_T=\rap(R)$. This means that $\Id^*_{T'}\Id^*_T$ is negligible unless $T<T'$. Since the square function in \eqref{cell-backward-4} is bounded above by $\Sq\vf_{\cp_R}$, by adding a rapidly decreasing factor, we can discard those $T$'s that have negligible contribution to the integral. Hence the remaining $T$'s are those satisfying:
\begin{equation}
\nonumber
    T<T'\ \textup{for some}\ T'\in\ZT_{O_{u},\om_{u-1}}^v[r_{u-1}].
\end{equation}

{\emph{Second observation:}} We want to rewrite the sum in \eqref{cell-backward-5} for the remaining $T$'s. Note that for an $r_u$ tube $T$, it may belong to many $\bar\ZT_{O,\om_u}[r_{u}]$ for different $O\in\bar\co_{u+1}(O_u)$. This means such $T$ can appear many times in the sum. Let $n(T):=\#\{ O\in\bar\cO_{u+1}(O_u):T\in\bar\ZT_{O,\om_u}[r_{u}] \}$. Then, recalling \eqref{scale-rho-tube-set}, one has
\begin{equation}
    \sum_{O\in\bar\co_{u+1}(O_u)}\sum_{T\in\bar\ZT_{O,\om_u}[r_{u}]}|\Id_T^\ast|^2\leq\sum_{T\in\ZT_{\om_u}[r_u]}n(T)|\Id^*_T|^2\lesssim \sup_{T\in\ZT_{\om_u}}n(T). 
\end{equation}
From the first observation, one has that $T<T'$ for some $T'\in \ZT_{O_{u},\om_{u-1}}^v[r_{u-1}]$. This yields $\sup_{T\in\ZT_{\om_u}}n(T)\lesssim v$ due to \eqref{tuberelation}, and hence proves \eqref{cell-backward-6}.

\smallskip

Recall \eqref{cell-backward-1}, \eqref{cell-backward-2}, \eqref{cell-backward-3} and \eqref{cell-backward-4}. By pigeonholing again, we can find a dyadic number $v=v_{u+1}$ and hence a refined collection of step $u$ cells $\bar{\cO}_u=\{O_u\in\co_u:v_{O_u}=v\}$ such that
\begin{align}
    \sum_{O\in\bar\co_{u+1}}\sum_{\om\in\Om_{u+1}}\|\Sq\vh_{O,\om}\|_2^2\lesssim&\, vd^{-1}(\log R)^3\sum_{O\in\bar\co_u}\sum_{\om\in\Om_{u}}\|\Sq\vh_{O,\om}\|_2^2\\ \nonumber
    &+\rap(R)\|\Sq\vf_{\cp_R}\|_2^2.
\end{align}
Here for each $O_u\in\bar{\cO}_u$, we define the refined set of tubes
\begin{equation}
\nonumber
    \bar\ZT_{O_{u},\om_{u-1}}[r_{u-1}]:=\ZT_{O_{u},\om_{u-1}}^{v_{u+1}}[r_{u-1}],
\end{equation}
which therefore define the auxiliary vector $\vh_{O_u,\om_u}$ as
\begin{equation}
\nonumber
    \vh_{O_u,\om_u}:=\vp_{O_u}\sum_{\om_{u-1}<\om_{u}}\sum_{T\in\ZT_{O_{u},\om_{u-1}}^v[r_{u-1}]}\vf_{O_{u-1},\om_{u-1}}\Id_{T}^\ast.
\end{equation}
This proves the first part \eqref{cell1}.

\medskip

Next, we prove \eqref{cell2}. Actually we will prove the following stronger inequality:
\begin{equation}
\nonumber
    \max_{O_u\in\bar{\cO}_u}\!\!\big|N_{R^{1+10\be}r_{u+1}^{-1/2}}L_{a,O_{u}}\big|\lesssim R^{O(\be)} d^3v_{u+1}^{-1}\!\!\max_{O_{u+1}\in\bar{\cO}_{u+1}}\big|N_{R^{1+10\be}r_{u+1}^{-1/2}}L_{a,O_{u+1}}\big|,
\end{equation}
where the $N_{R^{1+10\be}r_{u}^{-1/2}}$ on the left hand side of \eqref{cell2} is replaced by $N_{R^{1+10\be}r_{u+1}^{-1/2}}$.

Fix a $O_u\in\bar{\cO}_u$. We choose a maximal collection of $Rr_{u+1}^{-1/2}$ separated points in $L_{a,O_u}$, denoted by $\{y_l\}_{l=1}^{m}$. For each $y_l$, we can pick one tube $T_l\in \bar\ZT_{O_{u}}[r_{u-1}]=\ZT_{O_{u}}^{v_{u+1}}[r_{u-1}]$ so that $y_l\in\mathring T_l\cap L_a$. By the definition of $\ZT_{O_{u}}^{v_{u+1}}[r_{u-1}]$, for each $T_l$ we can find $\sim v_{u+1}$ sets $\{ \bar\ZT_{O_{u+1}}[r_{u}]:O_{u+1}<O_u,~O_{u+1}\in\bar\co_{u+1} \}$ and a tube $T_{l,j}$ in each set $\bar\ZT_{O_{u+1}}[r_{u}]$, such that $T_{l,j}<T_l$ for $j=1,\cdots,O(v_{u+1})$. Pick a point $z_{l,j}$ in $L_a\cap \mathring T_{l,j}$. Now the total number of points we picked is $\#\{z_{l,j}\}\sim mv_{u+1}$. We will prove
\begin{equation}
\label{pointestimate}
    \#\{z_{l,j}\}\lesssim d^3\frac{R^{O(\be)}}{(Rr_{u+1}^{-1/2})^2}\max_{O_{u+1}\in\bar{\cO}_{u+1}}\big|N_{R^{1+10\be}r_{u+1}^{-1/2}}L_{a,O_{u+1}}\big|. 
\end{equation}
This immediately implies
\begin{equation}
\nonumber
    \max_{O_u\in\bar{\cO}_u}\!\!\big|N_{10Rr_{u+1}^{-1/2}}L_{a,O_{u}}\big|\lesssim m(Rr_{u+1}^{-1/2})^2\lesssim R^{O(\be)}d^3v_{u+1}^{-1}\max_{O_{u+1}\in\bar{\cO}_{u+1}}\big|N_{10Rr_{u+1}^{-1/2}}L_{a,O_{u+1}}\big|
\end{equation}
as desired.

To prove \eqref{pointestimate}, we define for each cell $O_{u+1}\in \bar{\cO}_{u+1}(O_u)$ a set $Z_{O_{u+1}}$ consisting of points $z_{l,j}$ whose associated tube $T_{l,j}$ belongs to $\bar\ZT_{O_{u+1}}[r_{u}]$:
\begin{equation}
    Z_{O_{u+1}}=\{ z_{l,j}:T_{l,j}\in\bar\ZT_{O_{u+1}}[r_{u}] \}.
\end{equation}
A crucial fact is that the points in a single set $Z_{O_{u+1}}$ have different subscripts $l$, since they come from different $T_l$ by definition. We fix the point set $Z_{O_{u+1}}$. Via the assumption $T_{l,j}<T_l$ one has $\mathring T_{l,j}\cap \cp_{10R^{1+\be}}\subset (100R^{1+\be}r_{u+1}^{-1/2})\mathring T_{l}$. Also, since the sets $\{\mathring T_l\cap L_a\}$ are $Rr_{u+1}^{-1/2}$ separated, we get that the set $\{ \mathring T_{l,j}\cap L_a: z_{l,j}\in Z_{O_{u+1}} \}$ are $\sim R^{1-O(\be)}r_{u+1}^{-1/2}$ separated. Hence, 
\begin{equation}
    \#Z_{O_{u+1}}\sim \frac{R^{O(\be)}}{(Rr_{u+1}^{-1/2})^2}\Big|N_{Rr_{u+1}^{-1/2}}\Big(L_a\bigcap\Big\{\bigcup_{T_{l,j}: z_{l,j}\in Z_{O_{u+1}}}\mathring T_{l,j}\Big\}\Big)\Big|,
\end{equation}
which is bounded above by
\begin{equation}
\nonumber
    \frac{R^{O(\be)}}{(Rr_{u+1}^{-1/2})^2}|N_{Rr_{u+1}^{-1/2}}\Big(L_a\bigcap\Big\{\bigcup_{T\in\bar\ZT_{O_{u+1}}[r_{u}]}\mathring T\Big\}\Big)\Big|\leq\frac{R^{O(\be)}}{(Rr_{u+1}^{-1/2})^2}\big|N_{R^{1+10\be}r_{u+1}^{-1/2}}L_{a,O_{u+1}}\big| 
\end{equation}

Finally, note that there are $O(d^3)$ many cells in $\bar{\cO}_{u+1}(O_u)$. Since
\begin{equation}
    \#\{z_{l,j}\}\leq\sum_{O_{u+1}\in \bar{\cO}_{u+1}(O_u)}\# Z_{O_{u+1}},
\end{equation}
we can sum up $\# Z_{O_{u+1}}$ for every cell $O_{u+1}\in \bar{\cO}_{u+1}(O_u)$ using the above estimates to prove \eqref{pointestimate}. \qed

\medskip

Next, we consider the transverse state.
\subsection{Transverse state at step \texorpdfstring{$u+1$}{Lg}} \hfill

Suppose we are in the situation that \textup{STATE}$(u+1)=$\textup{trans}. So any $O_u\in\cO_{u}$ has $\sim \mu_{u+1}$ children $O_{u+1}\in\co_{u+1}$.

We define $\bar\co_u:=\co_u$, $\bar\ZT_{O_u}[r_{u-1}]:=\ZT_{O_u}[r_{u-1}]$, and hence defined the auxiliary vector $\vh_{O_u,\om_u}$ as
\begin{equation}
    \vh_{O_{u},\om_{u}}:=\vp_{O_u}\sum_{\om_{u-1}<\om_{u}}\sum_{T\in\bar\ZT_{O_{u},\om_{u-1}}[r_{u-1}]}\vf_{O_{u-1},\om_{u-1}}\Id_{T}^\ast,
\end{equation}
which is just $\vf_{O_u,\om_u}$.

Since the wave packets summed in $\vh_{O_{u+1},\om_{u+1}}$ is a subset of the wave packets of summed in $\vf_{O_{u+1},\om_{u+1}}$, we have
\begin{equation}
    \sum_{O\in \bar{\co}_{u+1}}\sum_{\om\in\Om_{u+1}}\|\Sq\vh_{O,\om}\|_2^2\le \sum_{O\in \bar{\co}_{u+1}}\sum_{\om\in\Om_{u+1}}\|\Sq\vf_{O,\om}\|_2^2.
\end{equation}
Combining it with \eqref{ittransverse2}, we get
\begin{equation}
    \sum_{O\in \bar{\co}_{u+1}}\sum_{\om\in\Om_{u+1}}\|\Sq\vh_{O,\om}\|_2^2\lesssim {\rm{Poly}}(d)R^{-\de}\sum_{O\in\bar{\cO}_{u}}\sum_{\om\in\Om_u}\|\Sq\vh_{O,\om}\|_2^2.
\end{equation}
We can also trivially obtain 
\begin{equation}
    \max_{O_u\in\bar{\cO}_u}\big|N_{R^{1+10\be}r_{u}^{-1/2}}L_{a,O_{u}}\big|\lesssim R^{O(\be)} \mu_u\max_{O_{u+1}\in\bar{\cO}_{u+1}}\big|N_{R^{1+10\be}r_{u+1}^{-1/2}}L_{a,O_{u+1}}\big|
\end{equation}
similarly as we did in the cell case. One just need to notes that each $O_u$ have $ O(\mu_u)$ many children $O_{u+1}\in\bar{\cO}_{u+1}$.

\subsection{Conclusion}\hfill

The backward algorithm stops when $u=1$, from which we obtain the refinements $\bar{\cO}_1\subset \cO_1$, $\{\bar\ZT_{O_1}[R]\}_{O_1\in\bar{\cO}_1}$ and $\{\vh_{O_1,\om_1}\}_{O_1\in\bar{\cO}_1}$. Set
\begin{equation}\label{v}
     v=\prod_uv_{u+1}\lesssim d^{s_c}, 
\end{equation}
where the product is taken over all $v_{u+1}$ in the cell cases.

Note that $(\log R)^{O(s)}R^{O(s_c\be )}\lesssim R^{O(\de)}$ and $|\co_u|=O(R^3)$ for any $1\leq u\leq s+1$. Iterate \eqref{cell1} and \eqref{trans1} and use the base estimate \eqref{l2-base-case} to have
\begin{align}
    \sum_{O\in \bar{\co}_{s+1}}\sum_{\om\in\Om_{s}}\|\Sq\vf_{O,\om}\|_2^2\lesssim&\, R^{O(\de)}d^{-s_c}R^{-s_t\de}v\sum_{O\in\bar{\cO}_{1}}\sum_{\om\in\Om_1}\|\Sq\vh_{O,\om}\|_2^2\\ \nonumber
    &+\rap(R)\|\Sq\vf_{\cp_R}\|_2^2.
\end{align}
Since there are $R^{O(\de)}$ many cells in $\bar\co_1$, by pigeonholing, there exists a cell $O_1\in\bar\co_1$ such that
\begin{align}
\label{ftoh}
    \sum_{O\in \bar{\co}_{s+1}}\sum_{\om\in\Om_{s}}\|\Sq(\vf_{O,\om})\|_2^2\lesssim&\,  R^{O(\de)}d^{-s_c}R^{-s_t\de}v\sum_{\om\in\Om_1}\|\Sq(\vh_{O_1,\om})\|_2^2\\ \label{ftohrapid}
    &+\rap(R)\|\Sq\vf_{\cp_R}\|_2^2.
\end{align}
Fix this cell $O_1$ and set $\bar\ZT=\bar\ZT_{O_1}[R]$. Via \eqref{formula-for-h-2}, we can write  $\vh_{O_1,\om_1}$ as

\begin{equation}
\label{formulaofh}
    \vh_{O_1,\om_1}:=\vp_{O_1}\sum_{\theta<\om_{1}}\sum_{T\in\bar\ZT_\theta}\vf_{\theta}\Id_{T}^\ast.
\end{equation}

\eqref{ftoh} is the $L^2$ estimate we obtain from the backward algorithm. For the support estimate, define
\begin{equation}
    L_{a,\cp_R}:=L_a\bigcap\Big\{ \bigcup_{T\in \bar\T}T \Big\}. 
\end{equation}
Iterate \eqref{cell2}, \eqref{trans2} and use \eqref{base-tangent} for the base case to have
\begin{equation}
\label{backward-supp-1}
    |N_{R^{10\be+1/2}}L_{a,\cp_R}|\lesssim R^{O(\de)}d^{3s_c}\prod_{u}\mu_uv^{-1}\max_{O\in\cO_{s+1}}\big|N_{R^{1+10\be}r_{s+1}^{-1/2}}L_{a,O}\big|.
\end{equation}
We need the following lemma to estimate $|N_{R^{1+10\be}r_{s+1}^{-1/2}}L_{a,O}\big|$. Its proof is postponed to the appendix.
\begin{lemma}
\label{tangenttubelemma}
Fix any cell $O_{s+1}\in \bar O_{s+1}$. Recall $L_{a,O_{s+1}}=L_a\bigcap \{\cup_{T\in\ZT_{O_{s+1}}[r_s]}\mathring T\}$, and set $r=r_s$. Then uniform for every $a\in[-R^{1+10\be},R^{1+10\be}]$, one has
\begin{equation}
    \big|N_{R^{1+10\be}r_{s+1}^{-1/2}}L_{a,O}\big|\lesssim R^{O(\de)}R^2r^{-1/2}.
\end{equation}
\end{lemma}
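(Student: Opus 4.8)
The engine of the proof is the planiness of tangent tubes recorded in Lemma~\ref{tangent-lemma}, together with the fact that $d=R^{\e^6}\le R^{O(\de)}$. Throughout, write $r=r_s$ and let $\pi$ denote the projection of $\ZR^3$ onto the horizontal plane $L_a$ along the direction $c_\si$. The tubes in $\ZT_{O_{s+1}}[r_s]$ are exactly the tangent tubes produced by Algorithm~\ref{algorithm} at the last step, so each of them has direction within angle $\lesssim M^{-1}$ of $c_\si$ and meets the cell $O_{s+1}$.

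\textbf{Step 1: reduction to a footprint on $L_a$.} Since $O_{s+1}$ sits in a rescaled $r$-ball $\cp_{O_{s+1}}$ of transverse radius $\lesssim M^{-1}r$ and every $T\in\ZT_{O_{s+1}}[r_s]$ meets $O_{s+1}$ with direction within angle $\lesssim M^{-1}$ of $c_\si$, the portion of $\mathring T$ inside the slab $\{|x_3|\le 2R^{1+10\be}\}$ has length $\lesssim R^{1+10\be}$, hence its footprint $\pi(\mathring T\cap\{|x_3|\le 2R^{1+10\be}\})$ lies in a fixed disk of radius $\lesssim R^{1+10\be}M^{-1}$. Consequently $L_{a,O_{s+1}}$ — and, because $r\gtrsim R^{\e/10}M^{2}$ in the first scenario, so that $r_{s+1}^{-1/2}\lesssim M^{-1}$, also its $R^{1+10\be}r_{s+1}^{-1/2}$-neighbourhood — is contained in a disk $D\subset L_a$ of radius $\lesssim R^{1+10\be}M^{-1}$. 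It therefore suffices to bound $|D\cap L_{a,O_{s+1}}|$. Note already that the trivial estimate $|D|\lesssim R^{2+O(\be)}M^{-2}$ gives the claim whenever $r\lesssim R^{O(\de)}M^{4}$, so we may assume $r$ is large.

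\textbf{Step 2: planiness on each small ball.} Cover $O_{s+1}$ by finitely overlapping rescaled balls $Q$ of radius $Mr^{1/2+\be}$ with $Q\cap O_{s+1}\ne\varnothing$, and split $\ZT_{O_{s+1}}[r_s]$ according to which $2Q$ a tube meets. For each $Q$, Lemma~\ref{tangent-lemma} furnishes a $2$-plane $V_Q$ with all these tubes contained in $N_{r^{1/2}R^{O(\de)}}(V_Q)$; since each such $T$ makes angle $\le r^{-1/2}R^{\de}$ with $V_Q$, the part of $\mathring T$ inside $\{|x_3|\le 2R^{1+10\be}\}$ lies in $N_{\rho_0}(V_Q)$ with $\rho_0:=R^{1+10\be}r^{-1/2}R^{O(\de)}$. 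Because $V_Q$ contains a direction within angle $\lesssim r^{-1/2}R^{\de}$ of $c_\si$, the projection $\pi\bigl(N_{\rho_0}(V_Q)\bigr)\cap D$ is a strip in $L_a$ of width $\lesssim\rho_0$ and length $\lesssim R^{1+10\be}M^{-1}$.

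\textbf{Step 3: summation.} It remains to add these strips, and this is the main obstacle — the reason the lemma is deferred to the appendix. The naive number of balls $Q$ is polynomially large in $r$, so one must use that the tube directions being within $M^{-1}$ of $c_\si$ forces the planes $V_Q$ to organise along $c_\si$-columns, and that the union of the strips is then controlled by the $\rho_0$-neighbourhood of the degree-$O(d)$ plane curve $Z(P)\cap L_a$ inside $D$. Applying the planar Wongkew estimate,
\[
\bigl|N_{\rho_0}(Z(P)\cap L_a)\cap D\bigr|\lesssim d\,\rho_0\cdot R^{1+10\be}M^{-1}\lesssim R^{\e^6}\cdot R^{1+10\be}r^{-1/2}R^{O(\de)}\cdot R^{1+10\be}M^{-1}\lesssim R^{O(\de)}R^{2}r^{-1/2},
\]
using $d=R^{\e^6}\le R^{O(\de)}$ and $M^{-1}\le1$, which is the asserted bound. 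The hard point I expect to have to work out carefully is exactly this passage from the per-$Q$ planiness to one global estimate without paying the polynomial-in-$r$ count of balls — requiring both the column structure (to reduce the number of effective planes) and the smallness of $\deg P$ (to run Wongkew with an acceptable loss), in parallel with the analogous argument in \cite{Wu}.
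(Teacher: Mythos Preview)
Your Steps~1 and~2 are fine, but Step~3 contains a genuine gap that cannot be repaired along the line you suggest. The tangency condition in Definition~\ref{tangent-definition} is \emph{local to $10\wt B$}: it says only that each $T\in\ZT_{O_{s+1}}[r_s]$ stays within $N_{r^{1/2}R^{O(\de)}}(Z(P))$ inside the cell. Once you prolong to $\mathring T$ and travel out to the plane $L_a$, which may sit at distance $\sim R$ from the cell, there is no reason whatsoever for $\mathring T\cap L_a$ to lie near $Z(P)\cap L_a$. Indeed $Z(P)\cap L_a$ may be empty while the prolonged tubes still hit $L_a$; and even when it is nonempty, the slice of $Z(P)$ at height $a$ is unrelated to the slice near the cell. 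So the planar Wongkew bound on $N_{\rho_0}(Z(P)\cap L_a)\cap D$ simply does not control the footprint of the prolonged tubes, and your ``column structure'' remark does not supply the missing link.

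The paper's argument (in the appendix) takes a completely different route, working \emph{inside} the cell where tangency actually holds. One picks a maximal $Rr^{-1/2}$-separated set $\{y_l\}_{l=1}^m$ in $L_{a,O_{s+1}}$, associates to each $y_l$ a tube $T_l$, and then runs an incidence/hairbrush argument of Wolff type: cover $N_{r^{1/2}R^{O(\de)}}Z(P)\cap\cp_{r_{s+1}}$ by $r^{1/2}$-cubes (Wongkew bounds their number), Cauchy--Schwarz on triples $(Q,T_1,T_2)$, pigeonhole an angular scale $\nu$, and estimate the resulting hairbrush volume, again via Wongkew, inside a $\nu r\times\nu r\times r$ cylinder. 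A final two-ends argument on each plane through $T_1$ shows the tubes there are essentially disjoint, yielding $m\lesssim R^{O(\de)}r^{1/2}M^{-1}$ and hence the lemma. The point is that separation of the $y_l$ on $L_a$ forces the associated tubes to make quantitatively large angles when they meet inside the cell, and it is \emph{there} that the algebraic structure of $Z(P)$ is exploited --- not on $L_a$.
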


Recalling \eqref{scenario2.4}, we use Lemma \ref{tangenttubelemma} to bound \eqref{backward-supp-1} so that
\begin{align}
    |N_{R^{10\be+1/2}}L_{a,\cp_R}|\lesssim M^{-1}R^{O(\de)}R^2r^{-1/2}|\co_{s+1}|v^{-1}.
\end{align}
It yields that, if define
\begin{equation}
    X=\bigcup_{T\in \bar\ZT} R^{5\be}T,
\end{equation}
then we can obtain the support estimate from the backward algorithm
\begin{equation}
\label{ineqofX}
    |X|\lesssim R^{O(\de)}R^3\min(M^{-2},M^{-1}r^{-1/2}|\cO_{s+1}|v^{-1}).
\end{equation}

\section{Concluding the proof}

We conclude the proof of Theorem \ref{local-broad-thm} in this section. Let us first estimate the right hand side of \eqref{ftoh}. By \eqref{formulaofh} and the triangle inequality, we have
\begin{align}
    \sum_{\om\in\Om_1}\|\Sq\vh_{O_1,\om}\|_2^2&\lesssim R^{O(\de)}\sum_{\theta\in\Theta}\sum_{T\in\bar\ZT_{\theta}}\int |\Sq\vf_{\theta}|^2\Id_{T}^\ast\\ \nonumber
    &=R^{O(\de)}\sum_{\theta\in\Theta}\sum_{j=1}^R\int |m_{j,\theta}\ast f|^2\Big(\sum_{T\in\bar\ZT_{\theta}}\Id_{T}^\ast\Big),
\end{align}
where the kernel $m_{j,\theta}$ was defined in \eqref{m-j-theta}. Via the kernel estimate \eqref{kernelestimate} and Definition \ref{adapt-def}, we see that the kernels  $\{R^{-O(\be)}m_{j,\theta}\}_j$ is adapt to the rectangular tube $T_{0,\theta}$, where $T_{0,\theta}$ is the $R^{1/2+\be}\times R^{1/2+\be}\times R^{1+\be}$ rectangular tube centered at the origin with direction $c_\theta$. Now via Lemma \ref{weightedlemma} and recalling \eqref{lattice-sqfcn}, \eqref{q-tau}, we can argue similarly as in Lemma \ref{half-lem} to obtain
\begin{align}
\label{concluding-1}
    \sum_{\om\in\Om_1}\|\Sq(\vh_{O_1,\om})\|_2^2&\lesssim R^{O(\de)}\sum_{\theta\in\Theta}\sum_{q\in\bq(\theta)}\int|\De_qf|^2 \wt w_{T_{0,\theta},N}\ast\Big(\sum_{T\in\bar\ZT_{\theta}}\Id_{T}^\ast\Big)\\ \nonumber
    &\lesssim R^{O(\de)}\sum_{\theta\in\Theta}\sum_{q\in\bq(\theta)}\int|\De_qf|^2 \Big(\sum_{T\in\bar\ZT_{\theta}}w_{T,N}\Big).
\end{align}
We define $X_\theta$ as 
\begin{equation}
    X_\theta:=\bigcup_{T\in \bar\ZT_\theta} R^{5\be}T,
\end{equation}
so that when $x\in \ZR^3\setminus X_\theta$, 
\begin{equation}
    \sum_{T\in\bar\ZT_{\theta}}w_{T,N}\leq \rap(R)w_{\cp_R}.
\end{equation}
Plug this back to \eqref{concluding-1} and use Lemma \ref{finiely-overlap-lattice-cube} to get
\begin{align}
\label{concluding-2}
    \sum_{\om\in\Om_1}\|\Sq\vh_{O_1,\om}\|_2^2\lesssim&\, R^{O(\de)}\sum_{\theta\in\Theta}\sum_{q\in\bq(\theta)}\int|\De_qf|^2\Id_{X_\theta}\\  \label{concluding-3}
    &+\rap(R)\int\sum_{q\in\bq}|\De_qf |^2w_{\cp_R}.
\end{align}
To bound the right hand side of \eqref{concluding-2}, we trivially bound $\Id_{X_\theta}$ by $\Id_X$ and use H\"older's inequality so that
\begin{align}
\label{concluding-4}
    \sum_{\theta\in\Theta}\sum_{q\in\bq(\theta)}\int|\De_qf |^2\Id_{X_\theta}&\lesssim\int\Big(\sum_{q\in\bq}|\De_qf |^2\Big)\Id_X\\ \nonumber
    &\lesssim\Big(\int\Big(\sum_{q\in\bq}|\De_qf |^2\Big)^{p/2}w_{{\cp_R}}\Big)^{2/p}|X|^{\frac{p-2}{p}}.
\end{align}
Note that $|X|\geq R^2$. The second term \eqref{concluding-3}, as well as the rapidly decreasing term $\rap(R)\|\Sq\vf_{\cp_R}\|_2^2$, can be estimated trivially as (For $\|\Sq\vf_{\cp_R}\|_2^2$ we need to argue similarly as in \eqref{end-small-auxiliary-2})
\begin{equation}
\label{concluding-5}
    \eqref{concluding-3},\eqref{ftohrapid}\lesssim\rap(R)\Big(\int\Big(\sum_{q\in\bq}|\De_qf |^2\Big)^{p/2}w_{{\cp_R}}\Big)^{2/p}|X|^{\frac{p-2}{p}}.
\end{equation}

Now let us move back to \eqref{ftoh}. Combining \eqref{concluding-1}, \eqref{concluding-2}, \eqref{concluding-3}, \eqref{concluding-4} and \eqref{concluding-5}, one can use pigeonholing to find a cell $O_{s+1}\in\co_{s+1}$ such that
\begin{align}
\label{concluding-6}
    \sum_{\om\in\Om_{s}}\|\Sq\vf_{O_{s+1},\om}\|_2^2\lesssim&\, R^{O(\de)}d^{-s_c}R^{-s_t\de}v|\co_{s+1}|^{-1}|X|^{\frac{p-2}{p}}\\ \nonumber
    &\Big(\int\Big(\sum_{q\in\bq}|\De_qf |^2\Big)^{p/2}w_{{\cp_R}}\Big)^{2/p}.
\end{align}
Recall \eqref{scenario1.1}. To bound $\|\Id_{O_{s+1}}{\rm Br}_{A/2^{a(s+1)}} \Sq\vf_{O_{s+1}}\|_p^p$, we need the following lemma, whose proof is postponed to next section.
\begin{lemma}[Bilinear estimate]
\label{bilinear-lem}
Let $O\in\co_{s+1}$ be any step $s+1$ cell (a tangent cell) and let $\vf_{O}$ be the associated vector. Then
\begin{equation}
\label{bilinear-esti}
    \int_O \bil[\Sq\vf_{O}]^p\lesssim R^{O(\de)}r^{-\frac{5(p-2)}{4}}M^{\frac{p-2}{2}}\Big(\sum_{\om\in\Om_s}\|\Sq\vf_{O,\om}\|_2^2\Big)^{p/2}.
  \end{equation}
\end{lemma}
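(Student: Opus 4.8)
The plan is to bound $\bil[\Sq\vf_O]$ by one curved square function per direction cap and then to harvest the small factor $r^{-5/4}M^{1/2}$ from the tangency/planarity of the wave packets of $\vf_O$. Write $r=r_s$, and for $\tau\in\cT$ put $\mathfrak S_\tau:=\sum_{\om<\tau,\ \om\in\Om_s}\|\Sq\vf_{O,\om}\|_2^2$, $\mathfrak S:=\sum_{\om\in\Om_s}\|\Sq\vf_{O,\om}\|_2^2$, and $f_{j,O,\tau}:=P_{j,\tau}f_{j,O}$. First I would use the pointwise bound $\bil[\Sq\vf_O]=\big(\sum_{\tau\in\cT}\Sq(P_\tau\vf_O)^{1/2}\big)^2\le|\cT|\sum_{\tau\in\cT}\Sq(P_\tau\vf_O)$; since $|\cT|\lesssim K^{O(1)}=R^{O(\de)}$ and $1\le p\le4$, the embedding $\ell^1\hookrightarrow\ell^p$ gives
\begin{equation*}
\int_O\bil[\Sq\vf_O]^p\lesssim R^{O(\de)}\max_{\tau\in\cT}\int_O\Sq(P_\tau\vf_O)^p .
\end{equation*}
So it suffices to bound $\int_O\Sq(P_\tau\vf_O)^p$ for one cap $\tau$ (a genuinely transverse pair $\tau_1,\tau_2$ is treated identically via $\|FG\|_2\le\|F\|_4\|G\|_4$). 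As $3.25\le p\le4$, interpolating the $L^q(O)$-norms gives $\int_O\Sq(P_\tau\vf_O)^p\le\|\Sq(P_\tau\vf_O)\|_{L^2(O)}^{4-p}\|\Sq(P_\tau\vf_O)\|_{L^4(O)}^{2(p-2)}$, and I would estimate the two factors separately.

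The $L^2$ factor is immediate: $P_{j,\tau}f_{j,O}=\sum_{\om<\tau}f_{j,O,\om}$ up to finitely overlapping Fourier supports and $\rap(R)$-errors, so by the wave-packet $L^2$-orthogonality (Lemma \ref{L2-orthogonality-small-1}),
\begin{equation*}
\|\Sq(P_\tau\vf_O)\|_{L^2(O)}^2=\sum_j\|\Id_O f_{j,O,\tau}\|_2^2\lesssim\sum_j\sum_{\om<\tau}\|f_{j,O,\om}\|_2^2=\mathfrak S_\tau\le\mathfrak S .
\end{equation*}
The heart of the matter is the $L^4$ estimate
\begin{equation*}
\|\Sq(P_\tau\vf_O)\|_{L^4(O)}^2\lesssim R^{O(\de)}\,r^{-5/4}M^{1/2}\,\mathfrak S_\tau ;
\end{equation*}
granting it, the interpolation above yields $\int_O\Sq(P_\tau\vf_O)^p\lesssim R^{O(\de)}r^{-5(p-2)/4}M^{(p-2)/2}\mathfrak S_\tau^{p/2}$, and combining with the first paragraph (using $\mathfrak S_\tau\le\mathfrak S$) proves the lemma. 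One should observe that a crude Bernstein/wave-packet count only gives $\|\Sq(P_\tau\vf_O)\|_{L^4(O)}^2\lesssim R^{O(\de)}(MK)^{-1}r^{-1/2}\mathfrak S_\tau$, which already suffices when $r\lesssim M^2R^{O(\de)}$ but fails in the genuine tangent regime $r\gg M^2$; there the gain must come from the planar structure.

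To prove the $L^4$ estimate I would cover $O$ by the balls $Q$ of radius $\sim Mr^{1/2+\be}$ from the definition of $\ZT_{B,tang,Q}$ and invoke Lemma \ref{tangent-lemma}: every scale-$r$ wave packet of $\vf_O$ meeting $2Q$ lies in $N_{r^{1/2}R^{O(\de)}}(V_Q)$ for a single plane $V_Q$, so up to $\rap(R)$-errors $f_{j,O,\tau}$ is supported on the plate $W_Q:=N_{r^{1/2}R^{O(\de)}}(V_Q)\cap CQ$, with $|W_Q|\sim M^2r^{3/2}R^{O(\de)}$. Expanding $\|\Sq(P_\tau\vf_O)\|_{L^4(O)}^4\lesssim\sum_{j,k}\sum_Q\int_Q|f_{j,O,\tau}|^2|f_{k,O,\tau}|^2$, on each $Q$ I would run a two-dimensional Córdoba-type $L^4$ (equivalently bilinear) argument inside $V_Q$: the pieces $f_{j,O,\om}$ ($\om<\tau$) have Fourier support in $r^{-1/2}$-arcs of the curve $\Gamma_j\cap V_Q$, whose curvature—coming from the curvature hypothesis \eqref{Phi-conditions}—makes the arc-sums $\om+\om'$ boundedly overlapping, while the planar confinement cuts the wave-packet multiplicity through a point from $\sim r(MK)^{-2}$ down to $\sim r^{1/2}(MK)^{-1}$. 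Re-summing over $j,k$ via Lemma \ref{L2-orthogonality-small-1} and over the balls $Q$, the scaling inputs—the plate volume $M^2r^{3/2}$, the plate thickness $r^{1/2}$, and the single wave packet $L^2$-normalization $\sim r^{-1}$—multiply to $r^{-5/4}M^{1/2}$, with $K^{O(1)}$ absorbed into $R^{O(\de)}$. Sharpness of this factor can be checked on the model configuration in which all $\tau$-directed wave packets of $\vf_O$ pass through a common $r^{1/2}$-ball, where the $\tau$-bundle focuses into a box of volume $\sim r^{3/2}MK$.

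I expect this last $L^4$ estimate to be the main obstacle: correctly implementing the planar Córdoba/bilinear argument for the curved pieces $f_{j,O,\om}$ supported in a $(MK)^{-1}$-cap, handling the possible degeneracy when $V_Q$ is tangent to $\Gamma_j$ (so that $\Gamma_j\cap V_Q$ need not be curved), and re-assembling the per-$Q$, per-slice estimates so that the $L^2$-orthogonality and the volume/normalization factors combine to exactly $r^{-5/4}M^{1/2}$ and no larger. The reduction to a single cap and the $L^2$–$L^4$ interpolation are routine for $p\in[3.25,4]$.
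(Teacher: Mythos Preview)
Your reduction to a single cap $\tau$ is fatal: the claimed $L^4$ bound $\|\Sq(P_\tau\vf_O)\|_{L^4(O)}^2\lesssim R^{O(\de)}r^{-5/4}M^{1/2}\mathfrak S_\tau$ is false when $r\gg M^2$. Take $\vf_O$ to consist of a single scale-$r$ wave packet $\vf_T$ with direction $\om<\tau$ and $\|f_{j,T}\|_2=1$; then $|f_{j,T}|\sim r^{-1}$ on a tube of volume $\sim r^2$, so $\int|f_{j,T}|^4\sim r^{-2}$, whereas your bound demands $\lesssim r^{-5/2}M$. A single tube is tangent to any plane containing it, so the tangent-cell hypothesis is satisfied and the planarity from Lemma~\ref{tangent-lemma} cannot manufacture the missing factor $r^{-1/2}M$. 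The C\'ordoba-type arc-sum orthogonality you invoke within one $(MK)^{-1}$-cap (where adjacent $\om$'s are only $\lesssim(MK)^{-1}$-transverse to each other) gives at best the crude bound $(MK)^{-1}r^{-1/2}$ you already flagged; your own focusing model saturates it.

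The paper does \emph{not} reduce to one cap. It keeps a genuinely bilinear pair $\tau_1\neq\tau_2$ (caps in $\cT$ are $8(MK)^{-1}$-separated by construction), reduces to $p=4$ by the same interpolation you use, and proves
\[
\sum_{j_1,j_2}\int_O\Big|\sum_{T_1\in\ZT_{O,\tau_1}}f_{j_1,T_1}\Big|^2\Big|\sum_{T_2\in\ZT_{O,\tau_2}}f_{j_2,T_2}\Big|^2\lesssim R^{O(\de)}r^{-5/2}M\Big(\sum_{\om}\|\Sq\vf_{O,\om}\|_2^2\Big)^2.
\]
The factor $r^{-5/2}M$ comes from the \emph{transversality between $\tau_1$ and $\tau_2$}: two tubes $T_1,T_2$ at angle $\sim(MK)^{-1}$ meet in volume $\sim r^{3/2}MK$ rather than $r^2$, which gives $\int|f_{j_1,T_1}|^2|f_{j_2,T_2}|^2\lesssim r^{-5/2}MK\|f_{j_1,T_1}\|_2^2\|f_{j_2,T_2}\|_2^2$. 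The planarity (Lemma~\ref{tangent-lemma}) is used on top of this, to project to $\ZR^2$ and run a bilinear C\'ordoba overlap count (their Lemma~10.1) on two curves whose normals are $(MK)^{-1}$-separated; that separation is what makes the $\om_1+\om_2$ sums boundedly overlapping. Your parenthetical ``a genuinely transverse pair $\tau_1,\tau_2$ is treated identically via $\|FG\|_2\le\|F\|_4\|G\|_4$'' is exactly backwards: that step discards the transversality and lands you on the false single-$\tau$ $L^4$ estimate. Your $L^2$ bound and the $L^2$--$L^4$ interpolation are fine; the gap is entirely in the $L^4$ step.
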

Since we know from \eqref{scenario1.2} that $\Id_{O_{s+1}}{\rm Br}_{A/2^{a(s+1)}} \Sq(\vf_{O_{s+1}})$ has the morally the same $L^p$ norm. Together with Lemma \ref{broad-blinear}, \eqref{scenario1.1}, \eqref{concluding-6} and \eqref{bilinear-esti}, one gets
\begin{align}
\label{concluding-7}
    \int_{\cp_R}|{\rm Br}_{A} \Sq f|^p\lesssim&\, R^{O(\de)}|\cO_{s+1}|^{\frac{2-p}{2}}M^{\frac{p-2}{2}}r^{\frac{5}{2}-\frac{5p}{4}}(d^{-s_c}R^{-s_t\de}v)^{\frac{p}{2}}|X|^{\frac{p-2}{2}}\\ \nonumber
    &\int\Big(\sum_{q\in\bq}|\De_qf |^2\Big)^{p/2}w_{{\cp_R}}.
\end{align}
Since $r=R d^{-s_c}R^{-s_t\de}$ and since \eqref{ineqofX}, it remains to prove the following lemma so that one can conclude Theorem \ref{local-broad-thm} from \eqref{concluding-7}.
\begin{lemma}

Assuming the notation as above, we have
\begin{align}
\label{counting-esti}
    &|\cO_{s+1}|^{\frac{2-p}{2}}(R/r)^{p-3}v^{p/2}(\min(M^{-1}r^{1/2},|\cO_{s+1}|v^{-1}))^{\frac{p-2}{2}} \\ \nonumber
    &\lesssim R^{O(\de)} M^{6-2p}R^{p-3}. 
\end{align}
\end{lemma}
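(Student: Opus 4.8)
This final statement is a self-contained arithmetic inequality among the parameters produced by the iteration, so the plan is to feed in three facts established earlier and then carry out elementary exponent bookkeeping. The three inputs are: $v\lesssim d^{s_c}$ from \eqref{v}; $|\cO_{s+1}|\gtrsim d^{3s_c}$, a weak form of \eqref{scenario1.4}; and $r\gtrsim M^{2}$, which holds throughout the iteration because the scale-$r$ wave packet decomposition of Proposition \ref{scale-rho-wpt} is only carried out while the $r^{-1/2}$-slabs still fit inside the $M^{-1}$-cap $\si$, i.e.\ while $r^{-1/2}\lesssim M^{-1}$. Writing $N:=|\cO_{s+1}|$, the first two inputs combine into the single relation $N\gtrsim v^{3}$.

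First I would divide \eqref{counting-esti} by $(R/r)^{p-3}$, reducing it to
$$N^{\frac{2-p}{2}}v^{p/2}m^{\frac{p-2}{2}}\lesssim R^{O(\de)}M^{6-2p}r^{p-3},\qquad m:=\min\big(M^{-1}r^{1/2},\,Nv^{-1}\big).$$
The plan is then to prove the clean bound
$$N^{\frac{2-p}{2}}v^{p/2}m^{\frac{p-2}{2}}\le M^{-1/2}r^{1/4},$$
after which the target follows because $M^{-1/2}r^{1/4}\le R^{O(\de)}M^{6-2p}r^{p-3}$ is equivalent to $(M^{2}/r)^{p-13/4}\le R^{O(\de)}$, which is immediate from $r\gtrsim M^{2}$ and $p\ge 3.25=13/4$. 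This last step is exactly where the endpoint hypothesis $p\ge 3.25$ enters, and the inequality is essentially sharp there.

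To establish the clean bound I would split on which term realises the minimum $m$. If $m=Nv^{-1}$, the powers of $N$ cancel and the left side collapses to $v$; moreover $v^{2}\le Nv^{-1}=m\le M^{-1}r^{1/2}$, the first inequality being $N\gtrsim v^{3}$ and the second the case assumption, so $v\le M^{-1/2}r^{1/4}$, as wanted. If instead $m=M^{-1}r^{1/2}$, the defining inequality $m\le Nv^{-1}$ supplies the extra lower bound $N\ge M^{-1}r^{1/2}v$, hence $N\ge\max\big(v^{3},\,M^{-1}r^{1/2}v\big)$; interpolating these two lower bounds with weights $\lambda=\tfrac1{p-2}$ and $1-\lambda=\tfrac{p-3}{p-2}$, which both lie in $[0,1]$ for $3.25\le p\le 4$, gives $N\ge v^{\,p/(p-2)}\,(M^{-1}r^{1/2})^{\,(p-3)/(p-2)}$. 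Substituting this into $N^{(2-p)/2}$ (using $(2-p)/2<0$) and collecting exponents, the powers of $v$ cancel identically, which is the whole reason for that choice of $\lambda$, leaving precisely $(M^{-1}r^{1/2})^{1/2}=M^{-1/2}r^{1/4}$. Multiplying back by $(R/r)^{p-3}$ recovers \eqref{counting-esti}, the factor $R^{O(\de)}$ absorbing the implied constants in the three inputs.

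The only genuine difficulty, and so the step I expect to be the main obstacle, is that the crude estimate ``left side $\le v$'', valid in both cases, is too lossy: it permits $v$ to be as large as $d^{s_c}$, which can be a true power of $R$ when $r$ is small. One therefore has to keep track of the competition between $v$ and $M^{-1/2}r^{1/4}$ and, in the second case, use exactly the interpolation exponent that annihilates the $v$-dependence; everything else is routine, and the numerology closes precisely at $p=13/4$.
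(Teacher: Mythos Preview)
Your argument is correct and takes a genuinely cleaner route than the paper's. The paper also splits on which term attains the minimum, but it tracks all of the iteration parameters $d^{s_c}$, $R^{s_t\de}$, and $\mu:=\prod_u\mu_u$ separately: in its second case it invokes not only the lower bound $|\cO_{s+1}|\gtrsim d^{3s_c}\mu$ of \eqref{scenario1.4} but also the matching upper bound $|\cO_{s+1}|\lesssim R^{O(\de)}d^{3s_c}\mu$ (coming from \eqref{itcellular1}, \eqref{ittransverse1}, \eqref{ittangent1}), feeds the case hypothesis back into the expression for $R$, and arrives at $(d^{s_c})^{13-4p}$ after a page of exponent chasing. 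You bypass all of that by compressing the iteration data into the single relation $N\gtrsim v^{3}$ and replacing the structural information about $\mu$ with the stopping-time fact $r\gtrsim M^{2}$, which indeed holds in the tangent scenario since the iteration has not yet reached the small-scale threshold. Your interpolation with weight $\lambda=1/(p-2)$, chosen precisely so the $v$-exponent cancels, is the elegant step that makes this work; it yields the uniform intermediate bound $M^{-1/2}r^{1/4}$ in both cases and lets the endpoint $p=13/4$ appear transparently as $(M^{2}/r)^{p-13/4}\lesssim 1$. The paper's approach has the mild advantage that it never appeals to $r\gtrsim M^{2}$, but at the cost of carrying $\mu$ and a two-sided bound on $|\cO_{s+1}|$ through the computation; your approach uses strictly less and is easier to read.
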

 
\begin{proof}
We consider two cases.
\subsection{\texorpdfstring{$M^{-1}r^{1/2}\ge | \cO_{s+1}|v^{-1}$}{Lg}}
This implies
\begin{equation}
    r\ge M^2| \cO_{s+1}|^2v^{-2}.
\end{equation}
Recall \eqref{scenario2.4}, we know that $| \cO_{s+1}|\gtrsim R^{-O(\de)}d^{3s_c}$, which implies.
\begin{equation}
    r R^{O(\de)}\ge M^2d^{6s_c}v^{-2}
\end{equation}
Thus, combining the fact $v\lesssim d^{s_c}$ from \eqref{v}, one can estimate \eqref{counting-esti} as
\begin{align}
    \eqref{counting-esti}&=(R/r)^{p-3}v\lesssim R^{O(\de)}M^{6-2p}R^{p-3}(d^{s_c})^{-6(p-3)}v^{2(p-3)+1}\\ \nonumber
    &\lesssim R^{O(\de)}M^{6-2p}R^{p-3}(d^{s_c})^{-6(p-3)+2(p-3)+1}\\ \nonumber
    &=R^{O(\de)}M^{6-2p}R^{p-3}(d^{s_c})^{13-4p}.
\end{align}
It implies \eqref{counting-esti} when $p\geq3.25$.

\subsection{\texorpdfstring{$M^{-1}r^{1/2}\le | \cO_{s+1}|v^{-1}$}{Lg}} Recalling \eqref{scenario1.4}, we first set 
\begin{equation}
    \mu=\prod_{\textup{STATE}(u)=\textup{trans}}\mu_u
\end{equation}
In this case, we have
\begin{align}
    \eqref{counting-esti}&=M^{-\frac{p-2}{2}}| \cO_{s+1}|^{\frac{2-p}{2}}(d^{s_c}R^{s_t\de})^{p-3}v^{\frac{p}{2}}r^{\frac{p-2}{4}}\\ \label{finalineq2}
    &\lesssim R^{O(\de)} M^{-\frac{p-2}{2}}R^{\frac{p-2}{4}}(d^{s_c})^{\frac{2-3p}{4}}(R^{s_t\de})^{\frac{3p-10}{4}}\mu^{\frac{2-p}{2}}v^{\frac{p}{2}}
\end{align}
after using $| \cO_{s+1}|\gtrsim R^{-O(\de)} d^{3s_c}\mu$ and $r=Rd^{-s_c}R^{-s_t\de}$. Note that the condition $M^{-1}r^{1/2}\le | \cO_{s+1}|v^{-1}$ implies
\begin{equation}
    R\le M^2d^{7s_c}R^{s_t\de}\mu^2v^{-2}.
\end{equation}
Taking the $\frac{3p-10}{4}$ power on both side (note that $\frac{3p-10}{4}<0$), we get
\begin{equation}
    R^{\frac{3p-10}{4}}\ge M^{\frac{3p-10}{2}}(d^{s_c})^{\frac{21p-70}{4}}(R^{s_t\de})^{\frac{3p-10}{4}}\mu^{\frac{3p-10}{2}}v^{\frac{-3p+10}{2}}.
\end{equation}
Plug this back to \eqref{finalineq2} so that
\begin{align}
    \eqref{finalineq2}&=R^{O(\de)}R^{p-3}M^{6-2p}(d^{s_c})^{18-6p}\mu^{6-2p}v^{2p-5}\\ \nonumber
    &\le R^{O(\de)}R^{p-3}M^{6-2p}(d^{s_c})^{18-6p}v^{2p-5}
\end{align}
when $p\geq3$. Finally, we plug in the fact $v\lesssim d^{s_c}$ into the above inequality and obtain
\begin{align}
    \eqref{finalineq2}\lesssim R^{O(\de)}R^{p-3}M^{6-2p}(d^{s_c})^{18-6p+2p-5}\le R^{O(\de)}R^{p-3}M^{6-2p}
\end{align}
when $p\geq3.25$.
\end{proof}

\section{A bilinear estimate}
To deal with the tangent case in Lemma \ref{bilinear-lem}, we need a slightly stronger bilinear estimate than the one in \cite{Guth-R3} Lemma 3.10. Specifically, let $\Pi_1$ and $\Pi_2$ be two smooth compact curves with positive curvature in $\ZR^2$, such that any of their normal vectors $n_1$, $n_2$ satisfy $\textup{Angle}(n_1,n_2)\sim (MK)^{-1}$. For $k=1,2$, let $W_k=\{w_k\}$ be a collection of disjoint rectangles of dimensions $\rho^{-1/2}\times M^{-1}\rho^{-1/2}$ contained in $N_{M^{-1}\rho^{-1/2}}(\Pi_j)$ with $\rho>100$ (See Figure \ref{bilinear-figure}). Then we have the following geometric lemma.
\begin{lemma}
\label{bilinear-geometric-lemma}
Each $w_{1}+w_{2}$ is contained in a $\rho^{-1/2}\times M^{-1}\rho^{-1/2}$-rectangle. Also, each $w_{1}+w_{2}$ can intersect only $O(K)$ many other sets. That is,
\begin{equation}
\label{L4-orthogonality}
    \sum_{(w_1,w_2)\in W_1\times W_2}\Id_{w_{1}+w_{2}}\lesssim K.
\end{equation}
\end{lemma}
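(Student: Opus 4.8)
The plan is to reduce the Minkowski-sum statement to two elementary facts about the convex geometry of the rectangles $w_k$, then sum. First I would record the following geometric input: each $w_k$ is a $\rho^{-1/2}\times M^{-1}\rho^{-1/2}$-rectangle whose long axis is essentially tangent to $\Pi_k$ at the relevant point, so its long direction is perpendicular to the normal direction $n_k$. Since $\textup{Angle}(n_1,n_2)\sim (MK)^{-1}$, the long axes of $w_1$ and $w_2$ make an angle $\sim (MK)^{-1}$ as well. The Minkowski sum of two rectangles of dimensions $\rho^{-1/2}\times M^{-1}\rho^{-1/2}$ whose long axes differ by an angle $\vartheta$ is contained in a rectangle whose long side has length $\lesssim \rho^{-1/2}$ and whose short side has length $\lesssim M^{-1}\rho^{-1/2} + \vartheta\,\rho^{-1/2}$; with $\vartheta\sim (MK)^{-1}\le M^{-1}$ the short side is still $\lesssim M^{-1}\rho^{-1/2}$. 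This gives the first claim: each $w_1+w_2$ lies in a $\rho^{-1/2}\times M^{-1}\rho^{-1/2}$-rectangle (oriented, say, along the long axis of $w_1$).

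For the overlap bound \eqref{L4-orthogonality}, the key observation is that the map $(w_1,w_2)\mapsto w_1+w_2$ is essentially injective up to multiplicity $O(K)$. I would argue as follows. Fix a point $x\in\ZR^2$ and count the pairs $(w_1,w_2)$ with $x\in w_1+w_2$, i.e. $x = y_1+y_2$ with $y_k\in w_k$. Project onto the normal direction $n_1$ of $\Pi_1$: since $w_1\subset N_{M^{-1}\rho^{-1/2}}(\Pi_1)$ and the rectangles in $W_1$ are disjoint, the $n_1$-coordinate of $y_1$ determines $w_1$ up to $O(1)$ choices — more precisely, $\Pi_1$ is a graph over its tangent line near the arc, with slope controlled by curvature, so the $n_1$-coordinate of points of $N_{M^{-1}\rho^{-1/2}}(\Pi_1)$ lying in a fixed rectangle $w_1$ ranges over an interval of length $\sim M^{-1}\rho^{-1/2}$, and distinct $w_1$'s occupy $\sim M^{-1}\rho^{-1/2}$-separated such intervals (after accounting for the $\rho^{-1/2}$ tangential extent, here one uses that $w_1$ is thin in the normal direction). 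Meanwhile $w_2$ is nearly perpendicular to $w_1$: its long axis makes angle $\sim (MK)^{-1}$ with the long axis of $w_1$, so $w_2$ has $n_1$-width $\lesssim M^{-1}\rho^{-1/2} + (MK)^{-1}\cdot\rho^{-1/2} \sim M^{-1}\rho^{-1/2}$... here I must be careful: I want the $n_1$-extent of $w_2$ to be comparable to that of a single $w_1$, which it is, so knowing $x$ and $w_2$ pins the $n_1$-coordinate of $y_1 = x - y_2$ to an interval of length $\lesssim M^{-1}\rho^{-1/2}$, hence pins $w_1$ to $O(1)$ choices. The remaining freedom is in $w_2$ alone: how many $w_2\in W_2$ can there be with $x\in w_1+w_2$ for some admissible $w_1$? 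Projecting onto $n_2$ and running the symmetric argument, $w_1$ has $n_2$-extent $\lesssim M^{-1}\rho^{-1/2} + (MK)^{-1}\rho^{-1/2}$, which would naively give $O(1)$ again — so I need to locate the factor $K$.

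The factor $K$ comes from the tangential direction. Write $v$ for the common long direction (to leading order both $w_1$ and $w_2$ point in nearly the same direction, within $(MK)^{-1}$). Along $v$, $w_1+w_2$ has extent $\sim\rho^{-1/2}$, but the relevant quantization scale for distinguishing the $w_2$'s in the $v$-direction is only $M^{-1}\rho^{-1/2}\cdot (MK) = K\rho^{-1/2}$... let me restate cleanly: decompose $W_2$ by $n_2$-coordinate into $O(1)$-many slabs compatible with a fixed admissible $w_1$; within such a slab the $w_2$'s are indexed by their position along $\Pi_2$, and the curved arc $\Pi_2$ turns by a total angle, over a normal-width window of size $M^{-1}\rho^{-1/2}$, corresponding to an arclength $\sim (M^{-1}\rho^{-1/2})^{1/2}$ by positive curvature — this is where the count could be large, but combined with the transversality $\textup{Angle}(n_1,n_2)\sim(MK)^{-1}$ which forces the window in which $w_2$ can contribute to shrink, one gets exactly $O(K)$ candidates. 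I would carry this out by the standard device: cover $\Pi_2$ by $\sim MK$ arcs on which the normal is essentially constant (so that "$w_2$ transverse to $w_1$" is an honest constraint), observe that $w_1+w_2$ for $w_2$ in a fixed such arc is genuinely injective in $w_2$ (the two nearly-orthogonal rectangle directions separate the $w_2$'s), and then note that only $O(K)$ of these $MK$ arcs can be relevant to a fixed $w_1$ because the admissible normals $n_2$ form an interval of length $\sim M^{-1}$ around $n_1^{\perp}$...

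I expect the main obstacle to be precisely this last counting step — pinning down that the transversality hypothesis $\textup{Angle}(n_1,n_2)\sim(MK)^{-1}$ yields overlap exactly $O(K)$ and not $O(K^2)$ or $O(1)$. The cleanest route, which I would adopt, is to change variables: the affine map $\Lambda:(y_1,y_2)\mapsto(y_1+y_2,\ y_1-y_2)$ is a bijection with bounded Jacobian, so $\sum_{(w_1,w_2)}\Id_{w_1+w_2}(x)$ equals the number of $(w_1,w_2)$ with $(x,z)\in \Lambda(w_1\times w_2)$ for some $z$, i.e. it is $\int \bigl(\sum_{(w_1,w_2)}\Id_{\Lambda(w_1\times w_2)}(x,z)\bigr)dz$ divided by the $z$-extent of a single $\Lambda(w_1\times w_2)$; since the sets $w_1\times w_2\subset\ZR^4$ are pairwise disjoint (as products of disjoint families) and each has volume $\sim (M^{-1}\rho^{-1/2})^2\cdot$(stuff), while each $\Lambda(w_1\times w_2)$ projects to $\ZR^2$ in a set of area $\lesssim \rho^{-1/2}\cdot M^{-1}\rho^{-1/2}$ (by the first part) and has fibers of $z$-length $\gtrsim M^{-1}\rho^{-1/2}$ but $\lesssim K\cdot M^{-1}\rho^{-1/2}$ precisely because of the $(MK)^{-1}$ angle, a short computation balancing these volumes gives the bound $K$. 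I would present the argument this way if the direct combinatorial count proves too delicate, and flag that the only genuinely new point compared to \cite{Guth-R3} Lemma 3.10 is tracking the eccentricity $M$ through every inequality.
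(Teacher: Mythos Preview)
Your containment argument for the first claim is fine. The overlap argument, however, is where the proposal has a genuine gap: you correctly sense that you cannot locate the factor $K$ in your direct projection argument, and your fallback change-of-variables sketch does not close either --- the ``short computation balancing these volumes'' is never carried out, and in fact the volume bookkeeping you describe does not obviously produce $K$ rather than $O(1)$ or $K^2$.

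The paper's route is considerably cleaner and rests on two ideas you are missing. First, since the long axes of \emph{all} the rectangles $w_1,w_2$ make an angle $\sim (MK)^{-1}$ with a fixed line, after one rotation every rectangle in $W_1\cup W_2$ is essentially horizontal (axis-parallel $\rho^{-1/2}\times M^{-1}\rho^{-1/2}$). Second --- and this is the step that produces the factor $K$ --- one thins: partition each $W_k$ into $O(K)$ subfamilies in which the horizontal coordinates of the centers are $K\rho^{-1/2}$-separated (rather than merely $\rho^{-1/2}$-separated), and then prove $O(1)$ overlap for each subfamily pair. After thinning, the centers $c_{w_k}=([c_{w_k}]_1,[c_{w_k}]_2)$ are $K\rho^{-1/2}$-separated in the first coordinate and $M^{-1}\rho^{-1/2}$-separated in the second; moreover, by the normalization (normals of $\Pi_1$ point one way, normals of $\Pi_2$ the other), the map $[c_{w_1}]_1\mapsto [c_{w_1}]_2$ is monotone increasing while $[c_{w_2}]_1\mapsto [c_{w_2}]_2$ is monotone decreasing. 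Given $(w_1+w_2)\cap(w_1'+w_2')\neq\varnothing$, matching the first coordinates determines $[c_{w_1}]_1-[c_{w_1'}]_1 \approx [c_{w_2'}]_1-[c_{w_2}]_1$, and matching the second coordinates forces, via the opposite monotonicities, that both differences are $O(1)$ steps --- hence $(w_1',w_2')$ is determined up to $O(1)$ choices. Your projection-onto-normals idea never isolates this opposite-monotonicity mechanism, which is the heart of the bilinear $L^4$ orthogonality (and is exactly Guth's Lemma 3.10 argument, with the eccentricity $M$ carried along).
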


In \cite{Guth-R3} Lemma 3.10, Guth proved Lemma \ref{bilinear-geometric-lemma} when $\Pi_1$ and $\Pi_2$ are both portions of a single smooth curve, and when $M=1$. The linear version of Lemma \ref{bilinear-geometric-lemma} was first observed in \cite{Carbery-MBR}.

\begin{proof}
After rotation and translation, without loss of generality, we can assume that all the normal vectors of $\Pi_1$ point to the left hand side of the vertical axis while all the normal vectors of $\Pi_2$ point to the right hand side. Also, each normal vector of either $\Pi_1$ or $\Pi_2$ makes an angle $\sim (MK)^{-1}$ with respect to the vertical axis. As a consequence, the directional vectors of $\Pi_1$ make an angle $\sim (MK)^{-1}$ with respect to the horizontal axis, pointing upward, while the directional vectors of $\Pi_2$ also make an angle $\sim (MK)^{-1}$ with respect to the horizontal axis, but pointing downward. At this point, each $w_j$ is indeed a horizontal rectangle of dimensions $\rho^{-1/2}\times M^{-1}\rho^{-1/2}$.

%{\color{red}By the triangle inequality, it suffices to prove \eqref{L4-orthogonality} when its right hand side is replaced by an absolute constant, for any collection of $K$ separated rectangles in $W_k$}.
By the triangle inequality, it suffices to prove \eqref{L4-orthogonality} when its right hand side is replaced by an absolute constant, for any collection of $K$ separated rectangles in $W_k$. We let $c_{w_1}=([c_{w_1}]_1,[c_{w_1}]_2)$ and $c_{w_2}=([c_{w_2}]_1,[c_{w_2}]_2)$ be the center of $w_{1}$ and $w_{2}$ respectively. Then points in  $\{[c_{w_k}]_1\}$ are $K\rho^{-1/2}$ separated and points in $\{[c_{w_k}]_2\}$ are $M^{-1}\rho^{-1/2}$ separated, for any $k=1,2$. Also, for any $w_1,w_1'\in W_1$, $[c_{w_1}]_1>[c_{w_1'}]_1$ implies $[c_{w_1}']_1>[c_{w_1'}]_2$. Conversely,  for any $w_2,w_2'\in W_2$, $[c_{w_2}]_1>[c_{w_2'}]_1$ implies $[c_{w_2}']_1<[c_{w_2'}]_2$.

%This is because each $w_j$ is a $\rho^{-1/2}\times M^{-1}\rho^{-1/2}$ rectangle whose shorter side makes an angle $\sim (MK)^{-1}$ with respect to the vertical axis,

Clearly $w_1+w_2$ is contained in a $\rho^{-1/2}\times M^{-1}\rho^{-1/2}$ horizontal rectangle. Note that $(w_1+w_2)\cap (w_1'+w_2')\not=\varnothing$ only if 
\begin{equation}
    [c_{w_1}]_1+[c_{w_2}]_1=[c_{w_1'}]_1+[c_{w_2'}]_1+O(\rho^{-1/2})
\end{equation}
and
\begin{equation}
    [c_{w_1}]_2+[c_{w_2}]_2=[c_{w_1'}]_2+[c_{w_2'}]_2+O(M^{-1}\rho^{-1/2}).
\end{equation}
By the geometric observation on the sets $\{[c_{w_j}]_1\}$ and $\{[c_{w_j}]_2\}$ above, there are finitely many $(w_1',w_2')$ satisfying $(w_1+w_2)\cap (w_1'+w_2')\not=\varnothing$. This concludes the proof of the lemma.
\end{proof}

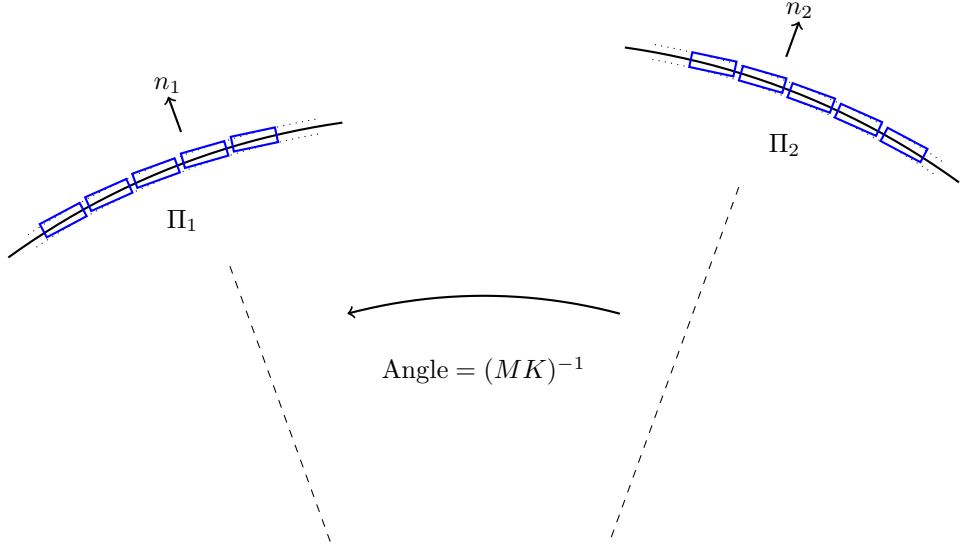
\begin{figure}
\begin{tikzpicture}

\draw [black,dotted,domain=100:124] plot ({-0.5+10*cos(\x)}, {-0.5+10*sin(\x)});
\draw [black,dotted,domain=100:124] plot ({-0.5+9.8*cos(\x)}, {-0.5+9.8*sin(\x)});
\draw [black,thick,domain=98:126] plot ({-0.5+9.9*cos(\x)}, {-0.5+9.9*sin(\x)});

\draw[color=blue,thick,rotate around={118:(-0.5,-0.5)}] (9.3,-0.3) rectangle (9.5,0.3);
\draw[color=blue,thick,rotate around={114:(-0.5,-0.5)}] (9.3,-0.3) rectangle (9.5,0.3);
\draw[color=blue,thick,rotate around={110:(-0.5,-0.5)}] (9.3,-0.3) rectangle (9.5,0.3);
\draw[color=blue,thick,rotate around={106:(-0.5,-0.5)}] (9.3,-0.3) rectangle (9.5,0.3);
\draw[color=blue,thick,rotate around={102:(-0.5,-0.5)}] (9.3,-0.3) rectangle (9.5,0.3);
\draw [->]  [color=black,thick,rotate around={110:(-0.5,-0.5)}] (9.8,-0.5) -- (10.3,-0.5);

\node at (-4,8) {$\Pi_1$};
\node at (-4.2,9.8) {$n_1$};

\draw [black,dotted,domain=56:80] plot ({0.5+10*cos(\x)}, {0.5+10*sin(\x)});
\draw [black,dotted,domain=56:80] plot ({0.5+9.8*cos(\x)}, {0.5+9.8*sin(\x)});
\draw [black,thick,domain=54:82] plot ({0.5+9.9*cos(\x)}, {0.5+9.9*sin(\x)});

\draw[color=blue,thick,rotate around={62:(0.5,0.5)}] (10.3,-0.3) rectangle (10.5,0.3);
\draw[color=blue,thick,rotate around={66:(0.5,0.5)}] (10.3,-0.3) rectangle (10.5,0.3);
\draw[color=blue,thick,rotate around={70:(0.5,0.5)}] (10.3,-0.3) rectangle (10.5,0.3);
\draw[color=blue,thick,rotate around={74:(0.5,0.5)}] (10.3,-0.3) rectangle (10.5,0.3);
\draw[color=blue,thick,rotate around={78:(0.5,0.5)}] (10.3,-0.3) rectangle (10.5,0.3);
\draw [->]  [color=black,thick,rotate around={70:(0.5,0.5)}] (10.8,0.5) -- (11.3,0.5);

\node at (4,9) {$\Pi_2$};
\node at (4.2,10.8) {$n_2$};

\draw[color=black,dashed,rotate around={70:(0.5,0.5)}] (4,0.5) -- (9,0.5);
\draw[color=black,dashed,rotate around={110:(-0.5,-0.5)}] (4,-0.5) -- (8,-0.5);
\draw [->] [black,thick,domain=75:105] plot ({7*cos(\x)}, {7*sin(\x)});
\node at (0,6) {$\textup{Angle}= (MK)^{-1}$};

\end{tikzpicture}
\caption{Rectangles in $(MK)^{-1}$ bilinear}
\label{bilinear-figure}
\end{figure}

Recall that for any step $s+1$ cell $O_{s+1}\in\co_{s+1}$, the associated function $\vf_{O_{s+1}}$ is defined in \eqref{inductiveformula1}. Also recall the definition for $\vf_{O_{s+1},\om_s}$ in \eqref{f_s+1}. Using Lemma \ref{bilinear-geometric-lemma}, we can prove Lemma \ref{bilinear-lem}. Let us recall this lemma below.

\begin{lemma}[Bilinear estimate]
\label{bilinear-lem-sec10}
Let $O\in\co_{s+1}$ be any step $s+1$ cell and let $\vf_{O}$ be the associated vector. Then
\begin{equation}
\label{bilinear-tangent}
    \int_O \bil[\Sq(\vf_{O})]^p\lesssim R^{O(\de)}r^{-\frac{5(p-2)}{4}}M^{\frac{p-2}{2}}\Big(\sum_{\om\in\Om_s}\big\|\Sq(\vf_{O,\om})\big\|_2^2\Big)^{p/2}.
  \end{equation}
\end{lemma}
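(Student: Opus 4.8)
\textbf{Proof plan for Lemma \ref{bilinear-lem-sec10}.} The plan is to run a standard bilinear-to-$L^2$ reduction in the spirit of \cite{Guth-R3} Lemma 3.10, but keeping careful track of the eccentricity factor $M$ coming from the fact that all wave packets of $\vf_O$ point (up to angle $M^{-1}$) in the direction $c_\si$, and of the rescaled scale $r=r_s$. Recall that $\bil[\Sq\vf_O](x)=\sum_{\tau_1,\tau_2\in\cT}|\Sq(P_{\tau_1}\vf_O)(x)|^{1/2}|\Sq(P_{\tau_2}\vf_O)(x)|^{1/2}$, where $\cT$ is a collection of $8(MK)^{-1}$-separated $(MK)^{-1}$-caps inside the $M^{-1}$-cap $\si$; since $|\cT|=O(K^2)$, after Cauchy--Schwarz and $\ell^2$-pigeonholing it suffices to prove the estimate for a single pair $(\tau_1,\tau_2)$ with $\mathrm{dist}(\tau_1,\tau_2)\sim (MK)^{-1}$, i.e.\ to bound $\int_O |\Sq(P_{\tau_1}\vf_O)|^{p/2}|\Sq(P_{\tau_2}\vf_O)|^{p/2}$. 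Because $p\le 4$, interpolation between the trivial $p=2$ case (Cauchy--Schwarz plus $L^2$-orthogonality) and the $p=4$ case reduces everything to proving the bilinear $L^2$, i.e.\ $L^4\times L^4$, statement
\begin{equation}
\nonumber
    \int_O |\Sq(P_{\tau_1}\vf_O)|^2|\Sq(P_{\tau_2}\vf_O)|^2\lesssim R^{O(\de)} r^{-5/2}M\Big(\sum_{\om\in\Om_s}\|\Sq\vf_{O,\om}\|_2^2\Big)^2,
\end{equation}
and then the Lemma follows by Hölder on the cell $O$ together with $p\le 4$. So the real work is this $L^4$-type bilinear inequality.

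\textbf{Key steps for the bilinear $L^2$ estimate.} First I would expand $\Sq(P_{\tau_i}\vf_O)^2=\sum_j |P_{\tau_i}f_{j,O}|^2$ and further decompose each $P_{\tau_i}f_{j,O}$ into the scale-$r$ wave packets $f_{j,O,\om_s}=\vf_{O,\om_s}$-components with $\om_s<\tau_i$ (using \eqref{inductiveformula1} and \eqref{f_s+1}); the Fourier support of each such piece lies in an $r^{-1/2}\times r^{-1/2}\times r^{-1}$-slab $S_j(\om_s)\subset N_{Cr^{-1/2}}(\Ga_j)$, and since $O$ is a tangent cell all of these slabs are essentially flat of thickness $\sim r^{-1}$ transverse to $c_\si$. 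Restricting a single $j$-slice to the tangent cell, the relevant piece of $\Ga_j$ near direction $\si$ is, after the non-isotropic rescaling built into the definition of rescaled balls, a graph over a $2$-plane; the transverse $r^{-1}$-direction contributes trivially (Bernstein / Plancherel in one variable gains a factor $r^{-1/2}$ in $L^\infty\to L^2$, which after taking $L^2$ on $O$ accounts for one power of $r^{-1/2}$ on each factor, hence $r^{-1}$ overall, with the remaining $r^{-3/2}$ and the $M$ coming from the two-dimensional bilinear count below). The heart of the matter is the two-dimensional bilinear/$L^4$ orthogonality: after Plancherel in the two non-transverse variables, $\int |P_{\tau_1}F|^2|P_{\tau_2}G|^2$ is controlled by $\sum_{w_1,w_2}\|\widehat F|_{w_1}\ast \widehat G|_{w_2}\|_2^2$ where $w_i$ are the $r^{-1/2}\times M^{-1}r^{-1/2}$ rectangles making up $N_{M^{-1}r^{-1/2}}(\Pi_i)$ for the two transverse curves $\Pi_1,\Pi_2$ whose normals differ by $\sim (MK)^{-1}$. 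Here Lemma \ref{bilinear-geometric-lemma} applies verbatim with $\rho=r$: each sumset $w_1+w_2$ lies in a $r^{-1/2}\times M^{-1}r^{-1/2}$-rectangle and $\sum_{w_1,w_2}\Id_{w_1+w_2}\lesssim K$. The area of each $w_1+w_2$ is $\sim M^{-1}r^{-1}$, which together with the $O(K)$ overlap and the trivial $r^{-1/2}$ from the transverse variable is exactly what produces the claimed $r^{-5/2}M$ (the $K=R^{\e^{10}}$ and $R^{O(\de)}$ factors absorb all $K$-powers and Schwartz-tail losses). Summing the resulting bound over $j$ and over all $\om_s<\tau_1,\tau_2$, and using the $L^2$-orthogonality Lemma \ref{L2-orthogonality-small-1} to collapse $\sum_j\sum_{\om_s}\|\cdots\|_2^2$ into $\sum_{\om_s\in\Om_s}\|\Sq\vf_{O,\om_s}\|_2^2$, gives the bilinear $L^2$ inequality; the square of that sum on the right is the $(\cdots)^{p/2}$ with $p=4$.

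\textbf{Assembling the pieces and the main obstacle.} Once the bilinear $L^2$ estimate is in hand, I would: (i) interpolate it with the elementary $p=2$ bound $\int_O |\Sq(P_{\tau_1}\vf_O)||\Sq(P_{\tau_2}\vf_O)|\le \big(\sum_{\om}\|\Sq\vf_{O,\om}\|_2^2\big)$ (Cauchy--Schwarz plus Lemma \ref{L2-orthogonality-small-1}) to get the desired exponent $p/2$ on the right for all $2\le p\le 4$, noting that the power of $r^{-1}$ and $M$ interpolate to exactly $r^{-5(p-2)/4}M^{(p-2)/2}$; (ii) sum over the $O(K^2)$ pairs $(\tau_1,\tau_2)$, which only costs a power of $K\le R^{\de}$, absorbed into $R^{O(\de)}$; and (iii) recover $\bil$ from the single-pair bound via the definition \eqref{bilinear-def} and Hölder. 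The main obstacle I anticipate is step (ii)/(iii) of the bilinear count being clean only after one has correctly identified, for the tangent cell $O$, the two-dimensional curves $\Pi_1,\Pi_2$ and verified that after the non-isotropic rescaling adapted to $\cp_O$ the pieces $\Ga_j(\tau_i)$ genuinely become $O(1)$-curved graphs with normals separated by $\sim(MK)^{-1}$ uniformly in $j$ — this is where the curvature hypothesis \eqref{Phi-conditions} and the tangent-case geometry (Lemma \ref{tangent-lemma}, Definition \ref{tangent-definition}) have to be invoked carefully, and where the bookkeeping of the transverse $r^{-1}$-variable versus the two tangential $r^{-1/2}$-variables (and hence the precise exponent $-5(p-2)/4$) is most delicate. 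The rest is routine Plancherel/Bernstein and orthogonality.
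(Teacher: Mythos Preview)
Your overall strategy matches the paper's: reduce to $p=4$ by interpolation with the trivial $p=2$ case, fix a single pair $(\tau_1,\tau_2)$, and prove a bilinear $L^2$ inequality using Lemma~\ref{bilinear-geometric-lemma}. The target factor $r^{-5/2}M$ and the final $L^2$-orthogonality summation are also correct.

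There is, however, a genuine gap in how you get to the two-dimensional setting of Lemma~\ref{bilinear-geometric-lemma}. That lemma is about rectangles along \emph{one-dimensional} curves $\Pi_1,\Pi_2$, but for a fixed $\tau_i$ the $r^{-1/2}$-caps $\om\subset 2\tau_i$ form a genuinely two-parameter family (about $(r^{1/2}(MK)^{-1})^2$ of them), so ``Plancherel in the two non-transverse variables'' and a global choice of $\Pi_i$ cannot work: the projected slabs would fill a two-dimensional region, not a curve, and the $O(K)$ overlap in \eqref{L4-orthogonality} would fail. What the paper does---and what you are missing---is to first cover $O$ by small $M$-rescaled balls $Q$ of radius $r^{1/2+\be}$ and work on each $Q$ separately. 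It is only at this scale that Lemma~\ref{tangent-lemma} bites: since $O$ is a tangent cell, all tubes through a single $Q$ lie in $N_{r^{1/2}R^{O(\de)}}(V_Q)$ for a plane $V_Q$ depending on $Q$, which forces the relevant caps $\om$ to lie along a single great circle. Projecting along the normal of $V_Q$ (not along $c_\si$) then produces the $r^{-1/2}\times M^{-1}r^{-1/2}$ rectangles along honest curves $P_2(\Si_j(\tau_i))$, and only now does Lemma~\ref{bilinear-geometric-lemma} apply. After the resulting $L^4$ orthogonality, the factor $r^{-5/2}M$ comes not from a Bernstein-in-one-variable argument but from the tube-intersection count $\int\eta_Q|f_{j_1,T_1}|^2|f_{j_2,T_2}|^2\lesssim r^{-5/2}MK\,\|f_{j_1,T_1}\|_2^2\|f_{j_2,T_2}\|_2^2$ (two $r$-tubes at angle $\sim(MK)^{-1}$), and one finishes by summing over $Q$ using that each pair $(T_1,T_2)$ lies in $O(1)$ products $\ZT_{1,Q}\times\ZT_{2,Q}$. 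Your remark that the tangent geometry is ``the main obstacle'' is right, but it is not a verification step at the end---it is the mechanism, via localization to $Q$, by which the problem becomes two-dimensional at all.
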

{\bf Some notations:} Recall that the radius $r$ always stands for $r_s$. Until the end of this section, and letters $\om,\om_1,\om_2$ always represent $r^{-1/2}$-caps in $\ZS^2$. We remind readers that the definition of the bilinear operator $\bil$ is in \eqref{bilinear-def} and collection of $(MK)^{-1}$ caps $\cT=\cT_\si$ is defined above \eqref{sqfcn-3}, where $\si$ is a fixed $M^{-1}$ cap in $\ZS^2$. The set $\ZT_{O}[r_s]$ indeed is the set of tangent tubes $\ZT_{O,tang}$, which was defined in \eqref{tangent-definition}. For each cap $\tau\in\cT$, we use $\ZT_{O,\tau}$ to denote the collection of tubes in $\ZT_{O,\om}[r]$, with $\om\subset2\tau$.

\begin{remark}

\rm

It looks possible to the authors that one can prove Lemma \ref{bilinear-lem-sec10} via a variant of the two dimensional polynomial partitioning iteration, based on the idea in \cite{Guth-II}. The proof given here is a more direct one.

\end{remark}

\begin{proof}
Since \eqref{bilinear-tangent} is true for $p=2$, by H\"older's inequality, we only need to prove \eqref{bilinear-tangent} for the endpoint $p=4$. Since there are $O(K^2)$ many pairs $(\tau_1,\tau_2)$ in $\cT\times\cT$, it suffices to prove for a fixed pair $(\tau_1,\tau_2)$ that
\begin{equation}
\label{bilinear-tangent-2}
    \sum_{j_1,j_2}\int_O \Big|\sum_{T_1\in\ZT_{O,\tau_1}}f_{j_1,O,T_1}\Big|^2\Big|\sum_{T_2\in\ZT_{O,\tau_2}}f_{j_2,O,T_2}\Big|^2\lesssim R^{O(\de)}r^{-\frac{5}{2}}M\Big(\sum_{\om\in\Om_s}\big\|\Sq\vf_{O,\om}\big\|_2^2\Big)^2.
\end{equation}
Recall that $f_{j,O,T}=f_{j,O,\om}\Id^\ast_T$ (See \eqref{wave-packet-2}), where $\om\subset\ZS^2$ is the $r^{-1/2}$ cap dual to $T$. To save notations, we define $\ZT_1:=\ZT_{O,\tau_1}$, $\ZT_2:=\ZT_{O,\tau_2}$, $f_{j,\om}=f_{j,O,\om}$ and $f_{j,T}:=f_{j,O,T}$.

Let $\cq=\{Q\}$ be a collection of finitely overlapping $M$-rescaled balls of radius $r^{1/2+\be}$ that forms a cover of $O$ and $Q\cap \not=\varnothing$. For each $Q\in\cq$, pick a smooth function $\eta_Q$ such that the Fourier transform of $\eta_Q$ is supported in $\wh{Q}$, the dual slab of $Q$ centered at the origin. Since $\eta_Q$ decays rapidly outside $2Q$, we have 
\begin{equation}
\nonumber
    \sum_{j_1,j_2}\int_O \Big|\sum_{T_1\in\ZT_1}f_{j_1,T_1}\Big|^2\Big|\sum_{T_2\in\ZT_2}f_{j_2,T_2}\Big|^2\lesssim\sum_{j_1,j_2}\sum_{Q\in\cq}\int\eta_Q \Big|\sum_{T_1\in\ZT_1}f_{j_1,T_1}\Big|^2\Big|\sum_{T_2\in\ZT_2}f_{j_2,T_2}\Big|^2.
\end{equation}
Now for each $Q\in\cq'$ and each $k=1,2$, define $\ZT_{k,Q}$ as
\begin{equation}
    \ZT_{k,Q}:=\{T\in\ZT_{k}, T\cap 2Q\not=\varnothing\}
\end{equation}
and define $f_{j_k,Q}$ as the sum of wave packets $f_{j,T}$ that $T\in\ZT_{k,Q}$. Hence 
\begin{equation}
\label{restricted-to-Q}
    \int\eta_Q \Big|\sum_{T_1\in\ZT_1}f_{j_1,T_1}\Big|^2\Big|\sum_{T_2\in\ZT_2}f_{j_2,T_2}\Big|^2\lesssim\int\eta_Q \big|f_{j_1,Q}\big|^2\big|f_{j_2,Q}\big|^2.
\end{equation}

Tubes in $\ZT_{k,Q}$ are nearly coplanar. Indeed, by Lemma \ref{tangent-lemma}, there is a plane $V$ that tubes in $\ZT_{k,Q}$ are all contained in $N_{r^{1/2}R^{O(\de)}}(V)$. After rotation, we assume that $V$ is parallel to the plane $V_2:=\{x_2=0\}$. Recall that $\Ga_j$ is the surface defined in \eqref{surface}. If we let $\Si_j(\tau)\subset \Gamma_j$ be the pullback $\Si_j(\tau):=G_j^{-1}(V_2\cap\tau)$ where $G_j$ is the Gauss map defined in \eqref{Gauss-map}, then points on the curve $\Si_j(\tau)$ indeed satisfies the  equation
\begin{equation}
    \partial_{\xi_2}\Phi(\xi_1,\xi_2,t_j)=0.
\end{equation}
Employing the implicit function theorem, there is a smooth map $h:\ZR\to\ZR$ that $\partial_{\xi_2}\Phi(\xi,h(\xi),t_j)=0$, and because of \eqref{Phi-conditions}, 
\begin{equation}
    \partial h(\xi)=-\frac{\partial^2_{\xi_1\xi_2}\Phi(\xi,h(\xi);t_j)}{\partial^2_{\xi_2\xi_2}\Phi(\xi,h(\xi);t_j)}=O(1).
\end{equation}
Consider the projection map $P_2:(x_1,x_2,x_3)\to(x_1,x_3)$. The projected curve $P_2(\Si_j(\tau))$ can be parameterized as $\big(\xi,\Phi(\xi,h(\xi);t_j)\big)$. Hence, any of its normal vector can be written as
\begin{equation}
    \big(-\partial_{\xi_1}\Phi(\xi,h(\xi),t_j)-\partial h\cdot\partial_{\xi_2}\Phi(\xi,h(\xi),t_j),1\big),
\end{equation}
which equals to $\big(-\partial_{\xi_1}\Phi(\xi,h(\xi),t_j),1\big)$ since $\partial_{\xi_2}\Phi(\xi,h(\xi),t_j)=0$. This yields that any normal vectors $n_1,n_2$ of the projected curved $P_2(\Ga_{j,\tau_1})$ and $P_2(\Ga_{j,\tau_2})$ respectively are $(MK)^{-1}$ separated, because we already assume caps in $\cT$ are $8(MK)^{-1}$ separated below \eqref{sigma-sqfcn-2}. We in fact can check that both curves $P_2(\Ga_{j,\tau_1})$ and $P_2(\Ga_{j,\tau_2})$ have positive curvature.

Now for each $r^{-1/2}$-cap $\om$, we define $\ZT_{k,Q,\om}:=\{T\in\ZT_{k,Q}, T~\textup{dual~to}~\om\}$, so the right hand side of \eqref{restricted-to-Q} can be rewritten as
\begin{equation}
\label{before-L4-orthogonality}
    \int\eta_Q \Big|\sum_{\om_1\subset2\tau_1}\sum_{T\in\ZT_{1,Q,\om_1}}f_{j_1,T}\Big|^2\Big|\sum_{\om_2\subset2\tau_2}\sum_{T\in\ZT_{2,Q,\om_2}}f_{j_2,T}\Big|^2. 
\end{equation}
The Fourier transform of any $\sum_{T\in\ZT_{1,Q,\om_k}}f_{j_k,T}$ is contained in the $r^{-1/2}\times r^{-1/2}\times r^{-1}$-slab $S_{j_k}(\om_k)$, where the slab $S_j(\om)$ was introduced in Lemma \ref{Fourier-support-small-scale}. Since the caps $\om_k$ is contained in $N_{r^{-1/2}R^{O(\de)}}(V_k)$ if $\ZT_{k,Q,\om_k}\not=\varnothing$, the pullback $S_{j_k}(\om_k)$ is contained in the $r^{-1/2}R^{O(\de)}$ neighbourhood of the curve $\Ga_{j_k}(\tau_k)$. We can also check directly that the Fourier supports of both functions $\big(\sum_{T\in\ZT_{1,Q,\om_1}}f_{j_1,T}\big)\big(\sum_{T\in\ZT_{2,Q,\om_2}}f_{j_2,T}\big)$ and $\eta_Q\big(\sum_{T\in\ZT_{1,Q,\om_1}}f_{j_1,T}\big)\big(\sum_{T\in\ZT_{2,Q,\om_2}}f_{j_2,T}\big)$ are contained in $3(S_{j_1}(\om_1)+S_{j_2}(\om_2))$.

Define $w_k:=P_2(S_{j_k}(\om_k))$ for $k=1,2$, so that $w_k$ is contained in a $r^{-1/2}\times M^{-1}r^{-1/2}R^{O(\de)}$ rectangle that is further contained in the $M^{-1}r^{-1/2}R^{O(\de)}$ neighborhood of the two-dimensional curve $P_2(\Ga_{j_k,\tau})$. Rectangles in the set $W_k:=\{w_k\}$ does not overlap too much. Indeed, we first note that the slabs $\{S_{j_k}(\om)\}_\om$ are finitely overlapped. Then, since the curve $\Si_j(\tau)$ is parametrized as $\big(\xi,h(\xi),\Phi(\xi,h(\xi),t_j)\big)$ and since $\partial h=O(1)$, the pullback $P_2^{-1}(x)$ has measure $r^{-1/2}R^{O(\de)}$ for any $x\in P_2(N_{r^{-1/2}R^{O(\de)}}(\Ga_{j_k}(\om_k)))$. The fact that $S_{j_k}(\om_k)$ is contained in the thin neighborhood $N_{r^{-1/2}R^{O(\de)}}(\Ga_{j_k}(\om_k))$ thus implies
\begin{equation}
    \sum_{w\in W_k}\Id_w\lesssim R^{O(\de)}.
\end{equation}
Thus, we can apply Lemma \ref{bilinear-geometric-lemma} to conclude 
\begin{equation}
\label{after-L4-orthogonality}
    \eqref{before-L4-orthogonality}\lesssim R^{O(\de)}\int\eta_Q \sum_{\om_1,\om_2}\Big|\sum_{T\in\ZT_{1,Q,\om_1}}f_{j_1,T}\Big|^2\Big|\sum_{T\in\ZT_{2,Q,\om_2}}f_{j_2,T}\Big|^2,
\end{equation}
which, by the $L^2$ orthogonality \eqref{L2-orthogonality-small-1}, is bounded above by
\begin{equation}
     R^{O(\de)}\sum_{\om_1,\om_2}\sum_{T_1\in\ZT_{1,Q,\om_1}}\sum_{T_2\in\ZT_{2,Q,\om_2}}\int\eta_Q|f_{j_1,T_1}|^2|f_{j_2,T_2}|^2.
\end{equation}

What follows is a standard application of the essentially constant property of wave packets. For each pair $(f_{j_1,T_1}, f_{j_2,T_2})$, one has 
\begin{equation}
    \int\eta_Q|f_{j_1,T_1}|^2|f_{j_2,T_2}|^2\lesssim R^{O(\be)}r^{-\frac{5}{2}}M\|f_{j_1,T_1}\|_2^2\|f_{j_2,T_2}\|_2^2.
\end{equation}
Recall \eqref{restricted-to-Q}, \eqref{before-L4-orthogonality} and \eqref{after-L4-orthogonality}. We first sum up all $T_k\in\ZT_{k,Q,\om}$ in the above estimate, then we sum up all $\om_k\in\tau_k$ to have
\begin{equation}
\nonumber
    \int\eta_Q \Big|\sum_{T_1\in\ZT_1}f_{j_1,T_1}\Big|^2\Big|\sum_{T_2\in\ZT_2}f_{j_2,T_2}\Big|^2\lesssim R^{O(\de)}r^{-\frac{5}{2}}M\sum_{T_1\in\ZT_{1,Q}}\sum_{T_2\in\ZT_{2,Q}}\|f_{j_1,T_1}\|_2^2\|f_{j_2,T_2}\|_2^2
\end{equation}
Since each pair of tubes $(T_1,T_2)$ belongs to $O(1)$ many cross product $\ZT_{1,Q}\times\ZT_{2,Q}$, summing up all $Q\in\cq$ yields 
\begin{equation}
\nonumber
    \int\Big|\sum_{T_1\in\ZT_1}f_{j_1,T_1}\Big|^2\Big|\sum_{T_2\in\ZT_2}f_{j_2,T_2}\Big|^2\lesssim R^{O(\de)}r^{-\frac{5}{2}}M\sum_{T_1\in\ZT_{1}}\sum_{T_2\in\ZT_{2}}\|f_{j_1,T_1}\|_2^2\|f_{j_2,T_2}\|_2^2
\end{equation}
Finally, we sum up all pairs $(j_1,j_2)$ to conclude \eqref{bilinear-tangent-2}.
\end{proof}

\section{Appendix: intersection of tubes and plane \texorpdfstring{$L_a$}{Lg}}

We prove Lemma \ref{tangenttubelemma} here in the appendix. The proof is similar to the one in \cite{Wu}. First, recall that we set $r=r_s$, which is the scale of $O_s$. So the scale of tangent cells  $O_{s+1}$ is $r_{s+1}=r_sR^{-\de}$. Since $r_s$ and $r_{s+1}$ only differ by a factor $R^\de$ which is acceptable, it's safe to treat $r_s$ as $r_{s+1}$ or $r_{s+1}$ as $r_s$. Also recall that each $O_{s+1}$ lies in a rescaled ball $\cp_{r_{s+1}}$ of dimensions $M^{-1}r_{s+1}\times M^{-1}r_{s+1}\times r_{s+1}$. For each cell $O_{s}\in\cO_s$, we have done polynomial partitioning on $O_{s}$, and obtain a polynomial $P$ (depending on $O_s$) of degree $O(d)$. For any $O_{s+1}<O_s$, the collection $\T_{O_{s+1}}[r_s]$ was defined as the tubes that are tangent to $Z(P)$ in $O_{s+1}$.   

To obtain Lemma \ref{tangenttubelemma}, it suffices to prove the following lemma.
\begin{lemma}
Fix a $O_{s+1}\in  \cO_{s+1}$, so $O_{s+1}\subset \cp_{r_{s+1}}$. For each $T\in \T_{O_{s+1}}[r]$, let $\mathring T$ be the tube with infinite length which is obtained by prolonging $T$.  Let $\mathring \T_{O_{s+1}}[r]$ be these prolonged tubes. Let $\{y_l\}_{l=1}^m$ be a $R^{1+10\be}r^{-1/2}$-separated subset of 
$$ L_a\bigcap\{\cup_{ T\in\T_{O_{s+1}}[r]}\mathring T\} $$
Then $m\lesssim R^{O(\de)}r^{1/2}M^{-1}$ uniformly for all $|a|\le 2R^{1+10\beta}$.
\end{lemma}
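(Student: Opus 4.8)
The plan is to fix $a$ with $|a|\le 2R^{1+10\be}$ and bound $m$ directly, by showing that the candidate points $y_l$ are confined to a thin planar set, and then counting via the $R^{1+10\be}r^{-1/2}$-separation. For each $l$ pick a tube $T_l\in\T_{O_{s+1}}[r]$ with $y_l\in\mathring T_l$, and let $z_l$ be a non-singular point of $Z=Z(P)$ in $10\wt O_{s+1}\cap 10T_l$ witnessing the tangency condition of Definition~\ref{tangent-definition} (the tubes whose intersection with $10\wt O_{s+1}$ meets $Z$ only at singular points are dealt with separately, running the same argument with the degree $O(d^2)$ singular locus of $Z$ in place of $Z$, which only improves the estimate; this is as in \cite{Wu}). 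Write $\rho':=R^{1+10\be}r^{-1/2}$.

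\textbf{Step 1 (the direction constraint).} Each $T_l$ meets $O_{s+1}\subset\cp_{r_{s+1}}$ and, coming from the scale-$r$ wave packet decomposition of $\Sq f$, points in a direction within $O(M^{-1})$ of $c_\si$; since $|c_\si-e_3|\le 1/10$, the line $\mathring T_l$ is transverse to $L_a$. Writing $y_l=p_l+t_lv(T_l)$ with $p_l\in T_l\cap O_{s+1}$ and using $|a|\le 2R^{1+10\be}$ together with $|p_l|\lesssim R$, we get $|t_l|\lesssim R^{1+10\be}$. Hence the distance from $y_l$ to the infinite core line $\ell_0$ of $\cp_{r_{s+1}}$ (an affine line of direction $c_\si$) is $\lesssim M^{-1}r_{s+1}+|t_l|M^{-1}\lesssim M^{-1}R^{1+10\be}$. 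Thus every $y_l$ lies in the disk $D_0:=L_a\cap N_{CM^{-1}R^{1+10\be}}(\ell_0)$, of radius $\sim M^{-1}R^{1+10\be}$.

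\textbf{Step 2 (the tangency constraint).} Cover $O_{s+1}$ by $Mr^{1/2+\be}$-rescaled balls $Q$ and apply Lemma~\ref{tangent-lemma}: the tubes of $\T_{O_{s+1}}[r]$ meeting $2Q$ lie in a slab $N_{r^{1/2}R^{O(\de)}}(V_Q)$ with $V_Q$ an affine tangent plane to $Z$ which contains a direction within $O(M^{-1})$ of $c_\si$. Combining the slab containment with the angle bound $r^{-1/2}R^\de$ in Definition~\ref{tangent-definition} and propagating it over the length $\lesssim R^{1+10\be}$ of the prolongation, the truncated tube $\mathring T_l\cap\{|x_3|\le 2R^{1+10\be}\}$ stays within $r^{1/2}+R^{1+10\be}r^{-1/2}R^\de\lesssim R^{O(\de)}\rho'$ of the affine tangent plane $z_l+T_{z_l}Z$, and $v(T_l)$ is within $O(M^{-1})$ of $c_\si$. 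The union of all affine tangent lines of $Z$ making an angle $O(M^{-1})$ with $c_\si$ is contained in $N_{R^{O(\de)}\rho'}(\mathcal Y)$ for a real algebraic variety $\mathcal Y\subset\R^3$ of dimension $\le 2$ and degree $d^{O(1)}$, generated by the silhouette curve $\Gamma=\{z\in Z:n(z)\perp c_\si\}$ — of degree $O(d^2)$ by B\'ezout — ruled by the associated nearly-vertical tangent directions. Hence every $y_l$ lies in $N_{R^{O(\de)}\rho'}(\mathcal Y\cap L_a)$.

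\textbf{Step 3 (counting) and the obstacle.} The set $\mathcal Y\cap L_a$ is a real algebraic curve in $L_a\cong\R^2$ of degree $d^{O(1)}$, so its one-dimensional measure inside the disk $D_0$ of radius $\sim M^{-1}R^{1+10\be}$ is $\lesssim d^{O(1)}M^{-1}R^{1+10\be}$. Combining Steps 1 and 2, all $y_l$ lie in $D_0\cap N_{R^{O(\de)}\rho'}(\mathcal Y\cap L_a)$, a planar set of area
\begin{equation}
\nonumber
    \lesssim d^{O(1)}M^{-1}R^{1+10\be}\cdot R^{O(\de)}\rho'=R^{O(\de)}M^{-1}R^{2}r^{-1/2},
\end{equation}
where we absorbed $d^{O(1)}=R^{O(\e^6)}$ into $R^{O(\de)}$. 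Since the $y_l$ are $\rho'$-separated, each occupies area $\gtrsim(\rho')^2\sim R^{2}r^{-1}$, so $m\lesssim R^{O(\de)}M^{-1}r^{1/2}$, which is the claim (and implies Lemma~\ref{tangenttubelemma}, using $|a|\le 2R^{1+10\be}$ and $M\ge1$). The main difficulty is Step 2: producing the bounded-degree variety $\mathcal Y$ and verifying that the suitably truncated prolonged tangent tubes genuinely lie in its $R^{O(\de)}\rho'$-neighborhood. This forces one to track precisely how a near-tangency of angle $r^{-1/2}R^\de$ accumulates over the prolongation length $R^{1+10\be}$, to control the degrees of the silhouette curve $\Gamma$ and of the ruled variety it spans, and to handle the singular points of $Z$; all of this follows the corresponding argument in \cite{Wu}, to which we defer the details.
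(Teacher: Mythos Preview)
There is a genuine gap in Step~2, and it is not repaired by the deferral to \cite{Wu}. Your central claim is that the union of all affine tangent lines of $Z$ making an angle $O(M^{-1})$ with $c_\si$ lies in the $R^{O(\de)}\rho'$-neighborhood of a \emph{two}-dimensional algebraic variety $\mathcal Y$ built from the silhouette curve $\Gamma=\{z\in Z:n(z)\perp c_\si\}$. This is false in general. For each $z\in\Gamma$ the tangent plane $T_zZ$ contains $c_\si$, so the admissible tangent directions at $z$ form an arc of length $\sim M^{-1}$ inside $T_zZ$; as $z$ ranges over the one-dimensional curve $\Gamma$ you obtain a two-parameter family of lines, whose union is three-dimensional. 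Hence $\mathcal Y\cap L_a$ is two-dimensional, and the area bound in Step~3 collapses to the trivial estimate $m\lesssim M^{-2}r$ coming from Step~1 alone. What you have not used is the crucial constraint that every tube passes through the fixed cell $O_{s+1}\subset\cp_{r_{s+1}}$; without this, tangency to $Z$ gives no improvement over the directional bound. Even if you try to rescue the argument via the tangent cone of $Z$ from a fixed point of $\cp_{r_{s+1}}$ (which would be a bounded-degree curve of directions), the root of the bush has transverse width $M^{-1}r\gg r^{1/2}$, and this spread, propagated to $L_a$, is in general $\gg\rho'$, so the curve-neighborhood counting still fails.

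The argument in \cite{Wu}---and the one in this paper---is not of the silhouette/ruled-variety type at all. It stays inside $\cp_{r_{s+1}}$: one covers $N_{r^{1/2}R^{O(\de)}}Z(P)\cap\cp_{r_{s+1}}$ by $r^{1/2}$-cubes (Wongkew's theorem gives $\lesssim R^{O(\de)}M^{-1}r$ of them), sets up the incidence count $\sum_Q n_Q^2$ between cubes and the tubes $T_l$, extracts a hairbrush rooted at some $T_1$ at a popular angular scale $\nu$, and finishes with a Wolff two-ends argument together with a second application of Wongkew inside the $\nu r\times\nu r\times r$ cylinder containing the hairbrush. The $\rho'$-separation of the $y_l$ enters only at the very end, to guarantee that tubes in a common plane through $T_1$ are essentially disjoint away from the root. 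So your appeal to \cite{Wu} for the missing details of Step~2 is misplaced: that reference supplies a different mechanism entirely.
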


Since the tiny factor $\be$ is harmless in our proof, let us assume $\be=0$ in the rest of this section.

\begin{proof}
For each point $y_l$, we pick a tube $\mathring T_l\in\mathring \T_{O_{s+1}}[r]$ satisfying $y_l\in\mathring T_l$. Let $ \mathring \T_{a}:=\{\mathring T_l\}_{l=1}^m$, and let $\T_a:=\{T_l\}_{l=1}^m$. We point out that if two distinct tubes $T_1, T_2\in \T_{a}$ intersect, then they make an angle $\gtrsim r^{-1/2}R^{-\de}$. Let $P$ be the polynomial that comes from the polynomial partitioning of $O_s$. We see that for any $T\in \T_{O_{s+1}}[r]$,  $T\cap \cp_{r_{s+1}}\subset N_{r^{1/2}R^{O(\de)}}Z(P)\cap \cp_{r_{s+1}}$. If we let $\cQ=\{Q\}$ to be a collection of finitely overlapping $r^{1/2}$-cube that cover $N_{r^{1/2}R^{O(\de)}}Z(P)\cap \cp_{r_{s+1}}$. By Wongkew's theorem \cite{Wongkew}, $\#\{Q\}\lesssim R^{O(\de)}rM^{-1}.$

For any $Q\in \cQ$, $T_1,T_2\in \T_{a}$, we define an incidence function $\chi(Q,T_1,T_2)$ which $=1$ if $Q\cap T_1\cap T_2\neq \varnothing$ and $=0$ otherwise. We let $n_Q$ be the number of tubes $T\in\T_{a}$ that intersect $Q$. By Cauchy-Schwartz,
$$ \sum_{Q,T_1,T_2}\chi(Q,T_1,T_2)\ge\sum_Q n_Q^2\ge \#\{Q\}^{-1}(\sum_Q n_Q)^2\ge R^{-O(\de)}M|\mathring \T_{a}|^2. $$
(The last inequality is because each $T$ contains $\sim r^{1/2}$ many $Q$'s.)

If we already have 
$$ |\mathring \T_{a}|\lesssim R^{O(\de)}r^{1/2}M^{-1}, $$
then we are done.
Otherwise we have $|\mathring \T_{a}|>C R^{O(\de)}r^{1/2}M^{-1}$, which implies a bound on the diagonal term:
$$ \sum_{Q,T}\chi(Q,T,T)\le r^{1/2}|\mathring \T_{a}|\le \frac{1}{2}R^{-O(\de)}M|\mathring \T_{a}|^2. $$
The last inequality holds since it is equivalent to
$$ |\mathring \T_{a}|\ge R^{O(\de)}2r^{1/2}M^{-1}. $$
So, we have
$$ \sum_Q\sum_{T_1\neq T_2}\chi(Q,T_1,T_2)\gtrsim R^{-O(\de)}M|\mathring \T_{a}|^2. $$
Notice that when $T_1\neq T_2$ and $\T_1\cap T_2\neq\emptyset$, we have $\angle(T_1,T_2)\ge cr^{-1/2}R^{-\de}$. Thus, there exists a dyadic value $\nu\in [cr^{-1/2}R^{-\de},M^{-1}]$ such that
$$
    \sum_Q\sum_{\angle(T_1,T_2)\sim \nu}\chi(Q,T_1,T_2)\gtrsim R^{-O(\de)}M|\mathring \T_{a}|^2.
$$
By pigeonholing, there exists a $T_1\in\T_{a}$ such that for this fixed $T_1$,
$$ \sum_Q\sum_{T_2:\ \angle(T_1,T_2)\sim \nu}\chi(Q,T_1,T_2)\gtrsim R^{-O(\de)}M|\mathring \T_{a}|. $$

From now on we fix this $T_1$ and define $\T_\nu:=\{ T\in\T_{a}:\angle(T_1,T)\sim\nu \}$. First we note that $\sum_Q\chi(Q,T_1,T_2)\lesssim \nu^{-1}$ for $T_2\in \T_{\nu}$, so we have
\begin{equation}\label{beforeclaim}
    |\T_\nu|\gtrsim\nu M R^{-O(\de)}|\mathring \T_{a}|.
\end{equation}

Next, we define $H:=\bigcup_{T\in \T_\nu} T\cap\cp_{r_{s+1}}$ which is a hairbrush rooted at $T_1$ (note that each $T\cap\cp_{r_{s+1}}$ is morally a $r^{1/2}\times r^{1/2}\times r_{s+1}$-tube). We claim that
\begin{equation}\label{claim}
    |\T_\nu|r^2\lesssim R^{O(\de)}(\log r)|H|.
\end{equation}

Let us quickly see how these two inequality combines to give the result. Since $H$ is contained in a fat tube of dimensions $C\nu r_{s+1}\times C\nu r_{s+1}\times Cr_{s+1}$, and since $H$ is contained in the set $N_{R^{O(\de)}r^{1/2}} Z(P)$. By Wongkew's theorem, we have
$$ |H|\lesssim R^{O(\de)}\nu r^{5/2}. $$
Together with \eqref{beforeclaim} and \eqref{claim}, we proved
$$ |\mathring \T_a|\lesssim R^{O(\de)}r^{1/2}M^{-1}. $$

To prove the claim \eqref{claim}, we use the idea of two-ends reduction from Wolff. We will decompose the set $\T_\nu$ into $\sim\nu r^{1/2}$ subsets. To do this, we choose $\sim \nu r^{1/2}$ many planes each of which contains the central line of $T_1$ and their normal vectors are $\nu^{-1}r^{-1/2}$ separated. Let $\{P_k\}_{k}$ be the $r^{1/2}$-neighborhood of these planes. We see that all the tubes in $\T_\nu$ lie in $\cup_k P_k$, since all the tubes in $\T_\nu$ lie in a cylinder of dimension $\nu r\times\nu r\times r$ and $\cup_k P_k$ covers this cylinder. For each tube $T\in \T_\nu$, we associate it to a $P_k$ if $T\in P_k$ (if there are many choice of $P_k$, we just choose one). Denoting by $\T_{\nu,k}$ the tubes that are associated to $P_k$, we get $\T_\nu=\sqcup_k \T_{\nu,k}$.

For each tube $T\in \T_{\nu,k}$, we let $\tilde{T}$ be the portion $(T\cap\cp_{r_{s+1}})\setminus N_{cr_{s+1}\nu}(T_1)$ Roughly speaking, $\tilde T$ contains two parts each of which is morally a tube of dimensions $r^{1/2}\times r^{1/2}\times r_{s+1}$ (the same dimensions as $T\cap \cp_{r_{s+1}}$). Let $\tilde{\T}_{\nu,k}$ be the collection of these $\tilde{T}$, and let $H_k$ be the union of tubes in $\tilde{T}_{\nu,k}$. By the separation of $P_k$, we see $\{H_k\}$ are at most $R^{O(\de)}$-overlapped. So it suffices to show 
\begin{equation}\label{lastone}
    |\T_{\nu,k}|r^2\lesssim R^{O(\de)}(\log r)|H_k|. 
\end{equation} 

To save notations, we let $\T=\T_{\nu,k}$,  $H=H_k$ and $P=P_k$ in the rest of the proof. For each $T\in\T$, the intersection of its stretch $\mathring T$ with $L_a$ contains a point $y_l$. By the geometric condition, the $\{y_l\}$ obtained from $\T$ lie in a $Rr^{-1/2}\times R$ rectangle in $L_a$. Since $y_l$'s are $Rr^{-1/2}$-separated, we can morally think about these $y_l$ are arranged in a line and two nearby $y_l$ are at least $Rr^{-1/2}$ separated. Now the following argument is quite standard as in the proof of $2$-dimensional Kakeya conjecture.

We choose $\T'$ to be a subset of $\T$ such that $\{\mathring T\cap L_a: T\in\T' \}$ are $C(\log r)R^{\de}Rr^{-1/2}$-separated and $|\T'|\gtrsim \frac{1}{(\log r)R^{\de}}|\T|$. For each $T\in\T'$, We have
$$ \sum_{T'\in \T',\ T'\neq T } |T \cap T'|\le r^2 \sum_{j=0}^{r}\frac{1}{Cj(\log r)R^{\de}}\le R^{-\de/2} r^{1/2} r^{1/2} r_{s+1}\le  \frac{1}{2}|\tilde{T}|. $$
This shows that $|\tilde{T}\setminus \cup_{T'\in \T',\ T'\neq T} T'|\ge \frac{1}{2}|\tilde{T}|$ for $T\in \T'$. As a result,
$$|H|\ge |\cup_{T\in \T'}\tilde{T}|\ge \sum_{T\in \T'}|\tilde{T}\setminus \cup_{T'\in \T',\ T'\neq T} T'|\gtrsim R^{-\de}r^2|\T'|\gtrsim \frac{R^{-\de}r^2}{\log r}|\T|, $$
which finishes the proof of \eqref{lastone}.
\end{proof}

\bibliographystyle{alpha}
\bibliography{bibli}

\end{document}